\newtheorem{theorem}{Theorem}[section]
\newtheorem{lemma}[theorem]{Lemma}
\newtheorem{proposition}[theorem]{Proposition}
\newtheorem{corollary}[theorem]{Corollary}
\theoremstyle{remark}
\newtheorem{remark}[theorem]{\it \bf{Remark}\/}
\numberwithin{equation}{section}
\def\section{\@startsection{section}{1}%
  \z@{1.5\linespacing\@plus\linespacing}{.5\linespacing}%
  {\normalfont\bfseries\large\centering}}
\newcommand{\be}{\begin{equation}}
\newcommand{\ee}{\end{equation}}
\newcommand{\bea}{\begin{eqnarray}}
\newcommand{\eea}{\end{eqnarray}}
\newcommand{\bee}{\begin{eqnarray*}}
\newcommand{\eee}{\end{eqnarray*}}
\def\pa{\partial}
\def\pr{\partial}
\def\RR{\mathbb{R}}
\def\TT{\mathcal{T}}
\def\SS{\mathcal{S}}
\def\ga{\gamma}
\def\de{\delta}
\def\om{\omega}
\def\ep{\varepsilon}
\def\fref#1{{\rm (\ref{#1})}}
\def\supess{\mathop{\operator@font Sup\,ess}}
\def\RR{\mathbb{R}}
\def\e{\varepsilon}
\def\bar#1{{\overline #1}}
\def\fref#1{{\rm (\ref{#1})}}
\def\R2+{\RR ^2_+}
\def\lsl{\frac{\lambda_s}{\lambda}}
\def\pa{\partial}
\def\lim{\mathop{\rm lim}}
\def\sup{\mathop{\rm sup}}
\def\exp{{\rm exp}}
\def\l{\lambda}
\def\log{{\rm log}}
\def\lsl{\frac{\lambda_s}{\lambda}}
\def\xsl{\frac{x_s}{\lambda}}
\def\pa{\partial}
\def\pa{\partial}
\def\Mod{\textrm{Mod}}
\def\NL{\textrm{NL}}
\begin{document}

\title[]{On the stability of type I blow up for the energy super critical heat equation}
\author[C. Collot]{Charles Collot}
\address{Laboratoire J.A. Dieudonn\'e, Universit\'e de Nice-Sophia Antipolis, France}
\email{ccollot@unice.fr}
\author[P. Rapha\"el]{Pierre Rapha\"el}
\address{Laboratoire J.A. Dieudonn\'e, Universit\'e de Nice-Sophia Antipolis, France}
\email{praphael@unice.fr}
\author[J. Szeftel]{Jeremie Szeftel}
\address{Laboratoire Jacques-Louis Lions, Universit\'e Paris 6, France}
\email{jeremie.szeftel@upmc.fr}

\keywords{blow-up, existence, heat, self-similar, stability, supercritical}
\subjclass[2010]{primary, 35K58 35B44 35B35, secondary 35J61 35B32}

\begin{abstract} 
We consider the energy super critical semilinear heat equation $$\pa_tu=\Delta u+u^{p}, \ \ x\in \Bbb R^3, \ \ p>5.$$ We first revisit the construction of radially symmetric self similar solutions performed through an ode approach in \cite{troy}, \cite{buddselfsim}, and propose a bifurcation type argument suggested in \cite{bizon} which allows for a sharp control of the spectrum of the corresponding linearized operator in suitable weighted spaces. We then show how the sole knowledge of this spectral gap in weighted spaces implies the finite codimensional non radial stability of these solutions for smooth well localized initial data using energy bounds. The whole scheme draws a route map for the derivation of the existence and stability of self similar blow up in non radial energy super critical settings.
\end{abstract}

\maketitle


\section{Introduction}



\subsection{Setting of the problem}


We consider the focusing  nonlinear heat equation 
\begin{equation}\label{eq:heat}
\left\{\begin{array}{l}
\pr_tu = \Delta u + |u|^{p-1}u,\,\,\,\, (t,x)\in \mathbb{R}\times\mathbb{R}^d,\\
u_{|_{t=0}}=u_0,
\end{array}\right.
\end{equation}
where $p>1$. This model dissipates the total energy
\bea
E(u)=\frac{1}{2}\int |\nabla u|^2-\frac{1}{p+1}\int u^{p+1}, \,\,\,\, \frac{1}{2}\frac{dE}{dt}=-\int (\pr_tu)^2<0
\eea
and admits a scaling invariance: if $u(t,x)$ is a solution, then so is
\bea
u_\l( t ,x)=\l^{\frac{2}{p-1}}u(\l^2 t, \l x),\,\,\, \l>0.
\eea
This transformation is an isometry on the homogeneous Sobolev space
$$\|u_\l(t,\cdot)\|_{\dot{H}^{s_c}}=\|u( t,\cdot)\|_{\dot{H}^{s_c}}\textrm{ for }s_c=\frac{d}{2}-\frac{2}{p-1}.$$
We address in this paper the question of the existence and stability of blow up dynamics in the energy super critical range $s_c>1$ emerging from well localized initial data.


\subsection{Type I and type II blow up}


There is a large litterature devoted to the question of the description of blow up solutions for \eqref{eq:heat} and we recall some key facts related to our analysis.\\

\noindent{\em Type I blow-up}. The universal scaling lower bound on blow up rate $$\|u(t,\cdot)\|_{L^{\infty}}\gtrsim \frac{1}{(T-t)^{\frac 1{p-1}}}$$ is saturated by Type I singularities: $$\|u(t,\cdot)\|_{L^{\infty}}\sim \frac{1}{(T-t)^{\frac 1{p-1}}}.$$ These solutions concentrate to leading order a blow up profile $$u(t,x)\sim \frac{1}{\l(t)^{\frac{2}{p-1}}}v\left(\frac{x}{\l(t)}\right),  \ \ \l(t)= \sqrt{T-t},$$ which solves the non linear elliptic equation
\be
\label{ellipticequation}
\Delta v-\frac 1 2 \Lambda v +v^p=0, \ \ \Lambda v=\frac{2}{p-1}v+y\cdot\nabla v.
\ee 
There are three known classes of radial solutions to \eqref{ellipticequation}:\\
\begin{itemize}
\item the constant solution 
$$\kappa=\left(\frac{1}{p-1}\right)^{\frac{1}{p-1}}$$
which generates the stable ODE type blow up \cite{Gi,Gi1,Gi2,Gi3, MZduke,MeZa2};
\item the singular at the origin homogeneous self similar solution 
\be
\label{defphistar}
\Phi_*=\frac{c_\infty}{|x|^{\frac{2}{p-1}}}, \ \ c_\infty=\left(\frac{2}{p-1}\left(d-2-\frac{2}{p-1}\right)\right)^{\frac{1}{p-1}};
\ee
\item for \be
\label{exponentpjl}
1+\frac{4}{d-2}<p<p_{JL}=\left\{\begin{array}{ll} +\infty\ \ \mbox{for}\ \ d\leq 10,\\
1+\frac{4}{d-4-2\sqrt{d-1}}\ \ \mbox{for}\ \ d\geq 11,
\end{array}\right.
\ee
where $p_{JL}$ is the so called Joseph-Lundgren exponent, there exists a quantized sequence of smooth radially symmetric solutions $\Phi_n$ to \eqref{ellipticequation} which behave like $$\Phi_n(r)\sim \frac{c_n}{r^{\frac{2}{p-1}}}\ \ \mbox{as}\ \ r\to +\infty.$$ These solutions have been constructed using global Lyapounov functionals based ODE methods, \cite{lepin,troy,buddone,buddselfsim}, and a sharp condition for their existence in the radial positive class is given in \cite{Mizoselfsim}.
\end{itemize}

Note that all these profiles have infinite energy and it is not clear how they may participate in singularity formation emerging from smooth well localized initial data. In the radially symmetric setting, the series of breakthrough works \cite{MaMe1,MaMe2} gives partial answers showing the universality of the ODE blow up, and the possiblity of threshold dynamics with $\Phi^*$ or $\Phi_n$ regimes depending on the value of $p$. The analysis however is strongly restricted to the radial setting and uses the intersection number Lyapounov functionals based on the maximum principle. In particular this approach does not provide any insight into the direct construction of these blow up profiles and their dynamical stability in the non radial setting.\\

\noindent{\em Type  II blow-up}. For $p>p_{JL}$, there exist type II blow-up solutions $$\text{lim}_{t\to T} \ \|u(t)\|_{L^{\infty}}(T-t)^{\frac{1}{p-1}}=+\infty.$$ They appear in the radial setting as threshold dynamics again at the boundary of the ODE blow up set, \cite{MaMe3}, and dynamical proofs were proposed in \cite{HV,Mizo, Mizo2}. Their construction has been revisited in \cite{MRR,Co} in the setting of dispersive Schr\"odinger and wave equations, and in \cite{Co2} for the non radial heat equation, to produce the full quantized sequence of smooth type II blow up bubbles. The blow-up profile near the singularity is a stationary profile:
$$
u(t,x)\sim \frac{1}{\l(t)^{\frac{2}{p-1}}}Q\left(\frac{x}{\l(t)}\right), \ \  \l(t)\ll \sqrt{T-t}
$$
where $Q$ solves the soliton equation:
\be \label{solitonequation}
\Delta Q+Q^p=0.
\ee
The heart of the analysis is to control the flow near $Q$ using suitable energy estimates, hence avoiding maximum principle tools or spectral arguments. Type II is intimately connected to the singular self similar profile \eqref{defphistar}, see \cite{HV,MRR} for a discussion on this fundamental matter. 


\subsection{Statement of the result}


Our aim in this paper is to propose a robust approach for both the existence and stability of self similar blow up with smooth self similar $\Phi_n$ like profile. For the sake of simplicity, we restrict ourselves to 
\be
\label{assumptionpd}
d=3, \ \ p>5, \ \ p_{JL}=+\infty.
\ee We first revisit the construction of self similar blow up solutions of \cite{troy,buddselfsim} and implement an abstract bifurcation argument which relies on the sole existence of the stationary profile $Q$ given by \eqref{solitonequation}. Note that this kind of argument is classical in the ODE literature, see for example \cite{buddone, wei,Cor}, and relies on the oscillatory nature of the eigenfunctions of the linearized operator close to $\Phi^*$ for $p<p_{JL}$.

\begin{proposition}[Existence and asymptotic of excited self similar solutions]
\label{propconstruction}
Assume \eqref{assumptionpd}. For all $n>N$ large enough, there exist a\footnote{locally unique in some suitable space} smooth radially symmetric solution to the self similar equation \eqref{ellipticequation} such that $$\Lambda \Phi_n\ \ \mbox{vanishes exactly n times on}\ \  (0,+\infty).$$ Moreover, there exists a small enough constant $r_0>0$ independent of $n$ such that:\\
\noindent{\em 1. Behavior at infinity}:  
\be
\label{behavioruinfity}
\lim_{n\to +\infty}\sup_{r\geq r_0}\left(1+r^{\frac{2}{p-1}}\right)|\Phi_n(r) - \Phi_*(r)| = 0.
\ee
\noindent{\em 2. Behaviour at the origin}: there exists a sequence $\mu_n>0$ with $\mu_n\to 0$ as $n\to +\infty$ such that
\be
\label{behviourselfsimlocal}
\lim_{n\to +\infty}\sup_{r\leq r_0}\left|\Phi_n(r) -  \frac{1}{\mu_n^{\frac{2}{p-1}}}Q\left(\frac{r}{\mu_n}\right)\right| =0.
\ee
\end{proposition}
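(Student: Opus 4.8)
The plan is to construct $\Phi_n$ by a shooting/bifurcation argument in self-similar variables, following the ODE philosophy of \cite{troy,buddselfsim} but organized around the soliton $Q$ rather than around $\Phi_*$. First I would introduce the one-parameter family of rescaled solitons $Q_\mu(r)=\mu^{-\frac{2}{p-1}}Q(r/\mu)$ and note that, while $Q_\mu$ solves the soliton equation \eqref{solitonequation} exactly, it solves \eqref{ellipticequation} only up to the error $-\tfrac12\Lambda Q_\mu$, which is small \emph{on compact sets in the variable $y=r/\mu$} because $\Lambda$ carries the scaling factor. This suggests an inner/outer matching: in the inner zone $r\lesssim \mu_n^{1/2}$ (say), the profile should be a perturbation of $Q_{\mu_n}$, while in the outer zone $r\gtrsim r_0$ it should be a perturbation of the explicit singular solution $\Phi_*$. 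The free parameter is $\mu_n>0$, and the quantization $\mu_n\to 0$ will be fixed by the oscillation-counting condition on $\Lambda\Phi_n$.

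The key steps, in order. Step 1: fix the behaviour at the origin by declaring $\Phi(0)=b$ for a shooting parameter $b$ large, and observe that the rescaling $y=b^{\frac{p-1}{2}}r$ turns $\Phi(r)=b\,\varphi(y)$ into an equation for $\varphi$ that converges, as $b\to\infty$, to the soliton equation $\Delta\varphi+\varphi^p=0$ with $\varphi(0)=1$ — hence $\varphi\to Q$ (after normalizing $Q(0)=1$) uniformly on compact sets by continuous dependence on the equation. This identifies $\mu_n$ with $b_n^{-\frac{p-1}{2}}$ and already gives a candidate for \eqref{behviourselfsimlocal} on the inner region. Step 2: propagate this solution outward. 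On $r\geq r_0$ the equation \eqref{ellipticequation} is a regular-singular ODE whose relevant solution near $r=\infty$ is, to leading order, $\Phi_*$; I would set $\Phi=\Phi_*+w$ and show the linearized operator around $\Phi_*$, namely $\Delta w-\tfrac12\Lambda w+p\Phi_*^{p-1}w$, has, for $p>5$ (so $p<p_{JL}=+\infty$ in $d=3$), oscillatory solutions at infinity — this is exactly the classical mechanism alluded to after \eqref{solitonequation}, and it is what produces the sign changes of $\Lambda\Phi$. Step 3: match. As $b\to\infty$, the inner solution, continued to the region $\mu_n^{1/2}\lesssim r\lesssim r_0$, converges to the singular profile $\Phi_*$ (because $Q(y)\sim c_\infty|y|^{-\frac{2}{p-1}}$ at infinity and the rescaling turns this tail precisely into $\Phi_*(r)$, independent of $\mu_n$); so the inner and outer descriptions agree to leading order on the overlap, and a fixed-point/implicit-function argument closes the matching provided $b=b_n$ is chosen on a discrete sequence. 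Step 4: count oscillations. Using the oscillatory solutions from Step 2 and a Sturm-type comparison, show that the number of zeros of $\Lambda\Phi$ on $(0,\infty)$ is a nondecreasing function of $b$ taking all large integer values, which both defines $b_n$ (hence $\mu_n\to 0$) and yields the ``vanishes exactly $n$ times'' statement; the estimates \eqref{behavioruinfity}–\eqref{behviourselfsimlocal} then follow from the uniform (in $n$) convergences established in Steps 1–3, with $r_0$ chosen once and for all as the radius up to which the outer perturbative regime is valid.

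The main obstacle I expect is the uniformity of the matching as $n\to\infty$: one must control the solution on the \emph{entire} transition zone $\mu_n^{1/2}\lesssim r\lesssim r_0$ with constants independent of $n$, which requires sharp pointwise decay of $Q-c_\infty|y|^{-\frac{2}{p-1}}|\cdot|$ and its derivatives at infinity (the rate depends on $p$ through the indicial roots of the linearization of the soliton equation around $\Phi_*$, and for $p>5$, $d=3$ these roots are complex, so the remainder decays with an oscillatory correction) together with a stability estimate for the outer linearized flow that does not degenerate as the inner scale collapses. A secondary difficulty is ensuring the oscillation count is \emph{exact} rather than merely bounded below; this needs the eigenfunction of the outer linearized operator associated to $\Lambda$ to be nondegenerate and the Wronskian-type quantity controlling consecutive zeros to be sign-definite, which is where the hypothesis $p<p_{JL}$ (automatic here) is essential.
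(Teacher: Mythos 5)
Your overall route --- an inner profile bifurcating from $Q$, an outer profile bifurcating from $\Phi_*$, a matching made possible by the oscillatory behaviour coming from the complex indicial roots for $p<p_{JL}$, and a quantized sequence of scales $\mu_n\to 0$ --- is the same as the paper's. But the two steps you defer as ``obstacles'' are the actual content of the proof, and as sketched they would not go through. First, continuous dependence on compact sets in the rescaled variable $y=r/\mu$ only gives $\varphi\to Q$ on $\{y\le R\}$ for fixed $R$; the matching is performed at the fixed radius $r_0$, i.e. at $y=r_0/\mu_n\to+\infty$, and there you need the solution, its derivative, and $\Lambda$ of the solution to agree with $Q$ up to errors that are small \emph{relative to the oscillatory tail} $c_1\sin(\om\log y+c_2)\,y^{-1/2}$ of \eqref{cejnenieie}, since it is precisely this subleading term that drives the quantization. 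The paper obtains this by solving for the interior correction in the weighted space $Y_{r_0/\l}$, yielding the uniform bound \eqref{estinterirotone} (a correction $\l^2T_1$ with $T_1=O((1+y)^{3/2})$ uniformly in $\l$), which sits below the $y^{-1/2}$ tail on the whole window exactly because $s_c>1$; some such quantitative statement, not compact-set convergence, is what makes the derivative-matching function $\mathcal G[r_0](\l)$ in \eqref{leadingorderderivative} genuinely proportional to $\l^{s_c-1}\sin(-\om\log\l+c_2-c_4)$ and hence produces the discrete sequence. Your proposal names this difficulty but supplies no mechanism to resolve it, and the analogous uniform bounds \eqref{wihfeihwwomeha} for the outer family are likewise needed and missing.

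Second, the exact count of zeros of $\Lambda\Phi_n$ cannot be obtained by monotonicity of a zero-counting function of the shooting parameter $b$: the admissible $b$'s form a discrete set, and between them the shot solution does not have the $\Phi_*$ behaviour at infinity, so there is no continuous admissible family along which a Sturm comparison can be run. Moreover the $n$ zeros do not come from the outer linearization on $r\ge r_0$ --- there $\Lambda u_{ext}=\ep(\Lambda\psi_1+\Lambda w)$ has a number of zeros that is fixed, independent of $n$, by \eqref{refinedbeahviourpsione} and \eqref{wihfeihwwomeha} --- but from the zeros of $\Lambda Q$ on the expanding inner window $[0,r_0/\mu_n]$, so your attribution of the sign changes to the outer oscillation misplaces the counting. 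Exactness then requires two quantitative inputs that your ``nondegeneracy/Wronskian'' remark does not provide: the matching points $r_0/\mu_k$ must stay a definite distance from the zeros of $\Lambda Q$ (estimate \eqref{tobeprovedlambdaq}, which itself uses the matched values and derivatives), and the spacing $\pi/\om$ of consecutive zeros of $\Lambda Q$ must be compared with the spacing of the quantized scales $\mu_k$ to show that exactly one new zero enters per step ($\#A_k=1$ in the paper, via the choice \eqref{choicedeltao}). The claim ``nondecreasing in $b$ and takes all large integer values'' skips precisely this increment-by-one argument.
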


Hence these solutions realize a connection between the ground state behavior $Q$ at the origin, and the homogeneous self similar decay $\Phi_*$ at infinity. We now claim that these solutions are the blow up profile of a class of finite energy initial data leaving on a non radial n codimensional manifold.

\begin{theorem}[Finite codimensional stability of $\Phi_n$]
\label{thmmain}
Assume \eqref{assumptionpd}. Let $n>N$ large enough. There exists a Lipschitz codimension $n$ manifold\footnote{see Proposition \ref{pr:lipschitz} for a precise statement of the Lipschitz regularity.} of non radial initial data with finite energy
$$u_0=\chi_{A_0}\Phi_n+w_0$$
where $A_0\gg 1$ is large enough and $w_0$ is small enough\footnote{See \eqref{scalalrho} and below for the definition of the weighted Sobolev space $H^2_{\rho}$.}
\be \label{Proximite w0}
\|w_0\|_{H^2_\rho}+\|\Delta w_0\|_{L^2}+\|w_0\|_{\dot{H}^{s_c}}\ll 1,
\ee
such that the corresponding solution to \eqref{eq:heat} blows up in finite time $0<T<+\infty$ with a decomposition $$u(t,x)=\frac{1}{\l(t)^{\frac{2}{p-1}}}(\Phi_n+v)\left(t,\frac{x-x(t)}{\l(t)}\right)$$ where:\\
\noindent{\em 1. Control of the geometrical parameters}: the blow up speed is self similar $$\l(t)=\sqrt{(2+o(1))(T-t)}\ \ \mbox{as}\ \ t\to T$$ and the blow up point converges 
\be
\label{behaviourxpoitn}
x(t)\to x(T)\ \ \mbox{as}\ \ t\to T.
\ee
\noindent{\em 2. Behaviour of Sobolev norms}: there holds the asymptotic stability of the self similar profile above scaling
\be
\label{sobolevone}
\lim_{t\to T}\|v(t)\|_{\dot{H}^s}=0\ \ \mbox{for}\ \ s_c<s\leq 2,
\ee
 the boundedness of norms below scaling
 \be
 \label{sobolevtwo}
 \limsup_{t\to T} \|u(t)\|_{\dot{H}^s}<+\infty\ \ \mbox{for}\ \ 1\leq s<s_c,
 \ee 
 and the logarithmic growth of the critical norm 
 \be
 \label{normioeoe}
 \|u(t)\|_{\dot{H^{s_c}}}=c_n(1+o_{t\to T}(1))\sqrt{|\log (T-t)|}, \ \ c_n\neq 0.
 \ee
\end{theorem}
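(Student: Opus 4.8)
The plan is to set up a bootstrap argument controlling the solution in self-similar variables near the profile $\Phi_n$, combining modulation theory to extract the geometrical parameters $(\l(t),x(t))$ and a spectrally projected coercive energy functional to close the estimates on the remainder $v$. First I would pass to self-similar/renormalized variables by writing $u(t,x)=\l(t)^{-\frac{2}{p-1}}(\Phi_n+v)(s,\frac{x-x(t)}{\l(t)})$ with $\frac{ds}{dt}=\l^{-2}$, so that $v$ satisfies a perturbed equation of the form $\pa_s v = \H_n v + \text{Mod}(s) + \NL(v)$, where $\H_n = \Delta - \frac12\Lambda + p\Phi_n^{p-1}$ is the linearized operator whose spectral gap in the weighted space (governed by the $n$-codimensional unstable/neutral directions coming from $\Lambda\Phi_n$ vanishing $n$ times, plus the scaling and translation null directions) is exactly what Proposition \ref{propconstruction} and its attendant spectral analysis provide. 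The modulation equations for $\lt/\l$ and $\dot x$ are obtained by imposing orthogonality conditions on $v$ against the (generalized) null/unstable eigenmodes of $\H_n$, and these force $\l(t)\sim\sqrt{2(T-t)}$ and $x(t)\to x(T)$ once the remainder is shown small.

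The core of the argument is the energy estimate. I would introduce a weighted functional, morally $\mathcal F(v) = \frac12\int |\nabla v|^2 \rho + \text{lower order} - \frac12\int p\Phi_n^{p-1} v^2 \rho$ together with higher-order analogues controlling $\|v\|_{H^2_\rho}$, $\|\Delta v\|_{L^2}$ and $\|v\|_{\dot H^{s_c}}$, chosen so that (i) after subtracting the finitely many bad directions the quadratic form is coercive — this is precisely where the spectral gap in weighted spaces from Proposition \ref{propconstruction} is used — and (ii) the time derivative $\frac{d}{ds}\mathcal F$ is, up to the modulation terms and nonlinear terms, negative (dissipation) modulo a controlled error. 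The nonlinear terms are handled by the smallness \eqref{Proximite w0} and by the fact that $v$ stays in a regime where $\Phi_n$ dominates; the region near the origin, where $\Phi_n\approx \mu_n^{-\frac{2}{p-1}}Q(\cdot/\mu_n)$ is large, and the region near infinity, where $\Phi_n\approx\Phi_*$ and the supercritical weight matters, are treated with the cutoff $\chi_{A_0}$ and by exploiting that $\Phi_*$ already solves the equation so only the self-similar decay structure enters. Tracking $\|v\|_{\dot H^{s_c}}$ requires an additional interpolation/commutator analysis leading to the logarithmic law \eqref{normioeoe}; the constant $c_n$ comes from the resonance between the $\dot H^{s_c}$ scaling and the self-similar drift, with $c_n\neq0$ following from a non-degeneracy of $\Lambda\Phi_n$ in that norm.

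Finally, the codimension $n$ manifold is produced by a Brouwer-type / topological shooting argument: the $n$ unstable directions (the growing modes of $\H_n$ in the weighted space) are controlled by choosing the corresponding $n$ parameters in the initial data, and a standard outgoing-flux argument on the unstable sphere shows that for a suitable choice the solution stays in the bootstrap regime for all $s$; the Lipschitz regularity of the manifold (Proposition \ref{pr:lipschitz}) follows from a contraction/implicit-function argument on the difference of two trajectories. The main obstacle I expect is closing the energy estimate uniformly: one must simultaneously (a) handle the loss of derivatives / lack of maximum principle inherent to the energy method in the supercritical range by building the right hierarchy of weighted norms with compatible coercivity, (b) absorb all the modulation-induced source terms, which themselves are only controlled by the very quantities being estimated, and (c) reconcile the two very different scales $\mu_n$ (inner, soliton) and $1$ (outer, self-similar) appearing in $\Phi_n$ — quantifying the errors so that the smallness is genuinely independent of $n$ (beyond $n>N$) is the delicate point, and is exactly what forces the weighted space $H^2_\rho$ and the precise proximities \eqref{behavioruinfity}--\eqref{behviourselfsimlocal}.
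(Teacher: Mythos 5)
Your proposal follows essentially the same route as the paper: renormalized variables with modulation of scaling and translation, a decomposition orthogonal to the $n+1$ radial unstable modes and the translation modes, weighted energy estimates closed by the spectral gap for $\mathcal L_n$ together with an unweighted global $\dot H^2$ bound and a critical $\dot H^{s_c}$ bound (whose interpolation yields the $L^\infty$ smallness needed for the nonlinear terms), a Brouwer-type argument on the $n$ remaining unstable coefficients, and the Lipschitz manifold from difference estimates on two trajectories. The only small correction is the mechanism behind \eqref{normioeoe}: the logarithmic growth is not a dynamical resonance but comes directly from the $\dot H^{s_c}$ norm of the truncated profile $\chi_{\frac1\lambda}\Phi_n$, whose slowly decaying tail $\sim c_\infty r^{-\frac{2}{p-1}}$ produces the $\sqrt{|\log\lambda|}$ divergence while $w$ remains bounded in $\dot H^{s_c}$.
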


\vspace{0.3cm}

{\it Comments on the results}.

\vspace{0.2cm}

\noindent{\it 1. On the construction of self similar solutions}. The construction of self similar solutions has been performed in \cite{troy,buddselfsim} using a global Lyapounov functional ode approach. A very interesting variational approach has also been developed in \cite{cazenaveshattah,germain} in the setting of the related wave map problem. But there are many classical problems which lack both the variational structure and the monotonicity formulas, hence the need for a more systematic approach typically connected in a way or another to a bifurcation argument, which is the method we are implementing here. This procedure has been applied in various settings, see for example \cite{buddone,wei}. One advantage is that the proof further allows for a control of the linearized operator near the bifurcated object. The prize to pay however is that we only get the bifurcated family locally near the bifurcation point, and not the whole branch\footnote{unless one works for $p_{JL}-\e<p<p_{JL}$ in which case the whole family could be bifurcated along the same lines as for the supercritical gKdV equation performed in \cite{koch}.}, in particular not the fundamental mode. A closely related theorem is the construction \cite{koch} for the KdV equation near the critical exponent.\\

\noindent{\it 2. Stability of self similar blow up}. There is an important literature devoted to the stability of self similar solutions for both parabolic and dispersive problems. We aim at developing a robust approach which will extend to more complicated systems. Hence we avoid on purpose maximum principle like tools. In \cite{donninger1, donninger2, donninger, DS}, this kind of question has also been addressed for the radially symmetric supercritical wave map problem, Yang-Mills, wave equation and Yang-Mills heat flow. In those works, the analysis requires a detailed description of the complex spectrum of the linearized operator in suitable spaces which is a delicate matter, and seems to rely heavily on the fact that in the cases under consideration, the self similar solution has an explicit formula.  Our approach is different: once we know the spectral gap estimate with exponential weight which is an elementary consequence of either the variational characterization of the self similar solution as in \cite{cazenaveshattah}, or the construction of the solution by bifurcation as in the setting of Proposition \ref{propconstruction}, then the control of the nonlinear flow follows by adapting the general strategy based on energy bounds of \cite{RaphRod,MRR}. In fact, the exponential decay bounds behind \eqref{sobolevone} considerably simplify the analysis with respect to the study of type II blow up. The connexion with type II blow up has been made in \cite{HR} using exponential weights again, and the analysis is indeed intrinsically more involved. This energy method in weighted spaces also draws a natural connexion with the analysis of ODE type I blow up for both the heat and the wave equation \cite{Gi1,MZduke,MZ}. Note also that we assume \eqref{assumptionpd} for the sake of simplicity only\footnote{Raising dimensions causes the nonlinearity to become non smooth since $\lim_{d\to +\infty}p_{JL}=1$ and hence would lead to additional but manageable technical difficulties.}. The solutions of Theorem \ref{thmmain} will be obtained using first a by now classical Brouwer like topological argument \cite{martelmulti,MRR}, which is then complemented by a local uniqueness statement to construct the Lipschitz manifold as in \cite{Co,MMNR,KS, DKSW}.\\

\noindent{\it 3. The flow near the ground state}. The question of the classification of the flow near the special class of stationary solutions $Q$ has attracted a considerable attention in the past ten years in connection with the construction of the unstable manifold \cite{NS}, or the complete classification of the flow near $Q$ in energy subcritical \cite{NS,MaMeRa} and critical settings \cite{CMR}. The corresponding instabilities are central in the derivation of unstable type II blow up bubbles, \cite{MRR}.  From \eqref{behviourselfsimlocal}, the self similar solution ressembles the solitary wave $Q$ up to scaling near the origin, and hence the stability Theorem \ref{thmmain} can be viewed as describing one instability of the solitary wave solution in a suitable function space. Here a fundamental issue is that the linearized operator $H=-\Delta-pQ^{p-1}$ is {\it unbounded from below} in the sense of quadratic forms for $p<p_{JL}$. This is a major difference with respect to the case $p>p_{JL}$ where $H>0$. Our analysis in this paper shows how the nonlinear bifurcated solution $\Phi_n$ precisely allows for the suitable modification of the linearized operator which fixes this unboundedness from below of $H$. One also observes the same behaviour of Sobolev norms \eqref{sobolevone}, \eqref{sobolevtwo} as in \cite{MRR} which illustrates the deep non trivial structure in space of the associated blow up scenario\footnote{and hence its relevance in particular for more geometric problems like the harmonic heat flow of surfaces.}. Let us also stress that the nature of our energy like non linear estimates goes far beyond the stability issues of specific dynamics, and has allowed in \cite{MaMeRa} in a dispersive setting and \cite{CMR} in the parabolic setting for a complete description of the flow near the ground states.\\

This paper and \cite{RaphRod,MRR,Co, Co2} hence display a deep unity and design a route map based on robust energy estimates for the proof of the existence and stability of type I or type II blow up bubbles in both radial and non radial settings.

\subsection*{Acknowledgements}  All three authors are supported by the ERC-2014-CoG 646650 SingWave. P.R. is a junior member of the Institut Universitaire de France.

\subsection*{Notations} From now on and for the rest of this paper we fix $$d=3, \ \ p>5.$$
\noindent{\em The ground state expansion}. We let $\Phi_*$ given by \eqref{defphistar} be the unique radial homogenous self similar solution to \eqref{ellipticequation}. We let $Q(r)$ denote the unique radially symmetric solution to  
$$\left\{\begin{array}{ll} Q''+\frac{2}{r}Q'+Q^p=0,\\ Q(0)=1, \ \ Q'(0)=0,\end{array}\right.$$ 
which asymptotic behavior at infinity is from standard ODE argument\footnote{see \cite{Di,Jo,YiLi}.} given by
\bee
Q(r)=(1+o_{r\to +\infty}(1)) \Phi_*(r).
\eee
The next term is this expansion relates to the $p_{JL}$ exponent \eqref{exponentpjl} which is infinite in dimension $d=3$. Hence the quadratic polynomial $$\ga^2-\ga+pc_{\infty}^{p-1}=0$$
has complex roots 
\be
\label{eq:defofgamma}
\ga=\frac{1}{2} \pm i\om, \ \ \Delta:=1-4pc_{\infty}^{p-1}<0, \ \ \om:=\frac{\sqrt{-\Delta}}{2}
\ee
and the asymptotic behavior of $Q$ may be precised\footnote{see \cite{Di,Jo,YiLi}.}:
\be
\label{cejnenieie}
Q(r)=\Phi_*(r)+\frac{c_1\sin\left(\om\log(r)+c_2\right)}{r^{\frac{1}{2}}}+o\left(\frac{1}{r^{\frac{1}{2}}}\right)\textrm{ as }r\to +\infty
\ee
where $c_1\neq 0$ and $c_2\in\mathbb{R}$. Note that
$$\frac{1}{2}-\frac{2}{p-1}=s_c-1>0$$
so that the second term in the expansion of $Q$ is indeed a correction term.\\

\noindent{\em Weighted spaces}. We define the derivation operator 
$$D^k:=\left\{\begin{array}{ll} \Delta^m & \mbox{for}\ \ m=2k,\\ \nabla \Delta^k & \mbox{for}\ \ m=2k+1.
\end{array}\right.$$ We define the scalar product 
\be
\label{scalalrho}
(f,g)_{\rho}=\int_{\Bbb R^3}f(x)g(x)\rho dx, \ \ \rho=e^{-\frac{|x|^2}{2}}
\ee 
and let $L^2_\rho$ be the corresponded weighted $L^2$ space. We let $H^k_\rho$ be the completion of $\mathcal C^{\infty}_c(\Bbb R^d)$ for the norm 
$$\|u\|_{H^k_\rho}=\sqrt{\sum_{j=0}^k\|D^ju\|^2_{L^2_\rho}}.$$ 

\noindent{\em Linearized operators}. The scaling semi-group on functions $u:\mathbb R^d\rightarrow \mathbb R$:
\be \label{definition ulambda}
u_{\lambda}(x):=\lambda^{\frac{2}{p-1}}u(\lambda x)
\ee
has for infinitesimal generator the linear operator
$$
\Lambda u:= \frac{2}{p-1}u+x.\nabla u=\frac{\partial}{\partial \lambda} (u_{\lambda})_{|\lambda=1}.
$$
We define the linearized operator corresponding to \eqref{ellipticequation} around respectively $\Phi_*$ and $\Phi_n$ by
\bee
\mathcal{L}_\infty := -\Delta+\Lambda -\frac{pc_\infty^{p-1}}{r^2}, \ \ \mathcal{L}_n:=-\Delta +\Lambda -p\Phi_n^{p-1}
\eee
and their projection onto spherical harmonics:
\bee
&&\mathcal{L}_{\infty,m}:= -\partial_{rr}-\frac{2}{r}\partial_r+\frac{2}{p-1}+r\pr_r +\frac{m(m+1)}{r^2}-p\Phi_*^{p-1}, \ \ m\in \Bbb N,\\
&&\mathcal{L}_{n,m}:= -\partial_{rr}-\frac{2}{r}\partial_r+\frac{2}{p-1}+r\pr_r +\frac{m(m+1)}{r^2}-p\Phi_n^{p-1}, \ \ m\in \Bbb N.
\eee
Note that $\mathcal L_\infty$ is formally self adjoint for the $L^2_\rho$ scalar product but \eqref{eq:defofgamma} implies that the associated quadratic form is not bounded from below\footnote{this is a limit point circle case as $r\to 0$, \cite{RS}.} on $H^1_{\rho}$. We similarly define the linearized operator corresponding to \eqref{solitonequation} around $Q$:
\bee
&&H := -\Delta -pQ^{p-1}\\
&&H_m:= -\partial_{rr}-\frac{2}{r}\partial_r+\frac{m(m+1)}{r^2}-pQ^{p-1}, \ \ m\in \Bbb N.
\eee
and again $H$ is not bounded from below on $\dot{H}^1$.\\

\noindent{\em General notation}. We let $\chi(x)$ denote a smooth radially symmetric function with 
$$\chi(x):=\left\{\begin{array}{ll} 1 & \mbox{for}\ \ |x|\leq \frac 14,\\[2mm] 0 & \mbox{for}\ \ |x|\geq \frac 12,\end{array}\right.$$ and for $A>0$ (note the difference with \fref{definition ulambda}), $$\chi_A(x)=\chi\left(\frac xA\right).$$ 

\subsection*{Organization of the paper} This paper is organized as follows. In section \ref{sectionconstruction}, we construct the family of self similar solutions $\Phi_n$ using a nonlinear matching argument. The argument is classical, but requires a careful track of various estimates to obtain the sharp bounds \eqref{behavioruinfity}, \eqref{behviourselfsimlocal}. In section \ref{sectionspectral}, we show how these bounds coupled with Sturm-Liouville like arguments allow for a sharp counting of the number of instabilities of the linearized operator close to $\Phi_n$ which is self adjoint against the confining measure $\rho (y)dy$, Proposition \ref{prop:spectral}. In section \ref{sectiondynamical}, we turn to the heart of the dynamical argument and show how the spectral estimates in the weighted space coupled with the control of the super critical $\dot{H}^2$ norm design a stability zone for well localized initial data. 

\section{Construction of self-similar profiles}
\label{sectionconstruction}


Our aim in this section is to construct radially symmetric solutions to the self similar equation
\be
\label{eq:selfsimilareq}
\Delta v-\Lambda v +v^p=0,
\ee
by using the classical strategy of gluing solutions which behave like $\Phi_*$ at infinity, and like $Q$ at the origin. As in \cite{bizon,wei,buddone}, the matching is made possible by the oscillatory behaviour \eqref{cejnenieie} for $p<p_{JL}$. The strength of this approach it that it relies on the implicit function theorem and not on fine monotonicity properties, and in this sense it goes far beyond the scalar parabolic setting, see for example \cite{koch} for a deeply related approach. The sharp control of the obtained solution \eqref{behavioruinfity}, \eqref{behviourselfsimlocal} will allow us to control the eigenvalues of the associated linearized operator in suitable exponentially weighted spaces, see Proposition \ref{prop:spectral}.


\subsection{Exterior solutions}\label{sec:exteriorsolution}


Recall that $\Phi_*$ given by \eqref{defphistar} is a solution to \eqref{eq:selfsimilareq} on $(0,+\infty)$. Our aim in this section is to construct the full family of solutions to \eqref{eq:selfsimilareq} on $[r_0,+\infty)$ for some small $r_0>0$ with the suitable behaviour at infinity.  The argument is a simple application of the implicit function theorem and continuity properties of the resolvent of $\mathcal L_\infty$ in suitable weighted spaces.\\

Given $0<r_0<1$, we define $X_{r_0}$ as the space of functions on $(r_0,+\infty)$ such that the following norm is finite
\bee
\|w\|_{X_{r_0}}=\sup_{r_0\leq r\leq 1}r^{\frac{1}{2}}|w|+\sup_{r\geq 1}r^{\frac{2}{p-1}+2}|w|.
\eee

\begin{lemma}[Outer resolvent of $\mathcal L_\infty$]
\label{lemma:constructionofpsi1andpsi2}{\em 1. Basis of fundamental solutions}: 
there exists two solutions $\psi_1$ and $\psi_2$ of
\be
\label{eqpsione}
\mathcal{L}_\infty(\psi_j)= 0\textrm{ for }j=1, 2\ \ \mbox{on}\ \ (0,+\infty)
\ee
with the following asymptotic behavior:
\begin{equation}
\label{behvorigin}
\psi_1= \frac{1}{r^{\frac{2}{p-1}}}\left(1+O\left(\frac{1}{r^2}\right)\right), \,\,\psi_2=  r^{\frac{2}{p-1}-3}e^{\frac{r^2}{2}}\left(1+O\left(\frac{1}{r^2}\right)\right),\textrm{ as } r\to+\infty
\end{equation}
and 
\begin{equation}\label{behvoriginbis}
\psi_1= \frac{c_3\sin\left(\om\log(r)+c_4\right)}{r^{\frac{1}{2}}}+O\left(r^{\frac{3}{2}}\right),\,\,\psi_2= \frac{c_5\sin\left(\om\log(r)+c_6\right)}{r^{\frac{1}{2}}}+O\left(r^{\frac{3}{2}}\right)\textrm{ as } r\to 0
\end{equation}
where $c_3, c_5\neq 0$ and $c_4, c_6\in\mathbb{R}$. Moreover, there exists  $c\neq 0$ such that 
\be
\label{refinedbeahviourpsione}
\Lambda\psi_1=\frac{c}{r^{\frac{2}{p-1}+2}}\left(1+O\left(\frac{1}{r^2}\right)\right)\textrm{ as } r\to+\infty.
\ee
{\em 2. Continuity of the resolvent}: let the inverse 
$$
\TT(f) = \left(\int_r^{+\infty} f\psi_2{r'}^{2}e^{-\frac{{r'}^2}{2}}dr'\right)\psi_1- \left(\int_r^{+\infty} f\psi_1{r'}^{2}e^{-\frac{{r'}^2}{2}}dr'\right)\psi_2,
$$
then 
$$\mathcal{L}_\infty(\TT(f))=f$$
and
\be
\label{estresolvent}
\|\TT(f)\|_{X_{r_0}}\lesssim \int_{r_0}^1 |f|{r'}^{\frac{3}{2}}dr' + \sup_{r\geq 1}r^{\frac{2}{p-1}+2}|f|.
\ee
\end{lemma}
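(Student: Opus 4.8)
The plan is to treat $\mathcal L_\infty$ as a second-order ODE on $(0,+\infty)$ — note $\mathcal L_{\infty}$ is $\mathcal L_{\infty,0}$, i.e. $-\partial_{rr}-\frac2r\partial_r+\frac{2}{p-1}+r\partial_r-\frac{pc_\infty^{p-1}}{r^2}$ — and to produce the two fundamental solutions by analyzing the two singular points $r\to+\infty$ and $r\to 0$ separately. First I would deal with the regular-singular point $r=0$: there the equation is to leading order $-w''-\frac2r w'-\frac{pc_\infty^{p-1}}{r^2}w=0$, whose indicial equation is $-\gamma(\gamma-1)-2\gamma-pc_\infty^{p-1}=0$, equivalently $\gamma^2+\gamma+pc_\infty^{p-1}=0$; using \eqref{eq:defofgamma} (after the substitution $w=r^{-1}\tilde w$ or directly) one checks the indicial roots are $-\frac12\pm i\om$, giving the oscillatory pair $r^{-1/2}\cos(\om\log r)$, $r^{-1/2}\sin(\om\log r)$ at leading order. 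A standard Frobenius/perturbation argument (treating the lower-order terms $\frac{2}{p-1}+r\partial_r$ and the correction to $pQ^{p-1}/r^2$ in $\mathcal L_n$ — here it is exactly $pc_\infty^{p-1}/r^2$, so only $\frac2{p-1}+r\partial_r$) shows any solution has the form $r^{-1/2}(c\sin(\om\log r+c')) + O(r^{3/2})$ near $0$, since the next correction gains two powers of $r$. Next I would analyze $r\to+\infty$: there the dominant balance is $-w''+rw'=0$ (the potential $\sim r^{-2}$ and constant are lower order), giving one solution $\to$ const and one $\sim r^{-3}e^{r^2/2}$; sharpening with the full operator and the explicit homogeneity of $\Phi_*$ one sees that $\Phi_*$-type decay $r^{-2/(p-1)}$ is an exact leading behavior of one branch. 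In fact $\Lambda\Phi_*=0$ is NOT a solution of $\mathcal L_\infty$ in general, but one can build $\psi_1$ with $\psi_1 = r^{-2/(p-1)}(1+O(r^{-2}))$ directly by a contraction in a weighted space, and $\psi_2$ by reduction of order (Wronskian $W(\psi_1,\psi_2)(r) = W_0\, r^{-2}e^{r^2/2}$, since the first-order coefficient of the equation in the form $w''+(\frac2r-r)w'+\dots=0$ is $\frac2r-r$, so $W = W_0\exp(\int(r-\frac2r)) = W_0 r^{-2}e^{r^2/2}$), yielding $\psi_2\sim r^{2/(p-1)-3}e^{r^2/2}$. The refined asymptotics \eqref{refinedbeahviourpsione} for $\Lambda\psi_1$ then follow by differentiating the expansion of $\psi_1$: $\Lambda\psi_1 = \frac{2}{p-1}\psi_1 + r\psi_1'$, and the leading terms $\frac2{p-1}r^{-2/(p-1)}$ cancel, leaving the $O(r^{-2/(p-1)-2})$ term with a nonzero constant (nonvanishing because $\psi_1$ is not exactly $\Phi_*$ — one must check the $O(r^{-2})$ coefficient in $\psi_1$ is nonzero, which comes from plugging the ansatz into $\mathcal L_\infty$ and matching; the source term is $\mathcal L_\infty(c_\infty^{-1}\Phi_*\cdot r^{2/(p-1)}\cdot r^{-2/(p-1)})$, i.e. the constant $\frac2{p-1}$ acting on $r^{-2/(p-1)}$, which is nonzero).

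For part 2, I would simply check directly that the variation-of-constants formula given for $\TT(f)$ solves $\mathcal L_\infty \TT(f)=f$: differentiate twice, use $\mathcal L_\infty\psi_j=0$ and the Wronskian identity $\psi_1'\psi_2-\psi_1\psi_2' = -W_0 r^{-2}e^{r^2/2}$ (up to the sign/normalization that makes the formula work — this fixes why the weight $r'^2 e^{-r'^2/2}$ and that precise combination appear). The integrals converge at $+\infty$ because $\psi_1 f r'^2 e^{-r'^2/2}$ and $\psi_2 f r'^2 e^{-r'^2/2}$ decay: $\psi_2 f r'^2 e^{-r'^2/2} \sim r'^{2/(p-1)-1}|f|$ which under the hypothesis $\sup r'^{2/(p-1)+2}|f|<\infty$ is integrable, and $\psi_1 f r'^2 e^{-r'^2/2}$ decays even faster (Gaussian). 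Then the bound \eqref{estresolvent} is obtained by splitting $\int_r^{+\infty} = \int_{\max(r,1)}^{+\infty}$ plus (when $r<1$) $\int_r^1$, estimating each $\psi_j$ by its stated asymptotics in the two regions $r_0\le r\le 1$ and $r\ge 1$, and reading off the two pieces on the right-hand side of \eqref{estresolvent} — the $\int_{r_0}^1|f|r'^{3/2}dr'$ piece coming from the near-origin region where $\psi_j\sim r^{-1/2}$ and the weight $r'^2 e^{-r'^2/2}\sim r'^2$ combine to $r'^{3/2}$, and the $\sup_{r\ge1}r^{2/(p-1)+2}|f|$ piece from the region $r\ge 1$ where the Gaussian makes the integrals converge with the claimed rate.

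The main obstacle is the construction of $\psi_1$, $\psi_2$ with \emph{sharp} asymptotic expansions at \emph{both} endpoints simultaneously, and in particular tracking that the error terms are genuinely $O(r^{-2})$ at infinity and $O(r^{3/2})$ at the origin rather than merely $o(1)$ — this is what later feeds the sharp spectral counting. Concretely: one fundamental system is naturally defined by its behavior at $+\infty$ (via a contraction mapping in $X_{r_0}$-type weighted norms) and a different one by its behavior at $0$, and one must express each in terms of the other, i.e. understand the connection coefficients; the statement that $c_3,c_5\neq 0$ and that the $O(r^{-2})$-coefficient of $\psi_1$ is nonzero (needed for \eqref{refinedbeahviourpsione}) are the delicate non-degeneracy points. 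The nonvanishing of $c_3,c_5$ is essentially automatic because the equation near $0$ has the oscillatory indicial roots $-\frac12\pm i\om$ and no solution can decay faster than $r^{-1/2}$ there (both indicial exponents have real part $-\frac12$), so the oscillatory term cannot be absent; making this rigorous requires the asymptotic analysis of the ODE near a regular singular point with complex indicial roots, which I would do by the substitution $r = e^{-t}$, $t\to+\infty$, turning it into a constant-coefficient-plus-decaying-perturbation system and applying a Levinson-type asymptotic integration theorem. I expect the $+\infty$ analysis to be comparatively routine (irregular singular point with a clean $-w''+rw'$ balance) and the bookkeeping of uniform constants to be tedious but standard.
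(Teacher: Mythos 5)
Your overall strategy is sound and part 2 of your plan (direct verification of the variation-of-constants formula, Wronskian $W\sim r^{-2}e^{r^2/2}$, splitting the integrals at $r=1$) coincides with the paper's argument. For part 1, however, you take a genuinely different route: the paper substitutes $\psi(r)=r^{-\gamma}w(r^2/2)$ and reduces $\mathcal{L}_\infty\psi=0$ to Kummer's equation, so that $\psi_1,\psi_2$ are explicit combinations of the Kummer and Tricomi functions $M(a,b,z)$, $U(a,b,z)$; the asymptotics \eqref{behvorigin} and \eqref{behvoriginbis} at \emph{both} endpoints, including the connection coefficients and the non-vanishing of $c_3,c_5$, are then read off from the classical expansions of these special functions. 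Your proposal replaces this by a Frobenius analysis at $r=0$ (indicial roots $-\tfrac12\pm i\om$, which you compute correctly, and your remark that $c_3,c_5\neq0$ is automatic because both roots have real part $-\tfrac12$ is a legitimate substitute for the explicit connection formulas) together with a contraction-mapping construction at $r\to+\infty$ and reduction of order for $\psi_2$. This is more hands-on but viable; what the paper's route buys is that the behavior at $0$ and at $+\infty$ of one and the same solution comes for free, which you would otherwise have to track through your fixed-point norms.

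There is, however, a genuine flaw in your argument for \eqref{refinedbeahviourpsione}. You claim the non-vanishing of $c$ comes from ``the constant $\frac{2}{p-1}$ acting on $r^{-\frac{2}{p-1}}$''. But $\Lambda\big(r^{-\frac{2}{p-1}}\big)=\frac{2}{p-1}r^{-\frac{2}{p-1}}+r\partial_r\big(r^{-\frac{2}{p-1}}\big)=0$ exactly: the drift term of $\mathcal{L}_\infty$ annihilates the leading profile, so the source you name is cancelled. The true source, as in the paper, is $-\mathcal{L}_\infty\big(r^{-\frac{2}{p-1}}\big)=\Delta\big(r^{-\frac{2}{p-1}}\big)+pc_\infty^{p-1}r^{-\frac{2}{p-1}-2}=\frac{2(p-3)}{p-1}\,r^{-\frac{2}{p-1}-2}$, nonzero because $p>3$, and it is two powers of $r$ lower than what you wrote; had the source really been of order $r^{-\frac{2}{p-1}}$ it would in fact be inconsistent with the relative error $O(r^{-2})$ in \eqref{behvorigin}. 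Relatedly, you cannot obtain \eqref{refinedbeahviourpsione} by ``differentiating the expansion'' $\psi_1=r^{-\frac{2}{p-1}}(1+O(r^{-2}))$: an $O$-bound on the remainder gives no bound on $r\partial_r$ of the remainder, and since $\Lambda$ kills the leading term, the entire content of \eqref{refinedbeahviourpsione} lives in that remainder. The paper resolves both points at once by setting $\widetilde\psi_1=\psi_1-r^{-\frac{2}{p-1}}$, solving $\mathcal{L}_\infty\widetilde\psi_1=\frac{2(p-3)}{p-1}r^{-\frac{2}{p-1}-2}$ by variation of constants with the boundary condition forcing $a_1=a_2=0$, and then applying $\Lambda$ to that explicit integral representation; in your framework you would either reproduce this representation or build derivative control (with the extra $r^{-2}$ gain) into the fixed-point norm. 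As written, the key non-degeneracy $c\neq0$ is not established.
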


\begin{proof} The proof is classical and we sketch the details for the reader's convenience.\\

\noindent{\bf step 1} Basis of homogeneous solutions. Recall \eqref{eq:defofgamma}. Let the change of variable and unknown
$$\psi(r)=\frac{1}{y^{\frac{\ga}{2}}}\phi(y),\,\,\,\, y=r^2,$$
then
$$\pr_r = 2r\pr_y,\,\,\,\, \pr^2_r=4r\pr_y(r\pr_y)=4r^2\pr_y^2+4r\pr_y(r)\pr_y=4y\pr_y^2+2\pr_y,\,\,\,\, r\pr_r = 2y\pr_y.$$
This yields
\bee
\mathcal{L}_\infty(\psi) &=&  \left(-4y\pr_y^2-2\pr_y - 4\pr_y +\frac{2}{p-1}+2y\pr_y - \frac{pc_\infty^{p-1}}{y}\right)\left(\frac{1}{y^{\frac{\ga}{2}}}\phi(y)\right).
\eee
Since 
\bee
\pr_y\left(\frac{1}{y^{\frac{\ga}{2}}}\phi(y)\right) &=& \frac{1}{y^{\frac{\ga}{2}}}\phi'(y) - \frac{\ga}{2y^{\frac{\ga}{2}+1}}\phi(y),\\
\pr_y^2\left(\frac{1}{y^{\frac{\ga}{2}}}\phi(y)\right) &=& \frac{1}{y^{\frac{\ga}{2}}}\phi''(y)- \frac{\ga}{y^{\frac{\ga}{2}+1}}\phi'(y)+\frac{\ga}{2}\left(\frac{\ga}{2}+1\right)\frac{1}{y^{\frac{\ga}{2}+2}}\phi(y),
\eee
we infer
\bee
\mathcal{L}_\infty(\psi) &=&  \Bigg\{-4y\left(\frac{1}{y^{\frac{\ga}{2}}}\phi''(y)- \frac{\ga}{y^{\frac{\ga}{2}+1}}\phi'(y)+\frac{\ga}{2}\left(\frac{\ga}{2}+1\right)\frac{1}{y^{\frac{\ga}{2}+2}}\phi(y)\right)\\
&+& (-6+2y)\left( \frac{1}{y^{\frac{\ga}{2}}}\phi'(y) - \frac{\ga}{2y^{\frac{\ga}{2}+1}}\phi(y)\right) +\left(\frac{2}{p-1} - \frac{pc_\infty^{p-1}}{y}\right)\frac{1}{y^{\frac{\ga}{2}}}\phi(y)\Bigg\}\\
&=& \frac{1}{y^{\frac{\ga}{2}}}\Bigg\{-4y\phi''(y)+\Big(4\ga-6+2y)\Big)\phi'(y)\\
&+& \left(\frac{2}{p-1}-\ga+\left(3\ga-\ga\left(\ga+2\right)- pc_\infty^{p-1}\right)\frac{1}{y}\right)\phi(y)\Bigg\}.
\eee
Since $\ga$ satisfies 
$$\ga^2-\ga+pc_\infty^{p-1}=0,$$
we infer
\bee
\mathcal{L}_\infty(\psi) &=& -\frac{4}{y^{\frac{\ga}{2}}}\left\{y\phi''(y)+\left(-\ga+\frac{3}{2}-\frac{y}{2}\right)\phi'(y)+\frac{1}{4}\left(-\frac{2}{p-1}+\ga\right)\phi(y)\right\}.
\eee
We change again variable by setting
$$\phi(y)=w(z),\,\,\,\, z=\frac{y}{2}.$$
We have
$$\phi'(y)=\frac{1}{2}w'(z),\,\,\,\, \phi''(y)=\frac{1}{4}w''(z)$$
and obtain
\bee
\mathcal{L}_\infty(\psi) &=& -\frac{2}{y^{\frac{\ga}{2}}}\left(zw''(z)+\left(-\ga+\frac{3}{2}-z\right)w'(z)-\left(\frac{1}{p-1}-\frac{\ga}{2}\right)w(z)\right).
\eee
Thus, $\mathcal{L}_\infty(\psi)=0$ if and only if
\bea\label{eq:kummersode}
z\frac{d^2w}{dz^2}+(b-z)\frac{dw}{dz}-aw=0
\eea
where we have used the notations
\bea\label{eq:actualvaluesofaandb}
a=\frac{1}{p-1}-\frac{\ga}{2},\,\,\,\, b=-\ga+\frac{3}{2}.
\eea
\eqref{eq:kummersode} is known as Kummer's equation. As long as $a$ is not a negative integer - which holds in particular for our choice of $a$  in \eqref{eq:actualvaluesofaandb} -, a basis of solutions to Kummer's equation consists of the Kummer's function $M(a,b,z)$ and the Tricomi function $U(a,b,z)$. These special functions have the following asymptotic behavior for $z\geq 0$ (see for example \cite{Handbookmathfunctions})
\begin{equation}\label{eq:kummersfunctionasymptotic1}
M(a,b,z) = \frac{\Gamma(b)}{\Gamma(a)}z^{a-b}e^z(1+O(z^{-1})),\,\, U(a,b,z) = z^{-a}(1+O(z^{-1}))\textrm{ as }z\to +\infty,
\end{equation}
\bea\label{eq:kummersfunctionasymptotic2}
M(a,b,z)=1+O(z)\textrm{ as }z\to 0,
\eea
and\footnote{Note that our choice of $b$  in \eqref{eq:actualvaluesofaandb} is such that $\Re(b)=1$ and $b\neq 1$.} for $1\leq\Re(b)<2$ with $b\neq 1$, 
\bea\label{eq:kummersfunctionasymptotic3}
U(a,b,z)=\frac{\Gamma(b-1)}{\Gamma(a)}z^{1-b}+\frac{\Gamma(1-b)}{\Gamma(a-b+1)}+O(z^{2-\Re(b)})\textrm{ as }z\to 0.
\eea
Since $w$ is a linear combination of $M(a,b,z)$ and $U(a,b,z)$, we immediately infer from \eqref{eq:kummersfunctionasymptotic1}, \eqref{eq:kummersfunctionasymptotic2} and \eqref{eq:kummersfunctionasymptotic3} the asymptotic of $w$ both as $z\to +\infty$ and $z\to 0_+$. Finally, since
$$\psi(r)=\frac{1}{r^\ga}w\left(\frac{r^2}{2}\right),$$
we infer from the asymptotic of $w$ the claimed asymptotic for $\psi$ both as $r\to +\infty$ and $r\to 0_+$. This concludes the proof of \eqref{behvorigin}, \eqref{behvoriginbis}.\\

\noindent{\bf step 2} Estimate on the resolvent. The Wronskian \bee
W &:=& \psi_1'\psi_2-\psi_2'\psi_1.
\eee
satisfies
$$
W' = \left(-\frac{2}{r}+r\right)W, \ \ W=\frac{C}{r^{2}}e^{\frac{r^2}{2}}$$
where we may without loss of generality assume $C=1$. We then solve
$$\mathcal{L}_\infty(w) = f$$
using the variation of constants which yields
\be
\label{variationconstats}
w = \left(a_1+\int_r^{+\infty} f\psi_2{r'}^{2}e^{-\frac{{r'}^2}{2}}dr'\right)\psi_1+ \left(a_2-\int_r^{+\infty} f\psi_1{r'}^{2}e^{-\frac{{r'}^2}{2}}dr'\right)\psi_2.
\ee
In particular, $\TT(f)$ corresponds to the choice $a_1=a_2=0$ and thus satisfies
$$\mathcal{L}_\infty(\TT(f))=f.$$

Next, we estimate $\TT(f)$ using the asymptotic behavior \fref{behvorigin} and \fref{behvoriginbis} of $\psi_1$ and $\psi_2$ as $r\to 0_+$ and $r\to +\infty$. For $r\geq 1$, we have
\bee
&&r^{\frac{2}{p-1}+2}|\TT(f)|\\
&=&r^{\frac{2}{p-1}+2}\left|\left(\int_r^{+\infty} f\psi_2{r'}^{2}e^{-\frac{{r'}^2}{2}}dr'\right)\psi_1- \left(\int_r^{+\infty} f\psi_1{r'}^{2}e^{-\frac{{r'}^2}{2}}dr'\right)\psi_2\right|\\
&\lesssim & r^2\left(\int_r^{+\infty} |f|{r'}^{\frac{2}{p-1}-1} dr'\right) +r^{\frac{4}{p-1}-1}e^{\frac{r^2}{2}} \left(\int_r^{+\infty} |f|\frac{1}{{r'}^{\frac{2}{p-1}}}{r'}^{2}e^{-\frac{{r'}^2}{2}}dr'\right)\\
&\lesssim& \left\{\sup_{r>1}\left(r^2\left(\int_r^{+\infty}\frac{dr'}{{r'}^3}\right) +r^{\frac{4}{p-1}-1}e^{\frac{r^2}{2}} \left(\int_r^{+\infty} {r'}^{-\frac{4}{p-1}}e^{-\frac{{r'}^2}{2}}dr'\right)\right)\right\}\sup_{r\geq 1}r^{\frac{2}{p-1}+2}|f|\\
&\lesssim&\sup_{r\geq 1}r^{\frac{2}{p-1}+2}|f|.
\eee

Also, for $r_0\leq r\leq 1$, we have
\bee
&&r^{\frac{1}{2}}\left|\left(\int_r^{+\infty} f\psi_2{r'}^{2}e^{-\frac{{r'}^2}{2}}dr'\right)\psi_1- \left(\int_r^{+\infty} f\psi_1{r'}^{2}e^{-\frac{{r'}^2}{2}}dr'\right)\psi_2\right|\\
&\lesssim& \int_r^1 |f|{r'}^{\frac{3}{2}}dr' + \int_1^{+\infty}{r'}^{\frac{2}{p-1}-1}|f|dr'\lesssim \int_{r_0}^1 |f|{r'}^{\frac{3}{2}}dr' + \sup_{r\geq 1}r^{\frac{2}{p-1}+2}|f|
\eee
and \eqref{estresolvent} is proved.\\

\noindent{\bf step 3} Refined control of $\psi_1$. We now turn to the proof of \eqref{refinedbeahviourpsione}. We decompose
\be
\label{defpsigonteilde}
\psi_1= \frac{1}{r^{\frac{2}{p-1}}}+\widetilde{\psi}_1.
\ee
Since $\mathcal{L}_\infty(\psi_1)=0$, we infer
$$\mathcal{L}_\infty(\widetilde{\psi}_1) = f$$
where $f$ is given by
\bee
f &=& -\mathcal{L}_\infty\left(\frac{1}{r^{\frac{2}{p-1}}}\right)= \pr_r^2\left(\frac{1}{r^{\frac{2}{p-1}}}\right) +\frac{2}{r}\pr_r\left(\frac{1}{r^{\frac{2}{p-1}}}\right) + \frac{pc_\infty^{p-1}}{r^2}\frac{1}{r^{\frac{2}{p-1}}}\\
 &=& \frac{2(p-3)}{p-1)}\frac{1}{r^{\frac{2}{p-1}+2}}.
\eee
In view of \eqref{variationconstats}, we infer
\bee
\widetilde{\psi}_1 &=& \left(a_1+\frac{2(p-3)}{p-1)}\int_r^{+\infty} \psi_2\frac{e^{-\frac{{r'}^2}{2}}}{{r'}^{\frac{2}{p-1}}}dr'\right)\psi_1+ \left(a_2-\frac{2(p-3)}{p-1}\int_r^{+\infty} \psi_1\frac{e^{-\frac{{r'}^2}{2}}}{{r'}^{\frac{2}{p-1}}}dr'\right)\psi_2.
\eee
On the other hand, we deduce from the asymptotic behavior of $\psi_1$
$$\widetilde{\psi}_1 = o\left(\frac{1}{r^{\frac{2}{p-1}}}\right)\textrm{ as }r\to +\infty.$$
In view of the  asymptotic behavior of $\psi_1$ and $\psi_2$ as $r\to +\infty$, this forces $a_1=a_2=0$ and hence
\bee
\widetilde{\psi}_1 &=& \frac{2(p-3)}{p-1}\left(\int_r^{+\infty} \psi_2\frac{e^{-\frac{{r'}^2}{2}}}{{r'}^{\frac{2}{p-1}}}dr'\right)\psi_1 - \frac{2(p-3)}{p-1}\left(\int_r^{+\infty} \psi_1\frac{e^{-\frac{{r'}^2}{2}}}{{r'}^{\frac{2}{p-1}}}dr'\right)\psi_2.
\eee
Then, applying $\Lambda$ to both sides, and using the asymptotic behavior of $\psi_1$ and $\psi_2$ as $r\to +\infty$ yields
$$\Lambda\widetilde{\psi}_1 =\frac{c}{r^{\frac{2}{p-1}+2}}\left(1+O\left(\frac{1}{r^2}\right)\right)\textrm{ as } r\to+\infty$$
for some constant\footnote{Actually, $c$ is explicitly given by
$$c=-\frac{p-3}{p-1}\neq 0.$$} $c\neq 0$. Injecting this into \eqref{defpsigonteilde}
 yields
$$
\Lambda\psi_1 = \Lambda\widetilde{\psi}_1 = \frac{c}{r^{\frac{2}{p-1}+2}}\left(1+O\left(\frac{1}{r^2}\right)\right)\textrm{ as } r\to+\infty$$
for some constant $c\neq 0$ and concludes the proof of Lemma \ref{lemma:constructionofpsi1andpsi2}.
\end{proof}

We are now in position to construct the family of outer self similar solutions as a classical consequence of the implicit function theorem.

\begin{proposition}[Exterior solutions]
\label{prop:exteriorsolution}
Let $0<r_0<1$ a small enough universal constant. For all 
\be
\label{estrovvove}
0<\ep \ll r_0^{s_c-1},
\ee
there exists a solution $u$ to 
\be
\label{sefsimiloutside}
\Delta u-\Lambda u+u^p=0\ \ \mbox{on}\ \ (r_0,+\infty)
\ee
of the form
$$u=\Phi_*+\ep\psi_1+\ep w$$
with the bounds:
\be
\label{wihfeihwwomeha}
\|w\|_{X_{r_0}}\lesssim \ep r_0^{1-s_c},\,\,\,\, \|\Lambda w\|_{X_{r_0}}\lesssim \ep r_0^{1-s_c}.
\ee
Furthermore,  $$w_{|_{\ep=0}}=0 \ \mbox{and}\ \ \|\pr_\ep w_{|_{\ep=0}}\|_{X_{r_0}}\lesssim r_0^{1-s_c}.$$
\end{proposition}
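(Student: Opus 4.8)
The plan is to turn \eqref{sefsimiloutside} into a fixed point equation for $w$ and solve it by a contraction mapping argument, i.e. a quantitative form of the implicit function theorem. Writing $u=\Phi_*+v$ and using that $\Phi_*$ solves \eqref{eq:selfsimilareq} with $\Lambda\Phi_*=0$ (as $\Phi_*$ is homogeneous of degree $-\frac{2}{p-1}$), together with $p\Phi_*^{p-1}=\frac{pc_\infty^{p-1}}{r^2}$ so that $\mathcal{L}_\infty=-\Delta+\Lambda-p\Phi_*^{p-1}$, equation \eqref{sefsimiloutside} is equivalent to $\mathcal{L}_\infty(v)=N(v)$ where
\[
N(v)=(\Phi_*+v)^p-\Phi_*^p-p\Phi_*^{p-1}v
\]
(we interpret $s\mapsto s^p$ as $|s|^{p-1}s$ and will work where $\Phi_*+v>0$). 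Substituting $v=\ep\psi_1+\ep w$ and using $\mathcal{L}_\infty(\psi_1)=0$ from Lemma \ref{lemma:constructionofpsi1andpsi2}, this reads $\ep\mathcal{L}_\infty(w)=N(\ep(\psi_1+w))$; in view of the resolvent identity $\mathcal{L}_\infty\TT(f)=f$ and the estimate \eqref{estresolvent}, it then suffices to find $w\in X_{r_0}$ with
\[
w=\Psi_\ep(w):=\TT\!\left(\tfrac1\ep N(\ep(\psi_1+w))\right),
\]
and $u=\Phi_*+\ep\psi_1+\ep w$ will automatically solve \eqref{sefsimiloutside}.

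First I would record pointwise bounds on the nonlinearity. Since $p>5>2$, as long as $|v|\le\tfrac12\Phi_*$ one has $|N(v)|\lesssim\Phi_*^{p-2}v^2$ and $|N(v_1)-N(v_2)|\lesssim\Phi_*^{p-2}(|v_1|+|v_2|)|v_1-v_2|$, by writing $N(v)=\left(\int_0^1(1-s)p(p-1)(\Phi_*+sv)^{p-2}ds\right)v^2$ and noting that $(\Phi_*+sv)^{p-2}$ is comparable to $\Phi_*^{p-2}$ on that range. The asymptotics \eqref{behvorigin}, \eqref{behvoriginbis} give $|\psi_1|/\Phi_*\lesssim r^{-(s_c-1)}$ on $(r_0,1]$ and $|\psi_1|/\Phi_*\lesssim 1$ on $[1,+\infty)$ (recall $s_c-1=\tfrac12-\tfrac2{p-1}>0$), hence $|\psi_1|\lesssim r_0^{-(s_c-1)}\Phi_*$ on $(r_0,+\infty)$ since $r_0<1$; likewise $|w|\lesssim \|w\|_{X_{r_0}}r_0^{-(s_c-1)}\Phi_*$. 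So if $\|w\|_{X_{r_0}}\le C\ep r_0^{1-s_c}\le 1$ then $|\ep(\psi_1+w)|\lesssim\ep r_0^{-(s_c-1)}\Phi_*\ll\Phi_*$ on $(r_0,+\infty)$ precisely when $\ep\ll r_0^{s_c-1}$, which is \eqref{estrovvove}. This is the crux of the matter: the ratio $\psi_1/\Phi_*$ is largest, of size $\sim r_0^{-(s_c-1)}\gg1$, at the inner endpoint $r=r_0$, and \eqref{estrovvove} is exactly what keeps the perturbation subordinate to $\Phi_*$ there.

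Then I would run the contraction on $\mathcal{B}=\{\|w\|_{X_{r_0}}\le C\ep r_0^{1-s_c}\}\subset X_{r_0}$. Inserting $|\tfrac1\ep N(\ep(\psi_1+w))|\lesssim\ep\Phi_*^{p-2}(\psi_1+w)^2$ into \eqref{estresolvent}: on $[r_0,1]$, using $|\psi_1+w|\lesssim r^{-1/2}$ and $\frac{2(p-2)}{p-1}=2-\frac2{p-1}$, the contribution is $\lesssim\ep\int_{r_0}^1 r'^{-\frac{2(p-2)}{p-1}-1}r'^{3/2}dr'=\ep\int_{r_0}^1 r'^{-s_c}dr'\lesssim\ep r_0^{1-s_c}$, while on $[1,+\infty)$ it is $\lesssim\ep\le\ep r_0^{1-s_c}$; hence $\Psi_\ep(\mathcal{B})\subset\mathcal{B}$ for $C$ large. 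The same computation with the Lipschitz bound yields $\|\Psi_\ep(w_1)-\Psi_\ep(w_2)\|_{X_{r_0}}\lesssim\ep r_0^{1-s_c}\|w_1-w_2\|_{X_{r_0}}$, a contraction since $\ep r_0^{1-s_c}=\ep r_0^{-(s_c-1)}\ll1$. The unique fixed point $w=w_\ep$ obeys $\|w_\ep\|_{X_{r_0}}\lesssim\ep r_0^{1-s_c}$, is positive (by the previous paragraph $u>\tfrac12\Phi_*>0$), and $u=\Phi_*+\ep\psi_1+\ep w_\ep$ is a classical solution of \eqref{sefsimiloutside}. For $\Lambda w_\ep$: differentiating the explicit formula for $\TT$, the terms from differentiating the integral bounds cancel (they produce $a'\psi_1-b'\psi_2=0$ for the two integrals $a,b$), so $\Lambda\TT(f)=\left(\int_r^{+\infty}f\psi_2 r'^2e^{-r'^2/2}dr'\right)\Lambda\psi_1-\left(\int_r^{+\infty}f\psi_1 r'^2e^{-r'^2/2}dr'\right)\Lambda\psi_2$; by \eqref{refinedbeahviourpsione}, $\Lambda\psi_1=O(r^{-\frac2{p-1}-2})$ at infinity (better than $\psi_1$), $\Lambda\psi_2=O(r^{\frac2{p-1}-1}e^{r^2/2})$ at infinity (a factor $r^2$ worse than $\psi_2$, but absorbed by the Gaussian inside the integral), and both are $O(r^{-1/2})$ at $0$; repeating Step 2 of the proof of Lemma \ref{lemma:constructionofpsi1andpsi2} verbatim then gives the same bound \eqref{estresolvent} for $\Lambda\TT$, which applied to $f=\tfrac1\ep N(\ep(\psi_1+w_\ep))$ and combined with the estimates above gives $\|\Lambda w_\ep\|_{X_{r_0}}\lesssim\ep r_0^{1-s_c}$. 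Finally, the representation $\tfrac1\ep N(\ep(\psi_1+w))=\ep\left(\int_0^1(1-s)p(p-1)(\Phi_*+s\ep(\psi_1+w))^{p-2}ds\right)(\psi_1+w)^2$ extends smoothly to $\ep=0$ with value $0$, so $w_\ep$ depends $C^1$ on $\ep$ near $0$ with $w_{|\ep=0}=\TT(0)=0$; differentiating the fixed point relation at $\ep=0$ (where $\partial_w\Psi_\ep=0$) gives $\partial_\ep w_{|\ep=0}=\TT\!\left(\tfrac{p(p-1)}{2}\Phi_*^{p-2}\psi_1^2\right)$, whose $X_{r_0}$ norm is $\lesssim r_0^{1-s_c}$ by the same computation applied to $\Phi_*^{p-2}\psi_1^2$.

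The main obstacle is the bookkeeping of the $r_0$-dependence: one must track all powers of $r_0$ so that the smallness requirement comes out as exactly \eqref{estrovvove} and the bounds come out of size $\ep r_0^{1-s_c}$, the key mechanism being that the weight $r'^{3/2}$ in \eqref{estresolvent} renders $\int_{r_0}^1 r'^{-s_c}dr'\sim r_0^{1-s_c}$ (since $s_c>1$), dominated by the inner endpoint where $\psi_1/\Phi_*$ is of size $r_0^{-(s_c-1)}$. The only point beyond the lemma is the control of $\Lambda w_\ep$, where the extra $r^2$ growth of $\Lambda\psi_2$ at infinity is harmless thanks to the Gaussian weight $e^{-r'^2/2}$ carried by the resolvent.
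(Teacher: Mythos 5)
Your proposal is correct and follows essentially the same route as the paper: the same decomposition $u=\Phi_*+\ep\psi_1+\ep w$, the same fixed point $w=\ep\,p(p-1)\,\TT\bigl(G[\Phi_*,\psi_1,\ep]w\bigr)$ solved by contraction in $X_{r_0}$ using \eqref{estresolvent}, the same $r_0^{1-s_c}$ bookkeeping driven by $\int_{r_0}^1 r'^{-s_c}dr'$, and the same formulas at $\ep=0$. Your explicit verification that $\Lambda\TT$ obeys the bound \eqref{estresolvent} (boundary terms cancelling, the extra $r^2$ in $\Lambda\psi_2$ absorbed by the Gaussian weight) just fills in the step the paper dispatches with ``differentiating the fixed point equation in space.''
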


\begin{proof} This a classical consequence of Lemma \ref{lemma:constructionofpsi1andpsi2}.\\

\noindent{\bf step 1}. Setting up the Banach fixed point. Let $v$ such that
$$u=\Phi_*+\ep v,$$
then u solves \eqref{sefsimiloutside} iff:
$$\mathcal{L}_\infty(v)=\ep\frac{p(p-1)}{2}\Phi_*^{p-2}v^2+\ep F(\Phi_*,v,\ep)\textrm{ on }r>r_0,$$
where
$$F(\Phi_*,v,\ep) = \frac{1}{\ep^2}\left((\Phi_*+\ep v)^p-\Phi_*^p-p\Phi_*^{p-1}\ep v -\frac{p(p-1)}{2}\Phi_*^{p-2}\ep^2v^2\right).$$
Furthermore, we decompose
$$v=\psi_1+w$$
and hence, using in particular the fact that $\mathcal{L}_\infty(\psi_1)=0$, $w$ is a solution to
$$\mathcal{L}_\infty(w)= p(p-1) \ep G[\Phi_*,\psi_1, \ep]w\textrm{ on }r>r_0$$
where we defined the map: 
$$
G[\Phi_*,\psi_1, \ep]w =\left(\int_0^1(1-s)(\Phi_*+s\ep(\psi_1+w))^{p-2}ds\right)(\psi_1+w)^2.
$$ 
We claim the non linear bounds: assume that 
$$\|w\|_{X_{r_0}}\leq 1,$$
then 
\be
\label{firsbound}
 \int_{r_0}^1|G[\Phi_*,\psi_1, \ep]w|{r'}^{\frac{3}{2}}dr' + \sup_{r\geq 1}r^{\frac{2}{p-1}+2}|G[\Phi_*,\psi_1, \ep]w|  \lesssim  r_0^{1-s_c}
\ee
and
\bea
\label{seconbound}
\nonumber && \int_{r_0}^1|G[\Phi_*,\psi_1, \ep]w_1- G[\Phi_*,\psi_1, \ep]w_2|{r'}^{\frac{3}{2}}dr' \\
\nonumber && + \sup_{r\geq 1}r^{\frac{2}{p-1}+2} |G[\Phi_*,\psi_1, \ep]w_1- G[\Phi_*,\psi_1, \ep]w_2| \\
& \lesssim &  r_0^{1-s_c}\|w_1 - w_2\|_{X_{r_0}}. 
\eea
Assume \eqref{firsbound}, \eqref{seconbound}, then we look for $w$ as the solution of the following fixed point 
\bea\label{eq:fixedpointexteriorproblem}
w= \ep p(p-1)  \TT\Big(G[\Phi_*,\psi_1, \ep]w\Big),\,\,\,\,  w\in X_{r_0}.
\eea
In view of the assumption $\ep r_0^{1-s_c}\ll 1$, the continuity estimate on the resolvent \eqref{estresolvent} and the nonlinear estimates \eqref{firsbound}, \eqref{seconbound}, the Banach fixed point theorem applies and yields a unique solution $w$ to \eqref{eq:fixedpointexteriorproblem} with
\bee
\|w\|_{X_{r_0}}\lesssim \ep r_0^{1-s_c}.
\eee
Differentiating \eqref{eq:fixedpointexteriorproblem} in space, we immediately infer
\bee
\|\Lambda w\|_{X_{r_0}}\lesssim \ep r_0^{1-s_c}.
\eee
Finally, we compute $w_{|_{\ep=0}}$ and $\pr_\ep w_{|_{\ep=0}}$. In view of \eqref{eq:fixedpointexteriorproblem}, we have
$$w_{|_{\ep=0}}=0.$$
Also, we have
\bee
\pr_\ep w=p(p-1) \TT\Big(G[\Phi_*,\psi_1, \ep]w\Big)+\ep p(p-1)  \TT\Big(\pr_\ep G[\Phi_*,\psi_1, \ep]w\Big)
\eee
and hence
\bee
\pr_\ep w_{|_{\ep=0}}= p(p-1) \TT\Big(G[\Phi_*,\psi_1, \ep]w\Big)_{|_{\ep=0}}.
\eee
We have
\bee
G[\Phi_*,\psi_1, \ep]w_{|_{\ep=0}} &=& \left(\int_0^1(1-s)\Phi_*^{p-2}ds\right)\psi_1^2= \frac 1 2 \Phi_*^{p-2}\psi_1^2
\eee
which yields
\bee
\pr_\ep w_{|_{\ep=0}}= \frac{p(p-1)}{2} \TT\Big(\Phi_*^{p-2}\psi_1^2\Big).
\eee
The continuity estimate \eqref{estresolvent} and the asymptotic behavior of $\psi_1$ \fref{behvorigin} \fref{behvoriginbis} yield
$$\|\pr_\ep w_{|_{\ep=0}}\|_{X_{r_0}}\lesssim r_0^{1-s_c}.$$

\noindent{\bf step 2} Proof of the nonlinear estimates \eqref{firsbound}, \eqref{seconbound}. Note first that in view of Lemma \ref{lemma:constructionofpsi1andpsi2} and the definition of $\|\cdot\|_{X_{r_0}}$, we have for $r_0\leq r\leq 1$, 
\bee
|w(r)|+|\psi_1(r)|\lesssim r^{-\frac{1}{2}} = r^{1-\frac{2}{p-1} -s_c}\lesssim r^{1-s_c}|\Phi_*(r)|\leq r_0^{1 -s_c}|\Phi_*(r)|
\eee
while for $r\geq 1$, we have
\bee
|w(r)|+|\psi_1(r)|\lesssim |\Phi_*(r)|,
\eee
and hence, our choice of $\ep$ yields for all $r\geq r_0$
\bee
\ep|\psi_1(r)|+\ep|w(r)|\lesssim |\Phi_*(r)|.
\eee

Next, we estimate $G[\Phi_*,\psi_1, \ep]w$. For $r_0\leq r\leq 1$, we have
\bee
&&|G[\Phi_*,\psi_1, \ep]w| \leq (|\Phi_*(r)|+\ep(|\psi_1(r)|+|w(r)|))^{p-2} (|\psi_1(r)|+|w(r)|)^2\\
&\lesssim& |\Phi_*(r)|^{p-2} (|\psi_1(r)|+|w(r)|)^2\lesssim \left(\frac{1}{r^\frac{2}{p-1}}\right)^{p-2} \left(\frac{1}{r^{\frac{1}{2}}}\right)^2(1+\|w\|_{X_{r_0}})^2\lesssim r^{\frac{2}{p-1}-3}
\eee
and hence
$$
 \int_{r_0}^1|G[\Phi_*,\psi_1, \ep]w|{r'}^{\frac{3}{2}}dr'  \lesssim  \left(\int_{r_0}^1 {r'}^{-s_c}dr'\right)\lesssim  r_0^{1-s_c}.
$$
Also, for $r\geq 1$, we have
\bee
|G[\Phi_*,\psi_1, \ep]w| &\leq& (|\Phi_*(r)|+\ep(|\psi_1(r)|+|w(r)|))^{p-2} (|\psi_1(r)|+|w(r)|)^2\\
&\lesssim & \left(\frac{1}{r^\frac{2}{p-1}}\right)^p(1+\|w\|_{X_{r_0}})^2\lesssim  \frac{1}{r^{2+\frac{2}{p-1}}}
\eee
and hence
\bee
\sup_{r\geq 1}r^{\frac{2}{p-1}+2}|G[\Phi_*,\psi_1, \ep]w| &\lesssim & 1
\eee
and \eqref{firsbound} is proved. We now prove the contraction estimate:
\bee
&& G[\Phi_*,\psi_1, \ep]w_1- G[\Phi_*,\psi_1, \ep]w_2\\
 &=& \left(\int_0^1(1-s)(\Phi_*+s\ep(\psi_1+w_1))^{p-2}ds\right)(\psi_1+w_1)^2 \\
& -& \left(\int_0^1(1-s)(\Phi_*+s\ep(\psi_1+w_2))^{p-2}ds\right)(\psi_1+w_2)^2\\
&=&  \left(\int_0^1(1-s)(\Phi_*+s\ep(\psi_1+w_1))^{p-2}ds\right)\Big((\psi_1+w_1)^2 - (\psi_1+w_2)^2\Big)\\
& +&\Bigg( \int_0^1(1-s)(\Phi_*+s\ep(\psi_1+w_1))^{p-2}ds\\
& -& \int_0^1(1-s)(\Phi_*+s\ep(\psi_1+w_2))^{p-2}ds \Bigg)(\psi_1+w_2)^2\\
&=&  \left(\int_0^1(1-s)(\Phi_*+s\ep(\psi_1+w_1))^{p-2}ds\right)\Big(2\psi_1+ w_1+w_2\Big)(w_1 -w_2)\\
&+&(p-2)\left(\int_0^1s(1-s)\int_0^1(\Phi_*+s\ep(\psi_1+w_1)+\sigma s\ep(w_2-w_1))^{p-3}d\sigma ds\right)\\
&\times& (\psi_1+w_2)^2\ep (w_1-w_2)
\eee
and hence
\bee
&& |G[\Phi_*,\psi_1, \ep]w_1- G[\Phi_*,\psi_1, \ep]w_2|\\
&\lesssim& (|\Phi_*(r)|+\ep(|\psi_1(r)|+|w_1(r)|))^{p-2} \Big(|\psi_1(r)|+ |w_1(r)|+|w_2(r)|\Big)|w_1(r) -w_2(r)|\\
&& +(|\Phi_*(r)|+\ep(|\psi_1(r)|+|w_1(r)|))^{p-3}(|\psi_1(r)|+|w_2(r)|)^2\ep |w_1(r)-w_2(r)|\\
&\lesssim & \left\{ |\Phi_*(r)|^{p-2} (|\psi_1(r)|+ |w_1(r)|+|w_2(r)|)+\ep|\Phi_*(r)|^{p-3}(|\psi_1(r)|+|w_2(r)|)^2\right\} |w_1(r)-w_2(r)|.
\eee
For $r_0\leq r\leq 1$, we have
\bee
&&\Big| G[\Phi_*,\psi_1, \ep]w_1- G[\Phi_*,\psi_1, \ep]w_2 \Big|\\
&\lesssim& \left(\frac{1}{r^\frac{2}{p-1}}\right)^{p-2} \left(\frac{1}{r^{\frac{1}{2}}}\right)^2(1+\|w_1\|_{X_{r_0}}+\|w_2\|_{X_{r_0}})\|w_1 - w_2\|_{X_{r_0}}\\
&+& \ep\left(\frac{1}{r^\frac{2}{p-1}}\right)^{p-3} \left(\frac{1}{r^{\frac{1}{2}}}\right)^3(1+\|w_1\|_{X_{r_0}}+\|w_2\|_{X_{r_0}})^2\|w_1 - w_2\|_{X_{r_0}}\\
&\lesssim& \left(r^{\frac{2}{p-1}-3}+\ep r^{\frac{4}{p-1}-\frac{7}{2}}\right)\|w_1 - w_2\|_{X_{r_0}}
\eee
and hence
\bee
&& \int_{r_0}^1|G[\Phi_*,\psi_1, \ep]w_1- G[\Phi_*,\psi_1, \ep]w_2|{r'}^{\frac{3}{2}}dr' \\
&\lesssim& \left(\int_{r_0}^1 {r'}^{-s_c}dr'+\ep\int_{r_0}^1 {r'}^{1-2s_c}dr'\right)\|w_1 - w_2\|_{X_{r_0}}\\
 & \lesssim & r_0^{1-s_c}(1+\ep r_0^{1-s_c})\|w_1 - w_2\|_{X_{r_0}} \lesssim  r_0^{1-s_c}\|w_1 - w_2\|_{X_{r_0}}. 
 \eee
Similarly, for $r\geq 1$, 
\bee
&& |G[\Phi_*,\psi_1, \ep]w_1- G[\Phi_*,\psi_1, \ep]w_2|\\
&\lesssim& \left(\frac{1}{r^\frac{2}{p-1}}\right)^p(1+\|w_1\|_{X_{r_0}}+\|w_2\|_{X_{r_0}})^3\|w_1 - w_2\|_{X_{r_0}}\\
&\lesssim& \frac{1}{r^{2+\frac{2}{p-1}}}\|w_1 - w_2\|_{X_{r_0}}\\
\eee
and hence
\bee
\sup_{r\geq 1}r^{\frac{2}{p-1}+2} |G[\Phi_*,\psi_1, \ep]w_1- G[\Phi_*,\psi_1, \ep]w_2| &\lesssim & \|w_1 - w_2\|_{X_{r_0}}.
\eee
This concludes the proof of \eqref{firsbound}, \eqref{seconbound} and of Proposition \ref{prop:exteriorsolution}.
\end{proof}


\subsection{Constructing interior self-similar solutions}\label{sec:interiorsolution}


We now construct the family of inner solutions to \eqref{eq:selfsimilareq} in $[0, r_0]$ which after renormalization bifurcate from the {\em stationary} equation and the ground state solution $Q$.\\

We start with the continuity of the resolvent of the linearized operator $H$ close to Q in suitable weighted spaces. Given $r_1\gg1$, we define $Y_{r_1}$ as the space of functions on $(0,r_1)$ such that the following norm is finite
\bee
\|w\|_{Y_{r_1}}=\sup_{0\leq r\leq r_1}(1+r)^{-\frac{3}{2}}(|w|+r|\partial_r w|).
\eee

\begin{lemma}[Interior resolvent of $H$]
\label{lemma:homogeneoussolutionsofH}
\noindent{\em 1. Basis of fundamental solutions}: we have
$$H(\Lambda Q)=0, \ \ H\rho=0$$ with the following asymptotic behavior as $r\to+\infty$
$$
\Lambda Q(r)=\frac{c_7\sin\left(\om\log(r)+c_8\right)}{r^{\frac{1}{2}}}+O\left(\frac{1}{r^{s_c-\frac{1}{2}}}\right),\,\,\rho(r)=\frac{c_9\sin\left(\om\log(r)+c_{10}\right)}{r^{\frac{1}{2}}}+O\left(\frac{1}{r^{s_c-\frac{1}{2}}}\right),
$$
where $c_7, c_9\neq 0$, $c_8, c_{10}\in\mathbb{R}$.\\
\noindent{\em 2. Continuity of the resolvent}: let the inverse 
$$
\SS(f) = \left(\int_0^r f\rho {r'}^{2}dr'\right)\Lambda Q- \left(\int_0^r f\Lambda Q {r'}^{2}dr'\right)\rho
$$
then 
\be
\label{resneonoess}
\|\SS(f)\|_{  Y_{r_1}}\lesssim \sup_{0\leq r\leq r_1}(1+r)^{\frac{1}{2}}|f|.
\ee
\end{lemma}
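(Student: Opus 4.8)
The overall strategy mirrors Lemma \ref{lemma:constructionofpsi1andpsi2}: first establish the two explicit homogeneous solutions and their asymptotics, then use the variation-of-constants formula with a Wronskian computation, and finally estimate the resolvent kernel against the weight $(1+r)^{3/2}$.

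First I would verify the two homogeneous solutions. That $H(\Lambda Q)=0$ follows by differentiating the soliton equation $\Delta Q + Q^p = 0$ under the scaling generator: applying $\Lambda$ (which commutes with $\Delta + \Lambda\cdot$ appropriately) to $\Delta Q + Q^p = 0$ gives $\Delta(\Lambda Q) + pQ^{p-1}\Lambda Q = 0$, i.e. $H(\Lambda Q)=0$; here one uses $[\Delta,\Lambda]=2\Delta$ and $\Lambda(Q^p) = pQ^{p-1}\Lambda Q$ together with $\Lambda\left(\frac{2}{p-1}\right)$-type bookkeeping — actually the clean statement is that $\Lambda Q$ solves the linearized stationary equation because the family $Q_\lambda$ solves \eqref{solitonequation}. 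For the second solution I would use the radial ODE structure: $H_0 = -\partial_{rr} - \frac2r\partial_r - pQ^{p-1}$, and a second solution $\rho$ is obtained from $\Lambda Q$ by reduction of order. To get the asymptotics as $r\to+\infty$, I would feed in the refined expansion \eqref{cejnenieie} of $Q$, so that $pQ^{p-1} \sim \frac{pc_\infty^{p-1}}{r^2}(1+O(r^{-1/2}))$; the equation $H_0 w = 0$ then becomes, to leading order, an Euler-type equation $w'' + \frac2r w' + \frac{pc_\infty^{p-1}}{r^2}w = 0$ whose indicial roots are exactly $-\gamma = -\frac12 \mp i\omega$ from \eqref{eq:defofgamma}. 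This produces the oscillatory behavior $\frac{c\sin(\omega\log r + c')}{r^{1/2}}$ with the stated error $O(r^{-(s_c-1/2)})$ coming from the next-order term in the expansion of $pQ^{p-1}$ (recall $s_c - 1 = \frac12 - \frac2{p-1} > 0$ controls the size of the correction). The constants $c_7,c_9$ are nonzero because the two solutions are genuinely oscillatory — this uses that the indicial roots are complex, equivalently $p < p_{JL} = +\infty$.

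Next I would compute the Wronskian $W := (\Lambda Q)'\rho - \rho'(\Lambda Q)$. From the ODE it satisfies $W' = -\frac2r W$, hence $W = \frac{C}{r^2}$; normalizing $C=1$ identifies $\rho$ as the specific second solution making $\SS$ the right inverse, so that $H(\SS(f)) = f$ by the standard variation-of-constants identity — note the integrals in $\SS$ run from $0$ (not $+\infty$ as in $\TT$), matching the fact that $\Lambda Q$ and $\rho$ are both regular enough at the origin ($\Lambda Q(0) = \frac2{p-1}$, $\rho(0)=1$, both with vanishing derivative). One checks directly that $\SS(f)$ is well-defined near $r=0$ since $f\rho {r'}^2$ and $f\Lambda Q {r'}^2$ are integrable there.

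Finally, the resolvent estimate \eqref{resneonoess}. Split $0\leq r\leq 1$ and $1\leq r\leq r_1$. On $0\leq r\leq 1$, $\Lambda Q$ and $\rho$ are bounded with bounded derivatives, and $\int_0^r {r'}^2 dr' \lesssim r^3$, so $|\SS(f)| + r|\partial_r \SS(f)| \lesssim r^3 \sup |f| \lesssim \sup_{[0,1]}(1+r)^{1/2}|f|$, which is even better than needed. On $1\leq r \leq r_1$, I would use $|\Lambda Q(r)|, |\rho(r)| \lesssim r^{-1/2}$ (and $|\partial_r \Lambda Q|, |\partial_r \rho| \lesssim r^{-3/2}$, obtained by differentiating the asymptotic expansions), so that each kernel term is bounded by
$$
r^{-\frac12}\int_0^r |f(r')|{r'}^{\frac32}\,dr' \lesssim r^{-\frac12}\left(\sup_{[0,r]}(1+r')^{\frac12}|f|\right)\int_0^r {r'}\,dr' \lesssim r^{\frac32}\sup_{[0,r_1]}(1+r')^{\frac12}|f|,
$$
which divided by $(1+r)^{3/2}$ gives the bound; the derivative term is handled identically using the $r^{-3/2}$ decay of $(\Lambda Q)', \rho'$ together with the boundary contributions from differentiating the integrals, which cancel thanks to the Wronskian normalization. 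The main obstacle is the first step: extracting the precise oscillatory asymptotics of $\Lambda Q$ and $\rho$ with the sharp error exponent $s_c - \frac12$ requires a careful perturbative ODE analysis around the Euler equation (one cannot just cite a table of special functions as in the $\mathcal L_\infty$ case, since $Q$ is not explicit), though this is standard — see the references \cite{Di,Jo,YiLi} cited for the expansion of $Q$. Everything else is routine Wronskian bookkeeping and weighted integral estimates.
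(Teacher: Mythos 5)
Your treatment of Part 1 (differentiating the family $Q_\lambda$ to get $H(\Lambda Q)=0$, then comparing $H$ at infinity with the Euler-type operator with potential $\frac{pc_\infty^{p-1}}{r^2}$, whose indicial roots are $-\frac12\pm i\omega$, and running a perturbative/fixed-point argument with the error $O(r^{-(s_c-\frac12)})$ coming from $|pQ^{p-1}-\frac{pc_\infty^{p-1}}{r^2}|\lesssim r^{-(1+s_c)}$) is exactly the route the paper takes, and the Wronskian/variation-of-constants structure for $\SS$ is also the same.

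There is, however, a genuine error in your handling of the second solution at the origin. You claim that $\rho$ is regular at $r=0$ with $\rho(0)=1$ and vanishing derivative, and you use this to say that on $0\leq r\leq 1$ both kernels are bounded with bounded derivatives, giving $|\SS(f)|+r|\partial_r\SS(f)|\lesssim r^3\sup|f|$. This is impossible and is inconsistent with your own Wronskian computation: since $W=(\Lambda Q)'\rho-\rho'\Lambda Q=\frac{C}{r^2}$ with $C\neq 0$, the two solutions cannot both be bounded with bounded derivatives near $r=0$ (the Wronskian would then be bounded). Equivalently, $r=0$ is a regular singular point with indicial roots $0$ and $-1$; the root $0$ is taken by $\Lambda Q$ (with $\Lambda Q(0)=\frac{2}{p-1}\neq 0$), so any independent solution must behave like $\rho\sim \frac{c}{r}$, $\partial_r\rho\sim -\frac{c}{r^2}$ as $r\to 0$. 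This is precisely what the paper establishes by reduction of order, writing $\rho=-\Lambda Q\int_r^{R_0}\frac{dr'}{(\Lambda Q)^2{r'}^{2}}$ on a small interval where $\Lambda Q>0$, which yields $|\rho|\lesssim \frac1r$ and $|\partial_r\rho|\lesssim\frac{1}{r^2}$. The conclusion of the small-$r$ estimate survives, but only after redoing it with these singular bounds: the measure ${r'}^2dr'$ absorbs the singularity inside the integrals ($|f\rho\, {r'}^2|\lesssim |f|r'$), and the external factor $\rho(r)$ contributes $\frac1r\int_0^r{r'}^2|f|dr'\lesssim r^2\sup|f|$, while the derivative term uses $|\partial_r\rho|\lesssim r^{-2}$ in the same way; likewise the well-definedness of $\SS(f)$ at $r=0$ should be argued from $\rho\sim c/r$, not from a (false) regularity of $\rho$. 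So the fix is exactly the paper's step: derive the $r^{-1}$, $r^{-2}$ bounds for $\rho$, $\partial_r\rho$ from the Wronskian before estimating $\SS$ on $0\leq r\leq 1$.
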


\begin{proof} {\bf step 1} Fundamental solutions. Define
$$Q_\l(r)=\l^{\frac{2}{p-1}}Q(\l r),\,\,\,\, \l>0,$$
then
$$ \Delta Q_\l +Q_\l^p=0\textrm{ for all }\l>0$$
and differentiating w.r.t. $\l$ and evaluating at $\l=1$ yields 
$$H(\Lambda Q)=0.$$
Let $\rho$ be another solution to $H(\rho)=0$ which does not depend linearly on $\Lambda Q$, we aim at deriving the asymptotic of both $\Lambda Q$ and $\rho$ as $r\to +\infty$.\\
\noindent{\em Limiting problem} We first solve
\bea\label{eq:equationasymptotictoH}
-\pr^2_r\varphi - \frac{2}{r}\pr_r\varphi -\frac{pc_\infty^{p-1}}{r^2}\varphi=f.
\eea
The homogeneous problem admits the explicit basis of solutions
\bea\label{eq:explictiformulaforvarphi1andvarphi2}
\varphi_1=\frac{\sin(\om\log(r))}{r^{\frac{1}{2}}},\,\,\,\,\varphi_2=\frac{\cos(\om\log(r))}{r^{\frac{1}{2}}},
\eea
and the corresponding Wronskian is given by
$$W(r)=\varphi_1'(r)\varphi_2(r)-\varphi_2'(r)\varphi_1(r)=\frac{\om}{r^2}.$$
Using the variation of constants, the solutions to \eqref{eq:equationasymptotictoH} are given by
$$\varphi(r)=\left(a_{1,0}+\int_r^{+\infty}f\varphi_2\frac{{r'}^2}{\om}dr'\right)\varphi_1+\left(a_{2,0}-\int_r^{+\infty}f\varphi_1\frac{{r'}^2}{\om}dr'\right)\varphi_2.$$
\noindent{\em Inverting $H$.} We now claim that all solutions to $H(\phi)=0$ admit an expansion
\be
\label{probeminifity}
\phi(r) = a_{1,0}\varphi_1+a_{2,0}\varphi_2+O\left(\frac{1}{r^{s_c-\frac{1}{2}}}\right)\textrm{ as }r\to +\infty.
\ee
Indeed, we rewrite the equation
$$-\pr^2_r\phi - \frac{2}{r}\pr_r\phi -\frac{pc_\infty^{p-1}}{r^2}\phi =f, \ \ f= p\left(Q^{p-1}(r)-\frac{c_\infty^{p-1}}{r^2}\right)\phi(r),$$
and hence \bea\label{eq:easylinearfixedpoint}
\phi=a_{1,0}\varphi_1+a_{2,0}\varphi_2+\widetilde{\phi},\,\,\,\,\widetilde{\phi}=\mathcal{F}\left(\widetilde{\phi}\right)
\eea
where
\bee
\mathcal{F}\left(\widetilde{\phi}\right)(r) &=& -\left(\int_r^{+\infty}p\left(Q^{p-1}(r')-\frac{c_\infty^{p-1}}{{r'}^2}\right)\left(a_{1,0}\varphi_1+a_{2,0}\varphi_2+\widetilde{\phi}\right)(r')\varphi_2\frac{{r'}^2}{\om}dr'\right)\varphi_1\\
&&+\left(\int_r^{+\infty}p\left(Q^{p-1}(r')-\frac{c_\infty^{p-1}}{{r'}^2}\right)\left(a_{1,0}\varphi_1+a_{2,0}\varphi_2+\widetilde{\phi}\right)(r')\varphi_1\frac{{r'}^2}{\om}dr'\right)\varphi_2.
\eee
Recall that
\bee
Q(r)=\frac{c_{\infty}}{r^{\frac{2}{p-1}}}+O\left(\frac{1}{r^{\frac{1}{2}}}\right)\textrm{ as }r\to +\infty
\eee
so that
\bee
\left|p\left(Q^{p-1}(r)-\frac{c_\infty^{p-1}}{{r}^2}\right)\right|\lesssim \frac{1}{r^{1+s_c}}\textrm{ for }r\geq 1.
\eee
We infer for $r\geq 1$
\bee
\left|\mathcal{F}\left(\widetilde{\phi}\right)(r)\right| &\lesssim& \frac{1}{r^{\frac{1}{2}}}\left(\int_r^{+\infty}\left(\frac{1}{{r'}^{s_c}}+\frac{1}{{r'}^{s_c-\frac{1}{2}}}\left|\widetilde{\phi}\right|(r')\right)dr'\right)\\
&\lesssim& \frac{1}{r^{s_c-\frac{1}{2}}}+\frac{1}{r^{\frac{1}{2}}}\left(\int_r^{+\infty}\frac{1}{{r'}^{s_c-\frac{1}{2}}}\left|\widetilde{\phi}\right|(r') dr'\right)
 \eee
 and
\bee
\left|\mathcal{F}\left(\widetilde{\phi}_1\right)(r)-\mathcal{F}\left(\widetilde{\phi}_2\right)(r)\right| &\lesssim& \frac{1}{r^{\frac{1}{2}}}\left(\int_r^{+\infty}\frac{1}{{r'}^{s_c-\frac{1}{2}}}\left|\widetilde{\phi}_1-\widetilde{\phi}_2\right|(r') dr'\right).
 \eee 
 Thus, for $R\geq 1$ large enough, the Banach fixed point theorem applies in the space corresponding to the norm
 $$\sup_{r\geq R}r^{s_c-\frac{1}{2}}\left|\widetilde{\phi}\right|(r)$$
 and yields a unique solution $\widetilde{\phi}$ to \eqref{eq:easylinearfixedpoint} with 
 $$\sup_{r\geq R}r^{s_c-\frac{1}{2}}\left|\widetilde{\phi}\right|(r)\leq 1,$$ 
 and \eqref{probeminifity} is proved.\\
 In particular, in view of the explicit formula \eqref{eq:explictiformulaforvarphi1andvarphi2} for $\varphi_1$ and $\varphi_2$, and in view of the fact that $H(\Lambda Q)=0$ and $H(\rho)=0$, we infer as $r\to +\infty$
\be
\label{estqpioen}
\Lambda Q(r) = \frac{c_7\sin\left(\om\log(r)+c_8\right)}{r^{\frac{1}{2}}}+O\left(\frac{1}{r^{s_c-\frac{1}{2}}}\right),\,\, \rho = \frac{c_9\sin\left(\om\log(r)+c_{10}\right)}{r^{\frac{1}{2}}} +O\left(\frac{1}{r^{s_c-\frac{1}{2}}}\right)
\ee
where $c_7, c_9\neq 0$, $c_8, c_{10}\in\mathbb{R}$.\\

\noindent{\bf step 2} Continuity of the resolvent. We compute
$$
W := \Lambda Q'\rho-\rho'\Lambda Q, \ \ W' = -\frac{2}{r}W, \ \ W=\frac{-1}{r^{2}},$$
without loss of generality. Still without loss of generality for $R_0>0$ small enough such that $\Lambda Q>0$ on $[0,R_0]$ the integration of the Wronskian law yields
$$
\rho = - \Lambda Q \int_r^{R_0} \frac{1}{(\Lambda Q)^2r'^{2}} dr'
$$
on $(0,R_0]$ which ensures
\be
\label{estrhorgigine}
|\rho(r)|\lesssim \frac1 r , \ |\partial_r \rho(r)|\lesssim \frac{1}{r^2}  \ \mbox{as}\ \ r\to 0.
\ee We now solve
$$H(w) = f,$$
using the variation of constants which yields
\bee
w &=&  \left(a_1+\int_0^r f\rho {r'}^{2}dr'\right)\Lambda Q+ \left(a_2-\int_0^r f\Lambda Q {r'}^{2}dr'\right)\rho.
\eee
In particular, $\SS(f)$ corresponds to the choice $a_1=a_2=0$ and thus 
$$H(\SS(f))=f.$$ Finally, using the estimates \eqref{estqpioen}, \eqref{estrhorgigine}, we estimate for $0\leq r\leq 1$:
\bee
&&|\SS(f)|= \left|\left(\int_0^r f\rho {r'}^{2}dr'\right)\Lambda Q- \left(\int_0^r f\Lambda Q {r'}^{2}dr'\right)\rho\right|\\
&\lesssim& \left(\int_0^r r'dr'+\frac 1 r \int_0^r {r'}^{2}dr' \right)\sup_{0\leq r\leq 1}|f| \lesssim \sup_{0\leq r\leq r_1}(1+r)^{\frac{1}{2}}|f|,
\eee
\bee
&&|r\partial_r\SS(f)|= \left|\left(\int_0^r f\rho {r'}^{2}dr'\right)r\partial_r\Lambda Q- \left(\int_0^r f\Lambda Q {r'}^{2}dr'\right)r\partial_r\rho\right|\\
&\lesssim& \left(r^2\int_0^r r'dr'+\frac 1 r \int_0^r {r'}^{2}dr' \right)\sup_{0\leq r\leq 1}|f| \lesssim \sup_{0\leq r\leq r_1}(1+r)^{\frac{1}{2}}|f|,
\eee 
and for $1\leq r \leq r_1$:
\bee
&&(1+r)^{-\frac{3}{2}}|\SS(f)|= (1+r)^{-\frac{3}{2}}\left|\left(\int_0^r f\rho {r'}^{2}dr'\right)\Lambda Q- \left(\int_0^r f\Lambda Q {r'}^{2}dr'\right)\rho\right|\\
&\lesssim& (1+r)^{-2}\left(\int_0^r f (1+r')^{\frac{3}{2}} dr'\right)\lesssim (1+r)^{-2}\left(\int_0^r (1+r') dr'\right)\sup_{0\leq r\leq r_1}(1+r)^{\frac{1}{2}}|f|\\
&\lesssim& \sup_{0\leq r\leq r_1}(1+r)^{\frac{1}{2}}|f|
\eee
\bee
&&(1+r)^{-\frac{3}{2}}|r\partial_r \SS(f)|= (1+r)^{-\frac{3}{2}}\left|\left(\int_0^r f\rho {r'}^{2}dr'\right)r\partial_r \Lambda Q- \left(\int_0^r f\Lambda Q {r'}^{2}dr'\right)r\partial_r \rho\right|\\
&\lesssim& (1+r)^{-2}\left(\int_0^r f (1+r')^{\frac{3}{2}} dr'\right)\lesssim (1+r)^{-2}\left(\int_0^r (1+r') dr'\right)\sup_{0\leq r\leq r_1}(1+r)^{\frac{1}{2}}|f|\\
&\lesssim& \sup_{0\leq r\leq r_1}(1+r)^{\frac{1}{2}}|f|,
\eee 
which concludes the proof of \eqref{resneonoess} and Lemma \ref{lemma:homogeneoussolutionsofH}.
\end{proof}

We are now in position to build the family of interior solutions:

\begin{proposition}[Construction of the interior solution]\label{prop:interiorsolution}
Let $r_0>0$ small enough and let $0<\l\leq r_0$. Then, there exists a solution $u$ to 
$$\Delta u-\Lambda u +u^p=0\textrm{ on }0\leq r\leq r_0$$
of the form
$$u=\frac{1}{\l^{\frac{2}{p-1}}}(Q+\l^2 T_1)\left(\frac{r}{\l}\right)$$
with
\be
\label{estinterirotone}
\|T_1\|_{Y_{\frac{r_0}{\l}}}+\|\Lambda T_1\|_{Y_{\frac{r_0}{\l}}}+\|\Lambda^2 T_1\|_{Y_{\frac{r_0}{\l}}}\lesssim 1.
\ee
\end{proposition}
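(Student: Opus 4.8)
The plan is to reduce to the stationary scale and then invert $H$ by means of the resolvent $\SS$ of Lemma \ref{lemma:homogeneoussolutionsofH}. Looking for $u$ in the form $u(r)=\lambda^{-\frac{2}{p-1}}v(r/\lambda)$, the scaling structure of the nonlinearity gives that $u$ solves $\Delta u-\Lambda u+u^p=0$ on $(0,r_0)$ if and only if $v$ solves $\Delta v-\lambda^2\Lambda v+v^p=0$ on $(0,r_0/\lambda)$. Writing $v=Q+\lambda^2T_1$, using $\Delta Q+Q^p=0$, dividing by $\lambda^2$ and recalling $H=-\Delta-pQ^{p-1}$, this takes the fixed point form
\[
T_1=\SS\big(N(T_1)\big),\qquad N(T_1):=-\Lambda Q-\lambda^2\Lambda T_1+\frac{1}{\lambda^2}\big[(Q+\lambda^2T_1)^p-Q^p-pQ^{p-1}\lambda^2T_1\big],
\]
and the explicit integral formula for $\SS$ automatically produces a radial profile regular at the origin (it forces $T_1=O(r^2)$ as $r\to0$, so $v(0)=1$, $v'(0)=0$).

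\noindent\textbf{Step 2: contraction in $Y_{r_0/\lambda}$.} I would run the Banach fixed point in a ball $\{\|T_1\|_{Y_{r_0/\lambda}}\le K\}$ for a suitably large universal constant $K$. By Lemma \ref{lemma:homogeneoussolutionsofH}, $\|\SS(f)\|_{Y_{r_0/\lambda}}\lesssim\sup_{0\le r\le r_0/\lambda}(1+r)^{\frac12}|f|$. The main source is harmless and controlled uniformly in $\lambda$: the smoothness of $\Lambda Q$ at the origin together with the asymptotics $\Lambda Q\sim c_7 r^{-\frac12}\sin(\om\log r+c_8)$ give $\sup_{r\ge0}(1+r)^{\frac12}|\Lambda Q|\lesssim1$, hence $\SS(-\Lambda Q)=O(1)$ in $Y_{r_0/\lambda}$; note that although $\Lambda Q\in\ker H$, the formula for $\SS$ creates no growth beyond the $(1+r)^{\frac32}$ already encoded in the space. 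For the corrections one uses $\|T_1\|_Y\le K$: since $|T_1|+r|\partial_rT_1|\lesssim K(1+r)^{\frac32}$ and $Q(r)\gtrsim(1+r)^{-\frac{2}{p-1}}$, on $r\le r_0/\lambda$ one gets $\lambda^2|T_1|/Q\lesssim K\lambda^2(1+r)^{3-s_c}\lesssim Kr_0^{3-s_c}\lambda^{s_c-1}\ll1$, so $Q+\lambda^2T_1\sim Q$ and Taylor's formula bounds the nonlinear bracket by $\lesssim\lambda^2 Q^{p-2}T_1^2\lesssim K^2\lambda^2(1+r)^{\frac{p+1}{p-1}}$; applying $\SS$ costs an extra $(1+r)^{\frac12}$ and gives a contribution $\lesssim K^2\lambda^2(r_0/\lambda)^{\frac12+\frac{p+1}{p-1}}\lesssim K^2 r_0^{\frac12+\frac{p+1}{p-1}}\lambda^{\frac{p-5}{2(p-1)}}\ll1$, the positivity of the power of $\lambda$ being exactly equivalent to $p>5$. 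Similarly $\|\SS(\lambda^2\Lambda T_1)\|_Y\lesssim\lambda^2(r_0/\lambda)^2K=Kr_0^2\ll1$. The differences $N(T_1^{(1)})-N(T_1^{(2)})$ obey the same estimates with an extra factor $\|T_1^{(1)}-T_1^{(2)}\|_Y$, and splitting $r\le1$ from $r\ge1$ (as in Proposition \ref{prop:exteriorsolution}) keeps all constants universal. Choosing $r_0$ small and then $\lambda\le r_0$ small, the map sends the ball into itself and contracts, yielding a unique $T_1$ with $\|T_1\|_{Y_{r_0/\lambda}}\lesssim1$.

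\noindent\textbf{Step 3: higher order bounds and conclusion.} To obtain $\|\Lambda T_1\|_Y$ and $\|\Lambda^2T_1\|_Y$ I would either differentiate the identity $T_1=\SS(N(T_1))$ (as $\|\Lambda w\|_{X_{r_0}}$ was deduced in Proposition \ref{prop:exteriorsolution}), or read off $\partial_rT_1$, $\partial_r^2T_1$, $\partial_r^3T_1$ directly from the second order ODE $HT_1=N(T_1)$ by integrating against the Wronskian: once $T_1$ and its source are controlled in the weighted norm, the coefficient $pQ^{p-1}$ and the source are smooth and decaying, so no weight is lost, and $\Lambda T_1=\frac{2}{p-1}T_1+r\partial_rT_1$, $\Lambda^2T_1$ inherit the bound. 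Undoing the scaling then produces $u=\lambda^{-\frac{2}{p-1}}(Q+\lambda^2T_1)(\cdot/\lambda)$ solving the equation on $[0,r_0]$ and satisfying \eqref{estinterirotone}.

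\noindent\textbf{Main obstacle.} The whole difficulty sits in Step 2: the scheme closes only because every error term carries a strictly positive power of $\lambda$ (or of $r_0$), and for the nonlinear bracket this gain is precisely the condition $s_c>1$, i.e. $p>5$. Two accompanying subtleties are the comparison $\lambda^2T_1\ll Q$, needed to legitimize the Taylor expansion of $(Q+\lambda^2T_1)^p$ and where the smallness of the bifurcation scale $\lambda\le r_0\ll1$ enters, and the observation that inverting the kernel element $-\Lambda Q$ through the explicit formula for $\SS$ generates no secular growth beyond what the space $Y_{r_1}$ already tolerates.
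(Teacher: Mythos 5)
Your proposal is correct and follows essentially the same route as the paper: the same renormalization leading to $H T_1=-\Lambda Q-\l^2\Lambda T_1+(\text{quadratic remainder})$, the same inversion by the resolvent $\SS$ of Lemma \ref{lemma:homogeneoussolutionsofH}, and the same smallness mechanism closing the contraction in $Y_{\frac{r_0}{\l}}$ (your gain $\l^{s_c-1}$, i.e. $p>5$, is exactly the paper's bound $\l^2 r_1^2=r_0^2\ll 1$). The higher bounds on $\Lambda T_1,\Lambda^2 T_1$, which the paper leaves implicit, are obtained in the standard way you indicate, by differentiating the fixed point identity or reading derivatives off the ODE.
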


\begin{proof} This is again a classical consequence of Lemma \ref{lemma:homogeneoussolutionsofH}.\\

\noindent{\bf step 1} Setting up the Banach fixed point. 
We look for $u$ of the form
$$u=\frac{1}{\l^{\frac{2}{p-1}}}(Q+\l^2 T_1)\left(\frac{r}{\l}\right)$$
so that $u$ solves $\Delta u -\Lambda u+u^p=0$ on $[0,r_0]$ if and only if
$$H(T_1)=J[Q, \l^2]T_1 \ \text{on} \ 0\leq r \leq r_1$$
where
$$
r_1=\frac{r_0}{\lambda}\geq 1
$$
so that
$$
\lambda^2r_1^2=r_0^2\ll 1
$$
and with
\bee
J[Q, \l^2]T_1 &=& -\Lambda Q-\lambda^2\Lambda T_1 + p(p-1)\l^2\left(\int_0^1(1-s)(Q+s\l^2T_1)^{p-2}ds\right)T_1^2.
\eee
We claim the nonlinear estimates: assume $\|w\|_{Y_{r_1}}\lesssim 1$, then
\bea
\label{esenneoe}
&&\sup_{0\leq r\leq r_1}(1+r)^{\frac{1}{2}}|J[Q, \l^2]w| \lesssim 1,\\
\label{esenneoebis}
&&\sup_{0\leq r\leq r_1}(1+r)^{\frac{1}{2}}|J[Q, \l^2]w_1 - J[Q, \l^2]w_2| \lesssim r_1^2\l^2\|w_1-w_2\|_{Y_{r_1}}.
\eea
Assume \eqref{esenneoe}, \eqref{esenneoebis}, we then look for $T_1$ as the solution to the fixed point
\bea\label{eq:fixedpointinteriorproblem}
T_1=\SS(J[Q, \l^2]T_1). 
\eea
In view of the bound $\l^2r_1^2\ll 1$, the resolvent estimate \eqref{resneonoess} and the nonlinear estimates \eqref{esenneoe}, \eqref{esenneoebis}, the Banach fixed point theorem applies and yields a unique solution $T_1$ to \eqref{eq:fixedpointinteriorproblem} which furthermore satisfies:
\bee
\|T_1\|_{Y_{\frac{r_0}{\l}}}\lesssim 1.
\eee

\noindent{\bf step 2} Proof of \eqref{esenneoe}, \eqref{esenneoebis}. Note first that for $0\leq r\leq r_1$, we have 
\bee
|w(r)|\lesssim (1+r)^{\frac{3}{2}}=r_1^2(1+r)^{-\frac{1}{2}}\lesssim r_1^2|Q(r)|.
\eee
Thus, we infer for all $0\leq r\leq r_1$
\bee
\l^2|w(r)|\lesssim \l^2r_1^2|Q(r)|
\eee
and hence, our choice of $\l$ yields for all $0\leq r\leq r_1$
\bee
\l^2|w(r)|\lesssim |Q(r)|.
\eee

Next, we estimate $J[Q, \l^2]w$. For $0\leq r\leq r_1$, we have
\bee
&&|J[Q, \l^2]w|  \leq  |\Lambda Q| + p(p-1)\l^2(|Q|+\l^2|w|)^{p-2}|w|^2+\lambda^2\left|\frac{1}{2}w+r\partial_r w\right| \\
&\lesssim &  |\Lambda Q| + \l^2|Q|^{p-2}|w|^2+\lambda^2|\frac{1}{2}w+r\partial_r w|\\
&\lesssim& (1+r)^{-\frac{1}{2}}+\l^2 (1+r)^{-\frac{2(p-2)}{p-1}}(1+r)^3\|w\|^2_{Y_{r_1}} +\lambda^2 (1+r)^{\frac 3 2} \|w\|^2_{Y_{r_1}}  \\
&\lesssim & (1+r)^{-\frac{1}{2}}\left(1+\l^2 (1+r)^{\frac{2}{p-1}+\frac{3}{2}} +\lambda^2 (1+r)^2 \right)\\
&\lesssim& (1+r)^{-\frac{1}{2}}\left(1+\l^2 (1+r)^{-s_c+3}+\lambda^2r_1^2\right)\lesssim (1+r)^{-\frac{1}{2}}\left(1+\l^2r_1^2\right)\lesssim (1+r)^{-\frac{1}{2}}
\eee
and hence
\bee
\sup_{0\leq r\leq r_1}(1+r)^{\frac{1}{2}}|J[Q, \l^2]w| &\lesssim 1.
\eee

Next, we estimate $|J[Q, \l^2]w_1 - J[Q, \l^2]w_2|$. We have
\bee
&& J[Q, \l^2]w_1 - J[Q, \l^2]w_2 \\
&=& p(p-1)\l^2\left(\int_0^1(1-s)(Q+s\l^2w_1)^{p-2}ds\right)w^2_1 -  p(p-1)\l^2\left(\int_0^1(1-s)(Q+s\l^2w_2)^{p-2}ds\right)w^2_2\\
&& +\lambda^2 \left(\frac{1}{2}(w_1-w_2)+r(\partial_r w_1-\partial_r w_2) \right) w\\
&=&  p(p-1)\l^2\left(\int_0^1(1-s)(Q+s\l^2w_1)^{p-2}ds\right)(w^2_1-w_2^2)\\
&+& p(p-1)\l^2\left(\int_0^1(1-s)(Q+s\l^2w_1)^{p-2}ds - \int_0^1(1-s)(Q+s\l^2w_2)^{p-2}ds\right)w^2_2\\
&& +\lambda^2 \left(\frac{1}{2}(w_1-w_2)+r(\partial_r w_1-\partial_r w_2) \right) w\\
&=&  p(p-1)\l^2\left(\int_0^1(1-s)(Q+s\l^2w_1)^{p-2}ds\right)(w_1+w_2)(w_1-w_2)\\
&+& p(p-1)(p-2)\l^4\left(\int_0^1s(1-s)\int_0^1(Q+s\l^2w_1+\sigma s\l^2(w_2-w_1))^{p-3}d\sigma ds\right)w^2_2(w_1-w_2)\\
&& +\lambda^2 \left(\frac{1}{2}(w_1-w_2)+r(\partial_r w_1-\partial_r w_2) \right) w\\
\eee
and hence
\bee
&& |J[Q, \l^2]w_1 - J[Q, \l^2]w_2| \lesssim  \l^2(|Q(r)|+\l^2|w_1(r)|)^{p-2}(|w_1(r)|+|w_2(r)|)|w_1(r)-w_2(r)|\\
&& +\l^4(|Q(r)|+\l^2|w_1(r)|+\l^2|w_2(r)|)^{p-3}|w_2(r)|^2|w_1(r)-w_2(r)| \\
&&+\lambda^2 \left(\frac{1}{2}(w_1-w_2)+r(\partial_r w_1-\partial_r w_2) \right) w \\
&\lesssim&  \l^2|Q(r)|^{p-2}(|w_1(r)|+|w_2(r)|)|w_1(r)-w_2(r)| +\l^4|Q(r)|^{p-3}|w_2(r)|^2|w_1(r)-w_2(r)|\\
&& +\lambda^2 \left(\frac{1}{2}(w_1-w_2)+r(\partial_r w_1-\partial_r w_2) \right) w.
\eee
This yields
\bee
&& |J[Q, \l^2]w_1 - J[Q, \l^2]w_2| \lesssim  \l^2(1+r)^{-\frac{2(p-2)}{p-1}}(1+r)^3(\|w_1\|_{Y_{r_1}}+\|w_2\|_{Y_{r_1}})\|w_1-w_2\|_{Y_{r_1}}\\
&& \l^4(1+r)^{-\frac{2(p-3)}{p-1}}(1+r)^{\frac{9}{2}}\|w_2\|_{Y_{r_1}}^2\|w_1-w_2\|_{Y_{r_1}} +\lambda^2(1+r)^{\frac 3 2}\|w_1-w_2\|_{Y_{r_1}} \\
&\lesssim&  \l^2(1+r)^{-\frac{1}{2}}\Big((1+r)^{\frac{2}{p-1}+\frac{3}{2}}+ \l^2(1+r)^{\frac{4}{p-1}+3}+(1+r)^2\Big)\|w_1-w_2\|_{Y_{r_1}}\\
&\lesssim&  \l^2(1+r)^{-\frac{1}{2}}\Big((1+r)^{-s_c+3}+ \l^2(1+r)^{-2s_c+6}+(1+r)^2\Big)\|w_1-w_2\|_{Y_{r_1}}\\
&\lesssim&  r_1^2\l^2(1+r)^{-\frac{1}{2}}\Big(1+ \l^2r_1^2\Big)\|w_1-w_2\|_{Y_{r_1}}\lesssim  r_1^2\l^2(1+r)^{-\frac{1}{2}}\|w_1-w_2\|_{Y_{r_1}}
\eee
which concludes the proof of \eqref{esenneoebis} and Proposition \ref{prop:interiorsolution}.
\end{proof}


\subsection{The matching}\label{sec:matching}


We now construct a solution to \eqref{eq:selfsimilareq} by matching the exterior solution to \eqref{eq:selfsimilareq} constructed in section \ref{sec:exteriorsolution} on $[r_0,+\infty)$ to the interior solution to \eqref{eq:selfsimilareq} constructed in section \ref{sec:interiorsolution} on $[0, r_0]$. The oscillations \eqref{cejnenieie} allow to perform the matching at $r_0$ for a quantized sequence of the small parameter $\e$ introduced in Proposition \ref{prop:exteriorsolution}.

\begin{proposition}[Existence of a countable number of smooth selfsimilar profiles]\label{prop:constructionPhin} 
There exists $N\in\mathbb{N}$ large enough so that for all $n\geq N$, there exists a smooth solution $\Phi_n$ to  \eqref{eq:selfsimilareq} such that $\Lambda \Phi_n$ vanishes exactly $n$ times.
\end{proposition}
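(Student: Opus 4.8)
The plan is to match the exterior family $\Phi_*+\e\psi_1+\e w$ from Proposition \ref{prop:exteriorsolution} with the interior family $\frac{1}{\l^{2/(p-1)}}(Q+\l^2T_1)(\cdot/\l)$ from Proposition \ref{prop:interiorsolution} at the radius $r_0$, treating $(\e,\l)$ as the two matching parameters. First I would rescale everything to a common variable, say $z=r/\l$ evaluated at $r=r_0$ so that $z_0=r_0/\l\to+\infty$ as $\l\to0$, and write the two $C^1$-matching conditions at $r_0$ (equality of value and of radial derivative) as two equations $F_1(\e,\l)=0$, $F_2(\e,\l)=0$. The key point is that both sides, when expanded at $r_0\ll1$ on the exterior side and at $z_0\gg1$ on the interior side, are governed to leading order by the \emph{same} oscillatory ansatz $r^{-1/2}\sin(\om\log r+\text{phase})$: on the exterior side this is the $\e\psi_1$ term via \eqref{behvoriginbis}, and on the interior side it is the leading behaviour of $\l^{-2/(p-1)}Q(r/\l)$, which by \eqref{cejnenieie} contributes both $\Phi_*(r)$ (matching the $\Phi_*$ already present on the exterior side) and a term $c_1\l^{s_c-1}r^{-1/2}\sin(\om\log(r/\l)+c_2)$.

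The heart of the argument is then that the difference of the two solutions at $r_0$ is, to leading order, a superposition of two oscillating functions of $\log r_0$ and $\log(r_0/\l)=\log r_0-\log\l$ with incommensurate-looking phases, and one solves the matching by choosing $\log\l$ so that the phases line up. Concretely I would fix $r_0$ small, and look for $\l=\l(\e)$ (or better, parametrise by a large integer $n$ via $\om\log\l_n\approx -\om\log r_0 - c_2 + c_4 + n\pi$ so that the two $\sin$ terms become proportional) and then solve the remaining scalar equation for $\e$ by the implicit function theorem, using that the map is a small perturbation of an explicit affine relation thanks to the quantitative error bounds \eqref{wihfeihwwomeha} and \eqref{estinterirotone}. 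The subleading terms — $\e w$, $\l^2 T_1$, the $O(r^{3/2})$ corrections in \eqref{behvoriginbis}, the $O(r^{s_c-1/2})$ corrections in the interior asymptotics of Lemma \ref{lemma:homogeneoussolutionsofH}, and the $o(r^{-1/2})$ in \eqref{cejnenieie} — are all controlled by powers of $r_0$ and $\l$ and hence are genuinely lower order once $r_0$ and $\l$ are small; they go into the error term of the fixed point. Each integer $n\geq N$ produces one such matched solution $\Phi_n$, with $\l=\mu_n\to0$ and $\e=\e_n\to0$; the countability comes directly from the $n\pi$ periodicity of the phase-matching condition.

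Finally, to count the zeros of $\Lambda\Phi_n$: away from the origin, $\Phi_n\approx\Phi_*+\e_n\psi_1$ so $\Lambda\Phi_n\approx\e_n\Lambda\psi_1$, which by \eqref{refinedbeahviourpsione} has a definite sign for $r$ large and, via \eqref{behvoriginbis} applied to $\Lambda\psi_1$, oscillates like $r^{-1/2}\sin(\om\log r + \text{phase})$ on a range of small $r$; in the inner region, after rescaling, $\Lambda\Phi_n\approx\mu_n^{-2/(p-1)}(\Lambda Q)(r/\mu_n)$ plus corrections, and $\Lambda Q$ is known (from the ODE analysis behind \eqref{estqpioen}) to have only finitely many zeros for bounded argument while again oscillating like $\sin(\om\log(r/\mu_n)+\text{phase})$ for large argument. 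The total number of sign changes is then the number of half-periods of $\om\log$ traversed between the bounded-inner scale and the bounded-outer scale, i.e. essentially $\frac{\om}{\pi}\log(1/\mu_n)+O(1)$, which by the phase-matching relation equals $n+O(1)$; adjusting $N$ (and, if needed, shifting the index) makes this exactly $n$. A clean way to make this rigorous is a Sturm-comparison / winding-number argument: write the solution of the second-order ODE for $\Lambda\Phi_n$ in Prüfer-type polar coordinates $(\Lambda\Phi_n, r\partial_r\Lambda\Phi_n)=(R\sin\Theta,R\cos\Theta)$ and show the angle $\Theta$ increases by exactly $n\pi$, the increment being dominated by the oscillatory middle zone and the contributions from the two bounded end-zones being $O(1)$ and absorbable into the choice of $N$.

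\textbf{Main obstacle.} The delicate part is not the existence matching itself — that is a routine two-parameter implicit function theorem once the asymptotics are in hand — but rather (i) organising the expansions so that the \emph{same} leading oscillatory profile appears on both sides with controlled phases, and in particular verifying that the interior solution $\l^{-2/(p-1)}(Q+\l^2T_1)(r/\l)$ reproduces $\Phi_*(r)$ \emph{exactly} (not just to leading order) at the matching point so that the $\Phi_*$'s cancel and only the $r^{-1/2}$-oscillations need to be matched; and (ii) pinning down the zero count to be \emph{exactly} $n$ rather than $n+O(1)$, which forces one to track the end-zone phase contributions precisely and to use the freedom in $N$ and in the indexing of the quantised sequence. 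I expect step (ii) to be where the real care is required.
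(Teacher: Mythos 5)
Your overall route is the same as the paper's (outer family from Proposition \ref{prop:exteriorsolution}, inner family from Proposition \ref{prop:interiorsolution}, matching at $r_0$ quantized by the oscillation \eqref{cejnenieie}), but two steps as you describe them do not close. First, the matching mechanics: once you fix $\l$ at a phase-aligned value $\l_n$, you have one free parameter $\e$ facing \emph{two} conditions (value and derivative), so "solve the remaining scalar equation for $\e$ by the implicit function theorem" is overdetermined; and a genuine two-parameter IFT in $(\e,\l)$ is not "routine", since it requires differentiability and uniform invertibility in $\l$ that are not available cheaply (in the paper the map $\l\mapsto\e(\l)$ obtained from the value matching is only H\"older of exponent $(s_c-1)_-<1/2$). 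The paper's resolution is asymmetric: the IFT is applied only to the value matching, giving $\e(\l)$ for \emph{all} small $\l$, and the derivative mismatch $\mathcal{G}[r_0](\l)$, whose leading order is $\l^{s_c-1}\sin(-\om\log\l+c_2-c_4)$ by \eqref{leadingorderderivative}, is then shown to change sign on the quantized intervals $[\l_{k,+},\l_{k,-}]$ and killed by the intermediate value theorem — only continuity in $\l$ is needed.

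Second, and more seriously, your zero count only yields $n+O(1)$, and "adjusting $N$ and shifting the index" repairs this only if the $O(1)$ error is eventually \emph{constant} in $n$; a bounded but fluctuating error would leave gaps or repetitions in the family indexed by the number of zeros. This is exactly the content of the paper's step 8, where it is proved that between consecutive matched solutions the count increases by exactly one ($\#A_k=1$, with both an upper and a lower bound, relying on the choice $\delta_0<\delta_1/4$ and the asymptotic $\pi/\om$ spacing of the zeros of $\Lambda Q$). Moreover, both your "zones add up" decomposition and the paper's counting require the non-degeneracy \eqref{tobeprovedlambdaq}: the matching radius in inner variables, $r_0/\mu_k$, must stay a definite phase-distance away from the zeros of $\Lambda Q$, otherwise the perturbations $\mu_k^2\Lambda T_1$ and $\e\Lambda w$ could create or destroy zeros near $r_0$ and the end-zone contributions in your Pr\"ufer argument are not controlled. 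The paper extracts \eqref{tobeprovedlambdaq} from the matching conditions themselves, comparing the value relation \eqref{estimateellamnbdabis} with its differentiated analogue and using the phase offset $\alpha_0\in(\tfrac{\pi}{2},\pi)$ between $Q-\Phi_*$ and $\Lambda Q$ (equivalently $c_2+\alpha_0=c_8$), so that the two cannot be simultaneously small at $r_0/\mu_k$; nothing in your sketch supplies this ingredient, and without it the "exactly $n$" claim is not reached. (Your worry (i) about the $\Phi_*$'s cancelling, by contrast, is harmless: $\Phi_*$ is exactly scale invariant, so $\l^{-2/(p-1)}\Phi_*(r/\l)=\Phi_*(r)$ and only the oscillatory part $Q-\Phi_*$ enters the matching.)
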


\begin{proof}
{\bf step 1} Initialization. Since 
$$\psi_1(r)=\frac{c_3\sin(\omega\log(r)+c_4)}{r^{\frac{1}{2}}}+O\left(r^{\frac{3}{2}}\right)\textrm{ as }r\to 0, \ \ c_3\neq 0$$
we compute
\bee
\Lambda \psi_1(r) = c_3\frac{(1-s_c)\sin(\om\log(r)+c_4)+\om\cos(\om\log(r)+c_4)}{r^{\frac{1}{2}}}+O\left(r^{\frac{3}{2}}\right)\textrm{ as }r\to 0.
\eee
We may therefore choose $0<r_0\ll1$ such that  
\be
\label{intiialirnot}
\psi_1(r_0) = \frac{c_3}{r_0^{\frac{1}{2}}}+O\left(r_0^{\frac{3}{2}}\right),\,\,\,\, \Lambda \psi_1(r_0) = \frac{c_3(1-s_c)}{r_0^{\frac{1}{2}}}+O\left(r_0^{\frac{3}{2}}\right), 
\ee
and Proposition \ref{prop:exteriorsolution} and Proposition \ref{prop:interiorsolution} apply. We therefore choose $\ep$ and $\l$ such that
$$0<\ep\ll r_0^{s_c-1},\,\,\,\, 0<\l\leq r_0,$$
and have from Proposition \ref{prop:exteriorsolution} an exterior solution $u_{ext}$ to 
$$-\Delta u_{ext}+\Lambda u_{ext}-u_{ext}^p,\,\,\, r\geq r_0$$
such that 
$$u_{ext}[\ep]=\Phi_*+\ep\psi_1+\ep w$$
and
\be
\label{estsgnnone}
\|w\|_{X_{r_0}}\lesssim \ep r_0^{1-s_c},\,\,\,\, \|\Lambda w\|_{X_{r_0}}\lesssim \ep r_0^{1-s_c}.
\ee
We also have from Proposition \ref{prop:interiorsolution} an interior solution $u_{int}$ to 
$$-\Delta u_{int}+\Lambda u_{int}-u_{int}^p,\,\,\, 0\leq r\leq r_0$$
such that 
\bee
u_{int}[\l] &=& \frac{1}{\l^{\frac{2}{p-1}}}(Q+\l^2 T_1)\left(\frac{r}{\l}\right).
\eee
with
\be \label{estimation T1}
\|T_1\|_{Y_{\frac{r_0}{\l}}}\lesssim 1.
\ee
We now would like to match the two solutions at $r=r_0$ which is equivalent to requiring that $$u_{ext}(r_0)-u_{int}(r_0)=0 \ \  \text{and} \ \ u_{ext}'(r_0)-u_{int}'(r_0).$$

\noindent{\bf step 2} Matching the functions. We introduce the map
$$\mathcal{F}[r_0](\ep, \l) := u_{ext}[\ep](r_0)-u_{int}[\l](r_0).$$
We compute
\bee
\pr_\ep \mathcal{F}[r_0](\ep, \l) & = & \pr_\ep u_{ext}[\ep](r_0)= \psi_1(r_0)+w(r_0)+\ep\pr_\ep w(r_0).
\eee
In particular, since $w_{|_{\ep=0}}=0$ and $\|\pr_\ep w_{|_{\ep=0}}\|_{X_{r_0}}\lesssim r_0^{1-s_c}$ in view of Proposition \ref{prop:exteriorsolution}, we have
$$\pr_\ep \mathcal{F}[r_0](0, 0) = \psi_1(r_0) \neq 0$$
since we assumed that $\psi_1(r_0)\neq 0$. Also, in view of the asymptotic behavior of $Q$ at infinity, we have as $\l\to 0_+$
\bee
\left|\frac{1}{\l^{\frac{2}{p-1}}}(Q-\Phi_*+\l^2 T_1)\left(\frac{r_0}{\l}\right)\right| &\lesssim & \frac{1}{\l^{\frac{2}{p-1}}}\left(\frac{1}{r^{\frac{1}{2}}}+\frac{\l^2r^2}{r^{\frac{1}{2}}}\right)\left(\frac{r_0}{\l}\right)\lesssim  \frac{\l^{\frac{1}{2}-\frac{2}{p-1}}}{r_0^{\frac{1}{2}}}\lesssim \frac{\l^{s_c-1}}{{r_0^{\frac{1}{2}}}} 
\eee
and hence, since $s_c>1$, we infer
$$\lim_{\l\to 0_+}\frac{1}{\l^{\frac{2}{p-1}}}(Q-\Phi_*+\l^2 T_1)\left(\frac{r_0}{\l}\right)=0.$$
Since 
$$\frac{1}{\l^{\frac{2}{p-1}}}\Phi_*\left(\frac{r_0}{\l}\right)=\Phi_*(r_0),$$
this yields
$$\mathcal{F}[r_0](0, 0) = \Phi_*(r_0)-\Phi_*(r_0)=0.$$
We may thus apply the implicit function theorem\footnote{We actually apply the implicit function theorem to 
$$\widetilde{\mathcal{F}}[r_0](\ep, \mu):=\mathcal{F}(\ep, \mu^{\frac{1}{s_c-1-\delta}})$$
for any $0<\delta<s_c-1$ so that $\widetilde{\mathcal{F}}\in \mathcal{C}^1$. This yields the existence of $\widetilde{\ep}\in \mathcal{C}^1$ and we choose $\ep(\l)=\widetilde{\ep}(\l^{s_c-1-\delta})$ so that $\ep$ belongs indeed to $\mathcal{C}^{\min(1,(s_c-1)_-)}$.} which yields the existence of $\l_0>0$ and a $\mathcal{C}^{\min(1,(s_c-1)_-)}$ function $\ep(\l)$ defined on $[0,\l_0)$  such that $\mathcal{F}(\ep(\l),\l)=0$ and hence
\bee
u_{ext}[\ep(\l)](r_0) = u_{int}[\l](r_0) \ \textrm{ on }[0,\l_0).\\
\eee

\noindent{\bf step 3} Control of $\e(\l)$. We claim for $\l\in [0,\l_0)$
\be
\label{estimateellamnbdabis}
\ep(\l)=\frac{1}{\psi_1(r_0)\l^{\frac{2}{p-1}}}(Q-\Phi_*)\left(\frac{r_0}{\l}\right)+O\left[\l^{s_c-1}(r^2_0+\l^{s_c-1}r_0^{1-s_c})\right].
\ee
Indeed, by construction
$$u_{ext}[\ep(\l)](r_0) = u_{int}[\l](r_0)$$
which is equivalent to 
\be
\label{cnecnnoe}
\ep(\l)\psi_1(r_0) +\ep(\l)w(r_0) = \frac{1}{\l^{\frac{2}{p-1}}}(Q-\Phi_*+\l^2 T_1)\left(\frac{r_0}{\l}\right).
\ee
We infer from \fref{intiialirnot}, \eqref{estsgnnone}, \fref{estimation T1} and the asymptotic of $Q$:
$$
\ep(\l)\psi_1(r_0) + \ep(\l) w(r_0) =\ep(\l)\frac{c_3}{r_0^{\frac 1 2}}\left(1+O(r_0^2)+O(\ep(\l)r_0^{1-s_c}) \right),
$$
$$
\frac{1}{\l^{\frac{2}{p-1}}}\left|Q-\Phi_*+\l^2 T_1)\left(\frac{r_0}{\l}\right)\right|\lesssim \frac{\l^{s_c-1}}{r_0^{\frac 1 2}}(1+O(r_0^2)).
$$
This first yields using \fref{estrovvove}
\be
\label{estimateellamnbda}
|\e(\l)|\lesssim \l^{s_c-1} .
\ee
which reinjected into \eqref{cnecnnoe} yields \eqref{estimateellamnbdabis}.\\

\noindent{\bf step 4} Computation of the spatial derivatives. We consider the difference of spatial derivatives at $r_0$ for $\l\in[0,\l_0)$
\bee
\mathcal{G}[r_0](\l) := u_{ext}[\ep(\l)]'(r_0)-u_{int}[\l]'(r_0)
\eee
and claim the leading order expansion:
\bea
\label{leadingorderderivative}
\mathcal{G}[r_0](\l) &=& \l^{s_c-1}\left[\frac{c_1c_3\om}{\psi_1(r_0)r_0^2}\sin\left(-\om\log(\l)+c_2-c_4\right)\right.\\
\nonumber &+& \left.O\left(r_0^{-s_c-\frac{1}{2}}\l^{s_c-1}+r_0^{\frac{1}{2}}\right)\right].
\eea
Indeed, 
\bee
\mathcal{G}[r_0](\l) &=&  \ep(\l)\psi_1'(r_0) +\ep(\l)w'(r_0) - \frac{1}{\l^{\frac{2}{p-1}+1}}(Q'-\Phi_*'+\l^2 T_1')\left(\frac{r_0}{\l}\right).
\eee
From \eqref{estimateellamnbda}, \eqref{wihfeihwwomeha}:
\bee
| \ep(\l) w'(r_0)| &\lesssim & \l^{s_c-1}|w'(r_0)|\lesssim \l^{2(s_c-1)} r_0^{-\frac 12 -s_c}
\eee
and from \eqref{estinterirotone}
\bee
\left|\frac{1}{\l^{\frac{2}{p-1}+1}}\l^2 T_1'\left(\frac{r_0}{\l}\right)\right| &\lesssim &  r_0^{\frac{1}{2}}\l^{s_c-1}
\eee
and hence using \eqref{estimateellamnbdabis}, \eqref{intiialirnot}:
\bee
\mathcal{G}[r_0](\l) &=&\ep(\l)\psi_1'(r_0)  - \l^{s_c-1}\frac{1}{\l^{\frac{3}{2}}}(Q'-\Phi_*')\left(\frac{r_0}{\l}\right)+O\left(\left(r_0^{-\frac{3}{2}}\l^{s_c-1}+r_0^{\frac{1}{2}}\right)\l^{s_c-1}\right)\\
&=&  \l^{s_c-1}\left(\frac{1}{\l^{\frac{1}{2}}\psi_1(r_0)}(Q-\Phi_*)\left(\frac{r_0}{\l}\right)\psi_1'(r_0)  - \frac{1}{\l^{\frac{3}{2}}}(Q'-\Phi_*')\left(\frac{r_0}{\l}\right)\right)\\
&&+O\left(\left( r_0^{-s_c-\frac 12}\l^{s_c-1}+r_0^{\frac{1}{2}}\right)\l^{s_c-1}\right)\\
&=& \frac{1}{r_0^{\frac{1}{2}}\psi_1(r_0)}\l^{s_c-1}\left\{\left(\frac{r_0}{\l}\right)^{\frac{1}{2}}(Q-\Phi_*)\left(\frac{r_0}{\l}\right)\psi_1'(r_0)  - \left(\frac{r_0}{\l}\right)^{\frac{3}{2}}(Q'-\Phi_*')\left(\frac{r_0}{\l}\right)\frac{\psi_1(r_0)}{r_0}\right\}\\
&&+O\left(\left( r_0^{-s_c-\frac 12}\l^{s_c-1}+r_0^{\frac{1}{2}}\right)\l^{s_c-1}\right).
\eee

Recall that
\bee
&&\psi_1(r)=\frac{c_3\sin(\omega\log(r)+c_4)}{r^{\frac{1}{2}}}+O\left(r^{\frac{3}{2}}\right)\textrm{ as }r\to 0,\\
&&\psi_1'(r)= -\frac{c_3\sin(\omega\log(r)+c_4)}{2r^{\frac{3}{2}}}+\frac{c_3\omega\cos(\omega\log(r)+c_4)}{r^{\frac{3}{2}}}+O\left(r^{\frac{1}{2}}\right)\textrm{ as }r\to 0,\\
&&Q(r)-\Phi_*(r)=\frac{c_1\sin\left(\om\log(r)+c_2\right)}{r^{\frac{1}{2}}}+O\left(\frac{1}{r^{s_c-\frac{1}{2}}}\right)\textrm{ as }r\to +\infty,\\
&&
Q'(r)-\Phi'_*(r)= -\frac{c_1\sin\left(\om\log(r)+c_2\right)}{2r^{\frac{3}{2}}}+\frac{c_1\omega\cos\left(\om\log(r)+c_2\right)}{r^{\frac{3}{2}}}+O\left(\frac{1}{r^{s_c+\frac{1}{2}}}\right)\textrm{ as }r\to +\infty
\eee
and hence:
\bee
&&\left(\frac{r_0}{\l}\right)^{\frac{1}{2}}(Q-\Phi_*)\left(\frac{r_0}{\l}\right)\psi_1'(r_0)  - \left(\frac{r_0}{\l}\right)^{\frac{3}{2}}(Q'-\Phi_*')\left(\frac{r_0}{\l}\right)\frac{\psi_1(r_0)}{r_0}\\
&=& \frac{c_1c_3}{r_0^{\frac{3}{2}}}\Bigg(\sin\left(\om\log(r_0)-\om\log(\l)+c_2\right)\left( -\frac{\sin(\omega\log(r_0)+c_4)}{2}+\omega\cos(\omega\log(r_0)+c_4)\right)\Bigg)\\
&& -\left(-\frac{\sin\left(\om\log(r_0)-\om\log(\l)+c_2\right)}{2}+\omega\cos\left(\om\log(r_0)-\om\log(\l)+c_2\right)\right)\sin(\omega\log(r_0)+c_4)\\
&& +O\left(r_0^{\frac{1}{2}}+\l^{s_c-1}r_0^{-s_c-\frac{1}{2}}\right)\\
&=& \frac{c_1c_3\om}{r_0^{\frac{3}{2}}}\Bigg(\sin\left(\om\log(r_0)-\om\log(\l)+c_2\right)\cos(\omega\log(r_0)+c_4) \\
&& -\cos\left(\om\log(r_0)-\om\log(\l)+c_2\right)\sin(\omega\log(r_0)+c_4)\Bigg)+O\left(r_0^{\frac{1}{2}}+\l^{s_c-1}r_0^{-s_c-\frac{1}{2}}\right)\\
&=& \frac{c_1c_3\om}{r_0^{\frac{3}{2}}}\sin\left(-\om\log(\l)+c_2-c_4\right) +O\left(r_0^{\frac{1}{2}}+\l^{s_c-1}r_0^{-s_c-\frac{1}{2}}\right). 
\eee
The collection of above bounds and \fref{intiialirnot} yields \eqref{leadingorderderivative}.\\

\noindent{\bf step 5} Discrete matching. For $\de_0>0$ a small enough universal constant such that $\de_0\geq r_0$ to be chosen later, we consider
\bea\label{eq:definitionoflambdakpm}
\l_{k,+}=\exp\left(\frac{-k\pi-c_4+c_2+\de_0}{\om}\right) ,\,\,\,\, \l_{k,-}=\exp\left(\frac{-k\pi-c_4+c_2-\de_0}{\om}\right).
\eea
From 
$$\lim_{k\to +\infty}\l_{k,\pm}=0,$$
there holds for $k\geq k_0$ large enough:
$$0<\cdots<\l_{k,+}<\l_{k,-}<\cdots<\l_{k_0,+}<\l_{k_0,-}\leq \l_0$$
With the above definition of $\l_{k,\pm}$, we have for all $k\geq k_0$
$$\sin\left(-\om\log(\l_{k,+})+c_2-c_4\right)=(-1)^k\sin(\de_0),\,\,\,\, \sin\left(-\om\log(\l_{k,-})+c_2-c_4\right)=-(-1)^k\sin(\de_0),$$
and hence
\bee
\mathcal{G}[r_0](\l_{k,\pm}) &=& \pm (-1)^k\l_{k,\pm}^{s_c-1}\left(\frac{c_1c_3\om}{\psi_1(r_0)r_0^2}\sin(\de_0)+O\left(r_0^{-s_c-\frac{1}{2}}\l_{k,\pm}^{s_c-1}+r_0^{\frac{1}{2}}\right)\right).
\eee
Since $\de_0\geq r_0$, this yields for $r_0$ small enough and for any $k\geq k_0$ large enough:
$$\mathcal{G}[r_0](\l_{k,-})\mathcal{G}[r_0](\l_{k,+})<0.$$
Since the function $\l\to \mathcal{G}[r_0](\l)$ is continuous, we infer from the mean value theorem applied to the intervals $[\l_{k,+}, \l_{k,-}]$ the existence of $\mu_k$ such that
$$\l_{k,+}<\mu_k<\l_{k,-}\textrm{ and }\mathcal{G}[r_0](\mu_k)=0\textrm{ for all }k\geq k_0.$$ 
Finally, for $k\geq k_0$, we have
$$\mathcal{F}[r_0](\ep(\mu_k),\mu_k)=0\textrm{ and } \mathcal{G}[r_0](\mu_k)=0$$
which yields
\bee
u_{ext}[\ep(\mu_k)](r_0) = u_{int}[\mu_k](r_0)\textrm{ and }u_{ext}[\ep(\mu_k)]'(r_0) = u_{int}[\mu_k]'(r_0).
\eee
and hence the function
$$
u_k(r):=\left\{\begin{array}{ll}
u_{int}[\mu_k](r) & \textrm{ for }0\leq r\leq r_0,\\
u_{ext}[\ep(\mu_k)](r) & \textrm{ for } r> r_0
\end{array}\right.
$$
is smooth and satisfies \eqref{eq:selfsimilareq}.\\
The rest of the proof is devoted to counting the number of zeroes of $\Lambda u_k$ and showing that this number is an unambiguous way of counting the number of self similar solutions $u_k$ as $k\to +\infty$.\\

\noindent{\bf step 6} Zeroes of $\Lambda u_{ext}[\ep]$. We claim that
\be
\label{cenjnenoeo}
\Lambda u_{ext}[\ep]\ \ \mbox{has as many zeros as}\ \ \Lambda\psi_1 \ \ \mbox{on}\ \ r\geq r_0.
\ee
Indeed, $\Lambda \psi_1+\Lambda w$ does not vanish on $[R_0,+\infty)$ for $R_0$ large enough from \eqref{refinedbeahviourpsione} and the uniform bound \eqref{wihfeihwwomeha}. Moreover, $\Lambda\psi_1(r_0)\neq 0$ from the normalization \eqref{intiialirnot}, and the absolute derivative of $\Lambda\psi_1$ at any of its zeroes is uniformly lower bounded using \eqref{eqpsione}, \eqref{behvoriginbis}, and hence the uniform smallness  \eqref{wihfeihwwomeha} $$\|\Lambda w\|_{X_{r_0}}\lesssim \ep r_0^{1-s_c}\ll 1$$ yields the claim.\\

\noindent{\bf step 7} Zeroes of $\Lambda u_{int}[\mu_k]$. We now claim that 
\be
\label{innerzeroes}
\Lambda u_{int}[\mu_k]\ \ \mbox{has as many zeros as}\ \  \Lambda Q \ \ \mbox{on}\ \  0\leq r\leq r_0/\mu_k.
\ee
Indeed, recall that
\bee
\Lambda u_{int}[\mu_k](r) &=& \frac{1}{\mu_k^{\frac{2}{p-1}}}(\Lambda Q+\mu_k^2 \Lambda T_1)\left(\frac{r}{\mu_k}\right).
\eee
We now claim 
\be
\label{tobeprovedlambdaq}
\left(\frac{r_0}{\mu_k}\right)^{\frac 12} \left|\Lambda Q\left(\frac{r_0}{\mu_k}\right)\right|\gtrsim 1.
\ee
Assume \eqref{tobeprovedlambdaq}, then since the zeros of $\Lambda Q$ are simple, since we have
\bee
\Lambda Q(r) = \frac{c_7\sin\left(\om\log(r)+c_8\right)}{r^{\frac{1}{2}}} +O\left(\frac{1}{r^{s_c-\frac{1}{2}}}\right) \textrm{ as }r\to +\infty,
\eee
since
$$\|\Lambda T_1\|_{Y_{\frac{r_0}{\mu_k}}}=\sup_{0\leq r\leq \frac{r_0}{\mu_k}}(1+r)^{-\frac{3}{2}}|\Lambda T_1|\lesssim 1$$
so that
$$\sup_{0\leq r\leq \frac{r_0}{\mu_k}}(1+r)^{\frac{1}{2}}|\mu_k^2\Lambda T_1|\lesssim r_0^2,$$
and similarily for $\Lambda^2 T_1$, and since
\bee
\Lambda Q(0) = \frac{2}{p-1}\neq 0,
\eee
we conclude that $\Lambda Q+\mu_k^2\Lambda T_1$ has as many zeros as $\Lambda Q$ on $0\leq r\leq r_0/\mu_k$. We deduce that on $0\leq r\leq r_0$, $\Lambda u_{int}[\mu_k]$ has as many zeros as $\Lambda Q$ on $0\leq r\leq r_0/\mu_k$.\\

\noindent{\it Proof of \eqref{tobeprovedlambdaq}}: Recall that
\bee
u_{ext}[\ep(\mu_k)](r_0) = u_{int}[\mu_k](r_0)\textrm{ and }u_{ext}[\ep(\mu_k)]'(r_0) = u_{int}[\mu_k]'(r_0),
\eee
which implies
$$\Lambda u_{ext}[\ep(\mu_k)](r_0) = \Lambda u_{int}[\mu_k](r_0).$$
This yields using \eqref{estimateellamnbdabis}:
\bee
\frac{\ep(\mu_k)}{\mu_k^{s_c-1}} &=& \frac{1}{\psi_1(r_0)\mu_k^{\frac{1}{2}}}(Q-\Phi_*)\left(\frac{r_0}{\mu_k}\right) + O\Big(\mu_k^{s_c-1}r_0^{s_c-1}+r_0^2\Big)
\eee
and differentiating \eqref{cnecnnoe}:
\bee
\frac{\ep(\mu_k)}{\mu_k^{s_c-1}} &=& \frac{1}{\Lambda\psi_1(r_0)\mu_k^{\frac{1}{2}}}\Lambda Q\left(\frac{r_0}{\mu_k}\right) + O\Big(\mu_k^{s_c-1} r_0^{s_c-1}+r_0^2\Big).
\eee
We infer
\bee
 \frac{1}{\psi_1(r_0)\mu_k^{\frac{1}{2}}}(Q-\Phi_*)\left(\frac{r_0}{\mu_k}\right) &=& \frac{1}{\Lambda\psi_1(r_0)\mu_k^{\frac{1}{2}}}\Lambda Q\left(\frac{r_0}{\mu_k}\right)+ O\Big(\mu_k^{s_c-1} r_0^{s_c-1}+r_0^2\Big).
\eee
In view of \eqref{intiialirnot} which we recall below
$$
\psi_1(r_0) = \frac{c_3}{r_0^{\frac{1}{2}}}+O\left(r_0^{\frac{3}{2}}\right),\,\,\,\, \Lambda \psi_1(r_0) = \frac{c_3(1-s_c)}{r_0^{\frac{1}{2}}}+O\left(r_0^{\frac{3}{2}}\right), 
$$
this yields
\bea\label{eq:usefulboundtocountnumberofzeros}
 \left|\left(\frac{r_0}{\mu_k}\right)^{\frac{1}{2}}(Q-\Phi_*)\left(\frac{r_0}{\mu_k}\right)\right| \leq \frac{2}{s_c-1}\left|\left(\frac{r_0}{\mu_k}\right)^{\frac{1}{2}}\Lambda Q\left(\frac{r_0}{\mu_k}\right)\right|+ O\Big(\mu_k^{s_c-1}+r_0^2\Big).\\
\nonumber
\eea
On the other hand,
\be
\label{espenknonv}
Q(r) - \Phi_*(r) = \frac{c_1\sin\left(\om\log(r)+c_2\right)}{r^{\frac{1}{2}}}+O\left(\frac{1}{r^{s_c-\frac{1}{2}}}\right)\textrm{ as }r\to +\infty
\ee
and hence as $r\to +\infty$
\bea\label{eq:equationwithtwostars}
\nonumber\Lambda Q(r) &=& c_1\frac{(1-s_c)\sin(\om\log(r)+c_2)+\om\cos(\om\log(r)+c_2)}{r^{\frac{1}{2}}}+O\left(\frac{1}{r^{s_c-\frac{1}{2}}}\right)\\
&=&c_1\sqrt{(s_c-1)^2+\omega^2}\,\frac{\sin(\om\log(r)+c_2+\alpha_0)}{r^{\frac{1}{2}}}+O\left(\frac{1}{r^{s_c-\frac{1}{2}}}\right)
\eea
where
$$\cos(\alpha_0)=\frac{1-s_c}{\sqrt{(s_c-1)^2+\omega^2}},\,\,\,\, \sin(\alpha_0)=\frac{\omega}{\sqrt{(s_c-1)^2+\omega^2}},\,\,\,\,\alpha_0\in \left(
\frac{\pi}{2}, \pi\right).$$  Thus there exists $r_2>0$ sufficiently small and a constant $\delta_1>0$ sufficiently small only depending on $\omega$ and $s_c-1$ such that for $0<r<r_2$, we have
$$\textrm{dist}\Big(\om\log(r)+c_2+\alpha_0, \pi\mathbb{Z}\Big)<\delta_1\,\Rightarrow\, r^{\frac{1}{2}}|Q(r) - \Phi_*(r)|\geq \frac{4}{s_c-1}r^{\frac{1}{2}}|\Lambda Q(r)|+\frac{c_1\sin(\alpha_0)}{2}.$$
In view of \eqref{eq:usefulboundtocountnumberofzeros}, we infer for $k\geq k_1$ large enough
\bea\label{eq:equationwithonestar}
\textrm{dist}\left(\om\log\left(\frac{r_0}{\mu_k}\right)+c_2+\alpha_0, \pi\mathbb{Z}\right)\geq \delta_1\\
\nonumber
\eea
and \eqref{tobeprovedlambdaq} is proved.\\

\noindent{\bf step 8} Counting. We have so far obtained
\bee
&&\#\{r\geq 0\textrm{ such that }\Lambda u_k(r)=0\}\\
 &=& \#\left\{0\leq r\leq \frac{r_0}{\mu_k}\textrm{ such that }\Lambda Q(r)=0\right\}+ \#\{r>r_0\textrm{ such that }\Lambda\psi_1(r)=0\}
\eee
which implies
\bee
\#\{r\geq 0\textrm{ such that }\Lambda u_{k+1}(r)=0\} &=& \#\{r\geq 0\textrm{ such that }\Lambda u_k(r)=0\} + \# A_k,
\eee
with
\bee
A_k := \left\{\frac{r_0}{\mu_k}< r\leq \frac{r_0}{\mu_{k+1}}\textrm{ such that }\Lambda Q(r)=0\right\}.\\
\eee
We claim for $k\geq k_0$ large enough:
\be
\label{estcarajo}
\# A_k=1
\ee
which by possibly shifting the numerotation by a fixed amount ensures that $\Lambda u_k$ vanishes exactly $k$ times.\\

\noindent{\em Upper bound}. We first claim 
\be
\label{upperbound}
\# A_k\le 1
\ee 
Recall that
\bea\label{eq:equationwiththreestars}
\Lambda Q(r) = \frac{c_7\sin\left(\om\log(r)+c_8\right)}{r^{\frac{1}{2}}}+O\left(\frac{1}{r^{s_c-\frac{1}{2}}}\right)\textrm{ as }r\to +\infty,
\eea
so that there exists $R\geq 1$ large enough such that 
\bea\label{eq:equationwithfourstars}
\{r\geq R\,/\,\Lambda Q(r)=0\}= \{r_q,\,\,\,q\geq q_1\},\,\,\omega\log(r_q)+c_8 =q\pi +O\left(\frac{1}{r_q^{s_c-1}}\right).
\eea
In view of \eqref{eq:equationwithtwostars} and \eqref{eq:equationwiththreestars}, we have
$$c_2+\alpha_0=c_8$$
and hence, together with \eqref{eq:equationwithonestar} and \eqref{eq:equationwithfourstars}, we infer 
\be
\label{distanceinninimisee}
\inf_{q\geq q_1, k\geq k_1}\left|\log\left(\frac{r_0}{\mu_k}\right)-\log(r_q)\right|\geq \frac{\delta_1}{2\omega}.
\ee
This implies for $k\geq k_1$
\bea\label{eq:inclusioninalargerintervalofAk}
A_k &=& \left\{q\geq q_1\textrm{ such that }r_q\in \left(\frac{r_0}{\mu_k}, \frac{r_0}{\mu_{k+1}}\right)\right\}\\
\nonumber&\subset& \left\{q\geq q_1\textrm{ such that }\log\left(\frac{r_0}{\mu_k}\right)+\frac{\delta_1}{2\omega}\leq \log(r_q) \leq \log\left(\frac{r_0}{\mu_{k+1}}\right)-\frac{\delta_1}{2\omega}\right\}.
\eea
Since $\l_{k,+}<\mu_k<\l_{k,-}$ with $\l_{k,\pm}$ given by \eqref{eq:definitionoflambdakpm}, we have for $k\geq k_1$
\bee
&&\log\left(\frac{r_0}{\mu_{k+1}}\right)-\frac{\delta_1}{2\omega} - \left(\log\left(\frac{r_0}{\mu_k}\right)+\frac{\delta_1}{2\omega}\right) = \log(\mu_k)-\log(\mu_{k+1})-\frac{\delta_1}{\omega}\\
&\leq& \log(\l_{k_+})-\log(\l_{k+1,-}) -\frac{\delta_1}{\omega}\leq \frac{\pi+2\delta_0-\delta_1}{\omega}.
\eee
Also, we have for $q\geq q_1$
\bee
\log(r_{q+1})-\log(r_q) &=& \frac{\pi}{\omega}+O\left(\frac{1}{r_q^{s_c-1}}\right).
\eee
We now choose $\delta_0$ such that
\be
\label{choicedeltao}
0<\delta_0<\frac{\delta_1}{4}.
\ee
Then, we infer
 for $k\geq k_1$
\bee
&& \log\left(\frac{r_0}{\mu_{k+1}}\right)-\frac{\delta_1}{2\omega} - \left(\log\left(\frac{r_0}{\mu_k}\right)+\frac{\delta_1}{2\omega}\right)\leq \frac{\pi}{\omega} -\frac{\delta_1}{2\omega}
\eee
and hence  for $k\geq k_1$ and $q\geq q_1$, we have
\bee
\log(r_{q+1})-\log(r_q) &>&  \log\left(\frac{r_0}{\mu_{k+1}}\right)-\frac{\delta_1}{2\omega} - \left(\log\left(\frac{r_0}{\mu_k}\right)+\frac{\delta_1}{2\omega}\right)
\eee
which in view of \eqref{eq:inclusioninalargerintervalofAk} implies \eqref{upperbound}.\\

\noindent{\em Lower bound}. We now prove \eqref{estcarajo} and assume by contradiction: $$\# A_{k_2}=0.$$
Then, let $q_2\geq q_1$ such that
$$r_{q_2} < \frac{r_0}{\mu_{k_2}} <  \frac{r_0}{\mu_{k_2+1}} < r_{q_2+1}.$$
We infer from \eqref{distanceinninimisee}:
\bea\label{eq:equationwithfivestars}
\log(r_{q_2}) \leq  \log\left(\frac{r_0}{\mu_{k_2}}\right)-\frac{\delta_1}{2\omega} <  \log\left(\frac{r_0}{\mu_{k_2+1}}\right)+\frac{\delta_1}{2\omega} \leq \log(r_{q_2+1}).
\eea
However, we have for $k\geq k_1$
\bee
&& \log\left(\frac{r_0}{\mu_{k_2+1}}\right)+\frac{\delta_1}{2\omega} - \left(\log\left(\frac{r_0}{\mu_{k_2}}\right)-\frac{\delta_1}{2\omega}\right)= \log(\mu_{k_2})-\log(\mu_{k_2+1})+\frac{\delta_1}{\omega}\\
&\geq& \log(\l_{k_2, -})-\log(\l_{k_2+1,+}) +\frac{\delta_1}{\omega}\geq \frac{\pi-2\delta_0+\delta_1}{\omega}\geq  \frac{\pi}{\omega} + \frac{\delta_1}{2\omega}
\eee
in view of our choice \eqref{choicedeltao}. Hence, we infer
\bee
&& \log\left(\frac{r_0}{\mu_{k_2+1}}\right)+\frac{\delta_1}{2\omega} - \left(\log\left(\frac{r_0}{\mu_{k_2}}\right)-\frac{\delta_1}{2\omega}\right) > \log(r_{q_2+1}) -\log(r_{q_2})
\eee
which contradicts \eqref{eq:equationwithfivestars}.\\
 This concludes the proof of Proposition \ref{prop:constructionPhin}.
\end{proof}

We now collect final estimates on the constructed solution $\Phi_n$ which conclude the proof of Proposition \ref{propconstruction}.

\begin{corollary}\label{cor:consequenceforPhinofproponexistence}
Let  $\Phi_n$ the solution to \eqref{eq:selfsimilareq} constructed in Proposition \ref{prop:constructionPhin}. Then there exists a small enough constant $r_0>0$ independent of $n$ such that:\\
\noindent{\em 1. Convergence to $\Phi_*$ as $n\to +\infty$}: 
\be
\label{estoneaprouver}
\lim_{n\to +\infty}\sup_{r\geq r_0}\left(1+r^{\frac{2}{p-1}}\right)|\Phi_n(r) - \Phi_*(r)| = 0.
\ee
\noindent{\em 2. Convergence to $Q$ at the origin}: there holds for some $\mu_n\to 0$ as $n\to +\infty$:
\be
\label{estaprvoeige}
\lim_{n\to +\infty}\sup_{r\leq r_0}\left|\Phi_n(r) -  \frac{1}{\mu_n^{\frac{2}{p-1}}}Q\left(\frac{r}{\mu_n}\right)\right| =0.
\ee
\noindent{\em 3. Last zeroes}: let $r_{0,n}<r_0$ denote the last zero of $\Lambda\Phi_n$ before $r_0$. Then, for $n\geq N$ large enough, we have
$$e^{-\frac{2\pi}{\omega}}r_0 <r_{0,n}<r_0.$$
Let $r_{\Lambda Q, n}<r_0/\mu_n$ denote  the last zero of $\Lambda Q$ before $r_0/\mu_n$, then
$$r_{0,n}=\mu_n r_{\Lambda Q, n}(1+O(r_0^2)).$$
\end{corollary}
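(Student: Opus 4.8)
The plan is to read off the first two limits directly from the decomposition of $\Phi_n=u_n$ recorded in Proposition~\ref{prop:constructionPhin} (together with $\mu_n\to0$, hence $\ep(\mu_n)\to0$ by \eqref{estimateellamnbda}), and to obtain the last one by a Sturm-type tracking of the zeroes of $\Lambda\Phi_n$ against those of $\Lambda Q$. For \eqref{estoneaprouver}: on $r\geq r_0$ we have $\Phi_n=\Phi_*+\ep(\mu_n)(\psi_1+w)$, so $|\Phi_n-\Phi_*|\leq|\ep(\mu_n)|(|\psi_1|+|w|)$, and the asymptotics \eqref{behvorigin}--\eqref{behvoriginbis} of $\psi_1$ together with $\|w\|_{X_{r_0}}\lesssim\ep(\mu_n)r_0^{1-s_c}$ from \eqref{wihfeihwwomeha} give, after splitting at $r=1$, the uniform bound $(1+r^{\frac{2}{p-1}})(|\psi_1(r)|+|w(r)|)\lesssim r_0^{-\frac12}(1+\ep(\mu_n)r_0^{1-s_c})$ on $[r_0,+\infty)$; multiplying by $|\ep(\mu_n)|\lesssim\mu_n^{s_c-1}$ and sending $n\to+\infty$ yields the claim. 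For \eqref{estaprvoeige}: on $r\leq r_0$ we have $\Phi_n(r)-\mu_n^{-\frac{2}{p-1}}Q(r/\mu_n)=\mu_n^{2-\frac{2}{p-1}}T_1(r/\mu_n)$, and $\|T_1\|_{Y_{r_0/\mu_n}}\lesssim1$ from \eqref{estinterirotone} gives $|T_1(s)|\lesssim(1+s)^{\frac32}$, so this difference is $\lesssim\mu_n^{2-\frac{2}{p-1}}(r_0/\mu_n)^{\frac32}=r_0^{\frac32}\mu_n^{s_c-1}\to0$.

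For the last zeroes I would start from the identity $\Lambda\Phi_n(r)=\mu_n^{-\frac{2}{p-1}}(\Lambda Q+\mu_n^2\Lambda T_1)(r/\mu_n)$ on $[0,r_0]$, so that through $s=r/\mu_n$ the zeroes of $\Lambda\Phi_n$ in $(0,r_0]$ are exactly those of $g_n:=\Lambda Q+\mu_n^2\Lambda T_1$ in $(0,r_0/\mu_n]$. The goal is to show each large-$s$ zero of $g_n$ lies within $O(r_0^2)$ in $\log s$ of a zero $r_q$ of $\Lambda Q$, and conversely. By \eqref{estqpioen}, $s^{\frac12}\Lambda Q(s)=c_7\sin(\om\log s+c_8)+O(s^{-(s_c-1)})$, and at each zero $r_q$ the relation $\om\log r_q+c_8=q\pi+O(r_q^{-(s_c-1)})$ of \eqref{eq:equationwithfourstars} shows that $\frac{d}{d\log s}(s^{\frac12}\Lambda Q)\big|_{s=r_q}=c_7\om\cos(\om\log r_q+c_8)=\pm c_7\om+O(r_q^{-(s_c-1)})$ is bounded away from $0$ for $r_q$ large, so $s^{\frac12}\Lambda Q$ crosses zero transversally with slope $\gtrsim1$ there. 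On the other hand $\|\Lambda T_1\|_{Y_{r_0/\mu_n}}+\|\Lambda^2T_1\|_{Y_{r_0/\mu_n}}\lesssim1$ yields $\sup_{s\leq r_0/\mu_n}s^{\frac12}|\mu_n^2\Lambda T_1(s)|\lesssim r_0^2$ and controls the corresponding derivative as well, so $s^{\frac12}g_n$ is an $O(r_0^2)$-perturbation of $s^{\frac12}\Lambda Q$ in $C^1$ at scale $s\gg1$; a quantitative intermediate value argument then produces, for $r_0$ small, exactly one zero $\widetilde{r}_q$ of $g_n$ near each large $r_q$ with $|\log\widetilde{r}_q-\log r_q|\lesssim r_0^2$ and no others at scale $s\gg1$.

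To conclude, write $r_{\Lambda Q,n}=r_{q^\star}$ for the last zero of $\Lambda Q$ before $r_0/\mu_n$. By \eqref{distanceinninimisee}, $\log(r_0/\mu_n)$ is at distance $\geq\delta_1/(2\om)$ from both $\log r_{q^\star}$ and $\log r_{q^\star+1}$, and since $O(r_0^2)\ll\delta_1/(2\om)$ the perturbed zeroes satisfy $\widetilde{r}_{q^\star}<r_0/\mu_n<\widetilde{r}_{q^\star+1}$, so $\widetilde{r}_{q^\star}$ is the last zero of $g_n$ before $r_0/\mu_n$; hence $r_{0,n}=\mu_n\widetilde{r}_{q^\star}$ and $\log r_{0,n}=\log\mu_n+\log r_{q^\star}+O(r_0^2)=\log(\mu_n r_{\Lambda Q,n})+O(r_0^2)$, that is $r_{0,n}=\mu_n r_{\Lambda Q,n}(1+O(r_0^2))$. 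Finally, $r_{q^\star}<r_0/\mu_n<r_{q^\star+1}$ together with $\log r_{q^\star+1}-\log r_{q^\star}=\frac{\pi}{\om}+O(r_{q^\star}^{-(s_c-1)})$ from \eqref{eq:equationwithfourstars} give $\log(r_0/r_{0,n})=\log(r_0/\mu_n)-\log r_{q^\star}+O(r_0^2)<\frac{\pi}{\om}+O(r_{q^\star}^{-(s_c-1)})+O(r_0^2)$; since $r_{q^\star}\asymp r_0/\mu_n\to+\infty$, for $n$ large this is $<\frac{2\pi}{\om}$, so $e^{-\frac{2\pi}{\om}}r_0<r_{0,n}<r_0$, the upper bound being trivial.

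The main obstacle is the zero-tracking of the second paragraph: one must be sure that the weighted-small perturbation $\mu_n^2\Lambda T_1$ neither creates nor destroys a zero of $\Lambda\Phi_n$ near the matching radius $r_0/\mu_n$, which would ruin the identification of the \emph{last} zero. This is exactly what the separation estimate \eqref{distanceinninimisee} — itself forced by the choice $\delta_0<\frac{\delta_1}{4}$ made in \eqref{choicedeltao} — provides; everything else is routine bookkeeping with the bounds from Propositions~\ref{prop:exteriorsolution} and~\ref{prop:interiorsolution}.
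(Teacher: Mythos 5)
Your proposal is correct and follows essentially the same route as the paper: parts 1 and 2 are read off, exactly as in the text, from the exterior decomposition $\Phi_n=\Phi_*+\ep(\mu_n)(\psi_1+w)$ with \eqref{wihfeihwwomeha} and the interior decomposition with the $T_1$ bound \eqref{estinterirotone}, and part 3 is the same perturbative comparison of the zeros of $\Lambda Q+\mu_n^2\Lambda T_1$ with those of $\Lambda Q$ using the asymptotics \eqref{estqpioen} and the separation facts from the proof of Proposition \ref{prop:constructionPhin}. The only (harmless) difference is bookkeeping: the paper gets the bound $e^{-\frac{2\pi}{\omega}}r_0<r_{0,n}$ from the a priori localization $e^{-\frac{3\pi}{2\omega}}\frac{r_0}{\mu_n}\leq r_{\Lambda Q,n}$, whereas you derive it from the $\frac{\pi}{\omega}$ spacing of consecutive zeros of $\Lambda Q$ together with \eqref{distanceinninimisee}.
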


\begin{proof}
We choose $r_0>0$ small enough as in the proof of Proposition \ref{prop:constructionPhin}. We start with the proof of the first claim. Recall from the proof of Proposition \ref{prop:constructionPhin} that we have for $r\geq r_0$
\bee
\Phi_n(r) &=& \Phi_*(r)+\ep(\mu_n)\psi_1(r)+\ep(\mu_n)w(r)
\eee
where we have in particular
$$\sup_{r_0\leq r\leq 1}r^{\frac{1}{2}}(|\psi_1|+|w|)+\sup_{r\geq 1}r^{\frac{2}{p-1}}(|\psi_1|+|w|)\lesssim 1$$
and 
$$\lim_{n\to +\infty}\ep(\mu_n)=0.$$
We infer
\bee
&&\sup_{r\geq r_0}\left(1+r^{\frac{2}{p-1}}\right)|\Phi_n(r) - \Phi_*(r)|\\
 &\lesssim & \ep(\mu_n)\left(\sup_{r\geq r_0}(|\psi_1(r)|+|w(r)|)+\sup_{r\geq 1}r^{\frac{2}{p-1}}(|\psi_1(r)|+|w(r)|)\right)\\
&\lesssim& \ep(\mu_n)r_0^{-\frac{1}{2}}
\eee
and hence
$$\lim_{n\to +\infty}\sup_{r\geq r_0}\left(1+r^{\frac{2}{p-1}}\right)|\Phi_n(r) - \Phi_*(r)| = 0.$$

Next, recall from the proof of Proposition \ref{prop:constructionPhin} that we have for $r\leq r_0$
\bee
\Phi_n(r) &=&  \frac{1}{\mu_n^{\frac{2}{p-1}}}(Q+\mu_n^2 T_1)\left(\frac{r}{\mu_n}\right)
\eee
with
\bee
\sup_{0\leq r\leq \frac{r_0}{\mu_n}}(1+r)^{-\frac{3}{2}}|T_1|\lesssim 1.
\eee
We infer for $r\leq r_0$
\bee
\left|\Phi_n(r) -  \frac{1}{\mu_n^{\frac{2}{p-1}}}Q\left(\frac{r}{\mu_n}\right)\right| &\leq& \mu_n^{2-\frac{2}{p-1}} |T_1|\left(\frac{r}{\mu_n}\right)\lesssim \mu_n^{\frac{1}{2}-\frac{2}{p-1}}
\eee
and hence
\be \label{estimation interieure convergence phin Q}
\sup_{r\leq r_0}\left|\Phi_n(r) -  \frac{1}{\mu_n^{\frac{2}{p-1}}}Q\left(\frac{r}{\mu_n}\right)\right| \lesssim\mu_n^{s_c-1}.
\ee
and since $\mu_n\to 0$ as $n\to +\infty$, \eqref{estaprvoeige} is proved.\\
We now estimate the localization of the last zeroes of $\Phi_n$ and $\Lambda Q$ before $r_0$. Recall that
$$\Lambda Q(r)\sim \frac{c_7\sin(\om\log(r)+c_8)}{r^{\frac{1}{2}}}\textrm{ as }r\to +\infty.$$
Since $\sin(\om\log(r)+c_8)$ changes sign on the interval
$$e^{-\frac{3\pi}{2\omega}}\frac{r_0}{\mu_n}\leq r\leq \frac{r_0}{\mu_n},$$
and since $r\gg 1$ on this interval, we infer by the mean value theorem that $\Lambda Q(r)$ has a zero on this interval. In particular, this yields
$$e^{-\frac{3\pi}{2\omega}}\frac{r_0}{\mu_n}\leq r_{\Lambda Q, n}\leq \frac{r_0}{\mu_n}.$$
Also, recall from the proof of Proposition \ref{prop:constructionPhin} that we have for $r\leq r_0$
\bee
\Lambda\Phi_n(r) &=& \frac{1}{\mu_n^{\frac{2}{p-1}}}(\Lambda Q+\mu_n^2\Lambda T_1)\left(\frac{r}{\mu_n}\right),
\eee
Since 
$$\Lambda Q(r)\sim \frac{c_7\sin(\om\log(r)+c_8)}{r^{\frac{1}{2}}}\textrm{ as }r\to +\infty,$$
and 
$$\sup_{0\leq r\leq \frac{r_0}{\mu_n}}(1+r)^{\frac{3}{2}}|\Lambda T_1|\lesssim 1,$$
and since  
$$e^{-\frac{2\pi}{\omega}}r_0\leq r\leq r_0,$$
we have $r/\mu_n\sim r_0/\mu_n\gg 1$ for $n\geq N$ large enough, we infer
\bee
\Lambda\Phi_n(r) &\sim&\frac{c_7\sin(\om\log(r)-\om\log(\mu_n)+c_8)+O(r_0^2)}{\mu_n^{\frac{2}{p-1}}\left(\frac{r}{\mu_n}\right)^{\frac{1}{2}}}.
\eee
This yields
$$\Big|\om\log(r_{0,n})-\om\log(\mu_n)+c_8 -(\om\log(r_{\Lambda Q,n})+c_8)\Big| \lesssim r_0^2$$
and hence
\bee
r_{0,n} &=& \mu_n r_{\Lambda Q,n} e^{O(r_0^2)}\\
&=& \mu_n r_{\Lambda Q,n}(1+ O(r_0^2)).
\eee
Furthermore, since we have
$$e^{-\frac{3\pi}{2\omega}}\frac{r_0}{\mu_n}\leq r_{\Lambda Q, n}\leq \frac{r_0}{\mu_n},$$
we deduce
$$e^{-\frac{2\pi}{\omega}}r_0\leq r_{0, n}\leq r_0.$$
This concludes the proof of the corollary.
\end{proof}


\section{Spectral gap in weighted norms}
\label{sectionspectral}

Our aim in this section is to produce a spectral gap for the linearized operator corresponding to \eqref{ellipticequation} around $\Phi_n$:
\bea\label{defh} 
\mathcal{L}_n:=-\Delta +\Lambda -p\Phi_n^{p-1}.
\eea 
Recall \eqref{scalalrho}, then $\mathcal{L}_n$  is self adjoint for the $L^2_\rho$ scalar product. Moreover, from \eqref{weightedesimate} and the local compactness of the Sobolev embeddings  $H^1(|x|\leq R)\hookrightarrow L^2(|x|\leq R)$, and the fact that $\Phi_n\in L^\infty$, the selfadjoint operator $\mathcal{L}_n+M_n$ for the measure $\rho dx$ is for $M_n\geq 1$ large enough invertible with compact resolvent. Hence $\mathcal{L}_n$ is diagonalizable in a Hilbert basis of $L^2_\rho$, and we claim the following sharp spectral gap estimate:

\begin{proposition}[Spectral gap for $\mathcal{L}_n$]
\label{prop:spectral}
Let $n>N$ with $N\gg 1$ large enough, then the following holds:\\
\noindent\emph{1. Eigenvalues.} The spectrum of $\mathcal{L}_n$ is given by 
\be
\label{eignevalurenvon}
-\mu_{n+1,n}<\dots<-\mu_{2,n}<-\mu_{1,n}=-2<-\mu_{-1,n}=-1 <0<\l_{0,n}<\l_{1,n}<\dots
\ee 
with 
\be
\label{spectralgapeigenvalue}
\l_{j,n}>0\textrm{ for all }j\geq 0 \textrm{ and }\lim_{j\to+\infty}\l_{j,n}=+\infty.
\ee
The eigenvalues $(-\mu_{j,n})_{1\leq j\leq n+1}$ are simple and associated to spherically symmetric eigenvectors 
$$\psi_{j,n}, \ \ \|\psi_{j,n}\|_{L^2_\rho}=1, \ \ \psi_{1,n}=\frac{\Lambda \Phi_n}{\|\Lambda \Phi_n\|_{\rho}},$$ 
and the eigenspace for $\mu_{-1,n}$ is spanned by 
\be
\label{tranaltionmode}
\psi^k_{-1,n}=\frac{\pa_k\Phi_n}{\|\pa_k\Phi_n\|_\rho}, \ \ 1\leq k\leq 3.
\ee
Moreover, there holds as $r\to +\infty$
\be
\label{estgrowth}
|\pa_k\psi_{j,n}(r)|\lesssim (1+r)^{-\frac{2}{p-1}-\mu_{j,n}-k}, \ \ 1\leq j\leq n+1, \ \ k\geq 0.
\ee

\noindent\emph{2. Spectral gap.} There holds for some constant $c_n>0$:
\bea\label{coerciviteapoids}
\forall \e\in H^1_\rho, \,\, 
(\mathcal{L}_n\e,\e)_\rho\geq c_n\|\ep\|_{H^1_\rho}^2-\frac{1}{c_n}\left[\sum_{j=1}^{n+1}(\e,\psi_{j,n})^2_{\rho}+\sum_{k=1}^3(\e,\psi_{0,n}^k)^2_{\rho}\right].
\eea
\end{proposition}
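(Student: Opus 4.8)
The plan is to first pin down the spectral picture \eqref{eignevalurenvon}--\eqref{estgrowth} by a Sturm--Liouville analysis of the radial operators $\mathcal{L}_{n,m}$, built around two explicit eigenfunctions obtained by differentiating the profile equation, and then to deduce the coercive bound \eqref{coerciviteapoids} from the resulting $L^2_\rho$ spectral gap by a soft G\aa rding argument. Since $\Phi_n$ is radial, $\mathcal{L}_n$ commutes with rotations and splits as $\bigoplus_{m\ge0}\mathcal{L}_{n,m}$ on the spherical harmonic sectors, each $\mathcal{L}_{n,m}$ self-adjoint on $L^2(\rho\,r^2dr)$ with discrete spectrum (the resolvent of $\mathcal{L}_n+M_n$ being compact, as already recalled). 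Differentiating $\Delta\Phi_n-\Lambda\Phi_n+\Phi_n^p=0$ and using the identities $\Lambda\Delta=\Delta\Lambda-2\Delta$, $\Lambda(\Phi_n^p)=p\Phi_n^{p-1}\Lambda\Phi_n-2\Phi_n^p$ and $\partial_k(\Lambda v)=\partial_k v+\Lambda(\partial_k v)$ yields $\mathcal{L}_n(\Lambda\Phi_n)=-2\,\Lambda\Phi_n$ and $\mathcal{L}_n(\partial_k\Phi_n)=-\partial_k\Phi_n$. Thus $\Lambda\Phi_n$ is a radial ($m=0$) eigenfunction at $-2$, vanishing exactly $n$ times by Proposition \ref{prop:constructionPhin}, while $\partial_k\Phi_n=\tfrac{x_k}{r}\Phi_n'$ is an $m=1$ eigenfunction at $-1$, with radial part $\Phi_n'$.

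For each $m$, Sturm--Liouville theory gives eigenvalues $\nu_0^{(m)}<\nu_1^{(m)}<\cdots\to+\infty$ with the $j$-th eigenfunction having exactly $j$ zeros on $(0,+\infty)$, and $\nu_j^{(m)}$ strictly increasing in $m$ since the centrifugal potential $\tfrac{m(m+1)}{r^2}$ is. Hence $\nu_n^{(0)}=-2$, so $\mathcal{L}_{n,0}$ has exactly $n$ eigenvalues strictly below $-2$, which are the simple values $-\mu_{n+1,n}<\cdots<-\mu_{2,n}$; and, $\Phi_n$ being strictly decreasing so that $\Phi_n'$ has no zero on $(0,+\infty)$ (a consequence of the sharp asymptotics of Corollary \ref{cor:consequenceforPhinofproponexistence} together with a continuity/ODE argument), $\partial_k\Phi_n$ is the ground state of $\mathcal{L}_{n,1}$, so $\nu_0^{(1)}=-1$. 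It then remains to rule out any further eigenvalue in $(-2,0)$ and at $0$. For $m\ge2$ one has $\mathcal{L}_{n,m}\ge0$: the $r^{-2}$ tail of $p\Phi_n^{p-1}$ is strictly subcritical for the $m$-sector Hardy inequality ($pc_\infty^{p-1}<2$ stays well below the threshold), which bounds the number of negative eigenvalues, while the localized bump of the potential at scale $\mu_n$ is controlled after rescaling by comparison with $H_m\ge0$ (recall $H_m\ge0$ for $m\ge1$, since $\partial_kQ$ is the ground state of $H_1$); the same rescaled comparison shows $-1$ is the only negative eigenvalue of $\mathcal{L}_{n,1}$. Finally, the absence of eigenvalue of $\mathcal{L}_{n,0}$ in $(-2,0]$ is obtained by the interior/exterior matching bookkeeping of Section \ref{sectionconstruction}: any such $L^2_\rho$ eigenfunction at $\nu$ would, after rescaling, coincide to leading order with $\Lambda Q$ for $r\lesssim r_0$ (the only solution of $H_0g\approx0$ regular at the origin) and with a fixed decaying solution of $\mathcal{L}_{\infty,0}\phi=\nu\phi$ for $r\gtrsim r_0$, and matching logarithmic derivatives at $r_0$ with $\mu_n$ frozen to one of the quantized values of Proposition \ref{prop:constructionPhin} is incompatible with $\nu\in(-2,0]$ by the oscillation count carried out there. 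This gives \eqref{eignevalurenvon}--\eqref{spectralgapeigenvalue}; the decay \eqref{estgrowth} follows from the indicial analysis of $\mathcal{L}_{n,m}\psi=-\mu\psi$ at $r=+\infty$, where the dominant balance $r\psi'+(\tfrac2{p-1}+\mu)\psi\approx0$ forces $\psi\sim r^{-\frac2{p-1}-\mu_{j,n}}$, derivatives lowering the power accordingly.

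Let $P$ be the $L^2_\rho$-orthogonal projection onto the span of the non-positive eigenfunctions $\psi_{1,n},\dots,\psi_{n+1,n},\psi_{0,n}^1,\psi_{0,n}^2,\psi_{0,n}^3$. Since $P$ commutes with $\mathcal{L}_n$, the spectral theorem and \eqref{spectralgapeigenvalue} give $(\mathcal{L}_n(1-P)\e,(1-P)\e)_\rho\ge\lambda_{0,n}\|(1-P)\e\|_{L^2_\rho}^2$ and $(\mathcal{L}_nP\e,P\e)_\rho\ge-\mu_{n+1,n}\|P\e\|_{L^2_\rho}^2$, hence
\bee
(\mathcal{L}_n\e,\e)_\rho &\ge& \lambda_{0,n}\|\e\|_{L^2_\rho}^2-(\lambda_{0,n}+\mu_{n+1,n})\Big(\sum_{j=1}^{n+1}(\e,\psi_{j,n})_\rho^2+\sum_{k=1}^3(\e,\psi_{0,n}^k)_\rho^2\Big).
\eee
On the other hand, integrating by parts with $\nabla\rho=-x\rho$ the first-order terms cancel and produce the exact identity $(\mathcal{L}_n\e,\e)_\rho=\|\nabla\e\|_{L^2_\rho}^2+\tfrac2{p-1}\|\e\|_{L^2_\rho}^2-p\int\Phi_n^{p-1}\e^2\rho$, whence $(\mathcal{L}_n\e,\e)_\rho\ge\|\nabla\e\|_{L^2_\rho}^2-C_n\|\e\|_{L^2_\rho}^2$ with $C_n=p\|\Phi_n\|_{L^\infty}^{p-1}$ since $\Phi_n\in L^\infty$. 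Taking the convex combination $\theta\times(\text{the second bound})+(1-\theta)\times(\text{the first bound})$ with $\theta>0$ small enough that $(1-\theta)\lambda_{0,n}-\theta C_n>0$ yields \eqref{coerciviteapoids} with a constant $c_n>0$ depending on $n$ through $\lambda_{0,n},\mu_{n+1,n}$ and $\|\Phi_n\|_{L^\infty}$.

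\textbf{Main obstacle.} The serious difficulty is the part of the second step asserting that the negative spectrum consists of \emph{exactly} the $n+1$ radial eigenvalues up to $-2$ together with the triple value $-1$, with nothing in $(-2,0)\setminus\{-1\}$ and nothing at $0$. The bare Hardy comparison only disposes of the sectors $m\ge2$; the radial gap genuinely requires the uniform-in-$n$ asymptotics \eqref{estoneaprouver}--\eqref{estaprvoeige} and the interior/exterior matching analysis of Section \ref{sectionconstruction}. Everything else is soft spectral theory and routine ODE asymptotics.
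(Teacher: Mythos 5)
Your overall skeleton (spherical harmonics, the explicit eigenvalues $-2$ and $-1$ from $\Lambda\Phi_n$ and $\pa_k\Phi_n$, Sturm--Liouville zero counting, inner/outer comparison, and the final G{\aa}rding-type upgrade from the $L^2_\rho$ spectral gap to the $H^1_\rho$ bound \eqref{coerciviteapoids}, which is a clean and correct way to finish) matches the paper's strategy. But the step you yourself flag as the serious one --- no radial eigenvalue in $(-2,0]$ and none at $0$ --- is not actually proved by what you invoke. You assert it is ``obtained by the interior/exterior matching bookkeeping of Section \ref{sectionconstruction}'': that bookkeeping quantizes $\mu_n$ and counts the zeros of $\Lambda\Phi_n$, but it says nothing about how the phase of the \emph{outer decaying solution} of $\mathcal{L}_{\infty,0}\phi=\nu\phi$ moves as $\nu$ ranges over $(-2,0]$. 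A priori that phase could realign modulo $\pi$ with the inner $\Lambda Q$ profile for some $\nu\in(-2,0]$, producing an extra radial eigenvalue, and nothing in Section \ref{sectionconstruction} excludes this. The paper's proof needs three additional ingredients: the Dirichlet operators $A_n[r_{n,0}]$, $A_\infty[r_{n,0}]$ and the spectral comparison Lemmas \ref{lemma:perturbationspectrumAn}--\ref{lemma:perturbationspectrumAinfty} (using the uniform bounds \eqref{estoneaprouver}, \eqref{uniformroximity}); the explicit Kummer/Tricomi phase $\Phi(\l)$ of the $H^1_\rho$ solutions of $\mathcal{L}_\infty\psi=\l\psi$ from Lemma \ref{lemma:spectrumLinfty}; and the quantitative (numerically verified, uniform in $p\geq 5$) phase gap $\sup_{-2\leq\l\leq 0.5}\big(\Phi(\l)-\Phi(-2)-\pi\big)<0$, which is what forces the next Dirichlet eigenvalue above $-2$ to be $\geq 0.5$ and hence pins the count at exactly $n+1$ and keeps $0$ out of the spectrum. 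Without an input of this type your matching argument cannot close, so this is a genuine gap at the heart of the proposition.

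Two secondary points are also under-justified. First, you take ``$\Phi_n'$ has no zero'' as an input deduced from the sup-norm asymptotics of Corollary \ref{cor:consequenceforPhinofproponexistence}; those bounds do not control $\Phi_n'$, and in the paper the statement is a \emph{consequence} of the $m=1$ analysis, not an input: one counts the zeros of the fundamental solution $\varphi_{n,1}$ (exactly one, via the uniform closeness \eqref{vnvnonvnornor} to $\nu_1>0$ on $[0,r_0]$ and the positivity \eqref{cnieoenvoelinfiit} of the $\mathcal{L}_{\infty,1}$ form on $(r_0,\infty)$), and one must also show $\varphi_{n,m}\notin H^1_\rho$ (the Wronskian/growth argument) to exclude $0$ from the spectrum in the sectors $m\geq 1$ --- a point your sketch omits entirely, and which ``$\mathcal{L}_{n,m}\geq 0$'' does not give. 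Second, your plan to prove $\mathcal{L}_{n,m}\geq 0$ for $m\geq 2$ (and exactly one negative eigenvalue for $m=1$) by splitting the potential into a rescaled bump controlled by $H_m\geq 0$ and a Hardy-subcritical tail is only a heuristic: the quadratic-form localization produces cross/commutator terms and the drift $\Lambda$ does not simply disappear after rescaling, whereas the paper's ODE route (zeros of $\varphi_{n,m}$, sign of $\varphi_{n,m}'(r_0)$, and the pointwise bound \eqref{borne potentiel harmonique}) sidesteps these issues. The eigenvalue identities, the decay \eqref{estgrowth} by indicial analysis, and the final convexity step are fine as written.
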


In other words, $\mathcal L_n$ admits $n+1$ instability directions when $\Lambda \Phi_n$ vanishes $n$ times, and $0$ is never in the spectrum. Moreover, there are no additional non radial instabilities apart from the trivial translation invariance \eqref{tranaltionmode}.\\

The rest of this section is devoted to preparing the proof of Proposition \ref{prop:spectral} which is completed in section \ref{sec:proofopropspectral}. 


\subsection{Decomposition in spherical harmonics}


 We first recall some basic facts about spherical harmonics. Spherical harmonics are the eigenfunctions of the Laplace-Beltrami operator on the sphere $\mathbb{S}^2$. The spectrum of this self-adjoint operator with compact resolvent is 
$$\left\{ - m(m+1), \ m\in \mathbb N\right\}.$$ 
For each $m\in \mathbb{N}$ the eigenvalue $m(m+1)$ has geometric multiplicity $2m+1$. We then denote the associated orthonormal family of eigenfunctions by $(Y^{(m,k)})_{m\in \mathbb N, \ -m\leq k \leq m}$ so that we have
$$
L^2(\mathbb{S}^2)= \underset{m=0}{\overset{+\infty}{\oplus}} \text{Span}\left\langle Y^{(m,k)}, \ -m\leq k \leq m\right\rangle
$$
and
\be \label{intro:eq:def Ynk}
-\Delta_{\mathbb S^2}Y^{(m,k)}=m(m+1)Y^{(m,k)}, \,\,\,\, \int_{\mathbb{S}^2} Y^{(m,k)}Y^{(m',k')}d\sigma_{\mathbb{S}^2}=\delta_{(m,k),(m',k')}.
\ee

In particular, $u\in H^1_{\rho}$ is decomposed as
$$u=\sum_{m=0}^{+\infty}\sum_{k=-m}^m u_{m,k}Y^{(m,k)}$$
where $u_{m,k}$ are radial functions satisfying the Parseval formula
$$\|u\|_{\rho}^2=\sum_{m=0}^{+\infty}\sum_{k=-m}^m \|u_{m,k}\|^2_{\rho}.$$
This allows us to write
\bea\label{eq:decompositionLninLmn}
(\mathcal{L}_n(u),u)_{\rho} &=&  \sum_{m=0}^{+\infty}\sum_{k=-m}^m (\mathcal{L}_{n,m}(u_{m,k}),u_{m,k})_{\rho}
\eea
where we recall
 $$\mathcal{L}_{n,m}:= -\partial_{rr}-\frac{2}{r}\partial_r+\frac{2}{p-1}+r\pr_r +\frac{m(m+1)}{r^2}-p\Phi_n^{p-1}.$$ 
We also recall for further use the definition of the operators:
 \bee
 && \mathcal{L}_{\infty,m}:= -\partial_{rr}-\frac{2}{r}\partial_r+\frac{2}{p-1}+r\pr_r +\frac{m(m+1)}{r^2}-p\Phi_*^{p-1},\\
 && H_m:= -\partial_{rr}-\frac{2}{r}\partial_r+\frac{m(m+1)}{r^2}-pQ^{p-1}.
 \eee


\subsection{Linear ODE analysis}


We compute in this section the fundamental solutions of $\mathcal L_{n,m}$, $H_m$ and we recall the behavior of the eigenvalues of $\mathcal L_\infty$. The claims are standard and follow from a classical ODE perturbation analysis using in an essential way the uniform bound \eqref{behviourselfsimlocal}.

\begin{lemma}[Fundamental solution for $\mathcal L_{n,m}$, $H_m$]
\label{lemma:asymptoticbehavioratinfinityforLmn}
Let $m\geq 1$. Let $\Delta_m>0$ be given by \eqref{eq:definitionofdiscriminantDeltam}.\\
\noindent{\em 1. Basis for $\mathcal L_{n,m}$}. Let $\phi_{n,m}$ be the solution to $\mathcal L_{n,m}\phi_{n,m}=0$ with the behaviour at the origin 
\be
\label{cneoneoenove}
\varphi_{n,m}=r^m[1+O(r^2)] \ \  \mbox{as} \ \ r\to 0,
\ee
then
\be
\label{beahvriour}\varphi_{n,m} \sim \frac{c_1}{r^{\frac{2}{p-1}}}+c_2 r^{\frac{2}{p-1}-3}e^{\frac{r^2}{2}}\textrm{ as } r\to+\infty, \ \ (c_1,c_2)\neq (0,0).
\ee
\noindent{\em 2. Basis for $H_{1}$}: let $m=1$, then there exists a fundamental basis $(\nu_1,\phi_1)$ with
\be
\label{beahviorunumbis}\nu_1(r)=\frac{Q'(r)}{Q''(0)}\left|\begin{array}{ll} =r[1+O(r^2)]\ \ \mbox{as} \ \ r\to 0\\ \sim \frac{c_{1,+}}{r^{\frac{1+\sqrt{\Delta_1}}{2}}} \ \ \mbox{as}\ \ r\to +\infty\end{array}\right.\ee
and
\be
\label{havviournuone}
\phi_1(r)=\left|\begin{array}{ll}\frac{1}{r^2}[1+O(r^2)]\ \ \mbox{as} \ \ r\to 0\\ \sim \frac{c_{1,-}}{r^{\frac{1-\sqrt{\Delta_1}}{2}}} \ \ \mbox{as}\ \ r\to +\infty,\ \ c_{1,-}\neq 0.\end{array}\right.
\ee

\noindent{\em 2. Basis for $H_{m}$}:  let $m\ge 2$, then there exists a fundamental basis $(\nu_m,\phi_m)$ with
\be
\label{beahviorunum}\nu_m\left|\begin{array}{ll}=r^m[1+O(r^2)] \ \ \mbox{as}\ \ r\to 0\\
\sim \frac{c_{m,-}}{r^{\frac{1-\sqrt{\Delta_m}}{2}}}\ \ \textrm{ as }r\to +\infty,\ \ c_{m,-}>0\end{array}\right.
\ee
and
\be
\label{havviournuonebis}
\phi_m(r)=\left|\begin{array}{ll}\frac{ 1}{r^{1+m}}[1+O(r^2)]\ \ \mbox{as} \ \ r\to 0\\ \sim \frac{c_{m,+}}{r^{\frac{1+\sqrt{\Delta_m}}{2}}} \ \ \mbox{as}\ \ r\to +\infty, \ \ c_{m,+}\neq 0.\end{array}\right.
\ee

\noindent{\em 4. Positivity}:  
\be
\label{psitioivoovov}
\nu_m(r)>0 \ \ \mbox{on}\ \ (0,+\infty).
\ee
\noindent{\em 5. Uniform closeness}: Fix $m\geq 1$. There exists a sequence\footnote{$(\mu_n)_{n\geq N}$ is the same sequence of scales as in \fref{behviourselfsimlocal} in Proposition \ref{propconstruction} and Corollary \ref{cor:consequenceforPhinofproponexistence}.} $\mu_n\to 0$ as $n\to +\infty$ such that for $n\geq N$ large enough
\be\label{vnvnonvnornor}
\sup_{0\leq r\leq r_0}\frac{\left|\mu_n^{-m}\varphi_{n,m}(r) - \nu_m\left(\frac{r}{\mu_n}\right)\right|}{\left|\nu_m\left(\frac{r}{\mu_n}\right)\right|} +\sup_{0\leq r\leq r_0}\frac{\left|\mu_n^{-m+1}\varphi_{n,m}'(r) - \nu_m'\left(\frac{r}{\mu_n}\right)\right|}{\left|\nu_m'\left(\frac{r}{\mu_n}\right)\right|}\lesssim r_0^2.
\ee
\end{lemma}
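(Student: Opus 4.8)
\emph{Overview.} I would prove the five claims in the order (i) a local Frobenius analysis at $r=0$; (ii) an asymptotic analysis at $r=+\infty$ as a perturbation of explicit Euler operators; (iii) gluing (i)--(ii) into the global solutions $\varphi_{n,m},\nu_m,\phi_m$ and extracting the non-vanishing of their leading constants; (iv) the positivity \eqref{psitioivoovov} by a ground-state/Sturm argument; and (v) the uniform closeness \eqref{vnvnonvnornor} by a rescaling and a contraction using the resolvent of $H_m$.

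\emph{Steps (i)--(ii): local and asymptotic ODE analysis.} Near $r=0$ both $\mathcal{L}_{n,m}$ and $H_m$ are perturbations of the Euler operator $-\partial_{rr}-\frac2r\partial_r+\frac{m(m+1)}{r^2}$, with indicial roots $m$ and $-m-1$. Since $\Phi_n,Q$ are smooth (Proposition \ref{propconstruction}) with Taylor expansions in even powers of $r$, and the remaining terms $\frac2{p-1}+r\partial_r-p\Phi_n^{p-1}$ resp.\ $-pQ^{p-1}$ only shift powers by even integers, while $m-(-m-1)=2m+1$ is odd, Frobenius produces (no resonance, no logarithm) a regular solution $\sim r^m[1+O(r^2)]$ and a singular one $\sim r^{-m-1}[1+O(r^2)]$: this gives \eqref{cneoneoenove} and the origin parts of \eqref{beahviorunumbis}--\eqref{havviournuonebis}. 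For $m=1$, $\partial_iQ$ solves $H(\partial_iQ)=0$ and lies in the $m=1$ sector, so $Q'$ solves $H_1Q'=0$, and differentiating $Q''+\frac2rQ'+Q^p=0$ at $0$ gives $Q''(0)=-\frac13$, whence $\nu_1=Q'/Q''(0)=-3Q'=r[1+O(r^2)]$. At $r=+\infty$, $Q^{p-1}(r)=c_\infty^{p-1}r^{-2}+O(r^{-2-\delta})$ for some $\delta>0$ by \eqref{cejnenieie} (and $\Phi_n^{p-1}(r)=O(r^{-2})$ with $\Phi_n\to\Phi_*$ fast in this range, Proposition \ref{propconstruction}), so $H_m$ is a perturbation, by coefficients integrable against $r$, of the Euler operator with indicial roots $-\frac{1\pm\sqrt{\Delta_m}}2$, $\Delta_m=1+4m(m+1)-4pc_\infty^{p-1}$ as in \eqref{eq:definitionofdiscriminantDeltam}; since in $d=3$ one computes $pc_\infty^{p-1}=\frac{2p(p-3)}{(p-1)^2}<2\le m(m+1)$ for every $m\ge1$, we get $\Delta_m>1>0$. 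Running the variation-of-constants/contraction scheme of Lemma \ref{lemma:homogeneoussolutionsofH} on $[R,+\infty)$, every solution of $H_m\psi=0$ admits an expansion $\psi(r)=a_+(\psi)r^{-\frac{1+\sqrt{\Delta_m}}2}(1+o(1))+a_-(\psi)r^{-\frac{1-\sqrt{\Delta_m}}2}(1+o(1))$ as $r\to+\infty$, with $\psi\mapsto(a_+(\psi),a_-(\psi))$ a linear isomorphism onto $\mathbb{R}^2$ (a solution with $a_+=a_-=0$ would decay faster than both fundamental modes, hence vanish). The same argument for $\mathcal{L}_{n,m}$ reproduces the two behaviours $r^{-\frac2{p-1}}$ and $r^{\frac2{p-1}-3}e^{r^2/2}$ of \eqref{beahvriour} (exactly those of $\psi_1,\psi_2$ in Lemma \ref{lemma:constructionofpsi1andpsi2}), so any nonzero solution has $(c_1,c_2)\ne(0,0)$.

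\emph{Steps (iii)--(iv): global solutions, non-vanishing, positivity.} For $m\ge2$, let $\nu_m$ be the regular solution; then $c_{m,-}:=a_-(\nu_m)\ne0$, because otherwise $\nu_m\sim c\,r^{-(1+\sqrt{\Delta_m})/2}$ with exponent $<-\frac52$, so $\nu_m\in L^2(r^2dr)$ would be a zero-eigenfunction of $H_m$, contradicting the form positivity proved below. Let $\phi_m$ span the one-dimensional space $\{a_-=0\}$; since $c_{m,-}\ne0$ it is not a multiple of $\nu_m$, hence singular at $0$ with leading order $r^{-m-1}$ (normalize), and $c_{m,+}:=a_+(\phi_m)\ne0$, for otherwise $\phi_m\equiv0$. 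For $m=1$ the roles swap: $\nu_1=-3Q'\sim-3\Phi_*'$ is a pure $a_+$-mode (it decays, while the $a_-$-mode $r^{2/(p-1)}$ grows; note $-(p+1)/(p-1)>-\frac32$, so $\Phi_*'$ dominates the $O(r^{-3/2})$ correction from \eqref{cejnenieie}), so $c_{1,+}=a_+(\nu_1)\ne0$, while $\phi_1$, spanning the complement of $\mathrm{span}\,\nu_1$, is singular ($\sim r^{-2}$) with $c_{1,-}=a_-(\phi_1)\ne0$ (else $\phi_1\parallel\nu_1$). For positivity: $\nu_1=-3Q'>0$ on $(0,+\infty)$ since $Q>0$ everywhere (\cite{Di,Jo,YiLi}) and $Q'<0$ there ($Q'<0$ near $0$, and a first zero $r_*$ of $Q'$ would be a strict local maximum of $Q$ because $Q''(r_*)=-Q(r_*)^p<0$, impossible as $Q$ decreases strictly up to $r_*$ with $Q(r_*)<Q(0)$). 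The positive solution $\nu_1$ yields the ground-state representation $\langle H_1u,u\rangle=\int\nu_1^2|\partial_r(u/\nu_1)|^2r^2\,dr\ge0$, hence for $m\ge2$ and $u\ne0$, $\langle H_mu,u\rangle=\langle H_1u,u\rangle+(m(m+1)-2)\int\frac{u^2}{r^2}r^2\,dr>0$, which both supplies the contradiction used above and forces $\nu_m>0$ on $(0,+\infty)$ (a first interior zero $r_*$ of $\nu_m$ would make this form vanish on $\mathbf{1}_{(0,r_*)}\nu_m$, the boundary terms dying at $0$ via $\nu_m\sim r^m$ and at $r_*$ via $\nu_m(r_*)=0$). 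Positivity of $\nu_m$ then pins the sign of its leading term at infinity, so $c_{m,-}>0$.

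\emph{Step (v): uniform closeness, and the main obstacle.} Put $y=r/\mu_n$, $r_1=r_0/\mu_n$, $g(y):=\mu_n^{-m}\varphi_{n,m}(\mu_ny)$. Inserting $\Phi_n(r)=\mu_n^{-2/(p-1)}(Q+\mu_n^2T_1)(r/\mu_n)$ from Proposition \ref{prop:interiorsolution} (with $\|T_1\|_{Y_{r_1}}+\|\Lambda T_1\|_{Y_{r_1}}+\|\Lambda^2T_1\|_{Y_{r_1}}\lesssim1$) into $\mathcal{L}_{n,m}\varphi_{n,m}=0$ and rescaling, $g$ solves $H_mg=-\mu_n^2R_n[g]$ on $[0,r_1]$, where $R_n[g]=\Lambda g$ plus error terms bounded in $Y_{r_1}$-type norms by the $T_1$ bounds, and $g(y)=y^m(1+O(\mu_n^2y^2))=\nu_m(y)(1+O(\mu_n^2y^2))$ near $0$. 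Then $e:=g-\nu_m$ solves $H_me=-\mu_n^2R_n[\nu_m+e]$ with $e=O(\mu_n^2y^{m+2})$ near $0$; inverting $H_m$ by the resolvent $\mathcal{S}_m$ built from $\nu_m,\phi_m$ (the exact analogue of $\SS$ and \eqref{resneonoess}) and using $\mu_n^2r_1^2=r_0^2\ll1$, a contraction gives $\|e\|_{Y_{r_1}}\lesssim\mu_n^2\cdot(\text{data})$; combined with the Frobenius control near $y=0$ this gives $|e(y)|/|\nu_m(y)|\lesssim\mu_n^2y^2\le r_0^2$, and differentiating the fixed point handles $\nu_m'$, i.e.\ \eqref{vnvnonvnornor}. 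I expect the main obstacle to be precisely this last step: setting up the rescaled resolvent of $H_m$ with weights matched to the $Y_{r_1}$-scale so that the contraction closes \emph{uniformly in $n$}, checking that the errors produced by $\mu_n^2T_1$ and by the un-rescaled $\Lambda$-part are genuinely lower order over the whole range $0\le y\le r_1$, and --- because $|\nu_m(y)|\sim y^m$ degenerates at the origin --- combining the interior $Y_{r_1}$-bound with the Frobenius expansion near $y=0$ to upgrade to the \emph{relative} bound in \eqref{vnvnonvnornor}. The other point needing real input is the non-vanishing of the asymptotic constants in Step (iii), which rests on the spectral positivity of $H_m$ for $m\ge2$ and on the explicit identification $\nu_1=-3Q'$.
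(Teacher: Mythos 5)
Your steps (i)--(iv) follow essentially the same route as the paper: Frobenius/fixed point at the origin, perturbation of the limiting operator at infinity (Kummer behaviour for $\mathcal L_{n,m}$, Euler exponents $-\frac{1\pm\sqrt{\Delta_m}}{2}$ for $H_m$), the identification $\nu_1=Q'/Q''(0)$ with $Q'<0$, positivity of the $H_1$ form from the non-vanishing zero mode and $H_m\geq H_1+\frac{m(m+1)-2}{r^2}$ for $m\geq 2$, and the non-vanishing of the asymptotic constants by the $\dot H^1$ contradiction; your ground-state representation and the explicit check that $Q'$ decays exactly at the rate $r^{-\frac{1+\sqrt{\Delta_1}}{2}}$ (because $1+\frac{2}{p-1}<\frac32$) are, if anything, slightly more explicit than the paper's. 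These parts are sound.

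The gap is in step (v). You propose to close the argument by a contraction in the $Y_{r_1}$-type norm, i.e.\ with the weight $(1+y)^{3/2}$ as in \eqref{resneonoess}, obtaining $\|e\|_{Y_{r_1}}\lesssim \mu_n^2$ and then dividing by $\nu_m$. This cannot give the \emph{relative} bound \eqref{vnvnonvnornor}, and it fails decisively for $m=1$: there $\nu_1(y)\sim c_{1,+}y^{-\frac{1+\sqrt{\Delta_1}}{2}}$ \emph{decays} on $1\leq y\leq r_0/\mu_n$, so an absolute bound $|e(y)|\lesssim \mu_n^2(1+y)^{3/2}$ only yields
$\frac{|e(y)|}{|\nu_1(y)|}\lesssim \mu_n^2\,(1+y)^{\frac32+\frac{1+\sqrt{\Delta_1}}{2}}$, which at the outer edge $y\sim r_0/\mu_n$ is of size $\mu_n^{2-\frac52-\frac{2}{p-1}}r_0^{\frac52+\frac{2}{p-1}}\to+\infty$ as $\mu_n\to 0$, not $O(r_0^2)$. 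The estimate has to be run directly on the ratio $|e|/|\nu_m|$ (equivalently with weights dictated by the actual behaviour of $\nu_m$ and $\phi_m$), as the paper does: after pinning $a_1=a_2=0$ by the origin behaviour, one performs a Gronwall-type estimate separately on $0\leq y\leq 1$ and $1\leq y\leq r_0/\mu_n$, and for $m=1$ this produces an unavoidable growth factor $\left(\frac{r_0}{\mu_n}\right)^{\sqrt{\Delta_1}}$ multiplying the source size $\frac{\mu_n^{s_c+1}}{r_0^{2-\frac{2}{p-1}}}+\mu_n^2$; it is only absorbed because $\sqrt{\Delta_1}=\frac{p+3}{p-1}<2$ and $\sqrt{\Delta_1}-1=\frac{4}{p-1}<1$, i.e.\ precisely because $p>5$. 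Your scheme never meets this obstruction, so it both misses where the hypothesis $p>5$ enters and, as written, does not prove \eqref{vnvnonvnornor} for $m=1$ (for $m\geq2$ it could be patched since $\frac{\sqrt{\Delta_m}-1}{2}>\frac32$, provided you also propagate the refined $O(\mu_n^2y^{m+2})$ behaviour through the Duhamel formula near the origin rather than only quoting the $Y_{r_1}$ bound). The derivative part of \eqref{vnvnonvnornor} then follows by the same weighted argument, not merely by ``differentiating the fixed point''.
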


The uniform in $n$ bound \eqref{vnvnonvnornor} follows from the uniform control \eqref{behviourselfsimlocal} using a standard ODE analysis. We provide a detailed proof of Lemma \ref{lemma:asymptoticbehavioratinfinityforLmn} in Appendix \ref{appendixlemmaun} for the sake of completeness.\\
 
We now detail the structure of the smooth zero of $\mathcal L_{n,0}$ which is the key to the counting of non positive eigenvalues. Let $\varphi_{n,0}$ be the solution to
\be
\label{defphinoneveev}
\mathcal{L}_{n,0}(\varphi_{n,0}) = 0,\,\,\,\, \varphi_{n,0}(0)=1,\,\,\,\,\varphi_{n,0}'(0)=0.
\ee
We recall that $r_{0,n}<r_0$ denotes the last zero of $\Lambda\Phi_n$ before $r_0$, and we let $r_{1,n}<r_0$ denote the last zero of $\varphi_{n,0}$ before $r_0$. We claim:

\begin{lemma}[Zeroes of $\Phi_{n,0}$]
\label{lemma:comparisionforrleqr0toodeH}
There holds
\be
\label{uniformroximity}
\sup_{0\leq r\leq r_0}\left(1+\frac{r}{\mu_n}\right)^{\frac{1}{2}}\left|\varphi_{n,0}(r) - \frac{p-1}{2}\Lambda Q\left(\frac{r}{\mu_n}\right)\right| \lesssim r_0^2
\ee
and 
\be
\label{eronvoinv}r_{1,n}=r_{0, n}+O(r_0^3), \ \ e^{-\frac{2\pi}{\omega}}r_0\leq r_{1, n}\leq r_0.
\ee
\end{lemma}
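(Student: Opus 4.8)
The plan is to rescale to the soliton variable, recognise the resulting equation as a small perturbation of $H_0u=0$ whose regular solution is $\frac{p-1}{2}\Lambda Q$, solve it by a linear fixed point built on the resolvent $\SS$ of Lemma \ref{lemma:homogeneoussolutionsofH}, and then locate the last zero of $\varphi_{n,0}$ from \eqref{uniformroximity} together with Corollary \ref{cor:consequenceforPhinofproponexistence}.

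First I would set $\psi_n(y):=\varphi_{n,0}(\mu_n y)$ for $0\le y\le r_1:=r_0/\mu_n$. Inserting $\Phi_n(\mu_n y)=\mu_n^{-2/(p-1)}(Q+\mu_n^2T_1)(y)$ from Proposition \ref{prop:interiorsolution} into $\mathcal L_{n,0}\varphi_{n,0}=0$ and multiplying by $\mu_n^2$, the equation becomes
$$H_0\psi_n=V_n\psi_n-\mu_n^2\Lambda\psi_n\ \text{on}\ [0,r_1],\qquad V_n:=p\big[(Q+\mu_n^2T_1)^{p-1}-Q^{p-1}\big],$$
with Cauchy data $\psi_n(0)=1$, $\psi_n'(0)=0$, which coincide with those of $\frac{p-1}{2}\Lambda Q$ since $\Lambda Q(0)=\frac{2}{p-1}$, $(\Lambda Q)'(0)=0$; and recall $H_0(\Lambda Q)=0$. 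Writing $\psi_n=\frac{p-1}{2}\Lambda Q+g_n$, the function $g_n$ solves a \emph{linear} equation with vanishing Cauchy data, hence, since $\SS(f)$ produces the solution vanishing to second order at the origin, it is the unique fixed point in $Y_{r_1}$ of $g\mapsto\SS\big(V_n(\frac{p-1}{2}\Lambda Q+g)-\mu_n^2\Lambda(\frac{p-1}{2}\Lambda Q+g)\big)$. Using $Q\sim(1+y)^{-2/(p-1)}$, $|\Lambda Q|+|\Lambda\Lambda Q|\lesssim(1+y)^{-1/2}$, $|T_1|\lesssim(1+y)^{3/2}$ and $\mu_n^2|T_1|\lesssim Q$, one gets $|V_n|\lesssim\mu_n^2(1+y)^{1-s_c}$, hence $\sup_{[0,r_1]}(1+y)^{1/2}|V_n\frac{p-1}{2}\Lambda Q-\mu_n^2\Lambda(\frac{p-1}{2}\Lambda Q)|\lesssim\mu_n^2$ (this is where $s_c>1$ enters), while $\sup_{[0,r_1]}(1+y)^{1/2}|V_ng-\mu_n^2\Lambda g|\lesssim(\mu_n^{s_c-1}r_0^{3-s_c}+r_0^2)\|g\|_{Y_{r_1}}$. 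By \eqref{resneonoess} and $\mu_n^2r_1^2=r_0^2\ll1$ the map is a contraction and yields $g_n$ with $\|g_n\|_{Y_{r_1}}\lesssim\mu_n^2$; by uniqueness for the regular singular ODE at $y=0$, $\psi_n=\frac{p-1}{2}\Lambda Q+g_n$. Unscaling, $(1+r/\mu_n)^{1/2}|g_n(r/\mu_n)|\le(1+r_1)^2\|g_n\|_{Y_{r_1}}\lesssim r_1^2\mu_n^2=r_0^2$, which is exactly \eqref{uniformroximity}.

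For \eqref{eronvoinv} I would feed \eqref{uniformroximity} into $\Lambda Q(y)=c_7\sin(\om\log y+c_8)y^{-1/2}+O(y^{-s_c+1/2})$: at any zero $r\in[0,r_0]$ of $\varphi_{n,0}$ one gets $|\sin(\om\log(r/\mu_n)+c_8)|\lesssim r_0^2$. The matching construction provides $c_8=c_2+\alpha_0$ and $\mathrm{dist}(\om\log(r_0/\mu_n)+c_8,\pi\mathbb Z)\ge\delta_1$ (cf. \eqref{eq:equationwithonestar}); combined with the fact that on $(r_{\Lambda Q,n},r_0/\mu_n)$ the phase $\om\log y+c_8$ sweeps an interval of length $<\pi$ starting $O(r_0^3)$-close to a multiple of $\pi$, and that $\Lambda Q$ has a simple zero at $r_{\Lambda Q,n}$ with $|(\Lambda Q)'(r_{\Lambda Q,n})|\gtrsim r_{\Lambda Q,n}^{-3/2}$, one checks for a universal $M$ that: (i) $\varphi_{n,0}$ does not vanish on $(\mu_nr_{\Lambda Q,n}(1+Mr_0^2),r_0)$, and (ii) $\varphi_{n,0}$ changes sign on $(\mu_nr_{\Lambda Q,n}(1-Mr_0^2),\mu_nr_{\Lambda Q,n}(1+Mr_0^2))$. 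Hence $r_{1,n}=\mu_nr_{\Lambda Q,n}(1+O(r_0^2))$. Since Corollary \ref{cor:consequenceforPhinofproponexistence} gives $r_{0,n}=\mu_nr_{\Lambda Q,n}(1+O(r_0^2))$, $e^{-3\pi/(2\om)}r_0/\mu_n\le r_{\Lambda Q,n}\le r_0/\mu_n$ and $e^{-2\pi/\om}r_0\le r_{0,n}\le r_0$, and since therefore $\mu_nr_{\Lambda Q,n}=\Theta(r_0)$, we conclude $r_{1,n}=r_{0,n}+O(r_0^3)$ and $e^{-2\pi/\om}r_0\le r_{1,n}\le r_0$ (the upper bound being automatic from the definition of $r_{1,n}$).

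The main obstacle is the weighted bookkeeping in the fixed point: one must verify that the source term is $O(\mu_n^2)$ in the $\SS$-norm and that the linear feedback operator has norm $\ll1$, both of which rely on $s_c>1$, i.e. on the correction term in the expansion of $Q$ decaying strictly faster than $\Phi_*$. A secondary difficulty is step (i) above, namely excluding a zero of $\varphi_{n,0}$ between $\mu_nr_{\Lambda Q,n}$ and $r_0$; this rests entirely on the uniform lower bound $\mathrm{dist}(\om\log(r_0/\mu_n)+c_8,\pi\mathbb Z)\ge\delta_1$ inherited from the matching of Proposition \ref{prop:constructionPhin}.
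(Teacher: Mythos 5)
Your proposal is correct and follows essentially the same route as the paper's Appendix D proof: rescale by $\mu_n$, compare $\varphi_{n,0}(\mu_n\cdot)$ with $\frac{p-1}{2}\Lambda Q$ using that both share the Cauchy data at the origin, invert $H$ through the $(\Lambda Q,\rho)$ basis of Lemma \ref{lemma:homogeneoussolutionsofH}, and then locate $r_{1,n}$ from the oscillatory asymptotics of $\Lambda Q$ together with the phase-distance bound \eqref{eq:equationwithonestar} and Corollary \ref{cor:consequenceforPhinofproponexistence}. The only differences are organizational: you run a Banach fixed point in $Y_{r_1}$ with the explicit $Q+\mu_n^2T_1$ decomposition, while the paper bounds the potential difference via \eqref{estimation interieure convergence phin Q} and closes the estimate by a direct Gronwall-type argument on the Duhamel representation, and you spell out the non-vanishing of $\varphi_{n,0}$ between $\mu_n r_{\Lambda Q,n}$ and $r_0$ in more detail than the appendix does.
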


This is again a simple perturbative analysis which proof is detailed in Appendix \ref{appendixphinzero}.\\

We now claim the following classical result which relies on the standard analysis of explicit special functions:

\begin{lemma}[Special functions lemma]
\label{lemma:spectrumLinfty}
Let $\lambda\in \mathbb{R}$. The solutions to 
$$\mathcal{L}_\infty(\psi)=\l\psi, \ \ \psi\in H^1_{\rho}(1,+\infty)$$
behaves for $r\to +\infty$ as
$$\psi\sim r^{-\frac{2}{p-1}+\l}$$
and for $r\to 0_+$ as 
\be \label{asymptotique 0 fonction propre Linfty}
\psi = \frac{1}{r^\frac{1}{2}}\cos(\omega\log(r)-\Phi(\l))+O\left(r^{\frac{3}{2}}\right)
\ee
where
$$\Phi(\l) = \arg\left(\frac{2^{\frac{i\omega}{2}}\Gamma(i\omega)}{\Gamma\left(\frac{1}{p-1}-\frac{\l}{2}-\frac{1}{4}+\frac{i\omega}{2}\right)}\right).$$
\end{lemma}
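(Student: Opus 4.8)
\textbf{Plan of proof for Lemma \ref{lemma:spectrumLinfty}.} The strategy is to redo the change of variables already carried out in the proof of Lemma \ref{lemma:constructionofpsi1andpsi2}, but now for the inhomogeneous spectral problem $\mathcal{L}_\infty(\psi)=\l\psi$, and to read off the asymptotics directly from the known connection formulas for the confluent hypergeometric (Kummer) functions. First I would set $\psi(r)=r^{-\ga}w(r^2/2)$ with $\ga=\frac12+i\om$ as in \eqref{eq:defofgamma}; the computation in step 1 of Lemma \ref{lemma:constructionofpsi1andpsi2} shows that $\mathcal{L}_\infty(\psi)=\l\psi$ is transformed into Kummer's equation $zw''+(b-z)w'-\tilde a w=0$ with $b=-\ga+\frac32$ exactly as in \eqref{eq:actualvaluesofaandb}, while the eigenvalue shifts the first coefficient to $\tilde a=\frac{1}{p-1}-\frac{\ga}{2}-\frac{\l}{2}$ (the $\l\psi$ term contributes $-\frac{\l}{4}$ inside the bracket, hence $-\frac{\l}{2}$ after the final rescaling $z=y/2$). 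The only change relative to the $\l=0$ case is therefore the value of the first Kummer parameter; $b$ is unchanged, so $\Re(b)=1$, $b\neq 1$ still holds.

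Next I would impose the decay condition at infinity. From \eqref{eq:kummersfunctionasymptotic1}, $M(\tilde a,b,z)$ grows like $e^z z^{\tilde a-b}$, which corresponds to $\psi\sim r^{2\tilde a-2b}e^{r^2/2}r^{-\ga}$, an exponentially growing branch excluded by $\psi\in H^1_\rho(1,+\infty)$; whereas $U(\tilde a,b,z)\sim z^{-\tilde a}$ gives $\psi\sim r^{-2\tilde a-\ga}$. Computing $-2\tilde a-\ga=-\frac{2}{p-1}+\l$ pins down the behaviour at infinity claimed in the statement, and forces $w$ to be a multiple of $U(\tilde a,b,z)$. For the behaviour at the origin I would use the connection formula \eqref{eq:kummersfunctionasymptotic3} for $U(\tilde a,b,z)$ as $z\to 0$, which expresses $U$ as a linear combination of $z^{1-b}$ and a constant, i.e. (since $1-b=\ga-\frac12=i\om$) a combination of $r^{2i\om}$ and $1$, both multiplied by $r^{-\ga}=r^{-1/2}r^{-i\om}$. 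The coefficients are the explicit Gamma quotients $\Gamma(b-1)/\Gamma(\tilde a)$ and $\Gamma(1-b)/\Gamma(\tilde a-b+1)$; plugging in $b-1=i\om$, $1-b=-i\om$, $\tilde a-b+1=\frac{1}{p-1}-\frac{\l}{2}-\frac14+\frac{i\om}{2}$ and combining the two complex-conjugate-type terms into a single cosine produces exactly $r^{-1/2}\cos(\om\log r-\Phi(\l))$ with $\Phi(\l)=\arg\bigl(2^{i\om/2}\Gamma(i\om)/\Gamma(\frac{1}{p-1}-\frac{\l}{2}-\frac14+\frac{i\om}{2})\bigr)$, after normalizing the overall constant; the error term $O(r^{3/2})$ comes from the $O(z^{2-\Re(b)})=O(z)$ remainder in \eqref{eq:kummersfunctionasymptotic3} pushed through the substitution.

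The routine but slightly delicate points are: (i) checking that the $z=y/2$ rescaling is tracked correctly so that the argument of the $\Gamma$ in $\Phi(\l)$ comes out as stated (the factor $2^{i\om/2}$ is precisely the artefact of evaluating $z^{1-b}=(y/2)^{i\om}$ and peeling off the $2^{-i\om}$); (ii) verifying that $\tilde a$ is never a non-positive integer for real $\l$, which is automatic since $\Im(\tilde a)=-\om/2\neq 0$, so that the Kummer basis $\{M,U\}$ and formula \eqref{eq:kummersfunctionasymptotic3} remain valid for every $\l\in\RR$; and (iii) the reality of $\psi$: since $\mathcal{L}_\infty$ and $\l$ are real, the real part of the complex solution we construct is the eigenfunction, and combining $U$ with its conjugate is exactly what yields the real cosine. \textbf{I expect the main obstacle to be purely bookkeeping} --- carrying the two successive rescalings and the $r\mapsto r^2$ substitution through the Gamma-function connection coefficients without dropping a factor of $2$ or a sign in the phase --- rather than any conceptual difficulty; everything else is a direct citation of the standard asymptotics \eqref{eq:kummersfunctionasymptotic1}--\eqref{eq:kummersfunctionasymptotic3} from \cite{Handbookmathfunctions}.
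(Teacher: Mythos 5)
Your proposal is correct and follows essentially the same route as the paper: the substitution $\psi(r)=r^{-\ga}w(r^2/2)$ reducing $\mathcal{L}_\infty\psi=\l\psi$ to Kummer's equation with $b=\frac32-\ga$ and shifted parameter $\tilde a=\frac{1}{p-1}-\frac{\l}{2}-\frac{\ga}{2}$, elimination of the $M$ branch by the $H^1_\rho(1,+\infty)$ condition so that $w$ is a multiple of $U(\tilde a,b,\cdot)$ (giving $\psi\sim r^{-\frac{2}{p-1}+\l}$), and the phase $\Phi(\l)$ read off from the $z\to 0$ connection formula \eqref{eq:kummersfunctionasymptotic3} together with reality and normalization, including the $2^{\frac{i\om}{2}}$ bookkeeping. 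The only blemish is a harmless internal sign slip (with $b=1-i\om$ one has $b-1=-i\om$ and $1-b=+i\om$, contrary to one line of your text but consistent with your earlier line), which does not affect your final Gamma arguments or the stated $\Phi(\l)$.
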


\begin{proof}
We consider the solution $\psi$ to 
$$\mathcal{L}_\infty(\psi)=\l\psi.$$
The change of variable and unknown
$$\psi(r)=\frac{1}{(2z)^{\frac{\ga}{2}}}w(z),\,\,\,\, z=\frac{r^2}{2}$$
leads to 
\bee
\mathcal{L}_\infty(\psi) -\l\psi &=& -\frac{2}{(2z)^{\frac{\ga}{2}}}\left(zw''(z)+\left(-\ga+\frac{3}{2}-z\right)w'(z)-\left(\frac{1}{p-1}-\frac{\l}{2}-\frac{\ga}{2}\right)w(z)\right)
\eee
and thus  $\mathcal{L}_\infty(\psi)=\l\psi$ if and only if
$$
z\frac{d^2w}{dz^2}+(b-z)\frac{dw}{dz}-aw=0
$$
with
\bea\label{eq:actualvaluesofaandb:bis}
a=\frac{1}{p-1}-\frac{\l}{2}-\frac{\ga}{2},\,\,\,\, b=-\ga+\frac{3}{2}.
\eea
Hence $w$ is a linear combination of the special functions $M(a,b,z)$ and $U(a,b,z)$ whose asymptotic at infinity is given by \eqref{eq:kummersfunctionasymptotic1}:
\bee
M(a,b,z)\sim \frac{\Gamma(b)}{\Gamma(a)}z^{a-b}e^z,\,\,\,\, U(a,b,z)\sim z^{-a}\textrm{ as }z\to +\infty,
\eee
In particular, a non zero contribution of $M(a,b,z)$ to $w$ would yield for $\psi(r)$ the following asymptotic 
$$\psi(r)\sim r^{\frac{2}{p-1}-3-\l}e^{\frac{r^2}{2}}\textrm{ as }r\to +\infty.$$
which contradicts $\psi\in H^1_{\rho}(1,+\infty)$. Hence $$w(z) = U(a,b,z).$$
In view of the asymptotic of $U$ recalled in \eqref{eq:kummersfunctionasymptotic1}, we have
$$w(z)\sim z^{-a}\textrm{ as }z\to +\infty.$$
Since 
$$\psi(r)=\frac{1}{r^{\ga}}w\left(\frac{r^2}{2}\right),$$
this yields
$$\psi\sim r^{-\frac{2}{p-1}+\l}\textrm{ as }r\to +\infty.$$
Also, in view of the asymptotic of $U$ recalled in \eqref{eq:kummersfunctionasymptotic3}, we have 
\bee
w(z)=\frac{\Gamma(b-1)}{\Gamma(a)}z^{1-b}+\frac{\Gamma(1-b)}{\Gamma(a-b+1)}+O(z^{2-\Re(b)})\textrm{ as }z\to 0,
\eee
which in view of \eqref{eq:actualvaluesofaandb:bis} and the fact that $\ga=1/2+i\om$ yields
\bee
w(z)=\frac{\Gamma(-i\omega)}{\Gamma\left(\frac{1}{p-1}-\frac{\l}{2}-\frac{1}{4}-\frac{i\omega}{2}\right)}z^{i\omega}+\frac{\Gamma(i\omega)}{\Gamma\left(\frac{1}{p-1}-\frac{\l}{2}-\frac{1}{4}+\frac{i\omega}{2}\right)}+O(z)\textrm{ as }z\to 0.
\eee
Since 
$$\psi(r)=\frac{1}{r^{\ga}}w\left(\frac{r^2}{2}\right),$$
this yields
\bee
\psi(r) &=& \frac{2^{-\frac{i\omega}{2}}}{r^{\frac{1}{2}}}\left(\frac{2^{-\frac{i\omega}{2}}\Gamma(-i\omega)}{\Gamma\left(\frac{1}{p-1}-\frac{\l}{2}-\frac{1}{4}-\frac{i\omega}{2}\right)}r^{i\omega}+\frac{2^{\frac{i\omega}{2}}\Gamma(i\omega)}{\Gamma\left(\frac{1}{p-1}-\frac{\l}{2}-\frac{1}{4}+\frac{i\omega}{2}\right)}r^{-i\omega}\right)\\
&&+O\left(r^{\frac{3}{2}}\right)\textrm{ as }r\to 0,
\eee
and since $\psi$ is real valued, we infer\footnote{Note in particular that $\Gamma$ satisfies $\overline{\Gamma(z)}=\Gamma(\overline{z})$ for all $z\in\mathbb{C}$.}
$$\psi(r)= \frac{\cos(\omega\log(r)-\Phi(\l))}{r^{\frac{1}{2}}}+O\left(r^{\frac{3}{2}}\right)\textrm{ as } r\to 0,$$
where
$$\Phi(\l) = \arg\left(\frac{2^{\frac{i\omega}{2}}\Gamma(i\omega)}{\Gamma\left(\frac{1}{p-1}-\frac{\l}{2}-\frac{1}{4}+\frac{i\omega}{2}\right)}\right).$$
This concludes the proof of the lemma.
\end{proof}


\subsection{Perturbative spectral analysis}


We now prove elementary spectral analysis perturbation results based on the uniform bounds \eqref{behavioruinfity}, \eqref{behviourselfsimlocal} which allow us to precisely count the number of instabilities of $\mathcal L_{n,0}$.

\begin{lemma}[Control of the outside spectrum]
\label{lemma:perturbationspectrumAn}
Let $r_0>0$ and let $r_{n,2}$ such that $r_{n,2}>e^{-\frac{2\pi}{\omega}}r_0$. 
Let us define the operators 
\be
\label{nceinveoneoninoe}
\left|\begin{array}{ll} A_n[r_{n,2}](f)=\mathcal{L}_{n,0}(f)\textrm{ on }r>r_{n,2},\,\,\,\, f(r_{n,2})=0,\\
A_\infty[r_{n,2}](f)=\mathcal{L}_\infty(f)\textrm{ on }r>r_{n,2},\,\,\,\, f(r_{n,2})=0,\end{array}\right.
\ee
then
\be
\label{cneneonelmepobis}
\underset{\lambda \in \text{Spec}(A_n[r_{n,2}])}{\text{sup}} \ \underset{\mu \in \text{Spec}(A_{\infty}[r_{n,2}])}{\text{inf}} |\lambda-\mu|+\underset{\mu \in \text{Spec}(A_{\infty}[r_{n,2}])}{\text{sup}} \ \underset{\lambda \in \text{Spec}(A_n[r_{n,2}])}{\text{inf}} |\lambda-\mu| \rightarrow 0
\ee
as $n\rightarrow +\infty$.
\end{lemma}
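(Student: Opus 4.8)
The plan is as follows. Both $A_n[r_{n,2}]$ and $A_\infty[r_{n,2}]$ are the radial realizations on $L^2_\rho(\{r>r_{n,2}\})$, with Dirichlet boundary condition at $r=r_{n,2}$, of operators of the form $-\Delta+\Lambda+(\text{bounded multiplication potential})$; exactly as for $\mathcal L_n$ in the discussion preceding Proposition \ref{prop:spectral}, the drift $r\pr_r$ together with the Gaussian weight $\rho$ makes these operators bounded below with compact resolvent (the Dirichlet condition at $r_{n,2}>0$ only helps), so their spectra are increasing sequences of eigenvalues $\lambda_{1,n}\leq\lambda_{2,n}\leq\cdots\to+\infty$ and $\mu_{1,n}\leq\mu_{2,n}\leq\cdots\to+\infty$ respectively, and the min-max principle applies. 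Since $A_\infty[r_{n,2}]$ and $A_n[r_{n,2}]$ share the same principal part, the same drift, and the same form domain, they differ only by the multiplication operator
\[
V_n:=p\big(\Phi_*^{p-1}-\Phi_n^{p-1}\big)\quad\text{on}\quad \{r>r_{n,2}\},
\]
i.e. $A_n[r_{n,2}]=A_\infty[r_{n,2}]-V_n$. The whole content of the lemma is therefore the claim that $\|V_n\|_{L^\infty(\{r>r_{n,2}\})}\to0$, uniformly over admissible $r_{n,2}$.

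To prove this I would first establish
\[
\sup_{r\geq e^{-\frac{2\pi}{\omega}}r_0}|\Phi_n(r)-\Phi_*(r)|\xrightarrow[n\to+\infty]{}0 .
\]
On $[r_0,+\infty)$ this is contained in \eqref{estoneaprouver}. On the fixed compact range $e^{-2\pi/\omega}r_0\leq r\leq r_0$, where $\Phi_n$ is described by the rescaled interior profile, I would combine the estimate \eqref{estimation interieure convergence phin Q}, namely $\sup_{r\leq r_0}|\Phi_n(r)-\mu_n^{-\frac{2}{p-1}}Q(r/\mu_n)|\lesssim\mu_n^{s_c-1}$, the homogeneity identity $\mu_n^{-\frac{2}{p-1}}\Phi_*(r/\mu_n)=\Phi_*(r)$, and the refined asymptotics \eqref{cejnenieie} giving $|Q(s)-\Phi_*(s)|\lesssim s^{-1/2}$ for $s$ large; since $r/\mu_n\to+\infty$ uniformly on this range, this yields $\sup_{e^{-2\pi/\omega}r_0\leq r\leq r_0}|\Phi_n(r)-\Phi_*(r)|\lesssim_{r_0}\mu_n^{s_c-1}\to0$, using $s_c>1$ and $\mu_n\to0$. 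On $\{r\geq e^{-2\pi/\omega}r_0\}$ the functions $\Phi_*$ and $\Phi_n$ are, for $n$ large, bounded by a constant depending only on $r_0$ (from $\Phi_*(r)=c_\infty r^{-2/(p-1)}$ and the bound just obtained), so by the mean value theorem $|V_n|\lesssim_{r_0}|\Phi_*-\Phi_n|$ pointwise, whence
\[
\|V_n\|_{L^\infty(\{r>r_{n,2}\})}\leq \|V_n\|_{L^\infty(\{r>e^{-\frac{2\pi}{\omega}}r_0\})}\lesssim_{r_0}\ \sup_{r\geq e^{-\frac{2\pi}{\omega}}r_0}|\Phi_*(r)-\Phi_n(r)|\ \xrightarrow[n\to+\infty]{}0 .
\]

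The conclusion is then soft. For a bounded self adjoint perturbation, min-max gives $|\lambda_k(A_\infty[r_{n,2}]-V_n)-\lambda_k(A_\infty[r_{n,2}])|\leq\|V_n\|_{L^\infty}$ for every $k\geq1$, i.e. $|\lambda_{k,n}-\mu_{k,n}|\leq\|V_n\|_{L^\infty}$; consequently every $\lambda\in\text{Spec}(A_n[r_{n,2}])$ lies within $\|V_n\|_{L^\infty}$ of $\text{Spec}(A_\infty[r_{n,2}])$ and conversely, so the left-hand side of \eqref{cneneonelmepobis} is at most $2\|V_n\|_{L^\infty}\to0$. The main obstacle is the first displayed step: controlling $\Phi_n-\Phi_*$ uniformly all the way down to $r=e^{-2\pi/\omega}r_0<r_0$, where only the rescaled-$Q$ description of $\Phi_n$ is available and one must exploit that $Q(r/\mu_n)$ sits deep in its asymptotic regime; everything else — the reduction to a bounded perturbation and the min-max comparison — is routine, provided one records (as above) the compact-resolvent property needed to number the two spectra.
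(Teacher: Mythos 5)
Your proof is correct and shares the paper's core strategy: both arguments regard $A_n[r_{n,2}]$ as $A_\infty[r_{n,2}]$ perturbed by the bounded multiplication operator $V_n=p(\Phi_*^{p-1}-\Phi_n^{p-1})$ and conclude by soft self-adjoint perturbation theory once $\|V_n\|_{L^\infty(r>r_{n,2})}\to 0$. Two differences are worth noting. First, the mechanism: the paper tests $A_\infty[r_{n,2}]$ against a normalized eigenvector $w_n$ of $A_n[r_{n,2}]$ and uses the residual bound $\textrm{dist}(\lambda_n,\textrm{Spec}(A_\infty[r_{n,2}]))\leq \|(A_\infty[r_{n,2}]-A_n[r_{n,2}])w_n\|_{L^2_\rho}$, then exchanges the roles of the two operators; you instead invoke the min-max principle to compare the ordered eigenvalues one by one, $|\lambda_k(A_n[r_{n,2}])-\lambda_k(A_\infty[r_{n,2}])|\leq\|V_n\|_{L^\infty}$, which is a marginally stronger conclusion and requires recording, as you do, semi-boundedness, compact resolvent and equality of the form domains (guaranteed since the perturbation is bounded). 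Second, and more substantively, you actually prove the smallness of $V_n$ down to $r=e^{-\frac{2\pi}{\omega}}r_0$: since $r_{n,2}$ may lie below $r_0$, the estimate \eqref{behavioruinfity} invoked in the paper only covers $r\geq r_0$, and your supplementary bound on $[e^{-\frac{2\pi}{\omega}}r_0,r_0]$ — combining \eqref{estimation interieure convergence phin Q}, the homogeneity $\mu_n^{-\frac{2}{p-1}}\Phi_*(r/\mu_n)=\Phi_*(r)$, and the $O(s^{-1/2})$ tail of $Q-\Phi_*$ from \eqref{cejnenieie}, giving an error $\lesssim_{r_0}\mu_n^{s_c-1}\to 0$ — fills a step the paper leaves implicit. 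Your conversion of the potential difference into $\lesssim_{r_0}|\Phi_n-\Phi_*|$ by the mean value theorem on a region where both profiles are bounded is exactly what is needed, and with it \eqref{cneneonelmepobis} follows with the quantitative bound $2\|V_n\|_{L^\infty}$.
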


\begin{proof}
In view of \eqref{weightedesimate}, the local compactness of the Sobolev embeddings 
$$H^1(|x|\leq R)\hookrightarrow L^2(|x|\leq R)\textrm{ for all }1\leq R<+\infty,$$
and the fact that $\Phi_n\in L^\infty$ and , the selfadjoint operators $A_n[r_{n,2}]+M_n$ for the measure $\rho dx$ are for $M_n\geq 1$ large enough invertible with compact resolvent, and $A_n[r_{n,2}]$ is diagonalizable. Since $\Phi_*\in L^\infty(r>r_0)$, we deduce similarly that $A_\infty[r_{n,2}]$ is diagonalizable. Let then $\l_n$ be an eigenvalue of $A_n[r_{n,2}]$ with normalized eigenvector $w_n$:
$$\mathcal{L}_n(w_n)=0\textrm{ on }r>r_{n,2},\,\,\,\, w_n(r_{n,2})=0,\,\,\,\, \|w_n\|_{L^2_{\rho}(r>r_{n,2})}=1.$$
Since $A_\infty[r_{n,2}]$ is diagonalizable in a Hilbert basis of $L^2_\rho$, we have
\bee
\|A_\infty[r_{n,2}](w_n)-\l_n w_n\|_{L^2_{\rho}(r>r_{n,2})} &\geq& \textrm{dist}(\l_n, \textrm{spec}(A_\infty[r_{n,2}]))\|w_n\|_{L^2_{\rho}(r>r_{n,2})}\\
&=& \textrm{dist}(\l_n, \textrm{spec}(A_\infty[r_{n,2}])).
\eee
On the other hand,
\bee
\|A_\infty[r_{n,2}](w_n)-\l_n w_n\|_{L^2_{\rho}(r>r_{n,2})} &=& \|(A_\infty[r_{n,2}]-A_n[r_{n,2}])(w_n)\|_{L^2_{\rho}(r>r_{n,2})}
\eee
from which:
\bee
&&\textrm{dist}(\l_n, \textrm{spec}(A_\infty[r_{n,2}])) \leq \|(A_\infty[r_{n,2}]-A_n[r_{n,2}])(w_n)\|_{L^2_{\rho}(r>r_{n,2})}\\
&\leq& \left(\sup_{r\geq r_{n,2}}\Big(p|\Phi_n(r)-\Phi_*(r)|^{p-1}\Big)\right)^{\frac{1}{2}}\|w_n\|_{L^2_{\rho}(r>r_{n,2})}\leq  \left(\sup_{r\geq r_{n,2}}\Big(p|\Phi_n(r)-\Phi_*(r)|^{p-1}\Big)\right)^{\frac{1}{2}}\\
&\to &  0\ \ \mbox{as}\ \ n\to +\infty
\eee
from \eqref{behavioruinfity}. \eqref{cneneonelmepobis} follows by exchanging the role $A_n[r_{n,2}]$ and $A_\infty[r_{n,2}]$.
\end{proof}

\begin{lemma}[Local continuity of the spectrum]
\label{lemma:perturbationspectrumAinfty}
Let $r_0>0$ and let $r_1$ and $r_2$ such that 
$$e^{-\frac{2\pi}{\omega}}r_0\leq r_1, r_2\leq r_0$$
and
$$r_1=r_2+O(r_0^3).$$
Then, for any eigenvalue $\l_1$ of $A_\infty[r_1]$ such that $\l_1\in [-3,1]$, we have
\be
\label{toberpovedvinvo}
\textrm{dist}(\l_1, \textrm{Spec}(A_\infty[r_2]))\lesssim r_0^{\frac{3}{2}}.
\ee
\end{lemma}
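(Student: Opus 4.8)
The plan is to exploit the variational (min–max) characterization of the eigenvalues of the self‑adjoint operators $A_\infty[r_1]$ and $A_\infty[r_2]$ for the measure $\rho\,dx$ on the exterior domains $\{r>r_1\}$ and $\{r>r_2\}$, which are bounded from below and have compact resolvent (since $\Phi_*\in L^\infty(r>r_0)$ and the embedding $H^1(r_1\le r\le R)\hookrightarrow L^2$ is compact, exactly as in the proof of Lemma~\ref{lemma:perturbationspectrumAn}). Write $Q_i(f)=\int_{r_i}^{+\infty}\big(|\partial_r f|^2+(\tfrac{2}{p-1}+\tfrac{m(m+1)}{r^2}-p\Phi_*^{p-1})|f|^2\big)\rho\,r^2dr+\int_{r_i}^{+\infty} r f\partial_r f\,\rho\,r^2dr$ for the associated quadratic form on $H^1_{0,\rho}(r>r_i)$ (after the usual symmetrization the drift term becomes a genuine potential against $\rho$; I will use whichever symmetric form is standard in the paper). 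The $j$‑th eigenvalue is $\l^{(j)}(r_i)=\inf_{\dim V=j}\sup_{0\neq f\in V}Q_i(f)/\|f\|^2_{L^2_\rho(r>r_i)}$. Since $r_1=r_2+O(r_0^3)$, the two domains differ only by a thin shell of width $O(r_0^3)$, and the goal is to show that the min–max values move by at most $O(r_0^{3/2})$.

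**The comparison of domains.** Assume first $r_1>r_2$, so $\{r>r_1\}\subset\{r>r_2\}$. Then extending by zero gives $H^1_{0,\rho}(r>r_1)\subset H^1_{0,\rho}(r>r_2)$ and $Q_1=Q_2$ on this subspace, so by monotonicity of min–max $\l^{(j)}(r_2)\le \l^{(j)}(r_1)$ for every $j$. For the reverse inequality I would build, from a $j$‑dimensional subspace $V\subset H^1_{0,\rho}(r>r_2)$ nearly realizing $\l^{(j)}(r_2)$, a comparable subspace of functions vanishing at $r_1$ by composing with a cutoff $\theta$ equal to $0$ on $[r_2,r_1]$ and $1$ on $[2r_1,+\infty)$, say — however the shell here has width only $O(r_0^3)$, so a cleaner route is: any eigenfunction $\phi$ of $A_\infty[r_1]$ with eigenvalue $\l_1\in[-3,1]$ is smooth and satisfies $\phi(r_1)=0$, hence $|\phi(r)|\lesssim |r-r_1|\,\|\phi\|_{H^2_{\rho,\mathrm{loc}}}$ near $r_1$; one then uses $\phi$ itself as a trial function for $A_\infty[r_2]$ after a harmless modification on $[r_2,r_1]$ (linear interpolation from $\phi(r_1)=0$ down to $0$ at $r_2$, which costs only $O(r_0^3)$ in $\|\cdot\|_{H^1_\rho}$ because the values and the interval are both $O(r_0^3)$). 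The resulting test function $\tilde\phi$ satisfies $\tilde\phi(r_2)=0$, $\|\tilde\phi-\phi\|_{H^1_\rho}\lesssim r_0^3$ and $|Q_2(\tilde\phi)-\l_1\|\phi\|^2|\lesssim r_0^3$, whence $\mathrm{dist}(\l_1,\mathrm{Spec}(A_\infty[r_2]))\lesssim r_0^{3/2}$ exactly as in the Rayleigh‑quotient perturbation estimate used in Lemma~\ref{lemma:perturbationspectrumAn} (the square root appears because $\|(A_\infty[r_2]-\l_1)\tilde\phi\|_{L^2_\rho}$, not the energy, controls the distance, and an $L^2$ bound of size $r_0^{3/2}$ follows from the $H^1$ bound $r_0^3$ by interpolation with the uniform $L^\infty$ control of $\Phi_*$ on $r\ge r_2$). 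The case $r_1<r_2$ is symmetric, exchanging the roles of the two operators.

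**Key steps, in order.** (1) Record that both $A_\infty[r_i]$ are self‑adjoint, bounded below, with compact resolvent, so all of $[-3,1]$ consists of isolated eigenvalues of finite multiplicity with the min–max formula. (2) Take an $L^2_\rho$‑normalized eigenfunction $\phi$ of $A_\infty[r_1]$, $A_\infty[r_1]\phi=\l_1\phi$, $\phi(r_1)=0$, and note the a priori bound $\|\phi\|_{H^2_\rho(r_1\le r\le 1)}\lesssim 1$ from elliptic regularity, using $\l_1\in[-3,1]$ and $\Phi_*\in L^\infty(r\ge r_2)$. (3) From $\phi(r_1)=0$ deduce the pointwise bound $|\phi(r)|+|r-r_1|^{-1}|\phi(r)|\lesssim 1$ on the shell $|r-r_1|\lesssim r_0^3$, and define $\tilde\phi$ by replacing $\phi$ on $[r_2,r_1]$ by the linear function vanishing at $r_2$ and matching $\phi(r_1)=0$ at $r_1$ (so in fact $\tilde\phi\equiv 0$ there, up to the trivial matching), giving $\|\phi-\tilde\phi\|_{H^1_\rho}\lesssim r_0^3$ and $\tilde\phi(r_2)=0$. (4) Compute $(A_\infty[r_2]-\l_1)\tilde\phi=(A_\infty[r_2]-\l_1)(\tilde\phi-\phi)$ on $r>r_2$ (since $(A_\infty[r_1]-\l_1)\phi=0$ where $\phi$ lives and the two operators have the same symbol), bound its $L^2_\rho(r>r_2)$ norm by $\lesssim r_0^{3/2}$ using the $H^1_\rho$‑smallness of $\tilde\phi-\phi$ together with the fact that $\tilde\phi-\phi$ is supported in the shell where the coefficients of $\mathcal L_\infty$ are bounded. (5) Conclude $\mathrm{dist}(\l_1,\mathrm{Spec}(A_\infty[r_2]))\le \|(A_\infty[r_2]-\l_1)\tilde\phi\|_{L^2_\rho}/\|\tilde\phi\|_{L^2_\rho}\lesssim r_0^{3/2}$, noting $\|\tilde\phi\|_{L^2_\rho}\ge 1-O(r_0^3)\ge \tfrac12$.

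**Main obstacle.** The delicate point is step (4): the operator $\mathcal L_\infty$ contains the singular potential $-pc_\infty^{p-1}/r^2$ and the unbounded drift term $r\partial_r$, so one must be sure that applying $A_\infty[r_2]$ to the interpolation correction $\tilde\phi-\phi$ does not produce a loss. This is why it is essential that the correction is supported in the shell $\{|r-r_1|\lesssim r_0^3\}\subset\{r\simeq r_0\}$, on which $1/r^2\simeq r_0^{-2}$ and $|r|\lesssim r_0$ are both harmless multiplicative constants depending only on $r_0$; the genuinely dangerous second‑derivative term is absent because on that shell $\tilde\phi-\phi$ is affine (indeed $\phi$ minus a linear function, and $\partial_r^2\phi$ is controlled by the equation and the elliptic bound of step (2)). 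Carrying out this bookkeeping carefully — in particular keeping track of the $r^2\rho\,dr$ weight so that the shell of width $r_0^3$ really contributes a factor $r_0^3$ to the squared $L^2_\rho$ norm and hence $r_0^{3/2}$ to the norm — is the only real work; everything else is the standard Rayleigh‑quotient perturbation argument already deployed for Lemma~\ref{lemma:perturbationspectrumAn}.
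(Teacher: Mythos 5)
Your overall strategy (a quasi-mode bound: build from the eigenfunction of $A_\infty[r_1]$ a test function vanishing at $r_2$ and use $\textrm{dist}(\l_1,\textrm{Spec}(A_\infty[r_2]))\leq \|(A_\infty[r_2]-\l_1)\tilde\phi\|_{L^2_\rho}/\|\tilde\phi\|_{L^2_\rho}$) is a legitimate and in fact potentially simpler route than the paper's, which translates the unknown eigenfunction, runs a Lyapunov--Schmidt reduction, and crucially needs a spectral gap of $A_\infty[r_1]$ around $\l_1$ that is uniform in $r_0$ (obtained from the explicit phase $\Phi(\l)$ of Lemma \ref{lemma:spectrumLinfty}) to invert $(A_\infty[r_1]-\l_1)_{|\varphi_1^\perp}$. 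A one-sided distance bound by a quasi-mode would bypass that gap entirely. However, your concrete construction has a genuine gap. In the case $r_1>r_2$ your ``linear interpolation from $\phi(r_1)=0$ down to $0$ at $r_2$'' is the extension of $\varphi_1$ by zero across the shell $[r_2,r_1]$. This function is only $H^1$: since $\varphi_1(r_1)=0$ and $\varphi_1\not\equiv 0$, ODE uniqueness forces $\varphi_1'(r_1)\neq 0$, so $\tilde\phi'$ jumps at $r_1$ and $\tilde\phi$ is not in the domain of $A_\infty[r_2]$ ($H^2$ with Dirichlet condition at $r_2$). Consequently your step (4) is ill-posed: pointwise a.e. one has $(A_\infty[r_2]-\l_1)\tilde\phi=0$ (it vanishes on the shell and equals $(A_\infty[r_1]-\l_1)\varphi_1=0$ for $r>r_1$), which would ``prove'' that $\l_1$ is exactly an eigenvalue of $A_\infty[r_2]$ -- an absurdity that signals the missing term: distributionally, $-\Delta\tilde\phi$ contains a Dirac mass at $r=r_1$ of amplitude proportional to $\varphi_1'(r_1)$, and from the behaviour $\varphi_1\sim r^{-1/2}\cos(\om\log r-\Phi(\l_1))$ near $r\sim r_0$ this amplitude is of size $r_0^{-3/2}$, not small. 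Your remark that ``the dangerous second-derivative term is absent because the correction is affine'' is exactly the place where this boundary contribution is overlooked; also the Lipschitz bound $|\phi(r)|\lesssim |r-r_1|$ hides a constant $\sim r_0^{-3/2}$, so the advertised $O(r_0^3)$ smallness of the modification is not available. Note further that mollifying the junction by a cutoff supported in the width-$r_0^3$ shell does not rescue the argument: the terms $\chi''\varphi_1$ and $\chi'\varphi_1'$ are of size $r_0^{-2}$ in $L^2_\rho$ there, so the quasi-mode error blows up rather than being $O(r_0^{3/2})$.

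The fix, staying within your strategy, is to take as trial function the translate $\tilde\phi(r):=\varphi_1(r+r_1-r_2)$, which vanishes at $r_2$, lies in the domain of $A_\infty[r_2]$, and for which $(A_\infty[r_2]-\l_1)\tilde\phi$ consists only of the potential difference $h\,\tilde\phi$ with $|h(r)|\lesssim |r_1-r_2|/r^3$ and of first-order drift corrections of size $|r_1-r_2|\,|\partial_r\tilde\phi|$; these are exactly the terms the paper estimates (with $\|h_1\|_{L^\infty}\lesssim r_0$ and $\|r^{-2}\|_{L^2(r_1<r<1)}\lesssim r_0^{-1/2}$), and they yield $\|(A_\infty[r_2]-\l_1)\tilde\phi\|_{L^2_\rho}\lesssim r_0^{3/2}$, hence \eqref{toberpovedvinvo} by the spectral theorem, with no Lyapunov--Schmidt step and no uniform gap needed. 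As written, though, the zero-extension construction in steps (3)--(5) does not prove the lemma, and the claimed symmetry of the case $r_1<r_2$ (where the restricted eigenfunction no longer vanishes at $r_2$) would need the same translation device.
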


\begin{proof}
Recall from the proof of Lemma \ref{lemma:perturbationspectrumAn} that both $A_\infty[r_1]$ and $A_\infty[r_2]$ are diagonalizable. Furthermore, by  Sturm-Liouville, their eigenvalues are simple. Let $\l_1$ be an eigenvalue of  $A_\infty[r_1]$. We claim the existence of a nearby eigenvalue $\l_2$ of  $A_\infty[r_2]$ using a classical Lyapunov Schmidt procedure.\\
Let $\varphi_1$ the normalized eigenfunction of $A_\infty[r_1]$ associated to $\l_1$ so that
$$A_\infty[r_1](\varphi_1)=\l_1\varphi_1,\,\,\,\, \|\varphi_1\|_{\rho}=1.$$
The eigenvalue equation
$$A_\infty[r_2](\varphi_2)=\l_2\varphi_2$$
is equivalent to
\bea\label{eq:eigenvalueequationforg}
A_\infty[r_1](g)=\l_2 g+hg+(r_2-r_1)\pa_rg
\eea
where
$$g(r)=\varphi_2(r+r_2-r_1),\,\,\,\, h(r)=\frac{pc_{\infty}^{p-1}}{(r+r_2-r_1)^2}-\frac{pc_{\infty}^{p-1}}{r^2}.$$
We decompose
$$g=\varphi_1+r_0\tilde{g},\,\,\,\, \l_2=\l_1+cr_0$$
where the constant $c$ will be chosen later. Then, $g$ satisfies \eqref{eq:eigenvalueequationforg} if and only if $\tilde{g}$ satisfies
\bea\label{eq:eigenvalueequationforgtilde}
(A_\infty[r_1]-\l_1)(\tilde{g}) = c\varphi_1+c r_0\tilde{g}+\frac{h}{r_0}(\varphi_1+r_0\tilde{g})+\frac{r_2-r_1}{r_0}\pa_rg.
\eea
We choose $c$ such that 
$$c(\varphi_1, r_0, \tilde{g}):=-\frac{1}{1+r_0(\tilde{g}, \varphi_1)_{\rho}}\left(\frac{h}{r_0}(\varphi_1+r_0\tilde{g})+\frac{r_2-r_1}{r_0}\pa_r(\varphi_1+r_0\tilde{g}),\varphi_1\right)_{\rho}.$$
Then, the right-hand side of \eqref{eq:eigenvalueequationforgtilde} is orthogonal to $\varphi_1$ and hence to the kernel of $A_\infty[r_1]-\l_1$ since $\l_1$ is a simple eigenvalue. Thus, we infer
\bea\label{eq:eigenvalueequationforgtildebis}
\tilde{g} = \mathcal{F}(\tilde{g})
\eea
where
\bee
\mathcal{F}(\tilde{g}): = B_\infty[r_1,\l_1]^{-1}\Big(c(\varphi_1, r_0, \tilde{g})(\varphi_1+r_0\tilde{g})+\frac{h}{r_0}(\varphi_1+r_0\tilde{g})+\frac{r_2-r_1}{r_0}\pa_r(\varphi_1+r_0\tilde{g})\Big)
\eee
with the operator $B_\infty[r_1,\l_1]$ being the restriction of $A_\infty[r_1]-\l_1$ to the orthogonal complement of the kernel of $A_\infty[r_1]-\l_1$, i.e.
\bee
B_\infty[r_1,\l_1]=(A_\infty[r_1]-\l_1)_{|_{\varphi_1^{\perp}}}.
\eee
Since $\lambda_1$ is an eigenvalue of $A_{\infty}[r_1]$, from the explicit behavior \fref{asymptotique 0 fonction propre Linfty} of the eigenfunctions of $\mathcal L_{\infty}$ and the boundary condition \fref{nceinveoneoninoe} at $r_1$ one deduces that there exists $k\in \mathbb Z$ such that
$$
\omega \text{log}(r_1)-\Phi(\lambda_1)=k\pi +\frac{\pi}{2}+O(r_0^2).
$$
Let $\lambda_1'$ be the smallest eigenvalue of $A_{\infty}[r_1]$ greater than $\lambda_1$. It then satisfies:
$$
\omega \text{log}(r_1)-\Phi(\lambda_1')=k\pi +\frac{\pi}{2} \pm \pi+O(r_0^2)
$$
and so
$$
|\Phi (\lambda_1) - \Phi (\lambda_1')|= \pi +O(r_0^2)\geq \frac{\pi}{2}.
$$
As $\Phi$ is a continuous function we deduce that there exists $c>0$ independent of $r_0$ such that $\lambda_1'\geq \lambda_1+c$ and we infer
$$
\text{inf}\{|\lambda-\lambda_1|, \ \lambda \in \text{Spec}(A_{\infty}[r_1]), \ \lambda>\lambda_1 \}\geq c .
$$
Similarly
$$
\text{inf}\{|\lambda-\lambda_1|, \ \lambda \in \text{Spec}(A_{\infty}[r_1]), \ \lambda <\lambda_1 \}\geq c', \ \ c'>0 
$$
and we conclude that
\bee
\|B_\infty[r_1,\l_1]^{-1}\|_{\mathcal{L}(L^2_{\rho}, H^2_{\rho})}\lesssim 1
\eee
with a bound that does not depend on $r_0$. Also, note that
\bee
\frac{h}{r_0} &=& \frac{pc_{\infty}^{p-1}}{r_0r^2}\left(\frac{1}{(1+\frac{r_2-r_1}{r})^2}-1\right)=\frac{h_1(r)}{r^2}
\eee
where
\bee
h_1(r) &=& -pc_{\infty}^{p-1}\frac{\left(\frac{2(r_1-r_2)}{r_0r}+\frac{(r_1-r_2)^2}{r_0r^2}\right)}{(1+\frac{r_2-r_1}{r})^2}.
\eee
Since 
$$e^{-\frac{2\pi}{\omega}}r_0\leq r_1\leq r_0\textrm{ and }r_1=r_2+O(r_0^3),$$
we infer 
\bee
\|h_1\|_{L^\infty(r>r_1)} &\lesssim& \|h_1\|_{L^\infty(r>e^{-\frac{2\pi}{\omega}}r_0)}\lesssim  r_0.
\eee
Moreover,
\bee
\|r^{-2}\|_{L^2(r_1<r<1)} &\lesssim& \left(\int_{r_1}^1\frac{dr}{r^2}\right)^{\frac{1}{2}}\lesssim \frac{1}{r_1^{\frac{1}{2}}}\lesssim \frac{1}{r_0^{\frac{1}{2}}}.
\eee
Collecting the previous estimates, we infer
\bee
&&\|\mathcal{F}(\tilde{g})\|_{H^2_{\rho}(r>r_1)}\\
 &\lesssim&  \|B_\infty[r_1,\l_1]^{-1}\|_{\mathcal{L}(L^2_{\rho}, H^2_{\rho})}\left\|c(\varphi_1, r_0, \tilde{g})(\varphi_1+r_0\tilde{g})+\frac{h}{r_0}(\varphi_1+r_0\tilde{g})+\frac{r_2-r_1}{r_0}\pa_r(\varphi_1+r_0\tilde{g})\right\|_{L^2_{\rho}}\\
&\lesssim& |c(\varphi_1, r_0, \tilde{g})|(1+r_0\|\tilde{g}\|_{L^2_{\rho}})+r_0\|\tilde{g}\|_{H^1_\rho}\\
&&+\|h_1\|_{L^\infty(r>r_1)}(1+r_0\|\tilde{g}\|_{L^2_{\rho}}+\|\varphi_1+r_0\tilde{g}\|_{L^\infty_{(r_1<r<1)}}\|r^{-2}\|_{L^2(r_1<r<1)})\\
&\lesssim& \frac{r_0^{\frac{1}{2}}}{1-r_0\|\tilde{g}\|_{L^2_{\rho}}}(1+r_0\|\tilde{g}\|_{L^2_{\rho}})+r_0\|\tilde{g}\|_{H^1_\rho}
\eee
and
\bee
\|\mathcal{F}(\tilde{g}_1) - \mathcal{F}(\tilde{g}_2)\|_{H^2_{\rho}(r>r_1)} &\lesssim& \frac{r_0^{\frac{3}{2}}}{1-r_0\|\tilde{g}\|_{L^2_{\rho}}}(1+r_0\|\tilde{g}\|_{L^2_{\rho}})\|\tilde{g_1}-\tilde{g_2}\|_{L^2_{\rho}}+r_0\|\tilde{g}_1-\tilde{g}_2\|_{H^1_\rho}.
\eee
Thus, for $r_0>0$ small enough, the Banach fixed point theorem applies in the space $H^2_{\rho}(r>r_1)$ and yields a unique solution $\tilde{g}$ to \eqref{eq:eigenvalueequationforgtildebis} with
 $$\|\tilde{g}\|_{H^2_{\rho}(r>r_1)}\lesssim r_0^{\frac{1}{2}}.$$
 Hence, $\varphi_2$ with
 $$\varphi_2(r)=g(r+r_1-r_2),\,\,\,\, g=\varphi_1+r_0\tilde{g}$$
 satisfies
$$A_\infty[r_2](\varphi_2)=\l_2\varphi_2$$
where
\bee
\l_2 &=& \l_1+c(\varphi_1, r_0, \tilde{g})r_0\\
&=&  \l_1-\frac{r_0}{1+r_0(\tilde{g}, \varphi_1)_{\rho}}\left(\frac{h}{r_0}(\varphi_1+r_0\tilde{g})+\frac{r_2-r_1}{r_0}\pa_r(\varphi_1+r_0\tilde{g}),\varphi_1\right)_{\rho}.
\eee
Thus, $\l_2$ belongs to the spectrum of $A_\infty[r_2]$ and hence
\bee
\textrm{dist}(\l_1, \textrm{Spec}(A_\infty[r_2])) &\leq & |\l_2-\l_1|\\
&\leq& \left|\frac{r_0}{1+r_0(\tilde{g}, \varphi_1)_{\rho}}\left(\frac{h}{r_0}(\varphi_1+r_0\tilde{g})+\frac{r_2-r_1}{r_0}\pa_r(\varphi_1+r_0\tilde{g}),\varphi_1\right)_{\rho}\right|.
\eee
In view of the previous estimates, we infer
\bee
\textrm{dist}(\l_1, \textrm{Spec}(A_\infty[r_2])) &\lesssim& \frac{r_0^{\frac{3}{2}}}{1-r_0\|\tilde{g}\|_{L^2_{\rho}}}(1+r_0\|\tilde{g}\|_{L^2_{\rho}})\lesssim r_0^{\frac{3}{2}}.
\eee
and \eqref{toberpovedvinvo} is proved.
\end{proof}


\subsection{Proof of Proposition \ref{prop:spectral}}\label{sec:proofopropspectral}


Recall that  $\mathcal{L}_n$ is diagonalizable in a Hilbertian basis of $L^2_\rho$, and hence the spectral gap estimate \eqref{coerciviteapoids} follows from the explicit distribution of eigenvalues \eqref{eignevalurenvon} which we now prove. Observe that the symmetry group of dilations and translations generates the explicit eigenmodes
\be
\label{explicitspectrum}
\mathcal{L}_n\Lambda \Phi_n=-2\Lambda \Phi_n, \ \ \mathcal{L}_n\nabla\Phi_n=-\nabla\Phi_n.
\ee  
Using the decomposition into spherical harmonics \eqref{eq:decompositionLninLmn}, the further study of the quadratic form $(\mathcal{L}_n(u), u)_{\rho}$ reduces to the study of the quadratic form $(\mathcal{L}_{n,m}(u), u)_{\rho}$ for $m\geq 0$ for which classical Strum Liouville arguments are now at hand.\\
 
\noindent{\bf step 1} The case $m=1$. Let $\varphi_{n,1}$ be defined in Lemma \ref{lemma:asymptoticbehavioratinfinityforLmn}. In particular, $\varphi_{n,1}$ satisfies   
$$\mathcal{L}_{n,1}(\varphi_{n,1})=0,\,\,\,\, \varphi_{n,1}(0)=0,\,\,\,\, \varphi_{n,1}'(0)=1.$$
Then from standard Sturm Liouville oscillation argument for central potentials, \cite{RS}, the number of zeros of $\varphi_{n,1}$ in $r>0$ correspond to the number of strictly negative eigenvalues of $\mathcal{L}_{n,1}$. 

Since we have
$$\nabla\Phi_n(x) = \Phi_n'(r)\frac{x}{r} = \Phi_n'(r)(Y^{(1,-1)}, Y^{(1,1)}, Y^{(1,0)})$$ 
and hence
$$\mathcal{L}_n(\nabla\Phi_n)=-\nabla\Phi_n \ \ \mbox{implies}\ \ \mathcal{L}_{n,1}(\Phi_n')=-\Phi_n'.$$
Thus, $\mathcal{L}_{n,1}$ has at least one strictly negative eigenvalue, and hence $\varphi_{n,1}$ has at least one zero which we denote by $r_{n,1}>0$. On $[0,r_0]$, we have by \eqref{vnvnonvnornor}:
\bee
\sup_{0\leq r\leq r_0}\frac{\left|\mu_n^{-1}\varphi_{n,1}(r) - \nu_1\left(\frac{r}{\mu_n}\right)\right|}{\left|\nu_1\left(\frac{r}{\mu_n}\right)\right|}  &\lesssim& r_0^2
\eee
Since $\nu_1(r)>0$ for all $r>0$, we infer that $\varphi_{n,1}$ can not vanish on $[0,r_0]$. Hence, $r_{n,1}\geq r_0$.\\

\noindent{\em No other zero}. Assume by contradiction that there exists a second zero $r_{n,2}>r_{n,1}$. Let $f_{n,1}$ being given as
$$f_{n,1}:=\left\{\begin{array}{ll}
\varphi_{n,1} & \textrm{ on }r_{n,1}<r<r_{n,2},\\
0 & \textrm{ on }r<r_{n,1},\\
0 & \textrm{ on }r>r_{n,2}.
\end{array}\right.
$$
Then, we have $f_{n,1}\in H^1_{\rho}$ and 
\be
\label{vnenvonveoneoneo}
(\mathcal{L}_{n,1}(f_{n,1}),f_{n,1})_{\rho} = 0.
\ee
On the other hand, using \eqref{defphistar}:
\bea
\label{cnieoenvoelinfiit}
\nonumber (\mathcal{L}_{\infty,1}(u),u)_{\rho} &=& \|u'\|_{\rho}^2+\int_0^{+\infty}\frac{2-pc_{\infty}^{p-1}}{r^2}u^2r^2\rho dr\\
&=&\|u'\|_{\rho}^2+\frac{2(p+1)}{(p-1)^2}\left(\int_0^{+\infty}\frac{u^2}{r^2}r^2\rho dr\right)\gtrsim  \|\frac{u}{r}\|_{L^2_\rho}^2.
\eea
We now estimate from \eqref{behviourselfsimlocal} 
\be
\label{estpojoeepe}
\sup_{r\geq r_0} r^2|\Phi_n^{p-1}-(\Phi_*)^{p-1}|=o_{n\to +\infty}(1)
\ee
and hence for $u$ supported in $(r_0, +\infty)$:
\bea
\label{vneionveoenone}
\nonumber \left|(\mathcal{L}_{\infty,1}(u),u)_{\rho} - (\mathcal{L}_{n,1}(u),u)_{\rho}\right|& \lesssim & \int_{r_0}^{+\infty}\left|\Phi_n^{p-1}-\Phi_*^{p-1}\right|u^2r^2\rho(r)dr\\
&\leq & o_{n\to +\infty}(1)\left\|\frac{u}{r}\right\|_{L^2_\rho}^2.
\eea
Since $f_{n,1}$ is supported in $(r_{n,1}, r_{n,2})\subset (r_0, +\infty)$, \eqref{cnieoenvoelinfiit}, \eqref{vneionveoenone} applied to $f_{n,1}$ and \eqref{vnenvonveoneoneo} yield a contradiction for $n\geq N$ large enough. Thus, $r_{n,2}$ can not exist, and hence $\varphi_{n,1}$ vanishes only once.\\

\noindent{\em $\varphi_{n,1}$ is not an eigenstate}. Since $\varphi_{n,1}$ vanishes only once, $\mathcal{L}_{n,1}$ has exactly one strictly negative eigenvalue. It remains to check the $\varphi_{n,1}\notin L^2_{\rho}$, i.e. $\varphi_{n,1}$ is not an eigenvector associated to the eigenvalue 0. To this end, note that $\varphi_{n,1}$ is strictly positive on $(0,r_{n,1})$ from \eqref{cneoneoenove} and strictly negative on $(r_{n,1}, +\infty)$. In particular, we have
$$\varphi_{n,1}'(r_{n,1})<0.$$
Since $\mathcal{L}_{n,1}(\varphi_{n,1})=0$, we have
$$(r^2\rho \varphi_{n,1}')' = r^2\rho \left[ \frac{2}{p-1}+\frac{(2-p r^2\Phi_n^{p-1})}{r^2}\right]\varphi_{n,1}$$
and from \eqref{vneionveoenone} for $r\geq r_{n,1}\geq r_0$:
\be \label{borne potentiel harmonique}
2-r^2p\Phi_n^{p-1} = 2 - pc_{\infty}^{p-1} + pc_{\infty}^{p-1} -r^2p\Phi_n^{p-1}\geq \frac{2(p+1)}{(p-1)^2} +o(1)> 0.
\ee
Since $\varphi_{n,1}$ is strictly negative on $(r_{n,1}, +\infty)$, we deduce
$$ r^2\rho\varphi_{n,1}' (r)\leq r_{n,1}^2\rho(r_{n,1})\varphi_{n,1}'(r_{n,1})=c_1<0 \textrm{ on }(r_{n,1},+\infty)$$ which implies $$\int_{r_{n,1}}^{+\infty}|\varphi_{n,1}' (r)|^2\rho r^2dr\gtrsim \int_{r_{n,1}}^{+\infty} \frac{dr}{r^2\rho}=+\infty$$ and hence $\varphi_{n,1}\notin H^1_{\rho}$ and is therefore not an eigenvector.\\

\noindent{\em Conclusion}. We conclude that $-1$ is the only negative eigenvalue of $\mathcal{L}_{n,1}$, and is associated to the single eigenvector $\Phi_n'$. Hence, there exists a constant $c_n>0$ such that for all $u\in H^1_{\rho}$:
\bea\label{eq:coerciveestimatem=1}
(\mathcal{L}_{n,1}(u),u)_{\rho}\geq c_n\|u\|^2_{L^2_\rho} - \frac{1}{c_n}(u, \Phi_n')_{\rho}^2.\\
\nonumber
\eea

\noindent{\bf step 2} The case $m\geq 2$. Let $\varphi_{n,m}$ be defined in Lemma \ref{lemma:asymptoticbehavioratinfinityforLmn}. In particular, $\varphi_{n,m}$ satisfies   
$$\mathcal{L}_{n,m}(\varphi_{n,m})=0\textrm{ and }\varphi_{n,m}=r^m(1+O(r^2))\textrm{ as }r\to 0_+.$$
Then, the number of zeros of $\varphi_{n,m}$ in $r>0$ corresponds to the number of strictly negative eigenvalues of $\mathcal{L}_{n,m}$. On $[0,r_0]$, we have by Lemma \ref{lemma:asymptoticbehavioratinfinityforLmn}.
\bee
\sup_{0\leq r\leq r_0}\frac{\left|\mu_n^{-m}\varphi_{n,m}(r) - \nu_m\left(\frac{r}{\mu_n}\right)\right|}{\left|\nu_m\left(\frac{r}{\mu_n}\right)\right|}  &\lesssim& r_0^2
\eee
and $\nu_m(r)>0$ for all $r>0$, and hence $\varphi_{n,m}$ cannot vanish on $[0,r_0]$:
$$\varphi_{n,m}(r)>0\textrm{ on }[0,r_0].$$

Next, we investigate the sign of $\varphi_{n,m}'(r_0)$. Recall \eqref{beahviorunum}:
$$\nu_m(r)\sim \frac{c_{m,-}}{r^{\frac{1-\sqrt{\Delta_m}}{2}}}\textrm{ as }r\to +\infty \ \ c_{m.-}>0$$
and hence
$$\nu_m'(r)\sim \frac{c_{m,-}(\sqrt{\Delta_m}-1)}{r^{\frac{3-\sqrt{\Delta_m}}{2}}}\textrm{ as }r\to +\infty.$$
We infer for $n\geq N$ large enough
$$ \varphi_{n,m}(r_0) = \frac{c_{m,-}(1+O(r_0^2))\mu_n^m}{\left(\frac{r_0}{\mu_n}\right)^{\frac{1-\sqrt{\Delta_m}}{2}}}$$
and
$$ \varphi_{n,m}'(r_0) = \frac{c_{m,-}(\sqrt{\Delta_m}-1)(1+O(r_0^2))\mu_n^{m-1}}{\left(\frac{r_0}{\mu_n}\right)^{\frac{3-\sqrt{\Delta_m}}{2}}}.$$
Thus, taking also into account that $\varphi_{n,m}(r)>0$ on $[0,r_0]$, we infer from the identity for $\varphi_{n,m}(r_0)$ that 
$$c_{m,-}>0.$$
Since $\sqrt{\Delta_m}\geq \sqrt{\Delta_1}=\frac{p+3}{p-1}>1$, we conclude: 
\be
\label{cneinvenevone}
\phi_{n,m}(r_0)>0, \ \ \phi_{n,m}'(r_0)>0.
\ee
Since $\mathcal{L}_{n,m}(\varphi_{n,m})=0$, we have
\be
\label{vneoneneo}
(r^2\rho \varphi_{n,m}')' = r^2\rho \left[\frac{2}{p-1}+ \frac{(m(m+1)-pr^2\Phi_n^{p-1})}{r^2}\right]\varphi_{n,m}
\ee
which together with \eqref{cneinvenevone}, \fref{borne potentiel harmonique} and the fact that $m\geq 2$, and an elementary continuity argument ensures $$\phi_{m,n}'(r)>0, \ \ \phi_{n,m}(r)\geq \phi_{n,m}(r_0)>0\ \ \mbox{for}\ \ r\ge r_0.$$ Hence $\phi_{n,m}$ does not vanish on $(0,+\infty)$ and using \eqref{vneoneneo}: $$r^2\phi_{n,m}'\rho(r)\geq r_0^2\phi_{n,m}'\rho(r_0)=c_0>0$$ which implies $$\int_{r_0}^{+\infty}(\phi'_{n,m})^2\rho r^2dr\gtrsim \int_{r_0}^{+\infty} \frac{dr}{r^2\rho}=+\infty$$ and hence $\phi_{n,m}$ is not eigenvector. We finally conclude that for $m=2$ and all $n\geq N$ large enough, $\mathcal{L}_{n,2}$ has a spectral gap and there exists a constant $c_n>0$ such that we have for all $u\in H^1_{\rho}$
$$(\mathcal{L}_{n,2}(u),u)_{\rho}\geq c_n\|u\|^2_{L^2_\rho}.$$
Since we have for all $m\geq 2$
$$(\mathcal{L}_{n,m}(u),u)_{\rho}\geq (\mathcal{L}_{n,2}(u),u)_{\rho},$$
we infer for all $m\geq 2$ and for all $u\in H^1_{\rho}$
\bea\label{eq:coerciveestimatemgeq2}
(\mathcal{L}_{n,m}(u),u)_{\rho}\geq c_n\|u\|^2_{L^2_\rho}.\\
\nonumber
\eea

\noindent{\bf step 3.} The case $m=0$. We now focus onto $\mathcal{L}_{n,0}$ which is the most delicate case, and we claim that $\mathcal L_{n,0}$ has exactly $n+1$ strictly negative eigenvalues, and that 0 is not in the spectrum. The key is to combine the uniform bounds \eqref{behavioruinfity} with the explicit knowledge of the limiting outer spectrum, Lemma \ref{lemma:spectrumLinfty}, as nicely suggested at the formal level in \cite{bizon}.\\
Let $\varphi_{n,0}$ be the solution to \eqref{defphinoneveev} so that the number of strictly negative eigenvalues of $\mathcal{L}_{n,0}$ coincides with the numbers of zeroes of $\varphi_{n,0}$. We count the number of zeros of $\varphi_{n,0}$ by comparing them with the  number of zeros of $\Lambda\Phi_n$.\\

\noindent{\em Lower bound}. First, since $\Lambda\Phi_n$ is an eigenvector of $\mathcal{L}_{n,0}$ corresponding to the eigenvalue $-2$ and since $\Lambda \Phi_n$ vanishes $n$ times from Proposition \ref{prop:constructionPhin}, we infer from Sturm Liouville
$$\#\textrm{Spec}(\mathcal{L}_{n,0}+2)\cap(-\infty,0] = n+1.$$
In particular, since the number of strictly negative eigenvalues of $\mathcal{L}_{n,0}$ coincides with the number of zeroes of $\varphi_{n,0}$, we infer
$$\#\{r\geq 0\textrm{ such that }\varphi_{n,0}(r)=0\}\geq n+1.$$
\noindent{\em Upper bound}. Recall \eqref{uniformroximity}:
\bee
\sup_{0\leq r\leq r_0}\left(1+\frac{r}{\mu_n}\right)^{\frac{1}{2}}\left|\varphi_{n,0}(r) - \frac{p-1}{2}\Lambda Q\left(\frac{r}{\mu_n}\right)\right| &\lesssim& r_0^2.
\eee
Also, we have $\Lambda Q(0)\neq 0$ and from \eqref{tobeprovedlambdaq}:
$$\left(\frac{r_0}{\mu_n}\right)^{\frac{1}{2}}\left|\Lambda Q\left(\frac{r_0}{\mu_n}\right)\right|\geq c>0$$
for some constant $c>0$ independent of $n$. Hence $\varphi_{n,0}$ and  $\Lambda Q$  vanish the same number of times on $[0,r_0]$. Since on the other hand $\Lambda Q$ and $\Lambda \Phi_n$ vanish the same number of times on $[0,r_0]$ from \eqref{innerzeroes},  $\varphi_{n,0}$ and  $\Lambda\Phi_n$  vanish the same number of times of $[0,r_0]$.\\
Let now $r_{n,0}$ to be the last zero of $\Lambda \Phi_n$ before $r_0$. In view of Corollary \ref{cor:consequenceforPhinofproponexistence}, we have
$$e^{-\frac{2\pi}{\omega}}r_0\leq r_{n,0}\leq r_0.$$
Let us now consider the operators \eqref{nceinveoneoninoe}:
\bee
&&A_n[r_{n,0}](f)=\mathcal{L}_{n,0}(f)\textrm{ on }r>r_{0,n},\,\,\,\, f(r_{n,0})=0,\\
&& A_\infty[r_{n,0}](f)=\mathcal{L}_\infty(f)\textrm{ on }r>r_{n,0},\,\,\,\, f(r_{n,0})=0,
\eee
then $$\mathcal{L}_{n,0}(\Lambda\Phi_n)=-2\Lambda\Phi_n\textrm{ and }\Lambda\Phi_n(r_{n,0})=0,$$ implies
$$A_n[r_{n,0}](\Lambda\Phi_n)=-2\Lambda\Phi_n.$$
In particular, $-2$ belongs to the spectrum of $A_n[r_{n,0}]$. In view of Lemma \ref{lemma:perturbationspectrumAn}, we deduce for $n\geq N$ large enough that the exists an eigenvalue $\l_0$ of $A_\infty[r_{n,0}]$ such that $\l_0 = -2+o(1)$. On the other hand, in view of Lemma \ref{lemma:spectrumLinfty}, the solutions to 
$$\mathcal{L}_\infty(f)=\l f$$
with $f\in H^1_{\rho}$ are completely explicit and behave for $r\to 0$ as 
$$f\sim \frac{1}{r^\frac{1}{2}}\cos(\omega\log(r)-\Phi(\l))$$
with 
$$\Phi(\l) = \arg\left(\frac{2^{\frac{i\omega}{2}}\Gamma(i\omega)}{\Gamma\left(\frac{1}{p-1}-\frac{\l}{2}-\frac{1}{4}+\frac{i\omega}{2}\right)}\right).$$
In order for $f$ to be an eigenfunction of $A_\infty[r_{n,0}]$, we need $f(r_{n,0})=0$ and hence there should exists $k\in\mathbb{Z}$ such that
$$\omega\log(r_{n,0})-\Phi(\l)\sim \frac{\pi}{2}+k\pi.$$
Recall that $\l_0=-2+o(1)$ is an eigenvalue of $A_\infty[r_{n,0}]$, and let $\l_1>\l_0$ be the next eigenvalue of $A_\infty[r_{n,0}]$. Then, there exists $k_0\in\mathbb{R}$ such that 
$$\omega\log(r_{n,0})-\Phi(\l_0)\sim \frac{\pi}{2}+k_0\pi, \,\,\,\,\omega\log(r_{n,0})-\Phi(\l_1)\sim \frac{\pi}{2}+(k_0-1)\pi$$
and hence
\bea\label{eq:equationquantizationlambda1}
\Phi(\l_1) = \Phi(-2) + \pi +o(1).
\eea
Now, by numerical check, we have\footnote{Notice that $\Phi(\l)$ has a well defined limit as $p\to +\infty$ given by
$$\Phi_\infty(\l) = \arg\left(\frac{2^{\frac{i}{4}}\Gamma(\frac{i}{2})}{\Gamma\left(-\frac{\l}{2}-\frac{1}{4}+\frac{i}{4}\right)}\right).$$
Our numerics are carried out using Matlab and indicate that $\Phi_p(\l)$ is increasing on $[-2,0.5]$ for all $p\geq 5$ so that the maximum on $[-2,0.5]$ is achieved at $\l=0.5$. Also, this maximum appears to be a growing function of $p$ so that the maximum in $p$ is given by $\Phi_\infty(0.5)-\Phi_\infty(-2)-\pi\sim -0.5945$. See \cite{bizon} for a similar numerical computation.}
$$\sup_{5\leq p<+\infty}\sup_{-2\leq \l\leq 0.5}(\Phi(\l)-\Phi(-2)-\pi)\sim -0.5945 <0,$$
and hence, the solution $\l_1$ to \eqref{eq:equationquantizationlambda1} satisfies
$$\inf_{5\leq p<+\infty}\l_1\geq 0.5>0.$$
We infer that $A_\infty[r_{n,0}]$ has no eigenvalue between $\l_0=-2+o(1)$ and $\l_1\geq 0.5$. Hence, using again Lemma \ref{lemma:perturbationspectrumAn}, $A_n[r_{n,0}]$ has no eigenvalue between $-2$ and $\l_1+o(1)\geq 0.25$. Thus, we have
$$\#\textrm{Spec}(A_n[r_{n,0}])\cap(-\infty,0] = \#\textrm{Spec}(A_n[r_{n,0}]+2)\cap(-\infty,0].$$
On the other hand, we have
$$\#\textrm{Spec}(A_n[r_{n,0}]+2)\cap(-\infty,0] = \#\{r>r_{n,0}\textrm{ such that }\Lambda\Phi_n(r)=0\}+1$$
since $\Lambda\Phi_n$ is in the kernel of $A_n[r_{n,0}]+2$, 
and hence
$$\#\textrm{Spec}(A_n)\cap(-\infty,0] =  \#\{r>r_{n,0}\textrm{ such that }\Lambda\Phi_n(r)=0\}+1.$$
Also, since $\varphi_{n,0}$ can not be an eigenvector of $A_n$\footnote{Indeed, $\varphi_{n,0}$ would be an eigenvector for the eigenvalue 0, but 0 is not in the spectrum of $A_n$ as seen above.}, we have
$$\#\textrm{Spec}(A_n[r_{n,0}])\cap(-\infty,0] = \#\{r>r_{n,0}\textrm{ such that }\varphi_{n,0}(r)=0\}.$$
We infer
$$\#\{r>r_{n,0}\textrm{ such that }\varphi_{n,0}(r)=0\}=\#\{r>r_{n,0}\textrm{ such that }\Lambda\Phi_n(r)=0\}+1.$$
But since $r_{n,0}$ has been chosen to be the last zero of $\Lambda \Phi_n$ before $r_0$, we have
$$\#\{r>r_{n,0}\textrm{ such that }\Lambda\Phi_n(r)=0\}=\#\{r>r_0\textrm{ such that }\Lambda\Phi_n(r)=0\}$$
and hence
$$\#\{r>r_{n,0}\textrm{ such that }\varphi_{n,0}(r)=0\}=\#\{r>r_0\textrm{ such that }\Lambda\Phi_n(r)=0\}+1.$$

Next, together with the fact that  $\varphi_{n,0}$ and  $\Lambda\Phi_n$  vanish the same number of times of $[0,r_0]$, we infer
\bee
&&\#\{r>0\textrm{ such that }\varphi_{n,0}(r)=0\} \\
&\leq& \#\{0\leq r\leq r_0\textrm{ such that }\varphi_{n,0}(r)=0\}+\#\{r>r_{n,0}\textrm{ such that }\varphi_{n,0}(r)=0\}\\
&=& \#\{0\leq r\leq r_0\textrm{ such that }\Lambda\Phi_n(r)=0\}+\#\{r>r_0\textrm{ such that }\Lambda\Phi_n(r)=0\}+1\\
&=& \#\{r>0\textrm{ such that }\Lambda\Phi_n(r)=0\}+1\\
&=& n+1
\eee
and since 
$$\#\{r\geq 0\textrm{ such that }\varphi_{n,0}(r)=0\}\geq n+1.$$
\noindent{\em $\phi_{n,0}$ is not an eigenstate}. We conclude that 
$$\#\{r\geq 0\textrm{ such that }\varphi_{n,0}(r)=0\}= n+1.$$
Assume now by contradiction that $\varphi_{n,0}$ is in the kernel of $\mathcal{L}_{n,0}$. Recall that $r_{0,n}<r_0$ is the last 0 of $\Lambda\Phi_n$ and let $r_{1,n}<r_0$ be the last 0 of $\varphi_{n,0}$. In particular, we have from Lemma \ref{lemma:comparisionforrleqr0toodeH}:
$$e^{-\frac{2\pi}{\omega}}r_0\leq r_{0,n}, r_{1,n}\leq r_0\textrm{ and }r_{1,n}=r_{0,n}+O(r_0^3).$$
Also, since $\varphi_{n,0}$ is in the kernel of $\mathcal{L}_{n,0}$ and $\varphi_{n,0}(r_{1,n})=0$, we infer that 0 is in the spectrum of $A_n[r_{1,n}]$, and hence applying Lemma \ref{lemma:perturbationspectrumAn} twice as well as Lemma \ref{lemma:perturbationspectrumAinfty}, we obtain that 
\bee
\textrm{dist}(\textrm{Spec}(A_n[r_{0,n}]), 0) \lesssim r_0^{\frac{3}{2}}+o(1)
\eee
as $n\to +\infty$. In particular, we have for $r_0>0$ small enough and $n\geq N$ large enough
\bee
\textrm{dist}(\textrm{Spec}(A_n[r_{0,n}]), 0) \leq 0.2.
\eee
On the other hand, we have proved above that $A_n[r_{n,0}]$ has no eigenvalue between $-2$ and $\l_1+o(1)\geq 0.25$ so that
\bee
\textrm{dist}(\textrm{Spec}(A_n[r_{0,n}]), 0) \geq 0.25
\eee
which is a contradiction. Hence $\varphi_{0,n}$ is not in the kernel of $\mathcal{L}_{n,0}$.\\

\noindent{\em Conclusion}. We conclude that $\mathcal{L}_{n,0}$ has exactly $n+1$ strictly negative eigenvalues. On the other hand, since $\Lambda\Phi_n$ is an eigenvector of $\mathcal{L}_{n,0}$ corresponding to the eigenvalue $-2$ and since $\Lambda \Phi_n$ vanishes $n$ times, we infer
$$\#\textrm{Spec}(\mathcal{L}_{n,0}+2)\cap(-\infty,0] = n+1,$$
and hence $\mathcal{L}_{n,0}$ has exactly $n+1$ negative eigenvalues and the largest negative eigenvalue is $-2$. We denote these eigenvalues by
$$
-\mu_{n+1,n}<\dots<-\mu_{2,n}<-\mu_{1,n}=-2.
$$ 
By Sturm Liouville, these eigenvalues are simple and associated to eigenvectors 
$$\psi_{j,n}, \ \ \|\psi_{j,n}\|_{L^2_\rho}=1, \ \ \psi_{1,n}=\frac{\Lambda \Phi_n}{\|\Lambda \Phi_n\|_{\rho}}.$$ 
Also, there holds for some constant $c_n>0$ and for all $u\in H^1_{\rho}$
\bea\label{eq:coerciveestimatem=0} 
(\mathcal{L}_{n,0}(u), u)_\rho\geq c_n\|u\|_{L^2_\rho}^2-\frac{1}{c_n}\left[\sum_{j=1}^{n+1}(u,\psi_{j,n})^2_{\rho}\right].
\eea
The behavior as $r\to +\infty$ of the eigenstates \eqref{estgrowth} follows from the asymptotic in Lemma \ref{lemma:spectrumLinfty} and a standard ODE argument using the variation of constants formula, this is left to the reader.\\

\noindent{\bf step 4} Conclusion.  We decompose $u\in H^1_{\rho}$ as
$$u=\sum_{m=0}^{+\infty}\sum_{k=-m}^m u_{m,k}Y^{(m,k)}$$
where $u_{m,k}$ are radial functions satisfying
$$\|u\|_{\rho}^2=\sum_{m=0}^{+\infty}\sum_{k=-m}^m \|u_{m,k}\|^2_{\rho}.$$
We have
\bee
(\mathcal{L}_n(u),u)_{\rho} &=&  \sum_{m=0}^{+\infty}\sum_{k=-m}^m (\mathcal{L}_{n,m}(u_{m,k}),u_{m,k})_{\rho}.
\eee
Together with \eqref{eq:coerciveestimatem=1}, \eqref{eq:coerciveestimatemgeq2} and \eqref{eq:coerciveestimatem=0}, we infer for all $u\in H^1_{\rho}$
\bee
(\mathcal{L}_n(u),u)_{\rho} &=&   (\mathcal{L}_{n,0}(u_{0,0}),u_{0,0})_{\rho} +\sum_{k=-1}^1 (\mathcal{L}_{n,1}(u_{1,k}),u_{1,k})_{\rho} + \sum_{m=2}^{+\infty}\sum_{k=-m}^m (\mathcal{L}_{n,m}(u_{m,k}),u_{m,k})_{\rho}\\
&\geq& c_n\|u\|_{\rho}^2 -\frac{1}{c_n}\left[\sum_{j=1}^{n+1}(u_{0,0},\psi_{j,n})^2_{\rho}+\sum_{k=1}^3(u_{1,k},\Phi_n')^2_{\rho}\right].
\eee
Since $\psi_{j,n}$ are all radial, we have
$$(u_{0,0},\psi_{j,n})_{\rho} = (u,\psi_{j,n})_{\rho}.$$
Also, since
$$\nabla\Phi_n(x) = \Phi_n'(r)\frac{x}{r} = \Phi_n'(r)(Y^{(1,-1)}, Y^{(1,1)}, Y^{(1,0)}),$$ 
we infer
\bee
\sum_{k=1}^3(u_{1,k},\Phi_n')^2_{\rho} &=& \sum_{k=1}^3(u,\pr_k\Phi_n)^2_{\rho}.
\eee
Finally, there holds for some constant $c_n>0$ and for all $u\in H^1_{\rho}$
\bee
(\mathcal{L}_n u, u)_\rho\geq c_n\|u\|_{H^1_\rho}^2-\frac{1}{c_n}\left[\sum_{j=0}^n(u,\psi_{j,n})^2_{\rho}+\sum_{k=1}^3(u,\pr_k\Phi_n)^2_{\rho}\right].
\eee
This concludes the proof of Proposition \ref{prop:spectral}.


\section{Dynamical control of the flow}
\label{sectiondynamical}

We now turn to the question of the stability of the self similar solution, and more precisely the construction of a manifold of {\it finite energy} initial data such that the corresponding solution to \eqref{eq:heat} blows up in finite time with $\Phi_n$ profile in the self similar regime described by Theorem \ref{thmmain}. $n$ is now fixed.


\subsection{Setting of the bootstrap}


We set up in this section the bootstrap analysis of the flow for a suitable set of finite energy initial data. The solution will be decomposed in a suitable way with standard technique, see \cite{mamerle, meraphannals}.

\noindent { \em Geometrical decomposition}. We start by showing the existence of the suitable decomposition. Recall the spectral Proposition \ref{prop:spectral}. To ease notations we now omit the $n$ subscript and write $\psi_j$, $\mu_j$ and $\lambda_j$ instead.

Define the $L^{\infty}$ tube around the renormalized versions of $\Phi_n$:
$$
X_{\delta}=\left\{ u=\frac{1}{\lambda^{\frac{2}{p-1}}}(\Phi_n+v)\left(\frac{x-y}{\lambda} \right), \ y\in \mathbb R^d, \ \lambda>0, \ \| v \|_{L^{\infty}}< \delta \right\}
$$

\begin{lemma}[Geometrical decomposition] \label{lemma:decompositionofinitialdata}
There exists $\delta>0$ and $C>0$ such that any $u\in X_{\delta}$ has a unique decomposition
$$
u=\frac{1}{\lambda^{\frac{2}{p-1}}}(\Phi_n+\sum_{j=2}^{n+1}a_j\psi_j+\varepsilon)\left(\frac{x-\bar x}{\lambda} \right),
$$
where $\varepsilon$ satisfies the orthogonality conditions
$$
(\e ,\psi_j)_\rho=(\e ,\pa_k\Phi_n)_\rho=0, \ \ 1\leq j\leq n+1, \ \ 1\leq k\leq 3,
$$
the parameters $\lambda$, $\bar x$ and $a_j$ being Fr\'echet differentiable on $X_{\delta}$, and with
\be \label{eq:bound continuite decomposition}
\| \e \|_{L^{\infty}}+\sum |a_j| \leq C.
\ee
\end{lemma}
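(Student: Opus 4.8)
This is a classical modulation (orthogonal decomposition) argument carried out via the implicit function theorem, and the only ingredient specific to the present situation is the non-degeneracy of the differential, which is dictated by Proposition \ref{prop:spectral}. Pick $u\in X_\delta$ together with one of its rough representations $u=\frac{1}{\lambda_0^{2/(p-1)}}(\Phi_n+v_0)(\frac{\cdot-y_0}{\lambda_0})$ with $\|v_0\|_{L^{\infty}}<\delta$. For parameters $(\lambda,\bar x,a)=(\lambda,\bar x,a_2,\dots,a_{n+1})$ near $(\lambda_0,y_0,0)$ set
$$
\e_{\lambda,\bar x,a}(z):=\lambda^{\frac{2}{p-1}}u(\lambda z+\bar x)-\Phi_n(z)-\sum_{j=2}^{n+1}a_j\psi_j(z)
$$
and define $G(u;\lambda,\bar x,a)\in\mathbb{R}^{n+4}$ to be the vector of scalar products $(\e_{\lambda,\bar x,a},\psi_j)_\rho$, $1\le j\le n+1$, together with $(\e_{\lambda,\bar x,a},\pa_k\Phi_n)_\rho$, $1\le k\le 3$. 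After the change of variable $w=\lambda z+\bar x$, the function $u$ enters $G$ only as a fixed $L^{\infty}$ integration weight, never differentiated, while the parameters enter only through the smooth, $L^2_\rho$-integrable profiles $\Phi_n,\psi_j,\pa_k\Phi_n$ and the Gaussian weight $\rho$; hence all the integrals converge (by Cauchy--Schwarz, since $\psi_j,\pa_k\Phi_n\in L^2_\rho$ and $\rho^{1/2}\in L^2$) and $G$ is $C^1$, in fact linear in $u$ and $C^{\infty}$ in $(\lambda,\bar x,a)$. Since the construction commutes with the scaling--translation group, it suffices to prove the statement around the reference point $(\lambda_0,y_0)=(1,0)$, i.e. for $u=\Phi_n+v$ with $\|v\|_{L^{\infty}}<\delta$, the general case following by conjugation.

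The heart of the matter is the differential $D_{(\lambda,\bar x,a)}G$ at $u=\Phi_n$, $(\lambda,\bar x,a)=(1,0,0)$, where $\e\equiv 0$ and $G=0$. Using $\pa_\lambda[\lambda^{2/(p-1)}\Phi_n(\lambda z)]_{|\lambda=1}=\Lambda\Phi_n$, $\pa_{\bar x_k}[\Phi_n(z+\bar x)]_{|\bar x=0}=\pa_k\Phi_n$ and $\pa_{a_j}\e=-\psi_j$, one gets, ordering the parameters as $(\lambda;a_2,\dots,a_{n+1};\bar x_1,\bar x_2,\bar x_3)$ and the conditions as $((\cdot,\psi_1)_\rho;(\cdot,\psi_j)_\rho;(\cdot,\pa_k\Phi_n)_\rho)$, the block-diagonal matrix
$$
\mathrm{diag}\Big(\|\Lambda\Phi_n\|_\rho\,;\,-\mathrm{Id}_n\,;\,\|\pa_1\Phi_n\|_\rho^2,\ \|\pa_2\Phi_n\|_\rho^2,\ \|\pa_3\Phi_n\|_\rho^2\Big),
$$
where one uses: that $\psi_1=\Lambda\Phi_n/\|\Lambda\Phi_n\|_\rho$ and that $\psi_1,\dots,\psi_{n+1}$ are $L^2_\rho$-orthonormal by Proposition \ref{prop:spectral}; that $\pa_k\Phi_n$ lies in the $\ell=1$ spherical harmonic sector, hence is $L^2_\rho$-orthogonal to every radial function, with $(\pa_k\Phi_n,\pa_l\Phi_n)_\rho=\delta_{kl}\|\pa_k\Phi_n\|_\rho^2$ by rotational invariance; and that $\Lambda\Phi_n\not\equiv0$ (it vanishes exactly $n$ times) and $\pa_k\Phi_n\not\equiv0$, so these norms are positive. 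This matrix is invertible, the counting being exactly balanced: the scaling $\lambda$ modulates out the $-2$ eigenmode $\psi_1=\Lambda\Phi_n$, the translations $\bar x$ the $-1$ eigenmodes $\pa_k\Phi_n$, and $a_2,\dots,a_{n+1}$ the remaining negative eigenmodes.

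The implicit function theorem then provides $\delta>0$ and a $C^1$ map $v\mapsto(\lambda(v),\bar x(v),a(v))$ on $\{\|v\|_{L^{\infty}}<\delta\}$ solving $G\equiv0$ with value $(1,0,0)$ at $v=0$; transporting back by the group action yields Fr\'echet-differentiable parameters $\lambda,\bar x,a_j$ on $X_\delta$ together with the orthogonal remainder $\e$. Shrinking $\delta$ keeps $|\lambda-\lambda_0|+|\bar x-y_0|+\sum_j|a_j|$ small, whence $\|\e\|_{L^{\infty}}\le\lambda^{2/(p-1)}(\|\Phi_n\|_{L^{\infty}}+\|v_0\|_{L^{\infty}})+\|\Phi_n\|_{L^{\infty}}+\sum_j|a_j|\,\|\psi_j\|_{L^{\infty}}\le C$, which is \eqref{eq:bound continuite decomposition}. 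For uniqueness: if $u\in X_\delta$ carried two orthogonal decompositions with small remainders, conjugating one frame onto the other would display $\Phi_n$ as $L^{\infty}$-close to a rescaled and translated copy of itself, and since $\Phi_n$ is a fixed profile whose maximum $\Phi_n(0)=\mu_n^{-2/(p-1)}$ is attained only at the origin, the two scales and centres must be $O(\delta)$-close, placing both decompositions in a single neighbourhood where the implicit function theorem gives local uniqueness.

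There is no genuine analytical obstacle here; the argument is routine modulation theory once Proposition \ref{prop:spectral} is available. The two points requiring attention are organisational: one must arrange $G$ so that $u$ occurs only as an integration weight — never differentiated — so that the mere $L^{\infty}$ smallness of the perturbation suffices for $G$ to be $C^1$; and one must use the scaling--translation invariance to move the a priori uncontrolled base point to $(\lambda,\bar x)=(1,0)$ before invoking the implicit function theorem, in order to obtain non-degeneracy constants uniform over $X_\delta$.
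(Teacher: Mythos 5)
Your proof is correct and follows essentially the same route as the paper: an implicit function theorem argument at the normalized reference point $(\lambda,\bar x)=(1,0)$, with the invertibility of the Jacobian coming from the $L^2_\rho$-orthonormality of $\psi_1,\dots,\psi_{n+1}$ and the orthogonality of the radial and $\ell=1$ spherical harmonic sectors, followed by transport under the scaling--translation group and a continuity argument reducing any second decomposition to the local uniqueness given by the fixed point. The only difference is cosmetic: where the paper invokes a generic ``continuity argument'' to show that two nearby representations force the scales and centres to be close, you justify it via the strict maximum of $\Phi_n$ at the origin, a slightly stronger (and itself unproven) claim than is strictly needed, but the overall structure coincides.
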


\begin{proof}
It is a classical consequence of the implicit function theorem. 

\noindent\textbf{step 1} Decomposition near $\lambda=1$, $\bar x=0$. We introduce the smooth maps
$$F(v, \mu, x, b_1,\ldots, b_n)=\mu^{\frac{2}{p-1}}(\Phi_n+v)(\mu y+x) -\Phi_n - \sum_{j=2}^{n+1} b_j\psi_j$$
and
$$G = ((F, \Lambda\Phi_n), (F,\pr_1\Phi_n), (F,\pr_2\Phi_n), (F,\pr_3\Phi_n), (F, \psi_2),\ldots, (F,\psi_{n+1})).$$ 
We immediately check that $G(\Phi_n, 1, 0,\ldots, 0)=0$ and that
$$\frac{\pr G}{\pr ( \mu, x, b_2,\ldots, b_{n+1})}_{|_{(\Phi_n, 1, 0, \ldots, 0)}}$$
is invertible. In view of the implicit function theorem, for $\kappa>0$ small enough, for any 
$$
\| v \|_{L^{\infty}}\leq \kappa
$$
there exists $(\mu , z, a_2, \ldots, a_{n+1})$ and
$$\ep =F(v, \mu, z, a_2,\ldots, a_{n+1})$$
such that
$$u =\Phi_n+v= \frac{1}{\mu ^{\frac{2}{p-1}}}\left(\Phi_n+\sum_{j=2}^{n+1}a_j\psi_j+\ep \right)\left(\frac{x-z}{\mu}\right),$$
$$
(\e ,\psi_j)=(\e ,\pa_k\Phi_n)=0, \ \ 1\leq j\leq n, \ \ 1\leq k\leq 3,
$$
and there exist two universal constants $K,\tilde K>0$ such that
$$
\| \e \|_{L^{\infty}}+\sum_{j=2}^{n+1} |a_j|+|\mu-1|+|z|\leq K\| v \|_{L^{\infty}}
$$
and such that the decomposition is unique under the bound
\be \label{eq:bound unicite decomposition}
\| \e \|_{L^{\infty}}+\sum_{j=2}^{n+1} |a_j|+|\mu-1|+|z|\leq \tilde K.
\ee

\noindent \textbf{step 2} Decomposition near any $\lambda,\bar x$. For any $\delta>0$, we take $C=C(\delta) := K \delta$. Let $u\in X_{\delta}$ then for some $\lambda'>0$ and $y$ one has 
$$
u(x)=\frac{1}{\lambda^{'\frac{2}{p-1}}}(\Phi_n+v)\left(\frac{x-y}{\lambda'} \right), \ \| v \|_{L^{\infty}}< \delta .
$$
The first step then provides the decomposition claimed in the lemma for $\delta$ small enough via the formulas $\lambda =\lambda' \mu (v)$, $\bar x=y-\lambda' z(v)$, $a_j=a_j(v)$ and $\e=\e(v)$. We will show in the next step that the decomposition is unique, implying that the parameters are Fr\'echet differentiable on $X_{\delta}$ for those of step 1 are.

\noindent \textbf{step 3} Uniqueness of the decomposition. First, from a continuity argument, for any $\epsilon>0$, there exists $\delta>0$ such that if
$$
(\Phi_n +v)(x)=\frac{1}{\mu^{\frac{2}{p-1}}} (\Phi_n+v')\left( \frac{x-y}{\mu}\right), \ \| v \|_{L^{\infty}} + \| v' \|_{L^{\infty}} \leq \delta
$$
then
$$
|\mu-1|+|y|\leq \epsilon . 
$$
Now recall that $C= K \delta$ and assume that we are given a second decomposition for $u\in X_{\delta}$. In view of step 2, performing a change of variable, this amount to say that $\Phi_n+v$ admits another decomposition:
$$
(\Phi_n+v)(x)=\frac{1}{\bar \mu^{\frac{2}{p-1}}} (\Phi_n+\sum_{j=2}^{n+1}\bar a_j \psi_j +\bar \varepsilon)\left( \frac{x-\bar z}{\bar \mu}\right)
$$
and the bound \fref{eq:bound continuite decomposition} gives
$$
\sum_{j=2}^{n+1} |\bar a_j| +\| \bar \varepsilon \|_{L^{\infty}}\leq K\delta .
$$
Using the above continuity estimate, one obtains that for $\delta $ small enough
$$
|\bar z|+|\bar \mu-1| \ll \tilde K.
$$
Therefore, for $\delta$ small enough the second decomposition associated with $\bar \mu$, $\bar z$, $\bar a_j$ and $\bar \e$ satisfies \fref{eq:bound unicite decomposition}, and is therefore the one given by step 2 by uniqueness.
\end{proof}

\noindent { \em Description of the initial datum}. We will now focus on solutions of \fref{eq:heat} that are a suitable perturbation of $\Phi_n$ at initial time:
\bea\label{geomdeopcopmtiinit}
u_0=\frac{1}{\l_0^{\frac{2}{p-1}}}(\Phi_n+v_0)\left(\frac{x}{\l_0}\right)
\eea
with
\bea\label{orhotpsininit}
v_0=\sum_{j=2}^{n+1} a_j\psi_j+\ep_0, \ \ (\e_0,\psi_j)_\rho=(\e_0,\pa_k\Phi_n)_\rho=0, \ \ 1\leq j\leq n+1, \ \ 1\leq k\leq 3.
\eea
For $s_0\gg 1$ and $\mu,K_0>0$ three constants to be defined later on, the parameters $\lambda_0$, $a_j$ and the profile $\e_0$ satisfy the bounds
\begin{itemize}
\item rescaled solution: 
\be
\label{scalingsmall}
\l_0 = e^{-s_0};
\ee
\item initial control of the unstable modes:
\be
\label{nvnono}
\sum_{j=2}^{n+1} |a_j|^2\leq e^{-2\mu s_0};
\ee
\item smallness of suitable initial norms:
\be
\label{inititalboot}
 \|\e_0\|_{H^2_\rho}+\|\Delta v_0\|_{L^2}+\|w_0\|_{\dot{H}^{s_c}}\leq K_0 e^{-\mu s_0} ;
\ee
\end{itemize}
where $w_0$ is given by
$$w_0=\left(1-\chi_{\frac{1}{\l_0}}\right)\Phi_n+v_0.$$
Note that in view of the $L^{\infty}$ bound \fref{poitwisebound}, the decomposition \fref{geomdeopcopmtiinit} is precisely the one given by Lemma \ref{lemma:decompositionofinitialdata}.\\

\noindent{\em Renormalized flow}. As long as the solution $u(t)$ starting from \fref{geomdeopcopmtiinit} belongs to $X_{\delta}$, Lemma \ref{lemma:decompositionofinitialdata} applies and it can be written
\be
\label{geomdeopcopmti}
u(t,x)=\frac{1}{\l(t)^{\frac{2}{p-1}}}(\Phi_n+\psi+\e)(s,z), \ \ y=\frac{x-x(t)}{\l(t)}
\ee 
with 
\be
\label{orhotpsin}
\psi=\sum_{j=2}^{n+1} a_j\psi_j, \ \ (\e,\psi_j)_\rho=(\e,\pa_k\Phi_n)_\rho=0, \ \ 1\leq j\leq n+1, \ \ 1\leq k\leq 3 .
\ee 
Moreover, as the parameters are Fr\'echet differentiable in $L^{\infty}$, and as $u\in C^1((0,T),L^{\infty})$ from parabolic regularizing effects, the above decomposition is differentiable with respect to time. We also introduce a further decomposition 
\be
\label{definitionv}
v=\psi+\e, \ \ \Phi_n+v=\chi_{\frac 1\lambda}\Phi_n+w.
\ee 
Consider the renormalized time 
$$
s(t)=\int_0^t\frac{d\tau}{\l^2(\tau)}+s_0 .
$$
Injecting \eqref{geomdeopcopmti} into \eqref{eq:heat} yields the renormalized equation
\be
\label{equatione}
\pa_s\e+\mathcal{L}_n\e=F-\Mod
\ee
with the modulation term
\be
\label{modulation}
\Mod=\sum_{j=2}^{n+1}\left[(a_j)_s-\mu_ja_j\right]\psi_j-\left(\lsl+1\right)(\Lambda\Phi_n+ \Lambda \psi)-\xsl\cdot(\nabla \Phi_n+\nabla \psi)
\ee
and the force terms
\bea
\label{erreurf}
&&F=L(\e)+\NL, \ \ L(\e)=\left(\lsl+1\right)\Lambda\e+\xsl\cdot\nabla \e\\
\label{nolineaire}
&&\NL=g(\e+\psi), \ \ g(v)=(\Phi_n+v)^p-\Phi_n^p-p\Phi_n^{p-1}v.
\eea

We claim the following bootstrap proposition.

\begin{proposition}[Bootstrap]
\label{bootstrap}
There exist universal constants $0<\mu, \eta\ll 1$, $K\gg 1$ such that for all $s_0\geq s_0(K,\mu, \eta)\gg 1$ large enough the following holds. For any $\lambda_0$ and $\e_0$ satisfying \fref{scalingsmall}, \eqref{orhotpsininit} and
\be \label{eq:bound initiale varepsilon}
\| (1-\chi_{\frac{1}{\l_0}})\Phi_n+\e_0 \|_{\dot H^{s_c}}+\| \e_0 \|_{H^2_{\rho}}+\| \Delta \e_0 \|_{L^2} \leq e^{-2\mu s_0},
\ee
there exist $(a_2(0),\dots,a_{n+1}(0))$ satisfying \eqref{nvnono} such that the solution starting from $u_0$ given by \fref{geomdeopcopmtiinit}, decomposed according to \eqref{geomdeopcopmti} satisfies for all $s\geq s_0$:
\begin{itemize}
\item control of the scaling:
\be
\label{controlsclaing}
0<\l(s)< e^{-\mu s};
\ee
\item control of the unstable modes:
\be
\label{controlunstable}
\sum_{j=2}^{n+1}|a_j|^2\leq e^{-2\mu s};
\ee
\item control of the exponentially weighted norm: 
\be
\label{poitwiseboundhtwo}
\|\e\|_{H^2_\rho}< Ke^{-\mu s};
\ee
\item control of a Sobolev norm above scaling: 
\be
\label{controlsobolev}
\|\Delta v\|_{L^2}< Ke^{-\mu s};
\ee
\item control of the critical norm:
\be
\label{sobolevciritical}
\|w\|_{\dot{H}^{s_c}}<\eta.
\ee 
\end{itemize}
\end{proposition}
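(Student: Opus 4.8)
The plan is to run a standard bootstrap/topological shooting argument. I would first fix the bootstrap regime $\mathcal{S}$ to be the set of times $s \geq s_0$ for which \eqref{controlsclaing}--\eqref{sobolevciritical} hold, with the strict inequalities in \eqref{poitwiseboundhtwo}, \eqref{controlsobolev}, \eqref{sobolevciritical} and the open condition on the $a_j$'s replaced by their closures where needed; the goal is to show that for a well-chosen initial datum $(a_2(0),\dots,a_{n+1}(0))$ the bootstrap interval is all of $[s_0,+\infty)$. The core is to show that each bound \emph{strictly improves} on $\mathcal{S}$, except for the finitely many unstable directions $a_j$, which are then handled by a Brouwer-type argument. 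So the steps are: (i) modulation equations for $\lambda_s/\lambda+1$, $x_s/\lambda$ and $(a_j)_s-\mu_j a_j$, obtained by differentiating the orthogonality conditions \eqref{orhotpsin} in \eqref{equatione}; these give $|\Mod|$ controlled by $\|\e\|_{H^2_\rho}$ plus the $a_j$'s plus nonlinear terms, hence $|\lambda_s/\lambda+1| + |x_s/\lambda| \lesssim e^{-\mu s}$, which upon integration improves \eqref{controlsclaing} (using $\lambda(s_0)=e^{-s_0}$ and $s_0$ large). (ii) The weighted energy estimate: using the spectral gap \eqref{coerciviteapoids} for $\mathcal{L}_n$ together with coercivity on the orthogonal complement of the unstable and null directions, one derives a differential inequality $\frac{d}{ds}\|\e\|_{H^2_\rho}^2 \leq -c_n \|\e\|_{H^2_\rho}^2 + (\text{lower order} + \Mod + \text{NL})$, which, after absorbing the nonlinear and modulation contributions using \eqref{controlsobolev}, \eqref{sobolevciritical} and Sobolev embedding, closes \eqref{poitwiseboundhtwo} with a constant strictly better than $K$. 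One has to be slightly careful that $H^2_\rho$ is the right space on which $\mathcal{L}_n$ acts coercively modulo the finite-dimensional obstructions — this is exactly Proposition \ref{prop:spectral} applied at the level of the operator and its iterates $D^j$.

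The next block of steps concerns the norms that are \emph{not} controlled by the confining weight $\rho$: (iii) the above-scaling norm $\|\Delta v\|_{L^2}$ and (iv) the critical norm $\|w\|_{\dot H^{s_c}}$. For (iii), I would write an energy identity for $\Delta v$ (or equivalently for $\Delta w$ away from the origin) using \eqref{equatione}, where the linearized operator $-\Delta + \Lambda - p\Phi_n^{p-1}$ is \emph{not} coercive in $\dot H^2$ but the dissipative term $-\Delta$ provides a gain, the potential $p\Phi_n^{p-1}$ is a lower-order perturbation (here $\Phi_n \in L^\infty$ is used crucially, and $p>5$, $d=3$ so $\Delta v \in L^2$ controls the nonlinearity via Sobolev), and the scaling term $\Lambda$ contributes a term with a favorable sign after integration by parts up to a controlled error. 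This should give $\frac{d}{ds}\|\Delta v\|_{L^2}^2 \leq -\delta \|\Delta v\|_{L^2}^2 + C(\|\e\|_{H^2_\rho}^2 + \dots)$ and hence close \eqref{controlsobolev}. For (iv), the critical norm $\|w\|_{\dot H^{s_c}}$ is only expected to grow logarithmically (cf. \eqref{normioeoe}), so one does \emph{not} need strict improvement — rather one shows $\frac{d}{ds}\|w\|_{\dot H^{s_c}} \lesssim e^{-\delta s} + (\text{small})$, which integrates to keep $\|w\|_{\dot H^{s_c}}$ below $\eta$ on any finite-but-long interval provided $s_0$ is large; the logarithmic growth is harmless within the bootstrap window. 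Interpolation between $\dot H^{s_c}$, the weighted $L^2$ bound and $\dot H^2$ is used throughout to control intermediate norms and the nonlinear terms $\NL = g(\e+\psi)$.

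Finally (v), the unstable directions: from the modulation equations, $(a_j)_s = \mu_j a_j + O(e^{-2\mu s} + \text{quadratic})$ with $\mu_j \geq 2$, so each $a_j$ satisfies an ODE that is repulsive near $0$. Exactly as in \cite{martelmulti,MRR}, one argues by contradiction: if for \emph{every} choice of $(a_2(0),\dots,a_{n+1}(0))$ in the ball $\{\sum |a_j|^2 \leq e^{-2\mu s_0}\}$ the solution exits the bootstrap regime, then since all the other bounds have been shown to strictly improve, the exit must occur through the sphere $\sum |a_j|^2 = e^{-2\mu s}$; the outgoing-flux/strict-transversality property coming from $\mu_j \geq 2 > \mu$ makes the exit time continuous in the initial data and yields a continuous retraction of the ball onto its boundary sphere, contradicting Brouwer. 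Hence some initial datum stays trapped forever, which is the assertion. I expect the main obstacle to be step (iii)--(iv): controlling the super-critical and critical norms, where no coercive structure or weight is available and one must delicately balance the dissipation of $-\Delta$ against the antidissipative scaling term $\Lambda$ and the non-perturbative tail of $\Phi_n \sim \Phi_*$ at infinity — this is precisely where the sharp asymptotics \eqref{behavioruinfity}, the $L^\infty$ bound on $\Phi_n$, and the restriction $d=3$, $p>5$ are needed, and where the analysis parallels (but, as the authors note, is simpler than) the type II estimates of \cite{RaphRod,MRR}.
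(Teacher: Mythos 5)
Your strategy coincides with the paper's actual proof: modulation equations with quadratic errors from the orthogonality conditions \eqref{orhotpsin}, weighted energy estimates closed by the spectral gap \eqref{coerciviteapoids} together with the $L^\infty$ bound obtained by interpolating the critical and $\dot H^2$ norms, a Lyapunov-type global $\dot H^2$ estimate in which the renormalized scaling term ($\lambda_s/\lambda\approx -1$, $s_c<2$) supplies the damping, an exponentially decaying bound on $\frac{d}{ds}\|\nabla^{s_c}w\|_{L^2}^2$, and the Brouwer topological argument on the $n$ unstable modes. One small correction: it is $\|u\|_{\dot H^{s_c}}$, not $\|w\|_{\dot H^{s_c}}$, that grows logarithmically, and the right-hand side of the critical-norm estimate must be (and in the paper is, cf. \eqref{decaycriticalnorm}) exponentially decaying rather than merely ``small'', since the bootstrap interval $[s_0,s^*]$ is a priori unbounded and a constant small source would not keep $\|w\|_{\dot H^{s_c}}$ below $\eta$.
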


Proposition \ref{bootstrap} is the heart of the analysis, and the corresponding solutions are easily shown to satisfy the conclusions of Theorem \ref{thmmain}. The strategy of the proof follows \cite{martelmulti,MRR}: we prove Proposition \ref{bootstrap} by contradiction using a topological argument \`a la Brouwer: given $(\e_0,\l_0)$ satisfying  \eqref{scalingsmall}, \eqref{eq:bound initiale varepsilon} and \eqref{orhotpsininit}, we assume that for all $(a_2(0),\dots,a_{n+1}(0))$ satisfying \eqref{nvnono}, the exit time 
\be
\label{defexittime}
s^*=\sup\{s\geq s_0\ \ \mbox{such that}\ \ \eqref{controlsclaing}, \eqref{controlunstable},\eqref{poitwiseboundhtwo},\eqref{controlsobolev}, \eqref{sobolevciritical} \ \text{holds} \ \text{on} \ [s_0,s)\}
\ee
is finite 
\be
\label{assumptioncontradiction}
s^*<+\infty
\ee
and look for a contradiction for $0<\mu,\eta,\frac{1}{K}$ small enough and $s_0\geq s_0(K,\mu)$ large enough. From now on, we therefore study the flow on $[s_0,s^*]$ where \eqref{controlsclaing}, \eqref{controlunstable}, \eqref{poitwiseboundhtwo}, \eqref{controlsobolev} and \eqref{sobolevciritical} hold. Using a bootstrap method we show that the bounds \eqref{controlsclaing}, \eqref{poitwiseboundhtwo}, \eqref{controlsobolev} and \eqref{sobolevciritical} can be improved, implying that at time $s^*$ necessarily the unstable modes have grown and \fref{controlunstable} is violated. Since $0$ is a linear repulsive equilibrium for these modes, this would contradict Brouwer fixed point theorem.\\

\noindent From the asymptotic \fref{estgrowth} of $\psi_j$ for $2\leq j\leq n+1$, \fref{nvnono} and \fref{eq:bound initiale varepsilon}, one can fix the constant $K_0$ independently of $(s_0,\mu,)$ such that \fref{inititalboot} holds. Also, note that the bootstrap bounds \eqref{controlunstable}, \eqref{poitwiseboundhtwo}, \eqref{controlsobolev} and \eqref{sobolevciritical} imply the $L^{\infty}$ bound \fref{poitwisebound}, and therefore the decomposition used in the Proposition is well defined since Lemma \ref{lemma:decompositionofinitialdata} applies.


\subsection{$L^\infty$ bound}


We start with the derivations of {\it unweighted} $L^\infty$ and Sobolev bounds on $v,w$ which will be essential to control nonlinear terms in the sequel and follow from \eqref{controlsobolev}, \eqref{sobolevciritical}.

\begin{lemma}[$L^\infty$ smallness]
There holds 
\be
\label{poitwisebound}
\|v\|_{L^\infty}+\|w\|_{L^\infty}\leq e^{-c\mu s}\leq \eta\ll1
\ee
for some universal constants $c>0$, $0<\eta\ll 1$.
\end{lemma}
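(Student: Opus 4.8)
The plan is to obtain the $L^\infty$ bound by a Sobolev embedding, using that in dimension $d=3$ the intersection $\dot H^{s_c}\cap\dot H^2$ embeds continuously into $L^\infty$ (since $1<s_c<\tfrac32<2$), applied to the variable $w$, for which both norms are controlled by the bootstrap assumptions \eqref{controlsobolev}--\eqref{sobolevciritical}. The bound on $v$ will then follow from $v=w-(1-\chi_{\frac1\lambda})\Phi_n$ together with a cheap pointwise estimate on the tail $(1-\chi_{\frac1\lambda})\Phi_n$, which is small because $\lambda$ is small.

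\textbf{Step 1 (pointwise and cutoff estimates for $\Phi_n$).} First I would record the decay $|\Phi_n(r)|\lesssim(1+r)^{-\frac2{p-1}}$ for $r\geq r_0$, which follows from \eqref{behavioruinfity} and $\Phi_*=c_\infty r^{-\frac2{p-1}}$, together with $|\Delta\Phi_n(r)|\lesssim(1+r)^{-\frac2{p-1}-2}$, obtained from the self-similar equation $\Delta\Phi_n=\Lambda\Phi_n-\Phi_n^p$ and the decay of $\Lambda\Phi_n$ furnished by \eqref{refinedbeahviourpsione} and \eqref{wihfeihwwomeha}. Since $1-\chi_{\frac1\lambda}$ vanishes on $\{|x|\leq\frac1{4\lambda}\}$, equals $1$ on $\{|x|\geq\frac1{2\lambda}\}$, and $|\nabla^k\chi_{\frac1\lambda}|\lesssim\lambda^k$, expanding
$$\Delta\big[(1-\chi_{\tfrac1\lambda})\Phi_n\big]=(1-\chi_{\tfrac1\lambda})\Delta\Phi_n-2\nabla\chi_{\tfrac1\lambda}\cdot\nabla\Phi_n-\Phi_n\,\Delta\chi_{\tfrac1\lambda}$$
and integrating each term over its support $\{|x|\gtrsim\lambda^{-1}\}$ gives, using $\lambda<e^{-\mu s}\ll1$ from \eqref{controlsclaing},
$$\big\|(1-\chi_{\tfrac1\lambda})\Phi_n\big\|_{L^\infty}\lesssim\lambda^{\frac2{p-1}},\qquad \big\|\Delta\big[(1-\chi_{\tfrac1\lambda})\Phi_n\big]\big\|_{L^2}\lesssim\lambda^{\frac2{p-1}+\frac12}\lesssim\lambda^{\frac12}.$$

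\textbf{Step 2 ($\dot H^2$-bound on $w$, interpolation, conclusion).} Adding the second estimate to \eqref{controlsobolev} yields $\|\Delta w\|_{L^2}\leq\|\Delta v\|_{L^2}+\big\|\Delta[(1-\chi_{\frac1\lambda})\Phi_n]\big\|_{L^2}\lesssim Ke^{-\mu s}+e^{-\frac{\mu s}{2}}$. Then I would invoke the Gagliardo--Nirenberg--Sobolev inequality on $\mathbb R^3$: writing $\|w\|_{L^\infty}\leq\|\widehat w\|_{L^1}$ and splitting the frequency integral at a scale $N$,
$$\|w\|_{L^\infty}\lesssim N^{\frac32-s_c}\|w\|_{\dot H^{s_c}}+N^{-\frac12}\|\Delta w\|_{L^2},$$
which is legitimate precisely because $2s_c<3<4$; optimizing in $N$ gives $\|w\|_{L^\infty}\lesssim\|w\|_{\dot H^{s_c}}^{\theta}\|\Delta w\|_{L^2}^{1-\theta}$ with $\theta=\frac1{2(2-s_c)}\in(\tfrac12,1)$. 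Plugging in \eqref{sobolevciritical} and the bound for $\|\Delta w\|_{L^2}$, and choosing $s_0$ large enough to absorb the fixed constant $K^{1-\theta}$, this produces $\|w\|_{L^\infty}\lesssim\eta^{\theta}\big(Ke^{-\frac{\mu s}{2}}\big)^{1-\theta}\leq e^{-c\mu s}$ on $[s_0,s^*]$ for a suitable $c>0$. Finally $\|v\|_{L^\infty}\leq\|w\|_{L^\infty}+\big\|(1-\chi_{\frac1\lambda})\Phi_n\big\|_{L^\infty}\lesssim e^{-c\mu s}+e^{-\frac{2\mu s}{p-1}}$; after shrinking $c$ and enlarging $s_0$ (so the overall implicit constant is absorbed and $e^{-c\mu s}\leq\eta$ for $s\geq s_0$) one obtains $\|v\|_{L^\infty}+\|w\|_{L^\infty}\leq e^{-c\mu s}\leq\eta$.

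\textbf{Main obstacle.} The only genuine subtlety is conceptual rather than computational: by scaling, $\dot H^2(\mathbb R^3)$ does \emph{not} embed into $L^\infty$, so \eqref{controlsobolev} on its own cannot bound $\|v\|_{L^\infty}$; one is forced to work with $w$, which is the object that simultaneously lies in the subcritical space $\dot H^{s_c}$ (kept small by \eqref{sobolevciritical}) and, after the Step~1 cutoff estimate, in $\dot H^2$, and only then to transfer the resulting bound back to $v$. The cutoff computations, the interpolation, and the absorption of the constants for $s_0$ large are all routine.
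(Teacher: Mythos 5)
Your proposal is correct and follows essentially the same route as the paper: decompose $w=(1-\chi_{\frac1\lambda})\Phi_n+v$, bound $\|w\|_{\dot H^2}\lesssim K e^{-\mu s}+\lambda^{2-s_c}$ using the self-similar decay of $\Phi_n$ (note your exponent $\frac{2}{p-1}+\frac12=2-s_c$ is exactly the paper's), then interpolate $\|w\|_{L^\infty}\lesssim\|\hat w\|_{L^1}\lesssim\|w\|_{\dot H^{s_c}}^{1-\alpha}\|w\|_{\dot H^2}^{\alpha}$ with $\alpha=\frac{3/2-s_c}{2-s_c}$ (your $\theta=1-\alpha$), and finally transfer to $v$ via the $L^\infty$ smallness of the tail $(1-\chi_{\frac1\lambda})\Phi_n\lesssim\lambda^{\frac2{p-1}}$. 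The only difference is that you spell out the cutoff computation and the frequency-splitting argument which the paper leaves implicit.
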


\begin{proof} We compute from \eqref{definitionv}: 
\be
\label{nvekononvevo}
w=(1-\chi_{\frac 1\lambda})\Phi_n+v.
\ee 
The self similar decay \eqref{behavioruinfity} and \eqref{controlsobolev} yield: $$\|w\|_{\dot{H}^2}\lesssim \|v\|_{\dot{H}^2}+\|(1-\chi_{\frac 1\lambda})\Phi_n\|_{\dot{H}^2}\lesssim K\left[ e^{-\mu s}+\l(s)^{2-s_c}\right]\leq e^{-c\mu s}.$$ Hence by interpolation using $s_c=\frac 32-\frac{2}{p-1}<\frac 32<2$:
$$\|w\|_{L^\infty}\lesssim \|\hat{w}\|_{L^1}\lesssim \|w\|^{1-\alpha}_{\dot{H}^{s_c}}\|w\|_{\dot{H}^2}^\alpha, \  \ \alpha=\frac{\frac 32-s_c}{2-s_c}$$ which together with \eqref{sobolevciritical} ensures: $$\|w\|_{L^\infty}\lesssim e^{-c\mu s}.$$ The decay \eqref{behavioruinfity} and \eqref{controlsclaing}, \eqref{nvekononvevo} yield the $L^\infty$ smallness for $v$ and conclude the proof.
\end{proof}

\subsection{Modulation equations}


We now compute the modulation equations which describe the time evolution of the parameters. They are computed in the self-similar zone, and involve the $\rho$ weighted norm.

\begin{lemma}[Modulation equations]
\label{lemmamodulation}
There holds the bounds
\be
\label{modulationequationbound}
\left|\lsl+1\right|+\left|\xsl\right|+\sum_{j=2}^{n+1}|(a_j)_s-\mu_ja_j|\lesssim \|\e\|_{H^1_\rho}^2+\|\Delta v\|_{L^2}^2+\sum_{j=2}^{n+1}|a_j|^2.
\ee
\end{lemma}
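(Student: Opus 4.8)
The plan is to extract the modulation equations by testing the renormalized equation \eqref{equatione} against the finite family of exponentially-localized functions $\psi_j$ ($2\le j\le n+1$) and $\pa_k\Phi_n$ ($1\le k\le 3$), using the orthogonality conditions \eqref{orhotpsin}. First I would differentiate each orthogonality relation in $s$: from $(\e,\psi_j)_\rho=0$ we get $(\pa_s\e,\psi_j)_\rho=0$, and similarly $(\pa_s\e,\pa_k\Phi_n)_\rho=0$, so that pairing \eqref{equatione} with $\psi_j$ gives $(\mathcal L_n\e,\psi_j)_\rho=(F,\psi_j)_\rho-(\Mod,\psi_j)_\rho$. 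Since $\mathcal L_n$ is self-adjoint on $L^2_\rho$ and $\mathcal L_n\psi_j=-\mu_j\psi_j$, the left-hand side is $-\mu_j(\e,\psi_j)_\rho=0$; hence $(\Mod,\psi_j)_\rho=(F,\psi_j)_\rho$. Likewise pairing with $\pa_k\Phi_n$ and using $\mathcal L_n(\pa_k\Phi_n)=-\pa_k\Phi_n$ together with $(\e,\pa_k\Phi_n)_\rho=0$ gives $(\Mod,\pa_k\Phi_n)_\rho=(F,\pa_k\Phi_n)_\rho$. This produces a linear system of $n+4$ equations for the $n+4$ unknowns $(\lsl+1)$, $\xsl$ (three components), and $(a_j)_s-\mu_ja_j$ ($2\le j\le n+1$).

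Next I would show the matrix of this linear system is, up to $O(1)$ invertible perturbations, diagonal-dominant and uniformly invertible. The key scalar products are $(\psi_j,\psi_{j'})_\rho=\delta_{jj'}$ (orthonormality), $(\Lambda\Phi_n,\psi_j)_\rho$, $(\pa_k\Phi_n,\psi_j)_\rho$, $(\pa_k\Phi_n,\pa_{k'}\Phi_n)_\rho$, and the mixed terms involving $\Lambda\psi$ and $\nabla\psi$ where $\psi=\sum a_j\psi_j$. Note that $\psi_1=\Lambda\Phi_n/\|\Lambda\Phi_n\|_\rho$ is itself an eigenvector, so $(\Lambda\Phi_n,\psi_j)_\rho=\|\Lambda\Phi_n\|_\rho\delta_{1j}=0$ for $j\ge2$; and $\pa_k\Phi_n$ spans the $\mu_{-1,n}=1$ eigenspace, orthogonal to all $\psi_j$, so $(\pa_k\Phi_n,\psi_j)_\rho=0$. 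By parity/rotation symmetry $(\Lambda\Phi_n,\pa_k\Phi_n)_\rho=0$ and $(\pa_k\Phi_n,\pa_{k'}\Phi_n)_\rho=c\,\delta_{kk'}$ with $c>0$. Hence, to leading order, the system decouples: testing against $\psi_j$ isolates $(a_j)_s-\mu_ja_j$, testing against $\Lambda\Phi_n$ isolates $\lsl+1$, and testing against $\pa_k\Phi_n$ isolates $(\xsl)_k$, with the only coupling coming from the $O(\sum|a_j|)$ terms $\Lambda\psi,\nabla\psi$ in \eqref{modulation}; these are absorbed on the right-hand side of \eqref{modulationequationbound} as the $\sum|a_j|^2$ contribution (after noting $|(\lsl+1)\Lambda\psi|$ etc.\ is quadratically small once the bound is closed, or treated by moving it to the RHS and invoking smallness of $\sum|a_j|$).

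Finally I would estimate the forcing terms $(F,\psi_j)_\rho$ and $(F,\pa_k\Phi_n)_\rho$ with $F=L(\e)+\NL$. For the linear part $L(\e)=(\lsl+1)\Lambda\e+\xsl\cdot\nabla\e$, integration by parts moves $\Lambda$ and $\nabla$ onto the weight $\rho$ and the exponentially-decaying $\psi_j$ (which by \eqref{estgrowth} decays polynomially, hence is in every $L^2_\rho$-type space together with its derivatives), yielding $|(L(\e),\psi_j)_\rho|\lesssim(|\lsl+1|+|\xsl|)\|\e\|_{H^1_\rho}$, an amount that can be absorbed into the left-hand side by the uniform invertibility since it carries a factor $\|\e\|_{H^1_\rho}\ll1$. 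For the nonlinear part $\NL=g(\e+\psi)$ with $g(v)=(\Phi_n+v)^p-\Phi_n^p-p\Phi_n^{p-1}v$ quadratic in $v$ near $0$, I would split space into the self-similar region $|y|\lesssim1$ (where $\rho$ and $\psi_j$ are comparable to constants and the $L^\infty$ smallness \eqref{poitwisebound} gives $|g(v)|\lesssim|v|^2\lesssim(\|\e\|_{L^\infty}+\sum|a_j|)|v|$, controlled by $\|\e\|_{H^1_\rho}^2+\sum|a_j|^2$ after Cauchy–Schwarz against the localized $\psi_j$) and the outer region where the Gaussian weight $\rho$ and the polynomial decay of $\psi_j$ from \eqref{estgrowth} beat any polynomial growth of $v$, so that the contribution is bounded by $\|\Delta v\|_{L^2}^2$ via Sobolev embedding $\dot H^2\hookrightarrow L^\infty$ in $d=3$. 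Collecting and inverting the system yields \eqref{modulationequationbound}. The main obstacle I anticipate is the bookkeeping in the outer region: pairing a quadratic nonlinearity in $v$ (which is only controlled in unweighted Sobolev norms, and in particular grows) against $\psi_j\rho$ requires carefully exploiting both the Gaussian factor in $\rho$ and the sharp polynomial decay \eqref{estgrowth} of $\psi_j$, and matching the resulting bound to exactly the quantities $\|\Delta v\|_{L^2}^2$ allowed on the right-hand side.
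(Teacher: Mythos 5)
Your overall skeleton is the same as the paper's: project the equation \eqref{equatione} onto $\psi_j$, $\Lambda\Phi_n$ and $\pa_k\Phi_n$, use the orthogonality conditions and the eigenvalue relations to isolate $(a_j)_s-\mu_ja_j$, $\lsl+1$ and $\xsl$ up to an almost-diagonal system, absorb the $O(\sum|a_j|)$ couplings coming from $\Lambda\psi,\nabla\psi$ and the $L(\e)$ contribution by smallness, and reduce everything to estimating $(\NL,\psi_j)_\rho$ using $|\NL|\lesssim v^2$ from the $L^\infty$ bound \eqref{poitwisebound}. Up to that point your plan reproduces the paper's Steps 1 and 2.

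The genuine gap is in the one estimate you yourself flag as the main obstacle: the pairing of $v^2$ against $\psi_j\rho$ in the outer region. Your proposed mechanism, ``$\|\Delta v\|_{L^2}$ controls $\|v\|_{L^\infty}$ via the Sobolev embedding $\dot{H}^2\hookrightarrow L^\infty$ in $d=3$'', is false: $\dot H^2(\Bbb R^3)$ does not embed into $L^\infty$ (take $v_R(x)=\chi(x/R)$; then $\|v_R\|_{\dot H^2}=R^{-1/2}\|\chi\|_{\dot H^2}\to 0$ while $\|v_R\|_{L^\infty}=1$), so this step fails as written. Falling back on the already established bound \eqref{poitwisebound} does not repair it either: that gives $\int v^2|\psi_j|\rho\lesssim e^{-2c\mu s}$, which is \emph{not} dominated by the right-hand side of \eqref{modulationequationbound} (the quantities $\|\e\|_{H^1_\rho}^2+\|\Delta v\|_{L^2}^2+\sum|a_j|^2$ may be far smaller than $e^{-2c\mu s}$), so the structural inequality of the lemma would not follow. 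The missing idea is the weighted Hardy inequality \eqref{estpoidsglobal}, which yields \eqref{contorlnornmesloc},
$$\int \frac{|\nabla v|^2}{1+|y|^2}+\frac{|v|^2}{1+|y|^4}\lesssim \|\Delta v\|_{L^2}^2+\|v\|_{H^1_\rho}^2\lesssim \|\Delta v\|_{L^2}^2+\|\e\|_{H^1_\rho}^2+\sum_{j=2}^{n+1}|a_j|^2;$$
combined with $|\psi_j|\rho\lesssim (1+|y|^4)^{-1}$ (polynomial growth \eqref{estgrowth} beaten by the Gaussian), this is exactly how $\|\Delta v\|_{L^2}^2$ enters the right-hand side in the paper's proof. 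Without this (or an equivalent weighted interpolation), your outer-region bookkeeping cannot be matched to the stated right-hand side, so the proof as proposed does not close.
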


\begin{proof} This lemma is a classical consequence of the choice of orthogonality conditions \eqref{orhotpsin}, but the control of the nonlinear term relies in an essential way on the $L^\infty$ smallness \eqref{poitwisebound}.\\

\noindent{\bf step 1} Law for $a_j$. Take the $L^2_\rho$ scalar product of \eqref{equatione} with $\psi_j$ for $2\leq j\leq n+1$, then using \eqref{orhotpsin} and the orthogonality 
\be \label{orthogonalite psii}
(\psi_j,\psi_k)_\rho=\delta_{jk}, \ \ \psi_1=\frac{\Lambda \Phi_n}{\|\Lambda \Phi_n\|_{L^2_\rho}},
\ee
we obtain
$$(a_j)_s-\mu_ja_j=\left(\lsl+1\right)(\Lambda \psi,\psi_j)_\rho+(F,\psi_j)_\rho.$$ First, from \fref{controlunstable} one has
$$
|(\Lambda \psi,\psi_j)_\rho |\lesssim e^{-\mu s}\ll \eta .
$$
We now estimate the $F$-term given by \fref{erreurf} . We use the bound from $p>5$: $$\left||1+z|^{p}-1-pz^{p-1}\right|\lesssim |z|^p+|z|^2$$ to estimate from the $L^\infty$ bound \eqref{poitwisebound}:
\be
\label{poitwisenonlinearterm}
|\NL|\lesssim |\e+\psi|^p+\Phi_n^{p-2}(\e+\psi)^2\lesssim (\e+\psi)^2=v^2.
\ee 
We estimate from the Hardy inequality \eqref{estpoidsglobal}:
\be
\label{contorlnornmesloc}
\int \frac{|\nabla v|^2}{1+|y|^2}+\frac{|v|^2}{1+|y|^4}\lesssim \int |\Delta v|^2+\|v\|_{H^1_\rho}^2\lesssim \int |\Delta v|^2+\|\e\|_{H^1_\rho}^2+\sum_{j=2}^{n+1}|a_j|^2
\ee
and hence using the polynomial bound \eqref{estgrowth}:
\bee
|(\NL,\psi_j)_{\rho}|&\lesssim & \int v^2|\psi_j|\rho\lesssim \int \frac{|v|^2}{1+|y|^4}\lesssim \int |\Delta v|^2+\|v\|_{H^1_\rho}^2\\
& \lesssim & \|\e\|^2_{H^1_\rho}+\|\Delta v\|_{L^2}^2+\sum_{j=2}^{n+1}|a_j|^2.
\eee Next, we integrate by parts and use Cauchy Schwarz and \eqref{estgrowth} to estimate:
\bee
\left|\left(\left(\lsl+1\right)\Lambda\e+\xsl\cdot\nabla \e,\psi_j\right)_\rho\right|\lesssim \left[\left|\lsl+1\right|+\left|\xsl\right|\right]\|\e\|_{L^2_\rho}
\eee
and hence the first bound $$\left|(a_j)_s-\mu_ja_j\right|\lesssim \left(\left|\lsl+1\right|+\left|\xsl\right|\right)\eta+\|\e\|^2_{ H^1_\rho}+\sum_{j=2}^{n+1}|a_j|^2+\|\Delta v\|_{L^2}^2.$$

\noindent{\bf step 2} Law for scaling and translation. We scalarize \eqref{equatione} with $\psi_1=\frac{\Lambda \Phi_n}{\|\Lambda \Phi_n\|_{L^2_\rho}}$ and $\frac{\pa_k\Phi_n}{\|\pa_k\Phi_n\|_{L^2}}$ and obtain in a completely similar way
 $$\left|\lsl+1\right|+\left|\xsl\right|\lesssim \left(\left|\lsl+1\right|+\left|\xsl\right|\right)\eta+\|\e\|^2_{ H^1_\rho}+\sum_{j=2}^{n+1}|a_j|^2+\|\Delta v\|_{L^2}^2.$$ Summing the above estimates and using the smallness of $\eta$  yields \eqref{modulationequationbound}.
\end{proof}


\subsection{Energy estimates with exponential weights}


We now turn to the proof of exponential decay which is an elementary consequence of the spectral gap estimate \eqref{coerciviteapoids}, the dissipative structure of the flow {\it and} the $L^\infty$ bound \eqref{poitwisebound} to control the non linear term.

\begin{lemma}[Lyapounov control of exponentially weighed norms]
There holds the differential bound
\be
\label{differentialcontrolnorm}
\frac{d}{ds}\|\e\|_{L^2_\rho}^2+c_n\|\e\|_{H^1_\rho}^2 \lesssim \sum_{j=2}^{n+1}|a_j|^4+\|\Delta v\|_{L^2}^4+ \|v\|^2_{L^{\infty}}\left[\|\Delta v\|_{L^2}^2+\sum_{j=2}^{n+1}|a_j|^2\right],
\ee
\bea
\label{differentialcontrolnormbis}
\frac{d}{ds}\|\mathcal{L}_n\e\|_{L^2_\rho}^2+c_n\|\mathcal{L}_n\e\|_{H^1_\rho}^2&\lesssim& \|\e\|_{H^1_\rho}^2+ \sum_{j=2}^{n+1}|a_j|^4+\|\Delta v\|_{L^2}^4\\
\nonumber& + & \|v\|^2_{L^{\infty}}\left[\|\Delta v\|_{L^2}^2+\|\e\|_{H^1_\rho}^2+\sum_{j=2}^{n+1}|a_j|^2\right],
\eea
with $c_n>0$ given by \eqref{coerciviteapoids}.
\end{lemma}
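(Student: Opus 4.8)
The two estimates are of the same nature, so I would treat \eqref{differentialcontrolnorm} in detail and then indicate the (more involved but parallel) modifications needed for \eqref{differentialcontrolnormbis}. The starting point is the renormalized equation \eqref{equatione}, $\pa_s\e+\mathcal{L}_n\e=F-\Mod$. Pairing with $\e$ against the weight $\rho$ gives
\[
\frac12\frac{d}{ds}\|\e\|_{L^2_\rho}^2 + (\mathcal{L}_n\e,\e)_\rho = (F,\e)_\rho - (\Mod,\e)_\rho .
\]
For the left-hand side I would invoke the spectral gap \eqref{coerciviteapoids}: since $\e$ satisfies the orthogonality conditions \eqref{orhotpsin}, all the projection terms in \eqref{coerciviteapoids} vanish and $(\mathcal{L}_n\e,\e)_\rho\geq c_n\|\e\|_{H^1_\rho}^2$. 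So it remains to bound the right-hand side by the claimed quadratic-plus-nonlinear quantities.

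The $\Mod$ term is controlled using Lemma \ref{lemmamodulation}: writing out \eqref{modulation}, each coefficient $|(a_j)_s-\mu_j a_j|$, $|\lsl+1|$, $|\xsl|$ is bounded by $\|\e\|_{H^1_\rho}^2+\|\Delta v\|_{L^2}^2+\sum|a_j|^2$, while the functions $\Lambda\Phi_n+\Lambda\psi$, $\nabla\Phi_n+\nabla\psi$, $\psi_j$ multiplied by $\rho$ have finite $L^2_\rho$ norm (using the polynomial growth \eqref{estgrowth} of $\psi_j$ and of $\Phi_n$'s derivatives, which beat $\rho=e^{-|y|^2/2}$). Hence $|(\Mod,\e)_\rho|\lesssim (\|\e\|_{H^1_\rho}^2+\|\Delta v\|_{L^2}^2+\sum|a_j|^2)\|\e\|_{L^2_\rho}$, and one absorbs the factor $\|\e\|_{L^2_\rho}\ll1$ together with a small part of $\|\e\|_{L^2_\rho}^2$ coming from the $\|\e\|_{H^1_\rho}^2$ term, at the cost of splitting off $\sum|a_j|^4+\|\Delta v\|_{L^2}^4$ after a Young inequality; the genuinely quadratic-in-$\e$ leftover $\|\e\|_{H^1_\rho}^2\cdot\|\e\|_{L^2_\rho}$ is $\ll c_n\|\e\|_{H^1_\rho}^2$ by \eqref{poitwiseboundhtwo} and can be absorbed into the coercive term (this is why \eqref{differentialcontrolnorm} has no bare $\|\e\|_{H^1_\rho}^2$ on the right). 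For the linear part $L(\e)$ of $F$ in \eqref{erreurf}, the coefficients $\lsl+1$ and $\xsl$ are again quadratically small by Lemma \ref{lemmamodulation}, and after an integration by parts $|(L(\e),\e)_\rho|\lesssim (|\lsl+1|+|\xsl|)\|\e\|_{L^2_\rho}^2$, which is cubic in the small quantities and absorbed as before. For the nonlinear part $\NL=g(\e+\psi)$ one uses the pointwise bound \eqref{poitwisenonlinearterm}, $|\NL|\lesssim v^2$, valid thanks to the $L^\infty$ smallness \eqref{poitwisebound}; then $|(\NL,\e)_\rho|\lesssim \int \frac{|v|^2}{1+|y|^4}\cdot\big(\text{something }\lesssim\|\e\|_{L^2_\rho}\big)$ and, via the Hardy-type bound \eqref{contorlnornmesloc} one gets $\int\frac{|v|^2}{1+|y|^4}\lesssim \|\Delta v\|_{L^2}^2+\|\e\|_{H^1_\rho}^2+\sum|a_j|^2$; a further refinement using $|\NL|\lesssim \|v\|_{L^\infty}^{0}v^2$ but keeping one factor of $\|v\|_{L^\infty}$ — i.e. $|\NL|\lesssim \|v\|_{L^\infty}|v|$ pointwise where $|v|\le\|v\|_{L^\infty}$ is not used — produces the weighted factor $\|v\|_{L^\infty}^2[\|\Delta v\|_{L^2}^2+\sum|a_j|^2]$ appearing on the right of \eqref{differentialcontrolnorm}, after splitting $v=\psi+\e$ and noting $\|\psi\|_{H^1_\rho}^2\lesssim\sum|a_j|^2$. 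Collecting everything and renaming constants gives \eqref{differentialcontrolnorm}.

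For \eqref{differentialcontrolnormbis} I would apply $\mathcal{L}_n$ to \eqref{equatione}, obtaining $\pa_s(\mathcal{L}_n\e)+\mathcal{L}_n(\mathcal{L}_n\e)=\mathcal{L}_n F-\mathcal{L}_n\Mod$, and pair with $\mathcal{L}_n\e$ against $\rho$. The key point is that $\mathcal{L}_n\e$ still satisfies the needed orthogonality conditions: $\mathcal{L}_n$ is self-adjoint on $L^2_\rho$ and the $\psi_j$, $\pa_k\Phi_n$ are its eigenvectors, so $(\mathcal{L}_n\e,\psi_j)_\rho=\mu_j^{\pm}(\e,\psi_j)_\rho=0$ and likewise for $\pa_k\Phi_n$; hence \eqref{coerciviteapoids} applies to $\mathcal{L}_n\e$ and yields $c_n\|\mathcal{L}_n\e\|_{H^1_\rho}^2$ on the left. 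The right-hand side terms are the same as before with one extra derivative; the error terms $\mathcal{L}_n L(\e)$ and $\mathcal{L}_n\Mod$ now generate, besides the quartic and $\|v\|_{L^\infty}^2[\cdots]$ contributions, a bare $\|\e\|_{H^1_\rho}^2$ term (coming from commutators of $\mathcal{L}_n$ with $\Lambda$, $\nabla$ and from $(\mathcal{L}_n\Mod,\mathcal{L}_n\e)_\rho$ where the modulation coefficients are only quadratically small while the profiles contribute $H^1_\rho$-norms), which is exactly why \eqref{differentialcontrolnormbis} carries the extra $\|\e\|_{H^1_\rho}^2$ on its right-hand side — this is then absorbed at the next stage (the $H^2_\rho$ bootstrap) using \eqref{differentialcontrolnorm} with a large constant. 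The nonlinear term requires controlling $\mathcal{L}_n\NL$; here one differentiates $g(v)$, using $|g'(v)|\lesssim \Phi_n^{p-2}|v|+|v|^{p-1}\lesssim |v|$ by \eqref{poitwisebound}, and the pointwise/Hardy machinery of the previous paragraph applied to $\nabla v$ and $\Delta v$, together with \eqref{contorlnornmesloc} and its analogue with more derivatives, to arrive at the stated bound.

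\textbf{Main obstacle.} The delicate point is the bookkeeping of which quadratic quantities can be absorbed into the coercive $c_n\|\mathcal{L}_n\e\|_{H^1_\rho}^2$ (or $c_n\|\e\|_{H^1_\rho}^2$) and which must be thrown onto the right-hand side — in particular, making sure that the nonlinear contributions are genuinely \emph{quadratic and weighted by $\|v\|_{L^\infty}^2$} rather than producing an unabsorbable $\|\e\|_{H^1_\rho}^2$. This hinges on using the $L^\infty$ smallness \eqref{poitwisebound} to extract the good small factor while the Hardy inequalities \eqref{contorlnornmesloc}/\eqref{estpoidsglobal} transfer the remaining weight from the exponential measure $\rho$ to the polynomial weight $(1+|y|^4)^{-1}$ bounded by $\|\Delta v\|_{L^2}^2$. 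For \eqref{differentialcontrolnormbis} the additional subtlety is that the commutator $[\mathcal{L}_n,L]$ and the $\mathcal{L}_n$-weighted modulation terms unavoidably leak an $O(1)\|\e\|_{H^1_\rho}^2$, so the estimate is \emph{not} self-improving on its own and must be coupled with \eqref{differentialcontrolnorm}; getting the constants in the right order is the crux of the argument.
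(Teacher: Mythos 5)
Your proposal is correct and follows essentially the same route as the paper: the weighted energy identity combined with the spectral gap \eqref{coerciviteapoids} (after checking the orthogonality of $\e$ and of $\mathcal{L}_n\e$ via self-adjointness), the modulation Lemma \ref{lemmamodulation} plus Young's inequality for the $\Mod$ terms, integration by parts for $L(\e)$, commutators $[\mathcal{L}_n,\Lambda]$, $[\mathcal{L}_n,\pa_k]$ at the $H^2_\rho$ level, and the $L^\infty$ smallness \eqref{poitwisebound} together with the Hardy bound \eqref{contorlnornmesloc} to extract the $\|v\|_{L^\infty}^2$-weighted nonlinear contribution. Your observation that \eqref{differentialcontrolnormbis} unavoidably leaks a bare $\|\e\|_{H^1_\rho}^2$ and is only closed when coupled with \eqref{differentialcontrolnorm} in the bootstrap is exactly how the paper proceeds.
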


\begin{proof} 
{\bf step 1} $L^2$ weighted bound. We compute from \eqref{equatione}:
\be
\label{vnbeovneoneo}
\frac 12\frac{d}{ds}\|\e\|_{L^2_\rho}^2=(\e,\pa_s\e)_\rho=-(\mathcal{L}_n\e,\e)_\rho+(F-\Mod,\e)_\rho.
\ee
From \eqref{modulation}, \eqref{modulationequationbound}:
\bee
|(\e,\Mod)_\rho|\lesssim \|\e\|_{L^2_\rho}\|\Mod\|_{L^2_\rho} & \lesssim & \|\e\|_{L^2_\rho}\left(\|\e\|_{H^1_\rho}^2+\sum_{j=2}^{n+1}|a_j|^2+\|\Delta v\|_{L^2}^2\right) \\
& \lesssim & \delta \|\e\|_{L^2_\rho}^2+C_{\delta} \left(\|\e\|_{H^1_\rho}^4+\sum_{j=2}^{n+1}|a_j|^4+\|\Delta v\|_{L^2}^4\right)
\eee
for any $\delta>0$. Integrating by parts and using \eqref{weightedesimate}, we estimate
\be
\label{comutareurwirhgt}
|(\e,\Lambda\e)_\rho|+|(\nabla \e,\e)_\rho|\lesssim \int (1+|y|^2)\e^2\rho dy\lesssim\|\e\|_{H^1_\rho}^2
\ee
from which using \eqref{modulationequationbound}:
$$
\left|\left(L(\e),\e\right)_\rho\right|\lesssim \|\e\|^2_{H^1_\rho}\left(\|\e\|_{L^2_\rho}^2+\sum_{j=2}^{n+1}|a_j|^2+\|\Delta v\|_{L^2}^2\right).$$
Finally using \eqref{poitwisenonlinearterm}, \eqref{contorlnornmesloc}:
\bee
|(\NL,\e)_{\rho}|&\lesssim & \int |\e|v^2\rho dy \leq\delta\int|\e|^2\rho+C_\delta \int |v|^4\rho dy\\
& \leq & \delta \int |\e|^2\rho+C_\delta\|v\|^2_{L^{\infty}}\int \frac{|v|^2}{1+|y|^4}dy\\
& \leq & \delta \|\e\|_{L^2_\rho}^2+C_\delta\|v\|^2_{L^{\infty}}\left[\int |\Delta v|^2+\|\e\|_{H^1_\rho}^2+\sum_{j=2}^{n+1}|a_j|^2\right].
\eee

Injecting the collection of above bounds into \eqref{vnbeovneoneo} and using the spectral gap estimate \eqref{coerciviteapoids} with the choice of orthogonality conditions \eqref{orhotpsin} yields
\bee
\frac{d}{ds} \| \e \|^2 & \leq &  -2c_n \| \e \|_{H^1_\rho} \left(1-C(\| \e \|_{H^1_\rho}^2-\sum_{j=2}^{n+1} |a_j|^2-\|†\Delta v\|_{L^2}^2)-C\delta-C_{\delta} \| \e†\|_{H^1_\rho} \right)\\
&&+C_{\delta}\| v\|_{L^{\infty}}\left[ \int \Delta v^2+\|\e\|_{H^1_\rho}^2+\sum_{j=2}^{n+1} |a_j|^2 \right]
\eee
which using the bootstrap bounds \fref{controlunstable}, \fref{poitwiseboundhtwo} and \fref{controlsobolev} gives \eqref{differentialcontrolnorm} for $s_0$ large enough and $\delta$ small enough.\\

\noindent{\bf step 2} $H^2$ weighted bound. Let $$\e_2=\mathcal{L}_n\e,$$ then $\e_2$ satisfies the orthogonality conditions \eqref{orhotpsin}: 
\be
\label{orhtongoanoa}
(\e_2,\psi_j)=(\e_2,\pa_k\Phi_n)=0, \ \ 1\leq j\leq n+1, \ \ 1\leq k\leq 3,
\ee and the equation from \eqref{equatione}:
$$\pa_s\e_2+\mathcal{L}_n\e_2=\mathcal{L}_n(F-\Mod).$$ Hence:
\be
\label{cneknenonee}
\frac{1}{2}\frac{d}{ds}\|\e_2\|_{L^2_\rho}^2=-(\mathcal{L}_n\e_2,\e_2)_\rho+(\mathcal{L}_n(F-\Mod),\e_2)_\rho.
\ee 
We estimate from \eqref{modulationequationbound}:
$$\|\mathcal{L}_n\Mod\|_{L^2_\rho}\lesssim \left|\lsl-1\right|+\left|\xsl\right|+\sum_{j=2}^{n+1}|(a_j)_s-a_j|\lesssim \|\e\|_{H^1_\rho}^2+\sum_{j=2}^{n+1}|a_j|^2+\|\Delta v\|_{L^2}^2.$$We now use the commutator relation 
$$[\Delta, \Lambda]=2\Delta$$ 
to compute 
$$[\mathcal{L}_n,\Lambda]=[-\Delta+\Lambda-p\Phi_n^{p-1},\Lambda]=-2\Delta+p(p-1)\Phi_n^{p-2}r\pr_r\Phi_n=2(\mathcal L_n-\Lambda +p\Phi_n^{p-1})+p(p-1)\Phi_n^{p-2}r\pr_r\Phi_n$$
from which using \eqref{comutareurwirhgt}, \eqref{weightedesimate}:
\bee
|(\e_2,\mathcal{L}_n\Lambda \e)_\rho|&=& \left|(\e_2,[\mathcal{L}_n,\Lambda]\e)_\rho+(\e_2,\Lambda \e_2)_\rho\right|\\
& \lesssim &\|\e_2\|_{H^1_\rho}^2+|(\e_2,\Lambda \e)_\rho|+|(\e_2, \Phi_n^{p-1}\e)_\rho|+|(\e_2, \Phi_n^{p-2}\Lambda\Phi_n\e)_\rho|\\
&\lesssim& \|\e_2\|_{H^1_\rho}^2+\|\e\|_{H^1_\rho}^2
\eee 
and similarly
$$|(\e_2,\mathcal{L}_n\pa_k \e)_\rho|\lesssim \|\e_2\|_{H^1_\rho}^2+\|\e\|_{H^1_\rho}^2.$$ 
Hence from \eqref{modulationequationbound}:
\bee
|(\e_2,\mathcal{L}_nL(\e) )_\rho|\lesssim (\|\e_2\|_{H^1_\rho}^2+\|\e\|_{H^1_\rho}^2)\left(\|\e\|_\rho^2+\sum_{j=2}^{n+1}|a_j|^2+\|\Delta v\|_{L^2}^2\right).
\eee
It remains to estimate the nonlinear term. We first integrate by parts since $\mathcal{L}_n$ is self adjoint for $(\cdot,\cdot)_\rho$ to estimate using the notation \eqref{nolineaire}:
\bee
|(\mathcal{L}_n\NL,\e_2)_\rho| &=& \left|(\nabla \NL,\nabla \e_2)_\rho+\left(\frac{2}{p-1}\NL-p\Phi_n^{p-1}\NL, \e_2\right)_\rho\right|\\
&\lesssim& \left|(\nabla g(v),\nabla \e_2)_\rho\right|+\left|\left(\frac{2}{p-1}g(v)-p\Phi_n^{p-1}g(v), \e_2\right)_\rho\right|.
\eee
We now compute explicitly  
\bea
\label{computationderivee}
\nabla g(v)&=& p\nabla v\left[(\Phi_n+v)^{p-1}-\Phi_n^{p-1}\right]\\
\nonumber & +& p\nabla \Phi_n\left[(\Phi_n+v)^{p-1}-\Phi_n^{p-1}-(p-1)\Phi_n^{p-2}v\right].
\eea
 We estimate by homogeneity with the $L^\infty$ bound \eqref{poitwisebound}: $$|g(v)|\lesssim |v|^2,\,\,\,\, |\nabla g(v)|\lesssim |\nabla v||v|+|v|^2$$ and hence the bound using \eqref{poitwisebound} again:
\bee
&& |(\nabla g(v),\nabla \e_2)_\rho|+\left|\left(\frac{2}{p-1}g(v)-p\Phi_n^{p-1}g(v), \e_2\right)_\rho\right|\\
&\lesssim& \int\left[|v||\nabla(v)|+|v|^2\right]|\nabla\e_2|\rho dy  +\int |\ep_2||v|^2\rho dy\\
&\leq& \delta\|\e_2\|_{H^1_\rho}^2+C_\delta\left[\int|v|^2|\nabla v|^2\rho dy+\int|v|^4\rho dy\right]
\\
&\leq & \delta\|\nabla\e_2\|_{L^2_\rho}^2+C_\delta\|v\|_{L^\infty}^2\left[\int \frac{|\nabla v|^2}{1+|y|^2} dy+\int\frac{|v|^2}{1+|y|^4}dy\right]\\
& \leq & \delta\|\nabla\e_2\|_{L^2_\rho}^2+C_\delta\|v\|_{L^\infty}^2\left[\|\e\|_{H^1_\rho}^2+\sum_{j=2}^{n+1}|a_j|^2+\|\Delta v\|_{L^2}^2\right].
\eee
The collection of above bounds together with the spectral gap estimate \eqref{coerciviteapoids} and the orthogonality conditions \eqref{orhtongoanoa} injected into \eqref{cneknenonee} yields \eqref{differentialcontrolnormbis}.
\end{proof}

\begin{remark} The proof of \eqref{differentialcontrolnorm} is elementary but {\emph requires in an essential way} the $L^\infty$ smallness bound\footnote{or anything above or equal scaling in terms of regularity.} \eqref{poitwisebound}, and in particular the sole control of the $H^1_\rho$ norm cannot suffice to control the nonlinear term $\int|\e|^{p+1}\rho$ due to both the energy super critical nature of the problem and the exponential weight.
\end{remark}


\subsection{Outer global $\dot{H}^2$ bound}


We recall $$v=\e+\psi$$ and now aim at propagating an {\it unweighted global} $\dot{H}^2$ decay estimate for $v$. We have 
$$\pa_sv-\Delta v -\lsl \Lambda v -\xsl\cdot\nabla v=G$$ with 
\bee
 G&=& \left[\left(\lsl+1\right)\Lambda\Phi_n+\xsl\cdot\nabla \Phi_n\right]+\widehat{NL}, \ \  \widehat{NL}=(\Phi_n+v)^p-\Phi_n^p.
 \eee
 
\begin{lemma}[Global $\dot{H}^2$ bound]
\label{lemmehtwoweight}
There holds the Lyapounov type monotonicity formula
\be
\label{decayweight}
\frac{d}{ds}\left[\frac{1}{\l^{4-\delta-2s_c}}\int|\Delta v|^2dy\right]+\frac{1}{\l^{4-\delta-2s_c}}\int |\nabla \Delta v|^2dy \lesssim  \frac{1}{\l^{4-2s_c-\delta}}\left[\|\e\|_{H^2_\rho}^2+\sum_{j=2}^{n+1}|a_j|^2\right]
\ee
for some universal constant $0<\delta\ll1$.
\end{lemma}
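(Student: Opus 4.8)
The plan is to run a weighted energy estimate directly at the level of $\Delta v$. First I would apply $\Delta$ to the equation $\pa_sv-\Delta v -\lsl \Lambda v -\xsl\cdot\nabla v=G$, pair with $\Delta v$ in $L^2(dy)$ (unweighted), and integrate by parts. The linear heat part produces the good term $-\int|\nabla\Delta v|^2$. The scaling term $\lsl\Lambda v$, after commuting $\Delta$ through $\Lambda$ via $[\Delta,\Lambda]=2\Delta$, contributes $\lsl\int \Delta v\,\Lambda\Delta v +2\lsl\int|\Delta v|^2$; integrating by parts in the first piece gives $-\frac{d}{2}\lsl\int|\Delta v|^2$ plus the $2\lsl\int|\Delta v|^2$, i.e. a term proportional to $\lsl\int|\Delta v|^2$, which is exactly what is absorbed by differentiating the prefactor $\l^{-(4-\delta-2s_c)}$: since $\frac{d}{ds}(\l^{-\alpha})=\alpha(-\lsl)\l^{-\alpha}$, choosing $\alpha=4-\delta-2s_c$ and using $\lsl=-1+O(\cdots)$ from \eqref{modulationequationbound} turns the prefactor derivative into a term that cancels the dangerous $\lsl\int|\Delta v|^2$ up to lower order controlled by the modulation bounds. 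The translation term $\xsl\cdot\nabla v$ is handled the same way, using $[\Delta,\partial_k]=0$ and integration by parts, and is lower order because $|\xsl|$ is quadratically small by \eqref{modulationequationbound}.

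Next I would estimate the forcing $G=\big[(\lsl+1)\Lambda\Phi_n+\xsl\cdot\nabla\Phi_n\big]+\widehat{NL}$. For the modulation part, $\Delta(\Lambda\Phi_n)$ and $\Delta(\nabla\Phi_n)$ are fixed smooth functions with self-similar decay inherited from \eqref{behavioruinfity}, hence in $L^2(dy)$; pairing with $\Delta v$ and using Cauchy–Schwarz gives a contribution $\lesssim (|\lsl+1|+|\xsl|)\|\Delta v\|_{L^2}$, which by \eqref{modulationequationbound} and the bootstrap bounds is $\lesssim (\|\e\|_{H^1_\rho}^2+\sum|a_j|^2+\|\Delta v\|_{L^2}^2)\|\Delta v\|_{L^2}$ and therefore, after a Young inequality and using $\|\Delta v\|_{L^2}\ll1$, is $\lesssim \|\e\|_{H^2_\rho}^2+\sum|a_j|^2$ times the prefactor, with the square of $\|\Delta v\|_{L^2}$ absorbed. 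For the nonlinear term $\widehat{NL}=(\Phi_n+v)^p-\Phi_n^p$, I would write $\Delta\widehat{NL}$ explicitly (or rather integrate by parts once to land a gradient on $\Delta v$, i.e. estimate $\int \nabla\widehat{NL}\cdot\nabla\Delta v$) and bound it using the $L^\infty$ smallness \eqref{poitwisebound} of $v$ together with $\Phi_n\in L^\infty$: schematically $|\nabla\widehat{NL}|\lesssim |\nabla\Phi_n|\,|v|+|\Phi_n|^{p-1}|\nabla v|+\dots$, so that the gradient pieces are controlled by the local Hardy-type bound \eqref{contorlnornmesloc} and the weighted/unweighted $\dot H^2$ norms, all multiplied by the small factor $\|v\|_{L^\infty}$ coming from the difference structure of $\widehat{NL}$, which makes these terms subcritical perturbations absorbed by $\delta\int|\nabla\Delta v|^2$ plus $C_\delta(\|\e\|_{H^2_\rho}^2+\sum|a_j|^2)$.

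Assembling, one obtains $\frac{d}{ds}\big[\l^{-(4-\delta-2s_c)}\int|\Delta v|^2\big]+\l^{-(4-\delta-2s_c)}\int|\nabla\Delta v|^2\lesssim \l^{-(4-\delta-2s_c)}\big[\|\e\|_{H^2_\rho}^2+\sum_{j=2}^{n+1}|a_j|^2\big]$, where the small parameter $\delta>0$ is fixed so that $4-\delta-2s_c>0$ (possible since $s_c<2$) and so that the residual term $\delta\lsl\int|\Delta v|^2$ is strictly dominated by the gain from the heat dissipation after the prefactor cancellation; this is the role of $\delta$. The main obstacle I anticipate is the bookkeeping of the scaling commutator term: one must check that the constant in front of $\lsl\int|\Delta v|^2$ produced by $[\Delta,\Lambda]=2\Delta$ plus the boundary integrations by parts is exactly matched (up to the chosen $\delta$ and up to the quadratically small modulation error $\lsl+1$) by the derivative of the prefactor $\l^{-(4-\delta-2s_c)}$, so that no uncontrolled term of size $\int|\Delta v|^2$ survives on the left. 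Everything else — the nonlinear estimate and the forcing estimate — is routine given \eqref{poitwisebound}, \eqref{modulationequationbound}, \eqref{contorlnornmesloc}, and the bootstrap assumptions.
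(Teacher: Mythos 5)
Your overall scheme is the same as the paper's: the unweighted $\dot H^2$ energy identity, the observation that the scaling term contributes $(2-s_c)\lsl\int|\Delta v|^2$ (the paper gets this constant by differentiating the rescaling identity rather than through $[\Delta,\Lambda]=2\Delta$, but the bookkeeping is equivalent), the choice of the prefactor exponent $4-\delta-2s_c$, and the treatment of the modulation forcing by Cauchy--Schwarz and \eqref{modulationequationbound}. Two remarks, the first of which is a genuine gap. Your estimate of the nonlinear term is not closable as described. After one integration by parts you must bound $\|\nabla\widehat{NL}\|_{L^2}$ (or, as in the paper, $\|\Delta\widehat{NL}\|_{L^2}$), and the pieces carrying a factor of $\Phi_n$ are indeed handled by the splitting $\{|y|\le A\}\cup\{|y|\ge A\}$ together with the Hardy bounds \eqref{contorlnornmesloc}, \eqref{estpoidsglobal} and the weighted norms. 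But the purely nonlinear pieces hidden in your ``$\dots$'', e.g. $|v|^{p-1}|\nabla v|$ (and at the second-derivative level $|\nabla v|^2(|v|^{p-1}+\Phi_n^{p-1})$, $|\Delta v|\,|v|^{p-1}$), cannot be controlled by ``$\|v\|_{L^\infty}$ small times Hardy and the $\dot H^2$ norms'': the crude bound $\|v\|_{L^\infty}^{p-1}\|\nabla v\|_{L^2}$ involves the global unweighted $\dot H^1$ norm of $v$, which is \emph{not} a bootstrapped quantity (note $(1-\chi_{\frac1\lambda})\Phi_n\notin \dot H^1$ for $p>5$, and $\e$ is only controlled in $H^2_\rho$), and the slowly decaying weights $|\nabla\Phi_n|\,|v|^{p-1}\sim \langle y\rangle^{-1-\frac{2}{p-1}}|v|^{p-1}$ decay too slowly for \eqref{estpoidsglobal}. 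This is exactly where the paper invokes the critical-norm bootstrap bound \eqref{sobolevciritical}: it uses $\|\nabla v\|_{L^6}\lesssim\|\Delta v\|_{L^2}$, the fact that $\Phi_n\in L^{6(p-2)}$, and Sobolev/interpolation control of $\|w\|_{L^{6(p-2)}}$ through $\|w\|_{\dot H^{s_c}}$ and $\|w\|_{\dot H^2}$, so that the quartic term is bounded by $\|\Delta v\|_{L^2}^4(1+\|w\|_{\dot H^{s_c}}^{\frac{p-1}{2}})\le\delta\|\Delta v\|_{L^2}^2$. Without bringing in $\|w\|_{\dot H^{s_c}}$ and these embeddings your plan fails at this step.

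Secondly, your description of the role of $\delta$ is off, although harmlessly so. After expanding the prefactor derivative, the leftover term is $\delta\lsl\int|\Delta v|^2$; since $\lsl=-1+O(e^{-2\mu s})$ by \eqref{modulationequationbound} this term has a \emph{favorable} sign, and its actual role is to absorb the $O(\delta)\|\Delta v\|_{L^2}^2$ errors generated by the Young inequalities in the forcing and nonlinear estimates (this is why no $\|\Delta v\|_{L^2}^2$ appears on the right of \eqref{decayweight}). It cannot be ``dominated by the gain from the heat dissipation'': $\int|\nabla\Delta v|^2$ does not control $\int|\Delta v|^2$ on $\mathbb R^3$, so if the residual had the wrong sign your argument would have no way to recover. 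Relatedly, your estimates must produce $\|\Delta v\|_{L^2}^2$ only with small constants (from the $A$-splitting, from $\eta$, or from the bootstrap smallness of $\|\Delta v\|_{L^2}$ itself), never with a large $C_\delta$, since only the $\delta$-margin in the exponent is available to absorb them; global unweighted quantities cannot be dumped into $C_\delta(\|\e\|_{H^2_\rho}^2+\sum_j|a_j|^2)$.
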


\begin{proof} We compute the $\dot{H}^2$ energy identity:
\bee
\frac12 \frac{d}{ds}\int |\Delta v|^2dy&=&\int \Delta v\Delta\left[\Delta v+\lsl\Lambda v+\xsl\cdot\nabla v+G\right]dy\\
& = & -\int |\nabla \Delta v|^2dy+\int \Delta v\Delta\left[\lsl\Lambda v+\xsl\cdot\nabla v+G\right]dy
\eee
 and estimate all terms.\\
 
\noindent{\bf step 1} Parameters terms. For any $\mu>0$, let $v_\mu=\frac{1}{\mu^{\frac{2}{p-1}}}v\left(\frac{y}{\mu}\right)$, then: $$\int|\Delta v_\mu|^2dy=\frac{1}{\mu^{4-2s_c}}\int|\Delta v|^2dy$$ and hence differentiating and evaluating at $\mu=1$: $$-2\int \Delta v \Delta (\Lambda v)dy=-(4-2s_c)\int|\Delta v |^2dy.$$ Hence
$$\lsl\int \Delta v\Delta(\Lambda v)=  (2-s_c)\lsl\int|\Delta v|^2dy.
$$
Also, integrating by parts:
$$
\int\Delta v\Delta\left(\xsl\cdot\nabla v\right)dy=0.
$$

\noindent{\bf step 2} $G$ terms. Thanks to the decay of the self similar solution from \eqref{behavioruinfity}: 
$$\int |\Delta\Lambda\Phi_n|^2dy + \int |\Delta\nabla\Phi_n|^2dy<+\infty,$$ 
we estimate in brute force using \eqref{modulationequationbound} the terms induced by the self similar solution:
\bee
&&\left|\int \Delta v\Delta\left\{\left[\left(\lsl+1\right)\Lambda \Phi_n+\xsl\cdot\nabla\Phi_n\right]\right\}\right|\\
&\lesssim & \left[\left|\lsl+1\right|+\left|\xsl\right|\right]\|\Delta v\|_{L^2}\leq  \delta \|\Delta v\|_{L^2}^2+C_\delta\left(\|\e\|_{H^1_\rho}^2+\|\Delta v\|_{L^2}^2+\sum_{j=2}^{n+1}|a_j|^2\right)^2\\
& \leq &  \delta \|\Delta v\|_{L^2}^2+C_\delta\left(\|\e\|_{H^1_\rho}^2+\sum_{j=2}^{n+1}|a_j|^2\right).
\eee
It remains to estimate the nonlinear term. We estimate by homogeneity:
\bee
|\Delta  \widehat{NL}|& = & \Bigg|p\Delta \Phi_n\left[(\Phi_n+v)^{p-1}-\Phi_n^{p-1}\right]+p(\Phi_n+v)^{p-1}\Delta v\\
& + & p(p-1)|\nabla \Phi_n|^2\left[(\Phi_n+v)^{p-2}-\Phi_n^{p-2}\right]+p(p-1)|\nabla v|^2(\Phi_n+v)^{p-1}\\
&+& 2p(p-1)(\Phi_n+v)^{p-2}\nabla \Phi_n\cdot\nabla v\Bigg|\\
& \lesssim & |\Delta \Phi_n|(|v|^{p-1}+|\Phi_n|^{p-2}|v|)+|\Delta v|(|v|^{p-1}+|\Phi_n|^{p-1})\\
& + &  |\nabla \Phi_n|^2(|v|^{p-2}+|\Phi_n|^{p-3}|v|)+|\nabla v|^2(|v|^{p-1}+|\Phi_n|^{p-1})+|\nabla v||\nabla \Phi_n|(|\Phi_n|^{p-2}+|v|^{p-2})
\eee
and hence using the self similar decay of $\Phi_n$ and the $L^\infty$ smallness \eqref{poitwisebound}:
\bee
|\Delta  \widehat{NL}| & \lesssim &\left[\frac{|\Delta v|}{1+|y|^2}+\frac{|\nabla v|}{1+|y|^3}+\frac{|v|}{1+|y|^4}\right]+\eta\left[|\Delta v|+\frac{|\nabla v|}{1+|y|}+\frac{|v|}{1+|y|^2}\right]\\
& + &|\nabla v|^2(|v|^{p-1}+|\Phi_n|^{p-1}).
\eee
The linear term is estimated using \eqref{estpoidsglobal}:
\bee
\int \left|\frac{|\Delta v|}{1+|y|^2}+\frac{|\nabla v|}{1+|y|^3}+\frac{|v|}{1+|y|^4}\right|^2&\lesssim& \frac{1}{A^4}\int_{|y|\geq A}|\Delta v|^2+C_A\|v\|_{H^2_\rho}^2\\
& \leq & \delta \int |\Delta v|^2 +C_\delta\left(\|\e\|_{H^2_\rho}^2+\sum_{j=2}^{n+1}|a_j|^2\right)
\eee
and using \eqref{estpoidsglobal} again:
$$
\int\left|\eta\left[|\Delta v|+\frac{|\nabla v|}{1+|y|}+\frac{|v|}{1+|y|^2}\right]\right|^2\lesssim \eta\|\Delta v\|_{L^2}^2+\|\e\|_{H^2_\rho}^2+\sum_{j=2}^{n+1}|a_j|^2.$$
To estimate the nonlinear term, we let $$q_c=\frac{3(p-1)}{2}\ \ \mbox{so that}\ \ \dot{H}^{s_c}\subset L^{q_c}.$$  We estimate using \eqref{poitwisebound} with $6(p-2)>q_c$ and Sobolev:
\bee
&&\int|\nabla v|^4(|v|^{2(p-2)}+|\Phi_n|^{2(p-2)})\lesssim \|\nabla v\|_{L^6}^4\left[\|v\|_{L^{6(p-2)}}^{2(p-2)}+\|\Phi_n\|_{L^{6(p-2)}}^{2(p-2)}\right]\\
&\lesssim &\|\Delta v\|_{L^2}^4\left[\|\Phi_n\|^{2(p-2)}_{L^{6(p-2)}}+\|w\|^{2(p-2)}_{L^{6(p-2)}}\right]\lesssim  \|\Delta v\|_{L^2}^4\left[1+\|w\|^{\frac{p-1}{2}}_{\dot{H}^{s_c}}\right]\leq \delta \|\Delta v\|_{L^2}^2.
\eee
We have therefore obtained
$$\int |\Delta\widehat{NL}|^2\leq \delta \|\Delta v\|_{L^2}^2+C_\delta\left(\|\e\|_{H^2_\rho}^2+\sum_{j=2}^{n+1} |a_j|^2\right).$$
The collection of above bounds and \fref{modulationequationbound} yields \eqref{decayweight}.
\end{proof}


\subsection{Control of the critical norm}


We now claim the control of the critical norm of $w$ (defined by \fref{definitionv}).

\begin{lemma}[Control of the critical norm]
\label{lemmehtwoweightbis}
There holds the Lyapounov type control
\be
 \label{decaycriticalnorm}
\frac{d}{ds}\int |\nabla^{s_c}w|^2dy+\int|\nabla^{s_c+1}w|^2dy \lesssim \|\e\|_{H^2_\rho}^2+\sum_{j=2}^{n+1}|a_j|^2+  \l^{\delta(2-s_c)}+\|\Delta v\|^{\delta}_{L^2}.
\ee
for some small enough universal constant $0<\delta=\delta(p)\ll1$.
\end{lemma}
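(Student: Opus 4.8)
The plan is to run an $\dot{H}^{s_c}$ energy estimate on the equation solved by $w$, exploiting that $s_c$ is precisely the scaling critical regularity so that the transport pieces $\lsl\Lambda w$ and $\xsl\cdot\nabla w$ are inert in this norm. First I would derive the $w$-equation. Starting from $\pa_sv-\Delta v-\lsl\Lambda v-\xsl\cdot\nabla v=G$ of the previous subsection and substituting $v=w-(1-\chi_{\frac1\lambda})\Phi_n$, using $\pa_s\Phi_n=0$, the profile equation $\Delta\Phi_n=\Lambda\Phi_n-\Phi_n^p$, and $\pa_s\chi_{\frac1\lambda}=\lsl\,(y\cdot\nabla\chi_{\frac1\lambda})$, a direct computation --- in which all the unlocalized $\Lambda\Phi_n$, $\nabla\Phi_n$ and $\Phi_n^p$ contributions telescope --- gives
$$\pa_sw=\Delta w+\lsl\Lambda w+\xsl\cdot\nabla w+\Psi,\qquad \Psi=\Psi_{\mathrm{mod}}+\Psi_{\mathrm{cut}}+\Psi_{\mathrm{NL}},$$
with $\Psi_{\mathrm{mod}}=\left(1+\lsl\right)\chi_{\frac1\lambda}\Lambda\Phi_n+\xsl\cdot\chi_{\frac1\lambda}\nabla\Phi_n$ carrying the quadratically small modulation factors from Lemma \ref{lemmamodulation}, $\Psi_{\mathrm{cut}}=2\nabla\chi_{\frac1\lambda}\cdot\nabla\Phi_n+\Phi_n\Delta\chi_{\frac1\lambda}+\xsl\cdot\Phi_n\nabla\chi_{\frac1\lambda}$ supported on the annulus $|y|\sim\frac1\lambda$ (where $\Phi_n\approx\Phi_*$ is $O(\lambda^{\frac2{p-1}})$), and $\Psi_{\mathrm{NL}}=(\chi_{\frac1\lambda}\Phi_n+w)^p-\chi_{\frac1\lambda}\Phi_n^p$.

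Next I would apply $\nabla^{s_c}$ and pair with $\nabla^{s_c}w$ in $L^2$, which gives
$$\frac12\frac{d}{ds}\int|\nabla^{s_c}w|^2dy+\int|\nabla^{s_c+1}w|^2dy=\lsl\int\nabla^{s_c}w\,\nabla^{s_c}\Lambda w\,dy+\xsl\cdot\int\nabla^{s_c}w\,\nabla^{s_c}\nabla w\,dy+\int\nabla^{s_c}w\,\nabla^{s_c}\Psi\,dy.$$
The crucial point is that, $s_c$ being the scaling exponent of $\Lambda$, one has $[\nabla^{s_c},\Lambda]=s_c\nabla^{s_c}$ and $\int f\,\Lambda f\,dy=-s_c\int f^2dy$, so $\int\nabla^{s_c}w\,\nabla^{s_c}\Lambda w\,dy=0$; similarly $\int\nabla^{s_c}w\,\pa_k\nabla^{s_c}w\,dy=\frac12\int\pa_k(\nabla^{s_c}w)^2dy=0$. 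Hence the transport terms drop out entirely and only $\int\nabla^{s_c}w\,\nabla^{s_c}\Psi\,dy$ remains. I would control it piece by piece, using for each piece either a Cauchy--Schwarz bound $\int\nabla^{s_c}w\,\nabla^{s_c}F\,dy\le\eta\,\|F\|_{\dot{H}^{s_c}}$ exploiting the bootstrap smallness $\|w\|_{\dot{H}^{s_c}}<\eta$ of \eqref{sobolevciritical}, or, after one integration by parts, $\int\nabla^{s_c}w\,\nabla^{s_c}F\,dy=\int\nabla^{s_c+1}w\,\nabla^{s_c-1}F\,dy\le\frac14\|\nabla^{s_c+1}w\|_{L^2}^2+C\|F\|_{\dot{H}^{s_c-1}}^2$, where $s_c-1=\frac12-\frac2{p-1}>0$ since $p>5$.

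Then I would estimate $\Psi$. The functions $\Lambda\Phi_n$ and $\nabla\Phi_n$ are fixed (recall $n$ is frozen) and, given their decay $\sim|y|^{-\frac2{p-1}-2}$ and $\sim|y|^{-\frac2{p-1}-1}$ together with smoothness at the origin, lie in $\dot{H}^{s_c}$ with finite $\lambda$-independent norm, so Cauchy--Schwarz and Lemma \ref{lemmamodulation} give $\left|\int\nabla^{s_c}w\,\nabla^{s_c}\Psi_{\mathrm{mod}}\,dy\right|\lesssim\eta\left(\|\e\|_{H^1_\rho}^2+\|\Delta v\|_{L^2}^2+\sum_{j=2}^{n+1}|a_j|^2\right)\lesssim\|\e\|_{H^2_\rho}^2+\sum_{j=2}^{n+1}|a_j|^2+\|\Delta v\|_{L^2}^\delta$, the last step using that $\|\Delta v\|_{L^2}<1$ is bootstrapped small so $\|\Delta v\|_{L^2}^2\le\|\Delta v\|_{L^2}^\delta$ for $\delta\le2$. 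For $\Psi_{\mathrm{cut}}$, an elementary scaling count on the annulus $|y|\sim\frac1\lambda$ yields $\|\Psi_{\mathrm{cut}}\|_{\dot{H}^{s_c-1}}+\|\Psi_{\mathrm{cut}}\|_{\dot{H}^{s_c}}\lesssim\lambda^2$, hence a contribution $\lesssim\lambda^4+\eta\lambda^2\lesssim\lambda^{\delta(2-s_c)}$. For $\Psi_{\mathrm{NL}}$ I would expand in powers of $w$: every term carrying at least two factors of $w$ --- including $w^p$ --- is estimated by the fractional Leibniz rule, the Sobolev embeddings $\dot{H}^{s_c}\hookrightarrow L^{q_c}$ and $\dot{H}^{s_c+1}\hookrightarrow\dot{W}^{1,q_c}$ with $q_c=\frac{3(p-1)}{2}$ (already used in Lemma \ref{lemmehtwoweight}), and the $L^\infty$ smallness \eqref{poitwisebound}, and since it then comes with at least one prefactor $\|w\|_{\dot{H}^{s_c}}<\eta$ it is $\le\eta\|\nabla^{s_c+1}w\|_{L^2}^2$ and absorbed on the left. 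The delicate term is the linear one $p(\chi_{\frac1\lambda}\Phi_n)^{p-1}w$: here I would further decompose $w=(1-\chi_{\frac1\lambda})\Phi_n+\psi+\e$; the $(1-\chi_{\frac1\lambda})\Phi_n$ part only overlaps $\chi_{\frac1\lambda}$ on $|y|\sim\frac1\lambda$ and again produces an $O(\lambda)$ contribution, while the $\psi$ and $\e$ parts are handled using $(\chi_{\frac1\lambda}\Phi_n)^{p-1}\lesssim(1+|y|)^{-2}$, the pointwise decay \eqref{estgrowth} of the $\psi_j$, and the weighted Hardy estimate \eqref{contorlnornmesloc}, giving $\lesssim\|\e\|_{H^2_\rho}^2+\sum_{j=2}^{n+1}|a_j|^2$. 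Summing the three contributions and absorbing the $\|\nabla^{s_c+1}w\|_{L^2}^2$ terms into the left-hand side yields \eqref{decaycriticalnorm}.

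The main obstacle is precisely this nonlinear term at the critical level. Because $\dot{H}^{s_c}$ is scaling critical and the equation is energy supercritical, there is no coercivity or conservation law available, and the interaction of the inverse-square profile potential $(\chi_{\frac1\lambda}\Phi_n)^{p-1}\sim|y|^{-2}$ with $w$ sits exactly at the borderline of the Hardy inequality; closing it requires using \emph{simultaneously} the $\dot{H}^{s_c}$-smallness of $w$ (for the pure-$w$ nonlinearity), the pointwise decay and the cutoff localization of $\Phi_n$ (to turn the tail and annular interactions into positive powers of $\lambda$), and the previously established weighted $H^2_\rho$ and global $\dot{H}^2$ controls on $\e$ and $v$ (to dominate the near-origin contribution) --- all while commuting the nonlocal operator $\nabla^{s_c}$ through products via Kato--Ponce type inequalities. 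Everything else is routine if somewhat lengthy bookkeeping.
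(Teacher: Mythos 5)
Your setup coincides with the paper's: the same $w$-equation (your $\Psi$ regroups exactly the paper's $\widetilde G$, with your $\Psi_{\mathrm{NL}}$ absorbing the annulus term $-(\chi_{\frac1\lambda}-\chi^p_{\frac1\lambda})\Phi_n^p$), the same vanishing of the $\Lambda w$ and $\nabla w$ terms (you argue by the commutator identity, the paper by differentiating the scale invariance of the critical norm -- equivalent), and the same treatment of the modulation and cut-off sources by interpolation and the smallness $\|w\|_{\dot H^{s_c}}<\eta$. Where you genuinely diverge is the nonlinear term. The paper does not split off the linear potential part nor absorb anything into the dissipation: it proves the single estimate \eqref{controlnonlinearterm}, $\|\nabla^{s_c}\widetilde{\NL}\|_{L^2}\lesssim\|\nabla^{s_c+\alpha}w\|_{L^2}$, by writing $\nabla\widetilde{\NL}=pg_1(w)\nabla(\widetilde{\Phi_n}+w)+p\widetilde{\Phi_n}^{p-1}\nabla w$, handling the first piece with the Besov product estimate \eqref{noneoneoneevoe} together with the translation characterization \eqref{eq:standarddefinitionbesovnorms} (which sidesteps any fractional chain rule for the non-integer power $p-1$), and the second, borderline inverse-square piece with the generalized Hardy bound \eqref{hardyboud} of \cite{MRR}; the terms $\l^{\delta(2-s_c)}+\|\Delta v\|^{\delta}_{L^2}$ in \eqref{decaycriticalnorm} then arise precisely from the interpolation $\|\nabla^{s_c+\alpha}w\|_{L^2}\lesssim\|\nabla^{s_c}w\|^{1-\delta}_{L^2}\|\Delta w\|^{\delta}_{L^2}$. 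You instead isolate $p\widetilde{\Phi_n}^{p-1}w$, control it through $w=(1-\chi_{\frac1\lambda})\Phi_n+v$ and the weighted integer-derivative bound \eqref{contorlnornmesloc} (note this forces $\|\Delta v\|_{L^2}^2$, not only $\|\e\|_{H^2_\rho}^2$, on the right, which is fine after $\|\Delta v\|^2\le\|\Delta v\|^\delta$), and absorb the genuinely nonlinear remainder into $\int|\nabla^{s_c+1}w|^2$. This can be made to work (e.g. integrate by parts to $\nabla^{s_c-1}$ and interpolate $\|\nabla^{s_c-1}F\|_{L^2}\le\|F\|_{L^2}^{2-s_c}\|\nabla F\|_{L^2}^{s_c-1}$ for $F=\widetilde{\Phi_n}^{p-1}v$), and it buys you independence from the external Hardy estimate \eqref{hardyboud}, at the price of using the dissipation and of needing a rigorous substitute for your ``expand in powers of $w$'' step: since $p$ is not an integer you must use a Taylor expansion with remainder plus a Kato--Ponce/fractional chain rule argument (or the paper's Besov-translation trick) rather than a literal power expansion.

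Two small inaccuracies to fix, neither fatal: the zeroth-order (in $w$) piece of your $\Psi_{\mathrm{NL}}$, namely $-(\chi_{\frac1\lambda}-\chi^p_{\frac1\lambda})\Phi_n^p$, is never estimated in your expansion and must be treated like $\Psi_{\mathrm{cut}}$ (the paper bounds it by $\l^2$ in $\dot H^{s_c}$ by interpolation); and your claim $\|\Psi_{\mathrm{cut}}\|_{\dot H^{s_c-1}}\lesssim\l^2$ is too strong -- a scaling count gives only $O(\l)$ in $\dot H^{s_c-1}$ -- though this is harmless since the $\dot H^{s_c}$ bound $O(\l^2)$, paired with $\|\nabla^{s_c}w\|_{L^2}\le\eta$, is all you need.
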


\begin{proof} Let 
\be
\label{defwidetilde}
\widetilde{\Phi_n}=\chi_{\frac1\lambda}\Phi_n,
\ee we compute the evolution equation of $w$:
\be
\label{wequation}
\pa_sw-\Delta w=\lsl\Lambda w+\xsl\cdot\nabla w+\widetilde{G}
\ee
with
\bee
\widetilde{G}  &=&  \left(\lsl+1\right)\chi_{\frac1\lambda}\Lambda \Phi_n+\xsl\cdot\nabla \widetilde{\Phi}_n +  2\nabla\chi_{\frac1\lambda}\cdot\nabla \Phi_n+\Delta \chi_{\frac1\lambda} \Phi_n -(\chi_{\frac1\lambda}-\chi^p_{\frac1\lambda})\Phi^p_n+\widetilde{\NL}, \\
\widetilde{\NL} &=&  (\widetilde{\Phi_n}+w)^p-(\widetilde{\Phi_n})^p.
\eee
Observe from the space localization of the cut, from the decay of the self similar solution, and from \eqref{controlsobolev} and \eqref{sobolevciritical}:
\be
\label{controlsoboevw}
\forall s_c\leq s\leq 2, \ \ \|w\|_{\dot{H}^s}\lesssim \eta.
\ee
We compute:
\bee
\frac12 \frac{d}{ds}\int |\nabla^{s_c} w|^2dy&=&\int \nabla^{s_c}w \cdot\nabla^{s_c}\left[\Delta w+\lsl\Lambda w+\xsl\cdot\nabla w+\widetilde{G}\right]dy\\
& = & -\int |\nabla^{s_c+1} w|^2+\int \nabla^{s_c}w\cdot\nabla^{s_c}\left[\lsl\Lambda w+\xsl\cdot\nabla w+\widetilde{G}\right]dy
\eee
 and estimate all terms.\\
 
\noindent{\bf step 1} Parameters terms. For any $\mu>0$, let $w_\mu=\frac{1}{\mu^{\frac{2}{p-1}}}w\left(\frac{y}{\mu}\right)$, then : $$\int|\nabla^{s_c} w_\mu|^2dy=\int|\nabla^{s_c} w|^2dy$$ and hence differentiating at $\mu=1$: $$-2\int\nabla^{s_c} w\cdot \nabla^{s_c} (\Lambda w)dy=0.$$ Integrating by parts:
$$
\int\nabla^{s_c}w\cdot\nabla^{s_c}\left(\xsl\cdot\nabla w\right)dy=0.
$$

\noindent{\bf step 2} $\widetilde{G}$ terms. The decay of the self similar solution and the space localization of the cut ensure using $1<s_c<2$:
\bee
&&\left\|2\nabla\chi_{\frac1\lambda}\cdot\nabla \Phi_n+\Delta \chi_{\frac1\lambda} \Phi_n\right\|_{\dot{H}^{s_c}}\\
& \lesssim & \left\|2\nabla\chi_{\frac1\lambda}\cdot\nabla \Phi_n+\Delta \chi_{\frac1\lambda} \Phi_n\right\|_{\dot{H}^1}^{2-s_c} \left\|2\nabla\chi_{\frac1\lambda}\cdot\nabla \Phi_n+\Delta \chi_{\frac1\lambda} \Phi_n\right\|_{\dot{H}^2}^{s_c-1}\\
& \lesssim & \left(\frac{\l^2}{\l^{s_c-1}}\right)^{2-s_c}\left(\frac{\l^2}{\l^{s_c-2}}\right)^{s_c-1}\lesssim \l^{2},
\eee
and similarly 
\bee
&&\left\|\left(\chi_{\frac1\lambda}-\chi_{\frac1\lambda}^p\right)\Phi_n^p\right\|_{\dot{H}^{s_c}}\lesssim \left\|\left(\chi_{\frac1\lambda}-\chi_{\frac1\lambda}^p\right)\Phi_n^p\right\|_{\dot{H}^{1}}^{2-s_c}\left\|\left(\chi_{\frac1\lambda}-\chi_{\frac1\lambda}^p\right)\Phi_n^p\right\|_{\dot{H}^{2}}^{s_c-1}\\
& \lesssim & (\l^{3-s_c})^{2-s_c}(\l^{4-s_c})^{s_c-1}\lesssim \l^2.
\eee
Using \eqref{modulationequationbound}:
\bee
\left\|\left(\lsl+1\right)\chi_{\frac{1}{\l}}\Lambda \Phi_n+\xsl\cdot\nabla(\chi_\frac 1\lambda\Phi_n)\right\|_{\dot{H}^{s_c}}\\
\lesssim \left|\xsl\right|+\left|\lsl+1\right|\lesssim \|\e\|_{L^2_\rho}^2+\sum_{j=2}^{n+1}|a_j|^2+\|\Delta v\|_{L^2}^2.
\eee 
We now turn to the control of the nonlinear term and claim the bound:
\be
\label{controlnonlinearterm}
\|\nabla^{s_c}\widetilde{NL}\|_{L^2}\lesssim \|\nabla^{s_c+\alpha} w\|_{L^2}
\ee
for some small enough universal constant $0<\alpha=\alpha(p)\ll1$. Assume \eqref{controlnonlinearterm}, we then interpolate with $\delta=\frac{\alpha}{2-s_c}$ and use \eqref{nvekononvevo}, \eqref{sobolevciritical} and the decay of the self similar solution to estimate: 
$$ \|\nabla^{s_c+\alpha} w\|_{L^2}\lesssim  \|\nabla^{s_c} w\|^{1-\delta}_{L^2} \|\Delta w\|^\delta_{L^2}\lesssim \l^{\delta(2-s_c)}+\|\Delta v\|^{\delta}_{L^2},$$ 
and the collection of above bounds yields \eqref{decaycriticalnorm}.\\

\noindent{\it Proof of \eqref{controlnonlinearterm}}. We compute
\bee
\nabla \widetilde{NL}&=&p\nabla(\widetilde{\Phi_n}+w)(\widetilde{\Phi_n}+w)^{p-1}-p\nabla \widetilde{\Phi_n}\widetilde{\Phi_n}^{p-1}\\
& = & p\nabla\widetilde{\Phi_n}\left[(\widetilde{\Phi_n}+w)^{p-1}-\widetilde{\Phi_n}^{p-1}\right]+p\nabla w(\widetilde{\Phi_n}+w)^{p-1}\\
&=& pg_1(w)\nabla(\widetilde{\Phi_n}+w)+p\widetilde{\Phi_n}^{p-1}\nabla w
\eee
with $$g_1(w)=(\widetilde{\Phi_n}+w)^{p-1}-\widetilde{\Phi_n}^{p-1}.$$ 
Hence letting 
$$s_c=1+\nu, \ \ 0<\nu=\frac12-\frac{2}{p-1}<\frac 12,$$ 
we estimate:
\be
\label{esnioenoveno}
\|\nabla^{s_c}\widetilde{\NL}\|_{L^2}\lesssim\left\|\nabla^\nu \left[g_1(w)\nabla(\widetilde{\Phi_n}+w)\right]\right\|_{L^2}+\left\|\nabla^\nu\left(\widetilde{\Phi_n}^{p-1}\nabla w\right)\right\|_{L^2}.
\ee
For the first term, we use the following commutator estimate proved in Appendix \ref{sec:appendixcommutatorestimate}: let $$0<\nu<1, \ \ 1<p_1,p_2, p_3, p_4<+\infty,\ \ \frac{1}{2}=\frac1{p_1}+\frac 1{p_2}=\frac1{p_3}+\frac1{p_4}$$ then 
\be
\label{noneoneoneevoe}
\|\nabla^\nu(uv)\|_{L^2}\lesssim \|u\|_{\dot{B}^\nu_{p_1, 2}}\|v\|_{L^{p_2}}+\|u\|_{L^{p_4}}\|v\|_{\dot{B}^\nu_{p_3, 2}},
\ee 
where we use here the standard space formulation of Besov norms for $0<s<1$ and $1\leq p<+\infty$\footnote{see for example \cite{cazenavebook}.}:
\bea\label{eq:standarddefinitionbesovnorms}
\|u\|_{\dot{B}^s_{p, 2}} &\sim&\left(\int_0^{+\infty}\left(\frac{\sup_{|y|\leq t}\|u(\cdot-y)-u(\cdot)\|_{L^{p}}}{t^s}\right)^2\frac{dt}t\right)^{\frac 12}.
\eea
We pick a small enough $0<\alpha\ll1$ to be chosen later and 
\bee
&&\frac{1}{p_1}=\frac{1}{3}+\frac\alpha 3, \ \ \frac{1}{p_2}=\frac16-\frac{\alpha}{3}\\
&& \frac{1}{p_3}=\frac{1+\alpha+\nu}{3}, \ \ \frac{1}{p_4}=\frac{1-2(\alpha+\nu)}{6} .
\eee 
Observe that $$-\nu+\frac{3}{p_2}=\frac{3}{p_4}$$ and hence from \eqref{noneoneoneevoe}, the embedding of $\dot{H}^{s, p}$ in $\dot{B}^s_{p, 2}$, and Sobolev\footnote{using $\frac 32-\frac{3}{p_3}=\frac12-(\alpha+\nu)>0$.}:
\bee
&&\left\|\nabla^\nu \left[g_1(w)\nabla(\widetilde{\Phi_n}+w)\right]\right\|_{L^2}\\
&\lesssim&  \|\nabla(\widetilde{\Phi_n}+w)\|_{L^{p_1}}\|g_1(w)\|_{\dot{B}^\nu_{p_2, 2}}+ \|\nabla(\widetilde{\Phi_n}+w)\|_{\dot{B}^\nu_{p_3, 2}}\|g_1(w)\|_{L^{p_4}}\\
& \lesssim & \|\nabla^{1+\frac32-\frac{3}{p_1}}(\widetilde{\Phi_n}+w)\|_{L^2}\|g_1(w)\|_{\dot{B}^\nu_{p_2, 2}}+ \|\nabla^{1+\nu+\frac 32-\frac{3}{p_3}}(\widetilde{\Phi_n}+w)\|_{L^2}\|\nabla^\nu g_1(w)\|_{L^{p_2}}\\
& \lesssim & \|\nabla^{\frac 32-\alpha}(\widetilde{\Phi_n}+w)\|_{L^2}\|g_1(w)\|_{\dot{B}^\nu_{p_2, 2}}.
\eee
Since $s_c=\frac32-\frac{2}{p-1}<\frac 32$, we may pick $0<\alpha\ll1 $ with $\frac 32-\alpha>s_c$ and hence using \eqref{controlsoboevw} and the decay of the self similar solution: $$\|\nabla^{\frac 32-\alpha}(\widetilde{\Phi_n}+w)\|_{L^2}\lesssim 1.$$
Let now $$f(z)=(1+z)^{p-1}-1$$ then $f(0)=0$ and  
$$ |f(z_2)-f(z_1)|=\left|\int_{z_1}^{z_2}f'(\tau)d\tau\right|\lesssim \int_{z_1}^{z_2}(1+|\tau|^{p-2})d\tau\lesssim |z_2-z_1|(1+|z_1|^{p-2}+|z_2|^{p-2})$$ and hence by homogeneity:
$$|g_1(w_2)-g_1(w_1)|\lesssim |w_2-w_1|(|\widetilde{\Phi_n}|^{p-2}+|w_2|^{p-2}+|w_1|^{p-2}).$$ Using the $L^\infty$ bound \eqref{poitwisebound},  \eqref{eq:standarddefinitionbesovnorms}, and Sobolev\footnote{Here we use that $\dot{B}^s_{2, 2}$ embeds in $\dot{B}^t_{p, 2}$ with $s-3/2=t-3/p$ for $p\geq 2$, and $\dot{B}^s_{2, 2}=\dot{H}^s$.}
\bee
\|g_1(w)\|_{\dot{B}^\nu_{p_2, 2}}&\lesssim&\left(\int_0^{+\infty}\left(\frac{\sup_{|y|\leq t}\|g_1(w(\cdot-y))-g_1(w(\cdot))\|_{L^{p_2}}}{t^{\nu}}\right)^2\frac{dt}t\right)^{\frac 12}\\
& \lesssim & \left(\int_0^{+\infty}\left(\frac{\sup_{|y|\leq t}\|w(\cdot-y)-w(\cdot)\|_{L^{p_2}}}{t^{\nu}}\right)^2\frac{dt}t\right)^{\frac 12}\sim \|w\|_{\dot{B}^\nu_{p_2, 2}}\\
& \lesssim & \|\nabla^{\nu+\frac 32-\frac{3}{p_2}}w\|_{L^2}=\|\nabla^{s_c+\alpha} w\|_{L^2}.
\eee
The collection of above bounds yields the control of the first term of \eqref{esnioenoveno}:
$$ \|\nabla^\nu \left[g_1(w)\nabla(\widetilde{\Phi_n}+w)\right]\|_{L^2}\lesssim \|\nabla^{s_c+\alpha} w\|_{L^2}.
$$
For the second term in \eqref{esnioenoveno}, we recall the following estimate proved in \cite{MRR}: let $ 0<\nu<1$ and $\mu>0$ with $\mu+\nu<\frac 32$, let $f$ smooth radially symmetric with 
\be
\label{veioboen}
|\pa_r^kf|\lesssim\frac{1}{1+r^{\mu+k}}, \ \ k=0,1,
\ee then there holds the generalized Hardy bound 
\be
\label{hardyboud}
\|\nabla^{\nu}(uf)\|_{L^2}\lesssim \|\nabla^{\nu+\mu}f\|_{L^2}.
\ee 
We then pick again a small enough $0<\alpha\ll1$ and let $$\mu=\alpha, \ \ \mu+\nu=\nu+\alpha=s_c-1+\alpha<\frac 32$$ for $0<\alpha\ll1 $ small enough, and $f=(\chi_{\frac1\lambda} \Phi_n)^{p-1}$ satisfies $$|\pa_r^kf|\lesssim\frac{1}{1+r^{2+k}}\lesssim\frac{1}{1+r^{\mu+k}}.$$ Hence 
$$\|\nabla^\nu\left(\widetilde{\Phi_n}^{p-1}\nabla w\right)\|_{L^2}\lesssim \|\nabla^{\nu+\mu+1}w\|_{L^2}=\|\nabla^{s_c+\alpha}w\|_{L^2}.$$ This concludes the proof of \eqref{controlnonlinearterm}.
\end{proof}


\subsection{Conclusion}\label{sec:conclusionproofpropboot}


We are now in position to conclude the proof of Proposition \ref{bootstrap} which then easily implies Theorem \ref{thmmain}.\\

\begin{proof}[Proof of Proposition \ref{bootstrap}] We recall that we are arguing by contradiction assuming \eqref{assumptioncontradiction}. We first  show that the bounds \eqref{controlsclaing}, \eqref{poitwiseboundhtwo}, \eqref{controlsobolev} and \eqref{sobolevciritical} can be improved on $[s_0,s^*]$, and then, the existence of the data $(a_j(0))_{2\leq j\leq n+1}$ follows from a classical topological argument \`a la Brouwer.\\

\noindent{\bf step 1} Improved scaling control. We estimate from \eqref{controlunstable}, \eqref{poitwiseboundhtwo}, \eqref{controlsobolev},  \eqref{modulationequationbound}:
\be
\label{mpdulaitoneauito}
\left|\lsl+1\right|\lesssim K^2 e^{-2\mu s}
\ee and hence after integration:
$$\left|\log \left(\frac{\l(s)}{\l_0}\right)+s-s_0\right|\lesssim \int_{s_0}^{+\infty}K^2e^{-2\mu\tau}d\tau\lesssim 1+o(1)$$  for $s_0$ large enough, which together with \eqref{scalingsmall} implies: 
\be
\label{improvedcontrolltworhoone}
\l(s)=\left(\l(s_0)e^{s_0}\right)e^{-s}(1+o(1))\ \ \mbox{and hence}\ \ \frac{e^{-s}}{2}\leq \lambda(s)\leq 2e^{-s}.
\ee

\noindent{\bf step 2} Improved Sobolev bounds.\\
\noindent{\em $L^2_\rho$ bound}. From \eqref{differentialcontrolnorm}, \eqref{controlunstable}, \eqref{controlsobolev}, \eqref{poitwisebound}:
$$\frac{d}{ds}\|\e\|_{L^2_\rho}^2+c_n\|\e\|_{H^1_\rho}^2\lesssim (1+K^4)e^{-4\mu s}+K^2e^{-2\mu s}e^{-2c\mu s}\leq e^{-(2+c)\mu s}$$ for $s\geq s_0$ large enough. From now on, we may fix once and for all the value 
\be
\label{defmu}
\mu=\frac{c_n}{4}
\ee 
and hence
 \be
 \label{nvekonevonove}
 \frac{d}{ds}\|\e\|_{L^2_\rho}^2+4\mu \|\e\|_{H^1_\rho}^2\leq e^{-(2+c)\mu s}
 \ee
which time integration yields using \eqref{inititalboot}:
\bea
\label{improvedcontrolltworhotwo}
\nonumber \|\e(s)\|_{L^2_\rho}^2+2\mu e^{-2\mu s}\int_{s_0}^se^{2\mu \sigma}\|\e\|_{H^1_\rho}^2d\sigma&\leq&  \left(e^{2\mu s_0}\|\e(s_0)\|_{L^2_\rho}^2\right)e^{-2\mu s}+e^{-2\mu s}\int_{s_0}^{s}e^{-\mu c\tau}d\tau\\
& \lesssim & K^2_0 e^{-2\mu s}.
\eea

\noindent{\em $H^2_\rho$ bound}. We estimate from \eqref{differentialcontrolnormbis} like for the proof of \eqref{nvekonevonove}:
$$\frac{d}{ds}\|\mathcal{L}_n\e\|_{L^2_\rho}^2+4\mu\|\mathcal{L}_n\e\|_{H^1_\rho}^2\lesssim \|\e\|_{H^1_\rho}^2+e^{-(2+c)\mu s} $$ whose time integration with the initial bound \eqref{inititalboot} and the bound \eqref{improvedcontrolltworhotwo} ensures:
$$
\|\mathcal L_n\e(s)\|_{L^2_\rho}^2\lesssim K^2_0 e^{-2\mu s}.
$$
We recall $$(\mathcal L_n\e,\e)_\rho=\|\nabla \e\|^2_{L^2_\rho}+\int\left(\frac 2{p-1}-p\Phi_n^{p-1}\right)|\e|^2\rho dy$$ and hence we first estimate from the spectral bound \eqref{coerciviteapoids}, the orthogonality conditions \eqref{orhotpsin}, and Cauchy-Schwarz:
\be
\label{koneonvvkonvoneo}
\|\nabla \e\|_{L^2_\rho}^2\leq (\mathcal L_n\e,\e)_\rho+C\|\e\|_{L^2_\rho}^2\lesssim  \|\mathcal L_n\e\|_{L^2_\rho}^2+\|\e\|_{L^2_\rho}^2\lesssim K^2_0 e^{-2\mu s}.
\ee 
This yields using \eqref{estimationapoinds}:
\be
\label{imprvedhtwobound}
\|\e\|_{H^2_\rho}^2\lesssim \|\mathcal L_n\e\|_{L^2_\rho}^2+\|\e\|^2_{H^1_\rho}
\ee
and hence the improved bound
\be
\label{inoeeioenoen}
\|\e\|_{H^2_\rho}^2 \lesssim K_0^2e^{-2\mu s}.
\ee

\noindent{\it $\dot{H}^2$ bound}. We rewrite \eqref{decayweight} using \eqref{controlunstable}, \eqref{modulationequationbound}, \eqref{inoeeioenoen}
$$\frac{d}{ds}\|\Delta v\|^2_{L^2}+(4-\delta-2s_c)\|\Delta v\|_{L^2}^2\lesssim K^2_0e^{-2\mu s}.$$ By possibly diminishing the value of $c_n$, we may always assume $$4-\delta-2s_c>c_n=4\mu$$
and hence from \eqref{inititalboot}: 
\bea\label{eq:improvedH2dotboundforv}
\|\Delta v\|_{L^2}^2\leq K_0^2e^{-4\mu s}e^{4\mu s_0}e^{-2\mu s_0}+e^{-4\mu s}\int_{s_0}^s K_0^2e^{4\mu \tau}e^{-2\mu \tau}d\tau\lesssim K_0^2e^{-2\mu s}.
\eea

\noindent{\it $\dot{H}^{s_c}$ bound}. We now rewrite \eqref{decaycriticalnorm} using \eqref{controlsclaing}-\eqref{sobolevciritical}:
$$\frac{d}{ds}\int \|\nabla^{s_c}w\|_{L^2}^2\leq  e^{-c \mu s} $$
for some universal constant $c>0$ which time integration using \eqref{inititalboot} ensures:
\be
\label{skjnjknkenekj}
\|\nabla^{s_c}w(s)\|_{L^2}^2\lesssim \|\nabla^{s_c}w(s_0)\|_{L^2}^2+e^{-cs_0}<\frac{\eta}{2}
\ee 
 for $s_0$ large enough.\\

\noindent{\bf step 3} The Brouwer fixed point argument. We conclude from \eqref{improvedcontrolltworhoone}, \eqref{inoeeioenoen}, \eqref{eq:improvedH2dotboundforv}, \eqref{skjnjknkenekj}, the definition \eqref{defexittime} of $s^*$ and a simple continuity argument that the contradiction assumption \eqref{assumptioncontradiction} implies from \eqref{controlunstable}: 
\be
\label{exitondition}
\sum_{j=2}^{n+1}|a_j(s^*)|^2=e^{-2\mu s^*}.
\ee 
Moreover, the vector field is strictly outgoing from \eqref{modulationequationbound}, \eqref{controlunstable}, \eqref{poitwiseboundhtwo}, \eqref{controlsobolev}:
\bee
\frac12 \frac{d}{ds}\sum_{j=2}^{n+1}|a_je^{\mu s}|^2&=&\sum_{j=2}^{n+1}a_je^{2\mu s}((a_j)_s+\mu a_j)=  \sum_{j=2}^{n+1}a_je^{2\mu s}\left[(\mu+\mu_j) a_j+O\left(K^2e^{-2\mu s}\right)\right]\\
& \geq & \mu\sum_{j=2}^{n+1}|a_je^{\mu s}|^2+O\left(K^2e^{-\mu s}\right)
\eee
from which $$\left(\frac{d}{ds}\sum_{j=2}^{n+1}|a_je^{\mu s}|^2\right)(s^*)>\mu+O(K^2e^{-\mu s_0})>0$$ for $s_0$ large enough. We conclude from standard argument that the map $$(a_j(0)e^{\mu s_0})_{2\leq j\leq n+1}\mapsto (a_j(s^*)e^{\mu s^*})_{2\leq j\leq n+1}$$ is continuous in the unit ball of $\Bbb R^{n}$, and the identity on its boundary, a contradiction to Brouwer's theorem. This concludes the proof of Proposition \ref{bootstrap}.
\end{proof}

We are now in position to conclude the proof of Theorem \ref{thmmain}.

\begin{proof}[Proof of Theorem \ref{thmmain}] Let an initial data as in Proposition \ref{bootstrap}, then the corresponding solution $u(s,y)$ admits on $[s_0,+\infty)$ a decomposition \eqref{geomdeopcopmti} with the bounds \eqref{controlunstable}, \eqref{poitwisebound}, \eqref{controlsobolev}, \eqref{sobolevciritical}, \eqref{improvedcontrolltworhoone}.\\

\noindent{\bf step 1} Self similar time blow up. Using \eqref{improvedcontrolltworhoone}, the life space of the solution $u$ is finite 
$$T=\int_{s_0}^{+\infty}\l^2(s)ds\lesssim \int_{s_0}^{+\infty}e^{-2s}ds<+\infty,$$ 
and hence $$T-t=\int_{s}^{+\infty}\l^2(s)ds\sim  e^{-2s}.$$ We may therefore rewrite \eqref{mpdulaitoneauito}: $$|\l \l_t+1|\lesssim (T-t)^{\mu}$$ 
and integrating in time using $\l(T)=0$ yields 
\be
\label{loicaling}
\l(t)=\sqrt{(2+o(1))(T-t)}.
\ee Also from \eqref{modulationequationbound}: 
$$\int_0^T|x_t|=\int_{s_0}^{+\infty}|x_s| ds\lesssim \int_{s_0}^{+\infty} e^{-s-2\mu s}ds<+\infty$$
and \eqref{behaviourxpoitn} is proved.\\

\noindent{\bf step 2} Asymptotic stability above scaling. We now prove \eqref{sobolevone} and \eqref{normioeoe}. We first estimate from \eqref{nvekononvevo} using the self similar decay of $\Phi_n$: \bee
\|w\|_{\dot{H}^2}&\lesssim & \|v\|_{\dot{H}^2}+\|(1-\chi_{\frac 1\lambda})\Phi_n\|_{\dot{H}^2}\lesssim  e^{-2\mu s}+\l^{2-s_c}(s)\\
&\to & 0\ \ \mbox{as}\ \ t\to T.
\eee
Hence from \eqref{sobolevciritical}: $$\forall s_c<\sigma\leq 2, \ \ \lim_{s\to +\infty}\|w(s)\|_{\dot{H}^\sigma}=0$$ which using \eqref{nvekononvevo} and the self similar decay of $\Phi_n$ again implies $$\forall s_c<\sigma\leq 2, \ \ \lim_{s\to +\infty}\|v(s)\|_{\dot{H}^\sigma}=0,$$ this is \eqref{sobolevone}. At the critical level, we have from \eqref{geomdeopcopmti}, \eqref{definitionv} and the sharp self similar decay from Proposition \ref{prop:exteriorsolution}:
$$\|u(t)\|_{\dot{H}^{s_c}}=\|\chi_{\frac{1}{\lambda}}\Phi_n+w\|_{\dot{H}^{s_c}}=c_n(1+o(1))\sqrt{|\log \lambda|}, \ \ c_n\neq 0,$$ and \eqref{loicaling} now yields \eqref{normioeoe}.\\

\noindent{\bf step 3} Boundedness below scaling. We now prove \eqref{sobolevtwo}.\\
\noindent{\it Control of the Dirichlet energy}. Recall the notation \eqref{defwidetilde}
and compute by rescaling using the self similar decay of $\Phi_n$:
$$\l^{2(s_c-1)}\left[\|\nabla \widetilde{\Phi_n}\|_{L^2}^2+\|\widetilde{\Phi_n}\|_{L^{p+1}}^{p+1}\right]\lesssim 1.$$ Hence the dissipation of energy which is translation invariant ensures
\bee
\l^{2(s_c-1)}\|\nabla w\|_{L^2}^2&\lesssim& \l^{2(s_c-1)}\left[\|\nabla (\widetilde{\Phi_n}+w)\|_{L^2}^2+\|\nabla\widetilde{\Phi_n}\|_{L^2}^2\right]\lesssim 1+2E(u)+\frac{2}{p+1}\|u\|_{L^{p+1}}^{p+1}\\
& \lesssim & 1+|E_0|+\l^{2(s_c-1)}\|w\|_{L^{p+1}}^{p+1}.
\eee
We now interpolate using the smallness\footnote{this is the only place in the proof where we use that the critical norm is small, bounded suffices everywhere else.} \eqref{sobolevciritical}
$$\|w\|_{L^{p+1}}^{p+1}\lesssim \|w\|_{\dot{H}^{s_c}}^{p-1}\|\nabla w\|_{L^2}^2\lesssim \eta \|\nabla w\|_{L^2}^2$$ and hence 
\be
\label{diehihuieboud}
\l^{2(s_c-1)}\|\nabla w\|_{L^2}^2\lesssim C(u_0)
\ee 
and $$\|\nabla u\|_{L^2}^2\lesssim \l^{2(s_c-1)}\left[\|\nabla\widetilde{\Phi_n}\|_{L^2}^2+\|\nabla w\|_{L^2}^2\right]\lesssim 1.$$
\noindent{\it Proof of \eqref{sobolevtwo}}. Let now $1\leq\sigma<s_c$, then using \eqref{sobolevciritical}, \eqref{diehihuieboud} and interpolation:
\bee
\|\nabla^\sigma u\|_{L^2} &\lesssim & \l^{s_c-\sigma}\|\nabla^\sigma \widetilde{\Phi_n}\|_{L^2}+\l^{s_c-\sigma}\|\nabla^\sigma w\|_{L^2}\lesssim 1+\l^{s_c-\sigma}\|\nabla w\|^{\frac{s_c-\sigma}{s_c-1}}_{L^2}\|\nabla^{s_c}w\|^{\frac{\sigma-1}{s_c-1}}_{L^2}\\
&\lesssim& 1+\left(\l^{s_c-1}\|\nabla w\|_{L^2}\right)^{\frac{s_c-\sigma}{s_c-1}}\lesssim C(u_0)
\eee
and \eqref{sobolevtwo} is proved. This concludes the proof of Theorem \ref{thmmain}.
\end{proof}


\subsection{The Lipschitz dependence}

We now state the Lipschitz aspect of the set of solutions constructed in this paper.

\begin{proposition}[Lipschitz dependence] \label{pr:lipschitz}
Let $s_0\gg 1$, $\e^{(1)}_0$ and $\e^{(2)}_0$ satisfy \fref{orhotpsininit} and \fref{eq:bound initiale varepsilon}, and take $\lambda_0^{(1)}=\lambda^{(2)}_0=e^{-s_0}$. Then the parameters $(a_j^{(1)}(0))_{2\leq j \leq n+1}$ and $(a_j^{(2)}(0))_{2\leq j \leq n+1}$, associated by Proposition \ref{bootstrap} to $(\e^{(1)},\lambda_0^{(1)})$ and $(\e^{(2)},\lambda_0^{(2)})$ respectively, satisfy:
\be \label{lip:lipschitz}
\sum_{j=2}^{n+1} \left|a_j^{(1)}(0)-a_j^{(2)}(0)\right|^2\lesssim \left\| \e^{(1)}_0 -\e^{(2)}_0\right\|^2_{L^2_\rho} .
\ee
\end{proposition}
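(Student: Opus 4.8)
\textbf{Proof strategy for Proposition \ref{pr:lipschitz}.} The plan is to run the bootstrap argument of Proposition \ref{bootstrap} for the \emph{difference} of the two solutions, exploiting that the dynamical system governing the coordinates $(a_j)$ is, at the linearized level, a hyperbolic fixed point: the unstable modes are repulsive and all the other quantities ($\e$, $\Delta v$, $w$) contract. More precisely, let $u^{(1)}$ and $u^{(2)}$ be the two solutions produced by Proposition \ref{bootstrap} from the data $(\e_0^{(i)},\lambda_0)$ with their (a priori unknown) unstable parameters $(a_j^{(i)}(0))$, decomposed as in \eqref{geomdeopcopmti} with parameters $\lambda^{(i)},x^{(i)},a_j^{(i)},\e^{(i)}$ and renormalized times $s^{(i)}$. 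Since $\lambda^{(1)}_0=\lambda^{(2)}_0=e^{-s_0}$ we start at the same renormalized time $s_0$, and by \eqref{improvedcontrolltworhoone} the two clocks stay comparable. Set
\be
\e^{\sharp}=\e^{(1)}-\e^{(2)}, \ \ a_j^{\sharp}=a_j^{(1)}-a_j^{(2)}, \ \ \lambda^{\sharp}=\log\lambda^{(1)}-\log\lambda^{(2)}, \ \ x^{\sharp}=\frac{x^{(1)}}{\lambda^{(1)}}-\frac{x^{(2)}}{\lambda^{(2)}}, \ \ v^{\sharp}=v^{(1)}-v^{(2)}.
\ee
Subtracting the two copies of \eqref{equatione}, one gets $\pa_s\e^{\sharp}+\mathcal L_n\e^{\sharp}=F^{\sharp}-\Mod^{\sharp}$ where $F^{\sharp}$ and $\Mod^{\sharp}$ are differences of the corresponding quantities; because $g(v)$ and all nonlinear terms are locally Lipschitz in $v$ and because of the $L^\infty$ smallness \eqref{poitwisebound} already established for \emph{both} solutions, one obtains pointwise bounds of the schematic form $|\NL^{(1)}-\NL^{(2)}|\lesssim (|v^{(1)}|+|v^{(2)}|)|v^{\sharp}|$, etc., exactly as in the proofs of Lemmas \ref{lemmamodulation}, the energy estimates and Lemma \ref{lemmehtwoweight}, but now linear in the difference.

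The key steps, in order, would be: (1) Derive the difference analogue of the modulation equations, Lemma \ref{lemmamodulation}, giving
\be
\left|\pa_s\lambda^{\sharp}\right|+\left|\pa_s x^{\sharp}\right|+\sum_{j=2}^{n+1}\left|(a_j^{\sharp})_s-\mu_j a_j^{\sharp}\right|\lesssim e^{-\mu s}\left(\|\e^{\sharp}\|_{H^1_\rho}+\|\Delta v^{\sharp}\|_{L^2}+\sum_{j=2}^{n+1}|a_j^{\sharp}|\right),
\ee
using that the bootstrap bounds force all quadratic prefactors to be $O(e^{-\mu s})$. (2) Derive the difference analogue of the weighted energy estimate \eqref{differentialcontrolnorm}–\eqref{differentialcontrolnormbis}, namely
\be
\frac{d}{ds}\|\e^{\sharp}\|_{L^2_\rho}^2+c_n\|\e^{\sharp}\|_{H^1_\rho}^2\lesssim e^{-c\mu s}\left(\|\e^{\sharp}\|_{H^1_\rho}^2+\|\Delta v^{\sharp}\|_{L^2}^2+\sum_{j=2}^{n+1}|a_j^{\sharp}|^2\right),
\ee
where the crucial input is again the spectral gap \eqref{coerciviteapoids} applied to $\e^{\sharp}$, whose orthogonality conditions \eqref{orhotpsin} are linear and hence automatically satisfied by the difference. (3) Derive the difference analogue of the global $\dot H^2$ monotonicity Lemma \ref{lemmehtwoweight}. (4) The critical norm $w^{\sharp}$ is \emph{not} needed because the nonlinear estimates of steps (1)–(3) only use the $L^\infty$ bound \eqref{poitwisebound}, which is already known.

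(5) The final and most delicate step is the hyperbolic ODE closing argument. Introduce the quantity
\be
N(s)=\|\e^{\sharp}(s)\|_{H^2_\rho}^2+\|\Delta v^{\sharp}(s)\|_{L^2}^2,\qquad A(s)=\sum_{j=2}^{n+1}|a_j^{\sharp}(s)|^2,
\ee
and show, combining steps (1)–(3) and the gap $4-\delta-2s_c>c_n=4\mu$, a differential inequality of the form $\frac{d}{ds}N\le -2\mu N+ C e^{-c\mu s}A$, so that $N(s)\lesssim N(s_0)+ \sup_{[s_0,s]}A$ uniformly. For the unstable modes, step (1) gives $\frac{d}{ds}\big(e^{2\mu s}A\big)\ge \mu e^{2\mu s}A - Ce^{2\mu s}e^{-\mu s}N^{1/2}A^{1/2}$, i.e. $e^{2\mu s}A$ is increasing up to a controllably small error; since the bootstrap of Proposition \ref{bootstrap} forces $A(s)\le e^{-2\mu s}$ for all $s$, the only way this can persist is if $A(s_0)$ is itself $O(N(s_0))$. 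This is the standard shooting/monotonicity dichotomy: were $A(s_0)\gg N(s_0)$, the exponential repulsivity would push $e^{2\mu s}A(s)$ past $1$ in finite time, contradicting $A\le e^{-2\mu s}$. Making this rigorous — essentially a Gronwall argument on $e^{2\mu s}A$ with the source term $N$ fed back in — yields $A(s_0)\lesssim N(s_0)\lesssim \|\e_0^{(1)}-\e_0^{(2)}\|_{L^2_\rho}^2+\|\Delta\e_0^{(1)}-\Delta\e_0^{(2)}\|_{L^2}^2$, and finally one upgrades the right-hand side to $\|\e_0^{(1)}-\e_0^{(2)}\|_{L^2_\rho}^2$ alone using \eqref{imprvedhtwobound} and the initialization \eqref{eq:bound initiale varepsilon} (the higher Sobolev norms of the initial difference are controlled by the $L^2_\rho$ norm of the difference since both $\e_0^{(i)}$ lie in the same small ball; alternatively one states the Lipschitz bound with the full $H^2_\rho\cap\dot H^2$ norm of the difference, which is the more natural statement — I expect the paper means the norm in which the data space is topologized). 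The main obstacle is precisely this last feedback loop: one must quantify that the contraction of $N$ dominates over the finite number of expanding directions \emph{and} that the error terms coupling them are summably small in $s$, which is exactly where the bootstrap bounds \eqref{controlsclaing}–\eqref{sobolevciritical} of Proposition \ref{bootstrap}, now available for both solutions, are used in an essential way.
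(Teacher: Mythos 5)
Your overall skeleton is the right one (subtract the two renormalized flows, use the already-established bootstrap bounds of Proposition \ref{bootstrap} as a priori information on each solution, and exploit the repulsivity of the unstable modes so that the initial unstable difference is slaved to the initial stable difference; your step (5) is essentially the paper's reintegration argument, where the coefficient of the divergent exponential $e^{\mu_j s}$ must vanish because $|\triangle a_j|\lesssim e^{-\mu s}$ is known a priori). But there is a genuine gap at the end, and it stems from an earlier inefficiency. You carry the norms $\|\e^{\sharp}\|_{H^2_\rho}$ and $\|\Delta v^{\sharp}\|_{L^2}$ of the \emph{difference} through the modulation and energy estimates, so your closing quantity is $N(s_0)=\|\e^{\sharp}(s_0)\|_{H^2_\rho}^2+\|\Delta v^{\sharp}(s_0)\|_{L^2}^2$, and the final step where you ``upgrade'' this to $\|\e_0^{(1)}-\e_0^{(2)}\|_{L^2_\rho}^2$ is false: the fact that both $\e_0^{(i)}$ lie in the same small ball of $H^2_\rho\cap\dot H^2$ does not let you control $\|\Delta\e_0^{(1)}-\Delta\e_0^{(2)}\|_{L^2}$ or $\|\e_0^{(1)}-\e_0^{(2)}\|_{H^2_\rho}$ by $\|\e_0^{(1)}-\e_0^{(2)}\|_{L^2_\rho}$, and \fref{imprvedhtwobound} is of no help for that. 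As written, your scheme only yields a Lipschitz bound with respect to the stronger topology, not the statement \fref{lip:lipschitz}.

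The missing idea, which is exactly what the paper uses, is that the difference system closes at the $L^2_\rho$ level \emph{alone}. For a single solution the nonlinear term is quadratic, $|\NL|\lesssim v^2$, and estimating it against the weight $\rho$ forces the Hardy-type bound \fref{contorlnornmesloc} and hence the global $\dot H^2$ norm; but for the difference the nonlinearity is effectively \emph{linear} in $\triangle v$ with an exponentially small coefficient,
\begin{equation*}
\left|(\Phi_n+v^{(1)})^p-(\Phi_n+v^{(2)})^p-p\Phi_n^{p-1}\triangle v\right|\lesssim e^{-c\mu s}\,|\triangle v|,
\end{equation*}
thanks to the $L^\infty$ bound \fref{poitwisebound} available for both solutions, and $\triangle v=\sum_j \triangle a_j\,\psi_j+\triangle\e$, so all weighted pairings are bounded by $\|\triangle\e\|_{L^2_\rho}+\sum_j|\triangle a_j|$. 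Consequently the modulation equations and the $L^2_\rho$ energy identity for $\triangle\e$ (with the spectral gap \fref{coerciviteapoids}, whose orthogonality conditions are linear and hence inherited by the difference) involve no $\|\Delta v^{\sharp}\|_{L^2}$ and no $\|\e^{\sharp}\|_{H^2_\rho}$ at all; your steps (2bis)--(3) are unnecessary. Reintegrating then gives $\sup_s e^{2\mu s}\|\triangle\e\|_{L^2_\rho}^2\lesssim e^{2\mu s_0}\|\triangle\e(0)\|_{L^2_\rho}^2$ and, via the vanishing of the growing-mode coefficient, $|\triangle a_j(0)|\lesssim\|\triangle\e(0)\|_{L^2_\rho}$, which is precisely \fref{lip:lipschitz}. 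With this low-regularity closure your argument becomes correct; without it, the claimed estimate does not follow.
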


\begin{proof}
The idea of the proof is classical, see for instance \cite{DKSW}. We study the difference of two solutions, and use the bounds we already derived in the existence result as a priori bounds now. This allows us to control the difference of solutions at a low regularity level which is sufficient to conclude. \\

We use the superscripts $(i)$, $i=1,2$ for all variables associated to the two solutions respectively: $u^{(i)}$ for \fref{geomdeopcopmti}, $v^{(i)}$ for \fref{definitionv}, $\psi^{(i)}$ for \fref{orhotpsin}, $\lambda^{(i)}$ for the scales and $x^{(i)}$ for the central points. The differences are denoted by
$$
\triangle \e := \e^{(1)}-\e^{(2)}, \ \triangle a_j := a^{(1)}_j-a^{(2)}_j, \ \triangle v := v^{(1)}-v^{(2)}.
$$
We compare the two renormalized solutions at the same renormalized time $s$. The time evolution for the difference is given by
\be \label{lip:evolution}
\begin{array}{r c l}
\triangle \e_s+\mathcal L_n \triangle \e &=& \frac{d}{ds}\left[\text{log}\left( \frac{\lambda^{(1)}}{\lambda^{(2)}} \right) \right] \Lambda (\Phi_n+v^{(2)}) +\left(\frac{x_s^{(1)}}{\lambda^{(1)}}-\frac{x^{(2)}_s}{\lambda^{(2)}} \right).\nabla (\Phi_n+v^{(2)}) \\
&&\displaystyle -\sum_{j=2}^{n+1} (\triangle a_{j,s}-\mu_j \triangle a_j ) \psi_j+\left(\frac{\lambda_s^{(1)}}{\lambda^{(1)}}+1 \right)\Lambda \triangle v \\
&& +\frac{x_s^{(1)}}{\lambda^{(1)}}.\nabla \triangle v+\left[(\Phi_n+v^{(1)})^p -(\Phi_n+v^{(2)})^p-p\Phi_n^{p-1}\triangle v \right] .
\end{array}
\ee

\noindent \textbf{step 1} Modulation equations. We claim that
\bea 
\nonumber && \left|\frac{d}{ds} \text{log}\left(\frac{\lambda^{(1)}}{\lambda^{(2)}} \right) \right|+\left|\frac{x_s^{(1)}}{\lambda^{(1)}}-\frac{x_s^{(2)}}{\lambda^{(2)}} \right|+\sum_{j=2}^{n+1} \left|\triangle a_{j,s}-\mu_j \triangle a_j \right| \\
\label{lip:modulation} &\lesssim & e^{-c\mu s}\left( \|\triangle \e \|_{L^2_\rho}+ \sum_{j=2}^{n+1} |\triangle a_j|\right) .
\eea
We now show this estimate. Taking the scalar product of \fref{lip:evolution} with $\psi_1=\frac{\Lambda \Phi_n}{\| \Lambda \Phi_n\|_{L^2_\rho}}$, using the orthogonality conditions \fref{orhotpsin} and \fref{orthogonalite psii} and the fact that $\psi_j$ is radial for $1\leq j \leq n+1$, yields the identity
\be \label{lip:modulation expression}
\begin{array}{r c l}
& \frac{d}{ds}\left[\text{log}\left( \frac{\lambda^{(1)}}{\lambda^{(2)}} \right) \right] (\Lambda (\Phi_n+v^{(2)}),\psi_1)_\rho \\
= & - \left(\left(\frac{x_s^{(1)}}{\lambda^{(1)}}-\frac{x^{(2)}_s}{\lambda^{(2)}} \right).\nabla \e^{(2)},\psi_1 \right)_\rho-\left(\frac{\lambda_s^{(1)}}{\lambda^{(1)}}+1 \right)(\Lambda \triangle v,\psi_1)_\rho-\left( \frac{x_s^{(1)}}{\lambda^{(1)}}.\nabla \triangle \e,\psi_1\right)_\rho  \\
&- \left( (\Phi_n+v^{(1)})^p -(\Phi_n+v^{(2)})^p-p\Phi_n^{p-1}\triangle v ,\psi_1 \right)_\rho
\end{array}
\ee 
and we now estimate each term. The coercivity \fref{weightedesimate} and the bounds \fref{controlunstable} and \fref{poitwiseboundhtwo} yields
$$
(\Lambda (\Phi_n+v^{(2)}),\psi_1)_\rho=1+O(e^{-\mu s}),
$$
$$
\left|\left(\left(\frac{x_s^{(1)}}{\lambda^{(1)}}-\frac{x^{(2)}_s}{\lambda^{(2)}} \right).\nabla \e^{(2)},\psi_1 \right)_\rho\right| \lesssim e^{-\mu s}\left| \frac{x_s^{(1)}}{\lambda^{(1)}}-\frac{x_s^{(2)}}{\lambda^{(2)}} \right| .
$$
The modulation estimate \fref{modulationequationbound}, with \fref{controlunstable}, \fref{poitwiseboundhtwo} and \fref{controlsobolev} and an integration by parts yields
$$
\left|\left(\frac{\lambda_s^{(1)}}{\lambda^{(1)}}+1 \right)(\Lambda \triangle v,\psi_1)_\rho-\left( \frac{x_s^{(1)}}{\lambda^{(1)}}.\nabla \triangle \e,\psi_1\right)_\rho \right|\lesssim e^{-\mu s} \left(\| \triangle \e \|_{L^2_\rho}+\sum_{j=2}^{n+1}|\triangle a_j| \right) .
$$
Eventually, for the difference of the nonlinear terms the nonlinear inequality
$$
\left|(x+y)^p-(x+z)^p-px^{p-1}(y-z) \right|\lesssim (|x|^{p-2}+|y|^{p-2}+|z|^{p-2})(|y|+|z|)|y-z|
$$
for any $x,y,z$ and the bound \fref{poitwisebound} yields the pointwise estimate
\be\label{eq:truepointwisenonlinear}
\left| (\Phi_n+v^{(1)})^p -(\Phi_n+v^{(2)})^p-p\Phi_n^{p-1}\triangle v \right|\lesssim e^{-c\mu s} |\triangle v|,
\ee
which implies
\be \label{lip:pointwise nonlinear}
\left| \left( (\Phi_n+v^{(1)})^p -(\Phi_n+v^{(2)})^p-p\Phi_n^{p-1}\triangle v ,\psi_1 \right)_\rho\right|\lesssim e^{-c\mu s}  \left(\| \triangle \e \|_{L^2_\rho}+\sum_{j=2}^{n+1}|\triangle a_j|\right).
\ee
The collection of the above bounds, when plugged in \fref{lip:modulation expression}, yields
$$
\left| \frac{d}{ds}\left[\text{log}\left( \frac{\lambda^{(1)}}{\lambda^{(2)}} \right) \right] \right| \lesssim e^{-\mu s} \left| \frac{x_s^{(1)}}{\lambda^{(1)}}-\frac{x_s^{(2)}}{\lambda^{(2)}} \right| +e^{-c\mu s} \left(\| \triangle \e \|_{L^2_\rho}+\sum_{j=2}^{n+1}|\triangle a_j|\right).
$$
With the same techniques, taking the scalar product of \fref{lip:evolution} with $\partial^k \Phi_n$, $k=1,2,3$ implies
$$
\left| \frac{x_s^{(1)}}{\lambda^{(1)}}-\frac{x_s^{(2)}}{\lambda^{(2)}} \right| \lesssim e^{-\mu s} \left| \frac{d}{ds}\left[\text{log}\left( \frac{\lambda^{(1)}}{\lambda^{(2)}} \right) \right] \right|  +e^{-c\mu s} \left(\| \triangle \e \|_{L^2_\rho}+\sum_{j=2}^{n+1}|\triangle a_j|\right).
$$
The two above equations, when put together, imply the estimate
$$
\left| \frac{d}{ds}\left[\text{log}\left( \frac{\lambda^{(1)}}{\lambda^{(2)}} \right) \right] \right|+\left| \frac{x_s^{(1)}}{\lambda^{(1)}}-\frac{x_s^{(2)}}{\lambda^{(2)}} \right|\lesssim e^{-c\mu s} \left(\| \triangle \e \|_{L^2_\rho}+\sum_{j=2}^{n+1}|\triangle a_j|\right).
$$
The corresponding estimate for $|\triangle a_{j,s}+\mu_j \triangle a_j|$ follows along the same lines, and therefore \fref{lip:modulation} is proven.\\

\noindent \textbf{step 2} Localized energy estimate. We claim the differential bound
\be \label{lip:energie}
\frac{d}{ds} \| \triangle \e \|_{L^2_\rho}^2+c_n \| \triangle \e \|_{L^2_\rho}^2\lesssim e^{-c\mu s} \sum_{j=2}^{n+1}|\triangle a_j|^2
\ee
which we now prove. From the evolution equation \fref{lip:evolution} and the orthogonality conditions \fref{orhotpsin} one obtains first the identity
\be \label{lip:energie expression}
\begin{array}{r c l}
\frac{d}{ds} \frac 12 \| \triangle \e \|^2_{L^2_\rho}  &=& -(\mathcal L_n \triangle \e, \triangle \e)_\rho+\frac{d}{ds}\left[\text{log}\left( \frac{\lambda^{(1)}}{\lambda^{(2)}} \right) \right]( \Lambda v^{(2)},\triangle \e)_\rho \\
&&+\left(\left(\frac{x_s^{(1)}}{\lambda^{(1)}}-\frac{x^{(2)}_s}{\lambda^{(2)}} \right).\nabla v^{(2)},\triangle \e \right)_\rho  +\left(\frac{\lambda_s^{(1)}}{\lambda^{(1)}}+1 \right)(\Lambda \triangle v,\triangle \e)_\rho \\
&& +\left(\frac{x_s^{(1)}}{\lambda^{(1)}}.\nabla \triangle v,\triangle \e \right)_\rho+\left( (\Phi_n+v^{(1)})^p -(\Phi_n+v^{(2)})^p-p\Phi_n^{p-1}\triangle v,\triangle \e \right)_\rho 
\end{array}
\ee
and we now estimate each term. The spectral gap \fref{coerciviteapoids} and \fref{orhotpsin} imply
$$
-(\mathcal L_n \triangle \e,\triangle \e)_\rho\leq -c_n \| \triangle \e \|_{L^2_\rho}^2.
$$
The modulation estimates \fref{lip:modulation} of step 1 and Cauchy-Schwarz imply
\begin{eqnarray*}
&& \left| \frac{d}{ds}\left[\text{log}\left( \frac{\lambda^{(1)}}{\lambda^{(2)}} \right) \right]( \Lambda v^{(2)},\triangle \e)_\rho+ \left(\left(\frac{x_s^{(1)}}{\lambda^{(1)}}-\frac{x^{(2)}_s}{\lambda^{(2)}} \right).\nabla v^{(2)},\triangle \e \right)_\rho \right| \\
&\lesssim &  \left( \left| \frac{d}{ds} \text{log}\left( \frac{\lambda^{(1)}}{\lambda^{(2)}} \right) \right| \| \Lambda v^{(2)} \|_{L^2_\rho}+\left|\frac{x_s^{(1)}}{\lambda^{(1)}}-\frac{x^{(2)}_s}{\lambda^{(2)}} \right| \|\nabla v^{(2)} \|_{L^2_\rho}\right) \|\triangle \e \|_{L^2_\rho} \\
&\lesssim &  \| v^{(2)} \|_{H^2_\rho} e^{-c\mu s} \left(\| \triangle \e \|_{L^2_\rho}+\sum_{j=2}^{n+1}|\triangle a_j|\right) \|\triangle \e \|_{L^2_\rho} \\
&\lesssim &  e^{-(1+c)\mu s} \left(\| \triangle \e \|_{L^2_\rho}^2+\sum_{j=2}^{n+1}|\triangle a_j|^2\right)
\end{eqnarray*}
where we used \fref{weightedesimate}, \fref{controlunstable} and \fref{poitwiseboundhtwo} to control $v^{(2)}$. Using the modulation estimate \fref{modulationequationbound}, with \fref{controlunstable}, \fref{poitwiseboundhtwo} and \fref{controlsobolev} for $u^{(1)}$, integrating by parts and applying Cauchy-Schwarz and \fref{weightedesimate} yields
\begin{eqnarray*}
&& \left| \left(\frac{\lambda^{(1)}_s}{\lambda^{(1)}}+1 \right)(\Lambda \triangle v,\triangle \e)_\rho+\left(\frac{x_s^{(1)}}{\lambda^{(1)}}.\nabla \triangle v,\triangle \e \right)_\rho \right| \\
&\lesssim & \left( \left| \frac{\lambda^{(1)}_s}{\lambda^{(1)}}+1 \right|+\left| \frac{x_s^{(1)}}{\lambda^{(1)}}\right| \right)\left(|(\Lambda \triangle \psi,\triangle \e)_\rho| +|(\Lambda \triangle \e,\triangle \e)_\rho| +|(\nabla \triangle \psi,\triangle \e)_\rho| +|(\nabla \triangle \e,\triangle \e)_\rho| \right)\\
&\lesssim & e^{-2\mu s} \left(\sum_{j=2}^{n+1}|\triangle a_j|^2+\| \triangle \e\|_{H^1_\rho}^2 \right).
\end{eqnarray*}
Finally, the pointwise estimate \fref{eq:truepointwisenonlinear} and Cauchy-Schwarz imply for the nonlinear term
$$
\left|\left( (\Phi_n+v^{(1)})^p -(\Phi_n+v^{(2)})^p-p\Phi_n^{p-1}\triangle v,\triangle \e \right)_\rho\right| \lesssim e^{-c\mu s}\left(\| \triangle \e \|_{L^2_\rho}^2+\sum_{j=2}^{n+1}|\triangle a_j|^2\right).
$$
We inject all the above bounds in the identity \fref{lip:energie expression}, which for $s_0$ large enough imply the desired estimate \fref{lip:energie} since $0<c\leq 1$.\\

\textbf{step 3} Lipschitz bound by reintegration. We define
\be \label{lip:normes bootstrap}
A:= \underset{s\geq s_0}{\text{sup}} \ \sum_{j=2}^{n+1} |\triangle a_j(s)|e^{\mu s}<+\infty, \ \ \mathcal E:= \underset{s\geq s_0}{\text{sup}} \ \|\Delta\e\|_{L^2_\rho}^2e^{2\mu s}<+\infty,
\ee
which are finite from \fref{controlunstable} and \fref{poitwiseboundhtwo}.

\noindent \emph{Identity for $\triangle a_j$}. Fix $j$ with $2\leq j\leq n+1$. Reintegrating the modulation equation \fref{lip:modulation} yields
\begin{eqnarray}\label{lip:identite triangleaj}
\nonumber\triangle a_j & = & \triangle a_j(0)e^{\mu_j(s-s_0)}+e^{\mu_j s}\int_{s_0}^s e^{-\mu_js'}O(e^{-c\mu s'}(\| \Delta\e \|_{L^2_\rho}+\sum_{j=2}^{n+1}|\triangle a_j|))ds'\\
\nonumber&=& \triangle a_j(0)e^{\mu_j(s-s_0)}+e^{\mu_j s}\int_{s_0}^s O(e^{-(\mu_j+(c+1)\mu) s'}(A+\sqrt \mathcal E))ds' \\
\nonumber&=& \left(\triangle a_j(0)e^{-\mu_j s_0}+\int_{s_0}^{+\infty} O(e^{-(\mu_j+(c+1)\mu) s'}(A+\sqrt \mathcal E))ds'\right)e^{\mu_j s}\\
&&-e^{\mu_j s} \int_{s}^{+\infty} O(e^{-(\mu_j+(c+1)\mu) s'}(A+\sqrt \mathcal E))ds' .
\end{eqnarray}
The integral appearing in this identity is indeed convergent and satisfies:
$$
\left| \int_{s}^{+\infty} O(e^{-(\mu_j+(c+1)\mu) s'}(A+\sqrt \mathcal E))ds' \right|\lesssim e^{-(\mu_j+(c+1)\mu)s}(A+\sqrt \mathcal E).
$$
From \fref{lip:normes bootstrap} one gets $|\triangle a_j|\lesssim e^{-\mu s}$ and from the two above identities one necessarily must have that the parameter in front of the diverging term $e^{\mu_j s}$ is $0$:
$$
\triangle a_j(0)e^{-\mu_j s_0}+\int_{s_0}^{+\infty} O(e^{-(\mu_j+(c+1)\mu) s'}(A+\sqrt \mathcal E))ds'=0
$$
which gives the first bound
\be \label{lip:borne triangle aj0}
|\triangle a_j(0)|\lesssim e^{-(c+1)\mu s_0}(A+\sqrt \mathcal E),
\ee
and going back to the identity \fref{lip:identite triangleaj} one obtains:
$$
|\triangle a_j| \lesssim e^{-((c+1)\mu)s}(A+\sqrt \mathcal E)
$$
which implies from the definition \fref{lip:normes bootstrap} of $A$ the bound
\be \label{lip:borne intermedaire}
A\lesssim e^{-c\mu s_0}\sqrt \mathcal E .
\ee

\noindent \emph{Identity for $\triangle \e$}. We reintegrate the energy bound \fref{lip:energie} to find
$$
\begin{array}{r c l}
\| \triangle \e \|_{L^2_\rho}^2 & \lesssim & \| \triangle \e (0)\|_{L^2_\rho}^2e^{-c_n(s-s_0)}+e^{-c_ns}\int_{s_0}^s e^{c_ns'} \sum_{j=2}^{n+1} |\triangle a_j|^2e^{-\mu cs'}ds' \\
&\lesssim &  \| \triangle \e (0)\|_{L^2_\rho}^2e^{-c_n(s-s_0)}+A^2e^{-(c+2)\mu s}
\end{array}
$$
since $\mu =\frac{c_n}{4}$ from \fref{defmu} and $0<c\ll 1$ can be chosen arbitrarily small. Injecting \fref{lip:borne intermedaire} in the above identity yields
$$
\mathcal E \lesssim \| \triangle \e (0)\|_{L^2_\rho}^2e^{2\mu s_0}
$$
so that \fref{lip:borne intermedaire} can be rewritten as $A\lesssim \| \triangle \e (0)\|_{L^2_\rho}e^{(1-c)\mu s_0}$. We inject these two last bounds in \fref{lip:borne triangle aj0} which finally yields the desired estimate \fref{lip:lipschitz}.
\end{proof}


\begin{appendix}



\section{Coercivity estimates}
\label{appendcoerc}

\begin{lemma}[Weighted $L^2$ estimate]\label{L:HARMONICOSCILLATOR}
Let $u,\pa_r u\in L^2_{\rho}(\RR^3)$, then \be
\label{weightedesimate}
\|ru\|_{\rho}\lesssim\|u\|_{H^1_{\rho}}.
\ee
Moreover, 
\be
\label{estimationapoinds}
\|\Delta u\|_{L^2_\rho}^2\lesssim \|-\Delta u+y\cdot\nabla u\|_{L^2_{\rho}}^2+\|u\|_{H^1_\rho}^2.
\ee
\end{lemma}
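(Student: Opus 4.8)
The plan is to establish both inequalities first for $u\in\mathcal{C}^\infty_c(\RR^3)$, for which all integrations by parts below are legitimate since the Gaussian weight $\rho=e^{-|y|^2/2}$ annihilates every boundary term, and then pass to the general case by density of $\mathcal{C}^\infty_c$ in $H^1_\rho$ (resp.\ in $H^2_\rho$), together with Fatou's lemma applied to the nonnegative integrands $|y|^2u^2\rho$ and $|\Delta u|^2\rho$; the passage to the limit in the term $y\cdot\nabla u$ needed for \eqref{estimationapoinds} is itself handled by \eqref{weightedesimate} applied to $\nabla(u_n-u)$, and the inequality is trivial whenever its right-hand side is infinite.

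For \eqref{weightedesimate} I would start from $\int_{\RR^3}\div(yu^2\rho)\,dy=0$. Expanding the divergence, using $\div(y)=d$ and $\nabla\rho=-y\rho$, gives the exact identity
\[
\int |y|^2u^2\rho\,dy = d\int u^2\rho\,dy + 2\int u\,(y\cdot\nabla u)\,\rho\,dy ,\qquad d=3 .
\]
Bounding the last term by the pointwise Young inequality $2u(y\cdot\nabla u)\le \tfrac12|y|^2u^2+2|\nabla u|^2$ and absorbing $\tfrac12\int|y|^2u^2\rho$ into the left-hand side yields $\|ru\|_\rho^2\lesssim \|u\|_{L^2_\rho}^2+\|\nabla u\|_{L^2_\rho}^2$, which is \eqref{weightedesimate}.

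For \eqref{estimationapoinds} I would set $f:=-\Delta u+y\cdot\nabla u$ and expand $\|f\|_{L^2_\rho}^2=\|\Delta u\|_{L^2_\rho}^2+\|y\cdot\nabla u\|_{L^2_\rho}^2-2I$ with $I:=\int \Delta u\,(y\cdot\nabla u)\rho\,dy$. The cross term $I$ is evaluated by two integrations by parts: first move one derivative off $\Delta u$ and use $\nabla\rho=-y\rho$ together with $\nabla(y\cdot\nabla u)=\nabla u+(y\cdot\nabla)\nabla u$; then use the pointwise identity $\nabla u\cdot(y\cdot\nabla)\nabla u=\tfrac12\,(y\cdot\nabla)|\nabla u|^2$ and integrate by parts once more against $\rho$. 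For $d=3$ this gives $I=\tfrac12\int|\nabla u|^2\rho-\tfrac12\int|y|^2|\nabla u|^2\rho+\int(y\cdot\nabla u)^2\rho$, and substituting back produces the clean identity
\[
\|\Delta u\|_{L^2_\rho}^2=\|f\|_{L^2_\rho}^2+\int|\nabla u|^2\rho\,dy-\Big(\int|y|^2|\nabla u|^2\rho\,dy-\int(y\cdot\nabla u)^2\rho\,dy\Big).
\]
The parenthesized quantity is nonnegative by the pointwise Cauchy--Schwarz inequality $(y\cdot\nabla u)^2\le|y|^2|\nabla u|^2$, so it may simply be discarded, giving $\|\Delta u\|_{L^2_\rho}^2\le\|f\|_{L^2_\rho}^2+\|\nabla u\|_{L^2_\rho}^2\le\|f\|_{L^2_\rho}^2+\|u\|_{H^1_\rho}^2$, i.e.\ \eqref{estimationapoinds}.

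The only delicate point — and the reason the estimate holds with no extra hypothesis — is the bookkeeping of the signs and constants in the computation of $I$: the integrations by parts a priori generate the second-moment term $\int|y|^2|\nabla u|^2\rho$, which is \emph{not} controlled by the right-hand side, but in dimension $d=3$ it enters with the favorable sign and is moreover dominated (up to sign) by $\int(y\cdot\nabla u)^2\rho$, so that it genuinely cancels rather than merely being absorbed and never has to be estimated. Checking that all constants line up for this exact cancellation is the main — and essentially the only — obstacle in the argument.
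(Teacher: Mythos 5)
Your proof is correct and follows essentially the same route as the paper: for \eqref{weightedesimate} your divergence identity plus Young's inequality is the same integration-by-parts computation the paper packages as the expansion of $\int(\pa_r u-\tfrac12 ru)^2\rho r^2dr\ge 0$, and for \eqref{estimationapoinds} your direct double integration by parts for the cross term $\int \Delta u\,(y\cdot\nabla u)\rho$ yields exactly the paper's identity $\|-\Delta u+y\cdot\nabla u\|_{L^2_\rho}^2=\|\Delta u\|_{L^2_\rho}^2+\int\rho\bigl(|y|^2|\nabla u|^2-|y\cdot\nabla u|^2\bigr)-\int\rho|\nabla u|^2$, which the paper obtains via a scaling-derivative trick, after which both arguments discard the nonnegative middle term. (Only your closing remark is slightly off: $\int|y|^2|\nabla u|^2\rho$ dominates $\int(y\cdot\nabla u)^2\rho$, not the reverse, and nothing about $d=3$ is special here beyond the harmless coefficient $\tfrac d2-1$ of $\int|\nabla u|^2\rho$.)
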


\begin{proof}
We may assume by density $u\in \mathcal D(\Bbb R^3)$.\\

\noindent{\bf step 1} Proof of \eqref{weightedesimate}. We use $\pa_r\rho=-r\rho$ and integrate by parts to compute:
\bee
&& \int_0^{+\infty}\left(\pa_ru-\frac{1}{2} ru\right)^2\rho r^2dr\\
 &=&  \int_0^{+\infty} (\pa_ru)^2\rho r^2dr+\frac{1}{4}\int_0^{+\infty} r^{2}u^2\rho r^2dr - \int_0^{+\infty} ru\pa_r u\rho r^2 dr\\
& = & \int_0^{+\infty} (\pa_ru)^2\rho r^2dr+\frac{1}{4}\int_0^{+\infty} r^{2}u^2\rho r^2dr-\frac{1}{2}\left[r^3\rho u^2\right]_0^{+\infty}\\
&& +\frac{1}{2}\int_0^{+\infty} u^2(3-r^2)\rho r^2dr\\
& = & \int_0^{+\infty} (\pa_ru)^2\rho r^2dr-\frac{1}{4}\int_0^{+\infty} r^{2}u^2\rho r^2dr +\frac{3}{2}\int_0^{+\infty} u^2\rho r^2dr
\eee
and hence
\bee
\|ru\|_{L^2_\rho}^2=\int_0^{+\infty} r^{2}u^2\rho r^2dr \leq  4\int_0^{+\infty} (\pa_ru)^2\rho r^2dr +6\int_0^{+\infty} u^2\rho r^2dr\lesssim \|u\|^2_{H^1_\rho}
\eee 
which concludes the proof of \eqref{weightedesimate}.\\

\noindent{\bf step 2}. Proof of \eqref{estimationapoinds}. We compute:
$$\|-\Delta u +y\cdot\nabla u\|_{L^2_\rho}^2=\|\Delta u\|^2_{L^2_\rho}+\|y\cdot\nabla u\|_{L^2_\rho}^2-2\int(\Delta u) y\cdot\nabla u\rho dy.$$
To compute the crossed term, let $u_\l(y)=u(\l y),$ then $$\int|\nabla u_\l(y)|^2\rho dy=\frac{1}{\l}\int|\nabla u(y)|^2\rho\left(\frac{y}{\l}\right)dy$$ and hence differentiating in $\l$ and evaluating at $\l=1$:
$$2\int \nabla u\cdot\nabla(y\cdot\nabla u)\rho dy=\int|\nabla u|^2(-\rho-y\cdot\nabla \rho)dy$$
i.e. 
$$2\int y\cdot\nabla u(\rho\Delta u+\nabla u\cdot\nabla \rho)=\int|\nabla u|^2(\rho +y\cdot\nabla \rho)dy$$
which using $\nabla \rho=-y\rho$ becomes:
$$-2\int(\Delta u) y\cdot\nabla u\rho dy=\int|\nabla u|^2\rho |y|^2-2\int|y\cdot\nabla u|^2\rho-\int\rho |\nabla u|^2.$$ Hence:
\bee
\|-\Delta u +y\cdot\nabla u\|_{L^2_\rho}^2&=&\|\Delta u\|^2_{L^2_\rho}+\int \rho(|y|^2|\nabla u|^2-|y\cdot\nabla u|^2)-\int\rho |\nabla u|^2\\
&\geq & \|\Delta u\|^2_{L^2_\rho}-\|\nabla u\|_{L^2_\rho}^2
\eee
which concludes the proof of \eqref{estimationapoinds}.
\end{proof}

We now turn to the proof of Hardy type inequalities. All proofs are more or less standard and we give the argument for the sake of completeness. 

\begin{lemma}[Radial Hardy with best constants]
\label{ahrdyvevoev}
Let $u\in \mathcal C^\infty_c(r>1)$ and 
\be
\label{nondegene}
\gamma\neq -1,
\ee
then 
\be
\label{hardypoidsbis}
\int_1^{+\infty}\frac{(\pa_ru)^2}{r^{\gamma}}dr\geq \left(\frac{\gamma+1}{2}\right)^2\int_1^{+\infty}\frac{u^2}{r^{\gamma+2}}dr.
\ee
\end{lemma}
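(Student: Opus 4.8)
\textbf{Proof plan for Lemma \ref{ahrdyvevoev}.}

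The plan is to prove the weighted one–dimensional Hardy inequality \eqref{hardypoidsbis} by a standard integration-by-parts / completion-of-squares argument, treating $u$ as a smooth compactly supported function on $(1,+\infty)$. First I would introduce the parameter $\alpha:=\frac{\gamma+1}{2}$, which is well defined and nonzero precisely because of the nondegeneracy hypothesis \eqref{nondegene}, and consider the manifestly nonnegative quantity
\be
0\le \int_1^{+\infty}\frac{1}{r^{\gamma}}\left(\pa_ru+\frac{\alpha}{r}u\right)^2 dr.
\ee
Expanding the square gives three terms: $\int r^{-\gamma}(\pa_ru)^2$, the cross term $2\alpha\int r^{-\gamma-1}u\,\pa_ru$, and $\alpha^2\int r^{-\gamma-2}u^2$.

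The key step is to integrate the cross term by parts. Since $u\,\pa_ru=\frac12\pa_r(u^2)$ and $u$ has compact support in $(1,+\infty)$, there is no boundary contribution, and one finds
\be
2\alpha\int_1^{+\infty}\frac{u\,\pa_ru}{r^{\gamma+1}}dr
= \alpha\int_1^{+\infty}\frac{\pa_r(u^2)}{r^{\gamma+1}}dr
= \alpha(\gamma+1)\int_1^{+\infty}\frac{u^2}{r^{\gamma+2}}dr
= 2\alpha^2\int_1^{+\infty}\frac{u^2}{r^{\gamma+2}}dr.
\ee
Plugging this into the expansion yields
\be
0\le \int_1^{+\infty}\frac{(\pa_ru)^2}{r^{\gamma}}dr + 2\alpha^2\int_1^{+\infty}\frac{u^2}{r^{\gamma+2}}dr + \alpha^2\int_1^{+\infty}\frac{u^2}{r^{\gamma+2}}dr,
\ee
which is not quite the right sign; so instead I would use the square $\left(\pa_ru-\frac{\alpha}{r}u\right)^2$, for which the cross term is $-2\alpha\int r^{-\gamma-1}u\,\pa_ru=-2\alpha^2\int r^{-\gamma-2}u^2$, giving
\be
0\le \int_1^{+\infty}\frac{(\pa_ru)^2}{r^{\gamma}}dr - 2\alpha^2\int_1^{+\infty}\frac{u^2}{r^{\gamma+2}}dr + \alpha^2\int_1^{+\infty}\frac{u^2}{r^{\gamma+2}}dr
= \int_1^{+\infty}\frac{(\pa_ru)^2}{r^{\gamma}}dr - \alpha^2\int_1^{+\infty}\frac{u^2}{r^{\gamma+2}}dr,
\ee
which rearranges exactly to \eqref{hardypoidsbis} with $\alpha^2=\left(\frac{\gamma+1}{2}\right)^2$. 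Finally I would remark that the general case $u\in\mathcal C^\infty_c(r>1)$ follows from the smooth compactly supported case by density, and that the hypothesis $\gamma\neq-1$ is exactly what guarantees the constant $\left(\frac{\gamma+1}{2}\right)^2$ is positive (and the integration by parts produces a nontrivial identity).

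I do not expect any genuine obstacle here; the only point requiring a little care is bookkeeping the sign in the cross term — choosing the correct sign $\left(\pa_ru-\frac{\alpha}{r}u\right)$ in the square — and checking that the compact support of $u$ in the open half-line kills all boundary terms at both $r=1$ and $r=+\infty$. The nondegeneracy condition $\gamma\neq-1$ enters only to exclude the trivial identity $0\le\int r^{-\gamma}(\pa_ru)^2$, so the statement as written is valid for all $\gamma\neq-1$ with the displayed sharp constant.
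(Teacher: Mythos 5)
Your proof is correct and takes essentially the same route as the paper: the paper integrates $\int_1^{+\infty}u^2r^{-\gamma-2}dr=\frac{2}{\gamma+1}\int_1^{+\infty}u\,\pa_ru\,r^{-\gamma-1}dr$ by parts and applies Cauchy--Schwarz, which is just the packaged form of your completion of the square with $\alpha=\frac{\gamma+1}{2}$. Two cosmetic remarks: your argument does not actually need $\gamma\neq-1$ (the inequality merely becomes trivial there, whereas the paper's division by $\gamma+1$ does), and the closing density remark is redundant since $u\in\mathcal C^\infty_c(r>1)$ is already the class treated.
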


\begin{proof} We integrate by parts:
\bee
\int_1^{+\infty}\frac{u^2}{r^{\gamma+2}}dr=\frac{2}{\gamma+1}\int_1^{+\infty}\frac{u\pa_ru}{r^{\gamma+1}}dr\leq \frac{2}{|\gamma+1|}\left(\int_1^{+\infty}\frac{u^2}{r^{\gamma+2}}dr\right)^{\frac 12}\left(\int_1^{+\infty}\frac{(\pa_ru)^2}{r^{\gamma}}dr\right)^{\frac 12}
\eee
and \eqref{hardypoidsbis} follows.
\end{proof}

\begin{lemma}[Global Hardy for $\Delta$]
Then there exists $c>0$ such that $\forall u\in C^\infty_c(|x|> 1)$, 
\be
\label{estpoidsglobal}
\int|\Delta u|^2dx\geq c\int\left( \frac{|\nabla u|^2}{|x|^{2}}+\frac{|u|^2}{|x|^{4}}\right)dx.
\ee
\end{lemma}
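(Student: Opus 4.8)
The estimate \eqref{estpoidsglobal} is a global weighted Hardy inequality for the bilaplacian on the exterior domain $\{|x|>1\}$, and the natural route is to split it into two pieces: first control the zeroth-order weight $\int |u|^2/|x|^4$ by $\int |\nabla u|^2/|x|^2$, and then control $\int |\nabla u|^2/|x|^2$ by $\int |\Delta u|^2$. The first reduction is the easy one: in dimension $d=3$ the radial Hardy inequality \eqref{hardypoidsbis} with $\gamma = 2$ (which is admissible since $\gamma=2\neq -1$, so \eqref{nondegene} holds) gives, after passing to radial coordinates for a fixed spherical variable and integrating over $\mathbb S^2$, the bound
$$\int \frac{|u|^2}{|x|^4}\,dx \lesssim \int \frac{|\partial_r u|^2}{|x|^2}\,dx \leq \int \frac{|\nabla u|^2}{|x|^2}\,dx.$$
So it suffices to prove $\int |\nabla u|^2/|x|^2 \lesssim \int |\Delta u|^2$ for $u\in \mathcal C^\infty_c(|x|>1)$.

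\textbf{Main step: $\int |\nabla u|^2/|x|^2 \lesssim \int |\Delta u|^2$.} The plan is to integrate by parts and play the gradient term against the Laplacian term. Writing $w = |x|^{-1}$ (or more robustly a cutoff-free weight $|x|^{-2}$ against $|\nabla u|^2$), I would compute
$$\int \frac{|\nabla u|^2}{|x|^2}\,dx = \int \frac{\nabla u \cdot \nabla u}{|x|^2}\,dx = -\int u\,\mathrm{div}\!\left(\frac{\nabla u}{|x|^2}\right)dx = -\int \frac{u\,\Delta u}{|x|^2}\,dx - \int u\,\nabla u\cdot \nabla\!\left(\frac{1}{|x|^2}\right)dx.$$
The first term on the right is handled by Cauchy--Schwarz and Young: $|\int u\Delta u/|x|^2| \le \delta \int |\Delta u|^2 + C_\delta \int |u|^2/|x|^4$, and $\int |u|^2/|x|^4$ is reabsorbed using the first reduction above (which bounds it by $\int |\nabla u|^2/|x|^2$, with a constant strictly less than $1$ after the $\delta$ is chosen, since in $d=3$ the constant in \eqref{hardypoidsbis} with $\gamma=2$ is $(3/2)^2\cdot$, wait — one must track the constants carefully here). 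The second term is $-\int u\,\nabla u\cdot \nabla(|x|^{-2})\,dx = \int \frac{x\cdot\nabla u}{|x|^4}\,2u\,dx$ up to signs, which after a further integration by parts becomes an expression involving only $\int |u|^2/|x|^4$ again, hence reabsorbable. Collecting terms and absorbing, one gets $\int |\nabla u|^2/|x|^2 \le C\int |\Delta u|^2$, and combining with the first reduction yields \eqref{estpoidsglobal}.

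\textbf{Where the difficulty lies.} The only genuinely delicate point is bookkeeping of constants: the reabsorption argument closes only if the composite constant obtained by chaining ``$\int |u|^2/|x|^4 \lesssim \int|\nabla u|^2/|x|^2$'' into the Young inequality on the cross term is strictly less than $1$. In $d=3$ this works because the Hardy constant is favourable and one has the freedom to choose $\delta$ small; the cleanest way to make this transparent is to do the computation directly on radial profiles (expanding in spherical harmonics and reducing to one-dimensional weighted integrals of $\partial_r u$), where \eqref{hardypoidsbis} gives sharp constants and the integration-by-parts identities produce only boundary terms that vanish by the support condition $u\in\mathcal C^\infty_c(|x|>1)$ — in particular there is no boundary contribution at $r=1$ or $r=+\infty$. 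I would therefore write the argument in polar coordinates, reduce everything to the inequalities of Lemma \ref{ahrdyvevoev}, and conclude by density.
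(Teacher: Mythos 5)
Your main step does not close, and it fails exactly at the constant bookkeeping you postponed. In $d=3$ your integration by parts gives the \emph{identity}
$\int \frac{|\nabla u|^2}{|x|^2}\,dx=-\int \frac{u\,\Delta u}{|x|^2}\,dx+\int \frac{u^2}{|x|^4}\,dx$,
because $-\int u\,\nabla u\cdot\nabla(|x|^{-2})\,dx=-\int u^2\,{\rm div}\,(x|x|^{-4})\,dx$ and ${\rm div}\,(x|x|^{-4})=(d-4)|x|^{-4}=-|x|^{-4}$: the zeroth-order term enters with a plus sign and coefficient exactly $1$, independently of any Young parameter $\delta$ (only the cross term $\int u\Delta u/|x|^2$ sees $\delta$). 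To reabsorb it you would need the exterior Hardy inequality $\int \frac{u^2}{|x|^4}\leq \theta \int \frac{|\nabla u|^2}{|x|^2}$ with $\theta<1$; but on $\{|x|>1\}$ in $d=3$ the sharp constant is $\theta=4$ (apply \eqref{hardypoidsbis} with $\gamma=0$ after inserting the volume element $r^2dr$ --- note in passing that your first reduction should also use $\gamma=0$, not $\gamma=2$ --- and the value $4$ is saturated by radial functions supported far from the boundary). Hence the chained constant is at least $4>1$ and no choice of $\delta$ helps: you get $X\leq 4X+2\sqrt{DX}$, which is vacuous. Your claim that ``in $d=3$ the Hardy constant is favourable'' is the opposite of the truth: the one-step multiplier argument you describe proves a Rellich-type bound only when the drift term has a good sign or the Hardy constant is below $1$, i.e.\ when $|d-4|>2$, and $d=3$ is precisely a bad dimension, the radial sector being the obstruction.

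The paper's proof uses a different mechanism, which your closing sentence gestures at but does not supply. One decomposes into spherical harmonics and, for each mode $m$, factorizes the radial operator as $\Delta_m=-A_m^*A_m$ with $A_m=-\partial_r+\frac{m}{r}$; then one applies the sharp one-dimensional Hardy inequality \eqref{hardypoidsbis} \emph{twice}, first with weight $r^{-(2+2m)}$ to $r^{2+m}A_mv$ and then with weight $r^{2m}$ to $r^{-m}v$, which is legitimate since $\gamma_m=-m$ never hits the forbidden value \eqref{nondegene}. This yields $\int_1^{+\infty}(\Delta_m v)^2r^2dr\geq \left(\frac{2m+3}{2}\right)^2\left(\frac{2m-1}{2}\right)^2\int_1^{+\infty}\frac{v^2}{r^2}dr$, a constant bounded below by $\frac{9}{16}$ and growing like $m^4$; the intermediate bound on $\int|A_mv|^2dr$ then recovers $\int\frac{(\partial_rv)^2}{r^2}r^2dr$, and the $m^4$ growth is exactly what is needed to control the angular part $m(m+1)\int\frac{|u_{m,k}|^2}{r^4}r^2dr$ of $\int\frac{|\nabla u|^2}{|x|^2}$ when summing over modes, giving \eqref{keyestimate} and hence \eqref{estpoidsglobal}. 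Without this two-step factorization (or some substitute treating at least the low modes separately), reducing to one-dimensional weighted integrals does not by itself prove anything: in $1$D the single integration by parts runs into the same constant obstruction for the radial mode.
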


\begin{proof} We decompose $u$ in spherical harmonics and consider $$\Delta_{m}u_m=\pa^2_ru_m+\frac2 r\pa_ru_m-\frac{m(m+1)}{r^2}, \ \ m\in \Bbb N.$$ We claim that for all $v\in \mathcal C^{\infty}_c((1,+\infty))$, 
\be
\label{keyestimate}
\int_1^{+\infty}|\Delta_m v|^2r^2dr\geq c\int_1^{+\infty}\left(\frac{|\pa_rv|^2}{r^{2}}+\frac{(1+m^4)|v|^2}{r^{4}}\right)r^2dr
\ee
with $c$ independent of $m$. Assume \eqref{keyestimate}, then 
$$\int\frac{ |\nabla u|^2}{r^{2}}dx\sim  \sum_{m\geq 0}\sum_{k=-m}^m\int \left(\frac{|\pa_ru_{m,k}|^2}{r^{2}}+\frac{m^2|u_{m,k}|^2}{r^{4}}\right)r^2dr$$ 
and hence summing \eqref{keyestimate} ensures \eqref{estpoidsglobal}.\\
To prove \eqref{keyestimate}, we factorize the Laplace operator:
$$\Delta_m=-A^*_mA_m \ \ \mbox{with}\ \ \left|\begin{array}{ll} A_m=-\pa_r-\frac{\gamma_m}{r}=-\frac{1}{r^{\gamma_m}}\pa_r(r^{\gamma_m}),\ \ \gamma_m=-m,\\
A^*_m=\pa_r+\frac{2-\gamma_m}{r}\pa_r=\frac{1}{r^{2-\gamma_m}}\pa_r(r^{2-\gamma_m}).\end{array}\right.$$
Hence from \eqref{hardypoidsbis}:
\bee
&&\int_1^{+\infty}(\Delta_mv)^2r^2dr= \int_1^{+\infty}(A^*_mA_mv)^2r^2dr=\int_1^{+\infty}\frac{1}{r^{2-2\gamma_m}}(\pa_r(r^{2-\gamma_m}A_mv))^2dr\\
& \geq & \left(\frac{2-2\gamma_m+1}{2}\right)^2\int_1^{+\infty}(A_mv)^2dr=\left(\frac{2-2\gamma_m+1}{2}\right)^2\int_1^{+\infty}\frac{1}{r^{2\gamma_m}}(\pa_r(r^{\gamma_m}v))^2dr\\
& \geq & \left(\frac{2-2\gamma_m+1}{2}\right)^2\left(\frac{2\gamma_m+1}{2}\right)^2\int_1^{+\infty}\frac{v^2}{r^2}dr
\eee
since $\gamma_m=-m$ with $m\in \Bbb N$ which ensures that the forbidden value \eqref{nondegene} is never attained. We conclude that for some universal constant $\delta>0$ independent of $m$:
$$\int_1^{+\infty}(\Delta_mv)^2r^2dr\geq \delta (1+m^4)\int_1^{+\infty}\frac{v^2}{r^4}r^2dr.$$ 
Also, since we have also proved that 
$$\int_1^{+\infty}|A_mv|^2dr \lesssim \int_1^{+\infty}(\Delta_mv)^2r^2dr,$$
we infer
\bee
\int_1^{+\infty}\frac{(\pa_rv)^2}{r^{2}}r^2dr &\lesssim& \int_1^{+\infty}|A_mv|^2dr + \gamma_m^2\int \frac{v^2}{r^4}r^2dr\\
&\lesssim & \int_1^{+\infty}(\Delta_mv)^2 r^2dr
\eee
and \eqref{keyestimate} follows.
\end{proof}


\section{Proof of \eqref{noneoneoneevoe}}
\label{sec:appendixcommutatorestimate}


Let $$0<\nu<1, \ \ 1<p_1,p_2, p_3, p_4<+\infty,\ \ \frac{1}{2}=\frac1{p_1}+\frac 1{p_2}=\frac1{p_3}+\frac1{p_4}.$$ 
Using \eqref{eq:standarddefinitionbesovnorms}, we have 
\bee
\|\nabla^\nu(uv)\|_{L^2} &\sim& \|uv\|_{\dot{B}^\nu_{2, 2}}\\
&\sim & \left(\int_0^{+\infty}\left(\frac{\sup_{|y|\leq t}\|uv(\cdot-y)-uv(\cdot)\|_{L^{2}}}{t^\nu}\right)^2\frac{dt}t\right)^{\frac 12}\\
&\lesssim & \left(\int_0^{+\infty}\left(\frac{\sup_{|y|\leq t}\|u(\cdot-y)(v(\cdot-y)-v(\cdot))\|_{L^{2}}}{t^\nu}\right)^2\frac{dt}t\right)^{\frac 12}\\
&&+\left(\int_0^{+\infty}\left(\frac{\sup_{|y|\leq t}\|v(\cdot)(u(\cdot-y)-u(\cdot))\|_{L^{2}}}{t^\nu}\right)^2\frac{dt}t\right)^{\frac 12}\\
&\lesssim & \|u\|_{L^{p_4}}\left(\int_0^{+\infty}\left(\frac{\sup_{|y|\leq t}\|v(\cdot-y)-v(\cdot)\|_{L^{p_3}}}{t^\nu}\right)^2\frac{dt}t\right)^{\frac 12}\\
&&+\|v\|_{L^{p_2}}\left(\int_0^{+\infty}\left(\frac{\sup_{|y|\leq t}\|u(\cdot-y)-u(\cdot)\|_{L^{p_1}}}{t^\nu}\right)^2\frac{dt}t\right)^{\frac 12}\\
&\lesssim& \|u\|_{\dot{B}^\nu_{p_1, 2}}\|v\|_{L^{p_2}}+\|u\|_{L^{p_4}}\|v\|_{\dot{B}^\nu_{p_3, 2}}
\eee
which concludes the proof of \eqref{noneoneoneevoe}.


\section{Proof of Lemma \ref{lemma:asymptoticbehavioratinfinityforLmn}}
\label{appendixlemmaun}


The existence and uniqueness of $\phi_{n,m},\nu_m$ satisfying \eqref{cneoneoenove} and \fref{beahviorunum} is well known. Thus, we focus on their behaviour as $r\to +\infty$.\\

\noindent{\bf step 1} Inverting $\mathcal L_{m,\infty}$. Let $\ga_m$ be the solution to
$$\gamma^2_{m}-\gamma_{m}+pc_{\infty}^{p-1}-m(m+1)=0,$$
 the corresponding discriminant $\Delta_m$ is given by
\bea\label{eq:definitionofdiscriminantDeltam}
\Delta_m:=1-4pc_\infty^{p-1}+4m(m+1).
\eea
For $m=1$, 
\bea\label{eq:explicitcompuationofDelta1}
\Delta_1=\left(\frac{p+3}{p-1}\right)^2>0
\eea
and hence for all $m\geq 1$
$$\Delta_m\geq\Delta_1>0.$$
Therefore, $\gamma_m$ is real and we choose the smallest root\footnote{This is motivated by the fact that we obtain below the Kummer's equation with $b=-\ga_m+1/2$. This is equivalent to $-b=\pm \sqrt{\Delta_m}$. Since the Kummer function is not defined for $-b\in \mathbb{N}$, this justifies to consider the smallest root $\ga_m$.} so that $\ga_m$ is given by
$$\gamma_m=\frac{1 - \sqrt{\Delta_m}}{2}.$$ We now solve
$$\mathcal{L}_{\infty,m}(\psi)=0$$ through the change of variable and unknown
$$\psi(r)=\frac{1}{(2z)^{\frac{\ga_m}{2}}}w(z),\,\,\,\, z=\frac{r^2}{2}$$
which leads to
\bee
\mathcal{L}_{\infty, m}(\psi) &=& -\frac{2}{(2z)^{\frac{\ga}{2}}}\left(zw''(z)+\left(-\ga_m+\frac{3}{2}-z\right)w'(z)-\left(\frac{1}{p-1}-\frac{\ga_m}{2}\right)w(z)\right).
\eee
Thus, $\mathcal{L}_{\infty, m}(\psi)=0$ if and only if
$$
z\frac{d^2w}{dz^2}+(b-z)\frac{dw}{dz}-aw=0
$$
where we have used the notations
\bee
a=\frac{1}{p-1}-\frac{\ga_m}{2},\,\,\,\, b=-\ga_m+\frac{3}{2}.
\eee
Hence $w$ is a linear combination of two special functions, the Kummer's function $M(a,b,z)$ and the Tricomi function $U(a,b,z)$. These special functions have the following asymptotic behavior at infinity (see for example \cite{Handbookmathfunctions}):
\bee
M(a,b,z)\sim \frac{\Gamma(b)}{\Gamma(a)}z^{a-b}e^z,\,\,\,\, U(a,b,z)\sim z^{-a}\textrm{ as }z\to +\infty.
\eee
This allows us to infer the asymptotic for $w$ for $z\to 0_+$. Finally, since
$$\psi(r)=\frac{1}{r^{\ga_m}}w\left(\frac{r^2}{2}\right),$$
we infer from the asymptotic of $w$ the following asymptotic behavior for $\psi_{1,m}$ and $\psi_{2,m}$
$$\psi_{1,m}\sim \frac{1}{r^{\frac{2}{p-1}}}\textrm{ and }\psi_{2,m}\sim  r^{\frac{2}{p-1}-3}e^{\frac{r^2}{2}}\textrm{ as } r\to+\infty.$$

Consider the Wronskian $W$ which is defined as
\bee
W &:=& \psi_{1,m}'\psi_{2,m}-\psi_{2,m}'\psi_{1,m},
\eee
then without loss of generality since $W'=\left(r-\frac 2r \right)W$
$$W=\frac{1}{r^{2}}e^{\frac{r^2}{2}}.$$
We  deduce using the variation of constants that the solution $w$ to 
$$\mathcal{L}_{\infty,m}(u) = f,$$
is given by
\bee
u &=& \left(a_1+\int_r^{+\infty} f\psi_{2,m}{r'}^{2}e^{-\frac{{r'}^2}{2}}dr'\right)\psi_{1,m}+ \left(a_2-\int_r^{+\infty} f\psi_{1,m}{r'}^{2}e^{-\frac{{r'}^2}{2}}dr'\right)\psi_{2,m}.
\eee

\noindent{\bf step 2} Basis of $\mathcal L_{m,n}$ near $+\infty$. We now construct a solution to $\mathcal{L}_{n,m}(\varphi)=0$ near $+\infty$ by solving:
\bee
\mathcal{L}_{\infty, m}(\varphi) &=& \mathcal{L}_{n,m}(\varphi) +p(\Phi_n^{p-1}-\Phi_*^{p-1})= p(\Phi_n^{p-1}-\Phi_*^{p-1})\varphi
\eee
ie
\bee
\varphi &=& \left(a_1+\int_r^{+\infty} p(\Phi_n^{p-1}-\Phi_*^{p-1})\varphi\psi_{2,m}{r'}^{2}e^{-\frac{{r'}^2}{2}}dr'\right)\psi_{1,m}\\
&&+ \left(a_2-\int_r^{+\infty} p(\Phi_n^{p-1}-\Phi_*^{p-1})\varphi\psi_{1,m}{r'}^{2}e^{-\frac{{r'}^2}{2}}dr'\right)\psi_{2,m}.
\eee
To construct the solution $\varphi_1$ with the choice $a_1=1$ and $a_2=0$ we solve the fixed point equation
\bea\label{eq:easylinearfixedpointbis}
\varphi_1=\psi_{1,m}+\widetilde{\varphi}_1,\,\,\,\,\widetilde{\varphi}_1=\mathcal{G}\left(\widetilde{\varphi}_1\right)
\eea
where
\bee
\mathcal{G}\left(\widetilde{\varphi} \right)(r) &=& \left(\int_r^{+\infty}p(\Phi_n^{p-1}-\Phi_*^{p-1})\left(\psi_{1,m}+\widetilde{\varphi} \right)(r')\psi_{2,m}{r'}^2e^{-\frac{{r'}^2}{2}}dr'\right)\psi_{1,m}\\
&&-\left(\int_r^{+\infty}p(\Phi_n^{p-1}-\Phi_*^{p-1})\left(\psi_{1,m}+\widetilde{\varphi} \right)(r')\psi_{1,m}{r'}^2e^{-\frac{{r'}^2}{2}}dr'\right)\psi_{2,m}.
\eee
Recall that we have in view of Corollary \ref{cor:consequenceforPhinofproponexistence} 
$$\lim_{n\to +\infty}\sup_{r\geq 1}r^{\frac{2}{p-1}}|\Phi_n(r) - \Phi_*(r)| = 0.$$
Thus, for $n\geq N$ large enough, we infer
$$|\Phi_n(r) - \Phi_*(r)|\leq \frac{1}{r^{\frac{2}{p-1}}}\textrm{ for }r\geq 1.$$
so that
$$|p(\Phi_n^{p-1}-\Phi_*^{p-1})|\lesssim \frac{1}{r^2}.$$
We infer for $r\geq 1$
\bee
\left|\mathcal{G}\left(\widetilde{\varphi}\right)(r)\right| &\lesssim& \frac{1}{r^{\frac{2}{p-1}}}\left(\int_r^{+\infty}{r'}^{\frac{2}{p-1}-3}\left(\frac{1}{{r'}^{\frac{2}{p-1}}}+|\widetilde{\varphi}(r')|\right)dr'\right)\\
&+&r^{\frac{2}{p-1}-3}e^{\frac{r^2}{2}}\left(\int_r^{+\infty}\frac{1}{{r'}^{\frac{2}{p-1}}}e^{-\frac{{r'}^2}{2}}\left(\frac{1}{{r'}^{\frac{2}{p-1}}}+|\widetilde{\varphi}(r')|\right)dr'\right)\\
&\lesssim& \frac{1}{r^{2+\frac{2}{p-1}}} + \frac{1}{r^{\frac{2}{p-1}}}\left(\int_r^{+\infty}{r'}^{\frac{2}{p-1}-3}|\widetilde{\varphi}(r')|dr'\right)\\
& +&r^{\frac{2}{p-1}-3}e^{\frac{r^2}{2}}\left(\int_r^{+\infty}\frac{1}{{r'}^{\frac{2}{p-1}}}e^{-\frac{{r'}^2}{2}}|\widetilde{\varphi}(r')|dr'\right)
\eee
and
\bee
\left|\mathcal{G}\left(\widetilde{\varphi}_{(1)}\right)(r)-\mathcal{G}\left(\widetilde{\varphi}_{(2)}\right)(r)\right| &\lesssim&  \frac{1}{r^{\frac{2}{p-1}}}\left(\int_r^{+\infty}{r'}^{\frac{2}{p-1}-3}|\widetilde{\varphi}_{(1)}(r')-\widetilde{\varphi}_{(2)}(r')|dr'\right)\\
&+& r^{\frac{2}{p-1}-3}e^{\frac{r^2}{2}}\left(\int_r^{+\infty}\frac{1}{{r'}^{\frac{2}{p-1}}}e^{-\frac{{r'}^2}{2}}|\widetilde{\varphi}_{(1)}(r')-\widetilde{\varphi}_{(2)}(r')|dr'\right)
\eee
 Thus, for $R\geq 1$ large enough, the Banach fixed point theorem applies in the space corresponding to the norm
 $$\sup_{r\geq R}r^{1+\frac{2}{p-1}}\left|\widetilde{\varphi}\right|(r).$$
 Hence, there exists a unique solution $\widetilde{\varphi}_1$ to \eqref{eq:easylinearfixedpointbis} and 
 $$\sup_{r\geq R}r^{1+\frac{2}{p-1}}\left|\widetilde{\varphi}_1\right|(r) \lesssim 1.$$
Hence, $\varphi_1$ satisfies $\mathcal{L}_{n,m}(\varphi_1)=0$ and
$$\varphi_1\sim \frac{1}{r^{\frac{2}{p-1}}},\textrm{ as }r\to +\infty.$$
 The behaviour of the other solution at infinity is computed using the Wronskian relation
$$
W = \varphi_1'\varphi_2-\varphi_2'\varphi_1=-\frac{1}{r^{2}}e^{\frac{r^2}{2}}$$
and hence
\bee
\left(\frac{\varphi_2}{\varphi_1}\right)' =  -\frac{W}{\varphi_1^2}= \frac{1}{r^{2}\varphi_1^2}e^{\frac{r^2}{2}}
\eee
from which
$$\varphi_2(r)=\varphi_1(r)\int_1^r\frac{1}{{r'}^{2}\varphi_1^2(r')}e^{\frac{{r'}^2}{2}}dr'\sim r^{\frac{2}{p-1}-3}e^{\frac{r^2}{2}}\textrm{ as } r\to+\infty$$
and \eqref{beahvriour} is proved.\\

\noindent{\bf step 3} Behaviour of $\nu_m$ at $+\infty$. First, consider the solution $\varphi$ to
\bea\label{eq:equationasymptotictoHm}
-\pr^2_r\varphi - \frac{2}{r}\pr_r\varphi +\frac{m(m+1)}{r^2}-\frac{pc_\infty^{p-1}}{r^2}\varphi=f.
\eea
The homogeneous equation admits the basis of solutions
$$\varphi_+=\frac{1}{r^{\frac{1+\sqrt{\Delta_m}}{2}}},\,\,\,\, \varphi_-=\frac{1}{r^{\frac{1-\sqrt{\Delta_m}}{2}}}$$
and the corresponding Wronskian is given by
$$W(r)=\varphi_+'(r)\varphi_-(r)-\varphi_-'(r)\varphi_+(r)=-\frac{1}{r^2}.$$
Using the variation of constants, the solutions to \eqref{eq:equationasymptotictoHm} are given by
$$\varphi(r)=\left(a_1-\int_r^{+\infty}f\varphi_-{r'}^2dr'\right)\varphi_++\left(a_2+\int_r^{+\infty}f\varphi_+{r'}^2dr'\right)\varphi_-.$$
Now, the equation $H_m(\phi)=0$ can be written as
$$-\pr^2_r\phi - \frac{2}{r}\pr_r\phi +\frac{m(m+1)}{r^2}\phi-\frac{pc_\infty^{p-1}}{r^2}\phi = p\left(Q^{p-1}(r)-\frac{c_\infty^{p-1}}{r^2}\right)\phi(r),$$
i.e. \eqref{eq:equationasymptotictoHm} with 
$$f=p\left(Q^{p-1}(r)-\frac{c_\infty^{p-1}}{r^2}\right)\phi(r).$$
We construct the solution $\phi_{m,1}$ to $H_m(\phi_{m,1})=0$ with the choice $a_1=1$ and $a_2=0$ by solving the fixed point equation
\bea\label{eq:easylinearfixedpointter}
\phi_{m,1}=\varphi_++\widetilde{\phi},\,\,\,\,\widetilde{\phi}=\mathcal{F}\left(\widetilde{\phi}\right)
\eea
where
\bee
\mathcal{F}\left(\widetilde{\phi}\right)(r) &=& -\left(\int_r^{+\infty}p\left(Q^{p-1}(r')-\frac{c_\infty^{p-1}}{{r'}^2}\right)\left(\varphi_++\widetilde{\phi}\right)(r')\varphi_-{r'}^2dr'\right)\varphi_+\\
&&+\left(\int_r^{+\infty}p\left(Q^{p-1}(r')-\frac{c_\infty^{p-1}}{{r'}^2}\right)\left(\varphi_++\widetilde{\phi}\right)(r')\varphi_+{r'}^2dr'\right)\varphi_-.
\eee
Recall that
\bee
Q(r)=\frac{c_{\infty}}{r^{\frac{2}{p-1}}}+\frac{c_1\sin\left(\om\log(r)+c_2\right)}{r^{\frac{1}{2}}}+o\left(\frac{1}{r^{\frac{1}{2}}}\right)\textrm{ as }r\to +\infty
\eee
so that
\bee
\left|p\left(Q^{p-1}(r)-\frac{c_\infty^{p-1}}{{r}^2}\right)\right|\lesssim \frac{1}{r^{1+s_c}}\textrm{ for }r\geq 1.
\eee
We infer for $r\geq 1$
\bee
\left|\mathcal{F}(\widetilde{\phi})(r)\right| &\lesssim& \frac{1}{r^{\frac{1+\sqrt{\Delta_m}}{2}}}\left(\int_r^{+\infty} \frac{1}{{r'}^{s_c-1}}\left(\frac{1}{r^{\frac{1+\sqrt{\Delta_m}}{2}}}+\left|\widetilde{\phi}\right|(r')\right)\frac{1}{{r'}^{\frac{1-\sqrt{\Delta_m}}{2}}}dr'\right)\\
&& +\frac{1}{r^{\frac{1-\sqrt{\Delta_m}}{2}}}\left(\int_r^{+\infty} \frac{1}{{r'}^{s_c-1}}\left(\frac{1}{r^{\frac{1+\sqrt{\Delta_m}}{2}}}+\left|\widetilde{\phi}\right|(r')\right)\frac{1}{{r'}^{\frac{1+\sqrt{\Delta_m}}{2}}}dr'\right)\\
&\lesssim& \frac{1}{r^{s_c-1}}\frac{1}{r^{\frac{1+\sqrt{\Delta_m}}{2}}} + \frac{1}{r^{\frac{1+\sqrt{\Delta_m}}{2}}}\left(\int_r^{+\infty} \frac{1}{{r'}^{s_c-1}}\frac{1}{{r'}^{\frac{1-\sqrt{\Delta_m}}{2}}}\left|\widetilde{\phi}\right|(r')dr'\right)\\
&& +\frac{1}{r^{\frac{1-\sqrt{\Delta_m}}{2}}}\left(\int_r^{+\infty} \frac{1}{{r'}^{s_c-1}}\frac{1}{{r'}^{\frac{1+\sqrt{\Delta_m}}{2}}}\left|\widetilde{\phi}\right|(r')dr'\right)
\eee
and
\bee
\left|\mathcal{F}(\widetilde{\phi}_1)(r)-\mathcal{F}(\widetilde{\phi}_2)(r)\right|  &\lesssim&  \frac{1}{r^{\frac{1+\sqrt{\Delta_m}}{2}}}\left(\int_r^{+\infty} \frac{1}{{r'}^{s_c-1}}\frac{1}{{r'}^{\frac{1-\sqrt{\Delta_m}}{2}}}\left|\widetilde{\phi}_1-\widetilde{\phi}_2\right|(r')dr'\right)\\
&& +\frac{1}{r^{\frac{1-\sqrt{\Delta_m}}{2}}}\left(\int_r^{+\infty} \frac{1}{{r'}^{s_c-1}}\frac{1}{{r'}^{\frac{1+\sqrt{\Delta_m}}{2}}}\left|\widetilde{\phi}_1-\widetilde{\phi}_2\right|(r')dr'\right).
\eee
 Thus, for $R\geq 1$ large enough, the Banach fixed point theorem applies in the space corresponding to the norm
 $$\sup_{r\geq R}r^{\frac{s_c-1}{2}}r^{\frac{1+\sqrt{\Delta_m}}{2}}|\widetilde{\phi}|(r)$$
 and yields a unique solution $\widetilde{\phi}$ to \eqref{eq:easylinearfixedpointter} with
 $$\sup_{r\geq R}r^{\frac{s_c-1}{2}}r^{\frac{1+\sqrt{\Delta_m}}{2}}|\widetilde{\phi}|(r)\leq 1.$$
 Hence, $\phi_{m,1}$ satisfies $H_m(\phi_{m,1})=0$ and
\be
\label{uohfeihfeohe}
\phi_{m,1}\sim \frac{1}{r^{\frac{1+\sqrt{\Delta_m}}{2}}},\textrm{ as }r\to +\infty.
\ee
 The other independent solution $\phi_{m,2}$ to  $H_m(\phi_{m,2})=0$ is computed through the Wronskian relation
\bee
W &:=& \phi_{m,1}'\phi_{m,2}-\phi_{m,2}'\phi_{m,1}= - \frac 1{r^2}
\eee
ie $$\phi_{m,2}(r)=\phi_{m,1}(r)\int_1^r\frac{1}{{r'}^{2}\phi_{m,1}^2(r')}dr'\sim \frac{1}{r^{\frac{1-\sqrt{\Delta_m}}{2}}}\textrm{ as } r\to+\infty.$$
Since $\nu_m$ is a linear combination of $\phi_{m,1}$ and $\phi_{m,2}$, we infer 
\be
\label{vnknveonoen}
\nu_m(r)\sim \frac{c_{m,+}}{r^{\frac{1+\sqrt{\Delta_m}}{2}}}+ \frac{c_{m,-}}{r^{\frac{1-\sqrt{\Delta_m}}{2}}}\textrm{ as }r\to +\infty
\ee
for some constant $c_{m,+}$ and $c_{m,-}$.\\
\noindent{\em case $m=1$}: By translation invariance
\be
\label{honezero}
H_1(Q')=0\textrm{ and }Q'(r) = Q''(0)r(1+O(r^2))
\ee
Hence, by uniqueness of $\nu_1$, we infer
$$\nu_1(r)=\frac{Q'(r)}{Q''(0)}<0\ \ \mbox{on}\ \ (0,+\infty)$$
where we used from standard ODE arguments $Q''(0)<0$ and 
\be
\label{qdecay}
Q'<0\ \ \mbox{on}\ \ (0,+\infty).
\ee
\noindent{\em case $m=2$}: From \eqref{honezero}, \eqref{qdecay} and standard Sturm Liouville oscillation arguments for central potentials \cite{RS}, the quadratic form $(H_1u,u)$ is positive on $\dot{H}^1_{\rm rad}(0,+\infty)$ and hence for $m\ge 2$, $H_m>H_1$ is definite positive, and hence $\nu_m>0$ on $(0,+\infty)$. Moreover, If $c_{m,-}=0$ in \eqref{vnknveonoen}, then $\nu_m\in \dot{H}^1_{\rm rad}$ satisfies $(H_m\nu_m,\nu_m)=0$ which is a contradiction, hence the leading order behaviour \eqref{beahviorunum}.\\

\noindent{\bf step 4} Completing the basis.\\
\noindent{\em case $m=2$}. Let $\phi_m$ be the solution to $H_m(\phi_m)=0$ constructed above with the behaviour \eqref{uohfeihfeohe}. At the origin, the equation $H_m\psi$ reads $$A^*_mA_m\psi=V\psi,$$ with $$A_m v=r^m\pa_r\left(\frac{v}{r^m}\right), \ \ A^*_m=\frac{v}{r^{m+1}}\pa_r(r^{m+1}v)$$ and $V\in L^{\infty}$ and hence all solutions on $(0,\delta)$ with $0<\delta\ll1 $ are of the form $$\psi=c_0r^m+\frac{c_1}{r^{m+1}}+r^m\int_r^{\delta}\frac{d\tau}{\tau^{2m+1}}\int_0^r \tau^{m+1} V\psi d\tau$$ through an elementary fixed point argument. Hence 
\bea\label{eq:auxbehaviorphimmgeq2}
\phi_m=\frac{c_1+O(r^2)}{r^{m+1}}.
\eea 
Assume by contradiction that $c_1=0$. Then, the fixed point above leads to $\phi_m=O(r^m)$. Hence $\phi_m$ is a zero of $H_m$ in $\dot{H^1}_{\rm rad}$ which is a contradiction. Thus, $c_1\neq 0$ and together with \eqref{eq:auxbehaviorphimmgeq2}, we have obtained \eqref{havviournuonebis}.\\

\noindent{\em case $m=1$}. We let $\phi_1$ be given by the Wronskian relation $$\phi_1=\nu_1(r)\int_r^1\frac{d\tau}{\tau^2\nu^2_1(\tau)}d\tau\sim \left|\begin{array}{ll}\frac{c}{r^2}\ \ \mbox{as}\ \ r\to 0, \ \ c\neq 0,\\ \frac{1}{r^{\frac{1-\sqrt{\Delta_1}}{2}}}\ \ \mbox{as}\ \ r\to+\infty,\end{array}\right.$$
which is \eqref{havviournuone}.\\

\noindent{\bf step 5} Proof of \eqref{vnvnonvnornor}. Let 
$$\kappa_{n,m}:=\mu_n^{-m}\varphi_{n,m}(\mu_nr).$$
Then, since $\varphi_{n,m}$ satisfies $\mathcal{L}_{n,m}(\varphi_{n,m})=0$, we infer
$$-\pr^2_r\kappa_{n,m}-\frac{2}{r}\pr_r\kappa_{n,m}+\frac{m(m+1)}{r^2}\kappa_{n,m} -p\left(\mu_n^{\frac{2}{p-1}}\Phi_n(\mu_nr)\right)^{p-1}\kappa_{n,m}=-\mu_n^2\Lambda\kappa_{n,m} .$$
This yields
\bee
H_m(\kappa_{n,m}) &=& f_{n,m}:= p\left(\left(\mu_n^{\frac{2}{p-1}}\Phi_n(\mu_nr)\right)^{p-1}-Q^{p-1}(r)\right)\kappa_{n,m} -\mu_n^2\Lambda\kappa_{n,m}.
\eee
Since $H_m(\nu_m)=0$, we infer 
\bee
H_m\left(\kappa_{n,m}- \nu_m\right) &=& f_{n,m}.
\eee
We let  $(\nu_m,\phi_m)$ be the completed fundamental basis for $H_m$ so that 
$$\kappa_{n,m}- \nu_m=\left(a_1-\int_0^rf_{n,m}\phi_m{r'}^2dr'\right)\nu_m+\left(a_2+\int_0^rf_{n,m}\nu_m{r'}^2dr'\right)\phi_m.$$
Since 
$$\nu_m(r)= r^m(1+O(r^2))\textrm{ and }\varphi_{n,m}(r)= r^m(1+O(r^2))\textrm{ as }r\to 0_+,$$
we infer
$$\kappa_{n,m}(r)-\nu(r)= O(r^{m+2})$$
and hence \eqref{havviournuone}, \eqref{havviournuonebis} implies $a_1=a_2=0$ and:
$$\kappa_{n,m}- \nu_m=-\left(\int_0^rf_{n,m}\phi_m{r'}^2dr'\right)\nu_m+\left(\int_0^rf_{n,m}\nu_m{r'}^2dr'\right)\phi_m.$$

In order to estimate $f_{n,m}$, recall from Corollary \ref{cor:consequenceforPhinofproponexistence} that we have
\bee
\sup_{r\leq r_0}\left|\Phi_n(r) -  \frac{1}{\mu_n^{\frac{2}{p-1}}}Q\left(\frac{r}{\mu_n}\right)\right| \lesssim \mu_n^{s_c-1}
\eee
This yields
\bea\label{eq:usefulestimatetocontrolfnm}
\sup_{r\leq \frac{r_0}{\mu_n}}\left|p\left(\left(\mu_n^{\frac{2}{p-1}}\Phi_n(\mu_nr)\right)^{p-1}-Q^{p-1}(r)\right)\right| \lesssim \frac{\mu_n^{s_c+1}}{r_0^{2-\frac{2}{p-1}}}. 
\eea
Also, we rewrite $f_{n,m}$ as
\bea\label{eq:usefulestimatetocontrolfnmbis}
f_{n,m} &=& p\left(\left(\mu_n^{\frac{2}{p-1}}\Phi_n(\mu_nr)\right)^{p-1}-Q^{p-1}(r)\right)\nu_m -\mu_n^2\Lambda\nu_m\\
\nonumber&&+p\left(\left(\mu_n^{\frac{2}{p-1}}\Phi_n(\mu_nr)\right)^{p-1}-Q^{p-1}(r)\right)(\kappa_{n,m}-\nu_m) -\mu_n^2\Lambda(\kappa_{n,m}-\nu_m).
\eea
$0\leq r\leq 1$. In view of the asymptotic behavior as $r\to 0_+$ \eqref{havviournuone}, \eqref{havviournuonebis} of the basis of solutions $\nu_m,\phi_m$, and after integrating by parts the term $\Lambda(\kappa_{n,m}-\nu_m)$, we have for $0\leq r\leq 1$ using \eqref{eq:usefulestimatetocontrolfnm} and \eqref{eq:usefulestimatetocontrolfnmbis}: \bee
|\kappa_{n,m}- \nu_m|(r)&\lesssim& \mu_n^2r^2|\kappa_{n,m}- \nu_m|(r)\\
&+&\left(\frac{\mu_n^{s_c+1}}{r_0^{2-\frac{2}{p-1}}}+ \mu_n^2\right)\Bigg(r^{m+2}+r^m\left(\int_0^r|\kappa_{n,m}- \nu_m|{r'}^{1-m}dr'\right)\\
&+&r^{-m-1}\left(\int_0^r|\kappa_{n,m}- \nu_m|{r'}^{m+2}dr'\right)\Bigg).
\eee
Using again  the asymptotic behavior of $\nu_m$ as $r\to 0_+$, we infer for all $m\geq 1$
\bea\label{eq:controlofthedifferencekappanmnumonrleq1}
\sup_{0\leq r\leq 1}\frac{|(\kappa_{n,m}- \nu_m)(r)|}{|\nu_m(r)|} &\lesssim& \frac{\mu_n^{s_c+1}}{r_0^{2-\frac{2}{p-1}}}+ \mu_n^2.
\eea
In particular, this yields
\bea\label{eq:controloftheintegraloffnmon0to1}
\int_0^1|f_{n,m}|{r'}^{1-m}dr'+\int_0^1|f_{n,m}|{r'}^{m+2}dr' &\lesssim&  \frac{\mu_n^{s_c+1}}{r_0^{2-\frac{2}{p-1}}}+ \mu_n^2.
\eea

Next, we consider the region $r\geq 1$. In view of the asymptotic behavior at infinity \eqref{havviournuone}, \eqref{havviournuonebis}, \eqref{beahviorunumbis}, \eqref{beahviorunum}, after integrating by parts the term $\Lambda(\kappa_{n,m}-\nu_m)$ and using also \eqref{eq:controloftheintegraloffnmon0to1}, we have
\bee
|\kappa_{n,m}- \nu_m|&\lesssim& \mu_n^2r^2|\kappa_{n,m}- \nu_m|\\
& + &  \frac{1}{(1+r)^{\frac{1+\sqrt{\Delta_m}}{2}}}\left(\frac{\mu_n^{s_c+1}}{r_0^{2-\frac{2}{p-1}}}+ \mu_n^2+\int_1^r|f_{n,m}|\frac{{r'}^2}{(1+r')^{\frac{1-\sqrt{\Delta_m}}{2}}}dr'\right)\\
&&+\frac{1}{(1+r)^{\frac{1-\sqrt{\Delta_m}}{2}}}\left(\frac{\mu_n^{s_c+1}}{r_0^{2-\frac{2}{p-1}}}+ \mu_n^2+\int_1^r|f_{n,m}|\frac{{r'}^2}{(1+r')^{\frac{1+\sqrt{\Delta_m}}{2}}}dr'\right).
\eee
After integrating by parts the term $\Lambda(\kappa_{n,m}-\nu_m)$, and in view of the asymptotic behavior of $\nu_m$ as $r\to +\infty$ as well as \eqref{eq:usefulestimatetocontrolfnm}, we deduce
\bee
&& |(\kappa_{n,m}- \nu_m)(r)|\\
&\lesssim& \frac{1}{(1+r)^{\frac{1+\sqrt{\Delta_m}}{2}}}\Bigg(\int_1^r\left(\frac{\mu_n^{s_c+1}}{r_0^{2-\frac{2}{p-1}}}|\nu_m| + \mu_n^2|\Lambda\nu_m|+\left(\frac{\mu_n^{s_c+1}}{r_0^{2-\frac{2}{p-1}}}+ \mu_n^2\right)|\kappa_{n,m}-\nu_m|\right)\\
&&\times\frac{{r'}^2}{(1+r')^{\frac{1-\sqrt{\Delta_m}}{2}}}dr' +\frac{\mu_n^{s_c+1}}{r_0^{2-\frac{2}{p-1}}}+ \mu_n^2\Bigg)\\
&&+\frac{1}{(1+r)^{\frac{1-\sqrt{\Delta_m}}{2}}}\Bigg(\int_1^r\left(\frac{\mu_n^{s_c+1}}{r_0^{2-\frac{2}{p-1}}}|\nu_m| + \mu_n^2|\Lambda\nu_m|+\left(\frac{\mu_n^{s_c+1}}{r_0^{2-\frac{2}{p-1}}}+ \mu_n^2\right)|\kappa_{n,m}-\nu_m|\right)\\
&&\times\frac{{r'}^2}{(1+r')^{\frac{1+\sqrt{\Delta_m}}{2}}}dr' +\frac{\mu_n^{s_c+1}}{r_0^{2-\frac{2}{p-1}}}+ \mu_n^2\Bigg).
\eee
\noindent{\em case $m\geq 2$}: We estimate from \eqref{beahviorunum}:
\bee
&& \frac{|(\kappa_{n,m}- \nu_m)(r)|}{|\nu_m(r)|}\\
&\lesssim& \frac{1}{(1+r)^{\sqrt{\Delta_m}}}\Bigg\{\int_1^r\left(\frac{\mu_n^{s_c+1}}{r_0^{2-\frac{2}{p-1}}}+ \mu_n^2\right)\left(\frac{1}{(1+r')^{\frac{1-\sqrt{\Delta_m}}{2}}}+|\kappa_{n,m}-\nu_m|\right)\\
&&\times\frac{{r'}^2}{(1+r')^{\frac{1-\sqrt{\Delta_m}}{2}}}dr'  +\frac{\mu_n^{s_c+1}}{r_0^{2-\frac{2}{p-1}}}+ \mu_n^2\Bigg\}\\
&&+\int_1^r\left(\frac{\mu_n^{s_c+1}}{r_0^{2-\frac{2}{p-1}}}+ \mu_n^2\right)\left(\frac{1}{(1+r')^{\frac{1-\sqrt{\Delta_m}}{2}}}+|\kappa_{n,m}-\nu_m|\right)\frac{{r'}^2}{(1+r')^{\frac{1+\sqrt{\Delta_m}}{2}}}dr'\\
&& +\frac{\mu_n^{s_c+1}}{r_0^{2-\frac{2}{p-1}}}+ \mu_n^2.
\eee
This yields
\bee
\sup_{1\leq r\leq\frac{r_0}{\mu_n}}\frac{|(\kappa_{n,m}- \nu_m)(r)|}{|\nu_m(r)|} &\lesssim& r_0^2\left(1+\frac{\mu_n^{s_c-1}}{r_0^{2-\frac{2}{p-1}}}\right)
\eee
which together with \eqref{eq:controlofthedifferencekappanmnumonrleq1} concludes the proof of \eqref{vnvnonvnornor} for $n\geq N$ large enough and $m\geq 2$.\\
\noindent{\em case $m=1$} We estimate using \eqref{beahviorunumbis}, \eqref{havviournuone}:
\bee
&& \frac{|(\kappa_{n,1}- \nu_1)(r)|}{|\nu_1(r)|}\\
&\lesssim& \int_1^r\left(\frac{\mu_n^{s_c+1}}{r_0^{2-\frac{2}{p-1}}}+ \mu_n^2\right)\left(\frac{1}{(1+r')^{\frac{1+\sqrt{\Delta_1}}{2}}}+|\kappa_{n,1}-\nu_1|\right)\frac{{r'}^2}{(1+r')^{\frac{1-\sqrt{\Delta_1}}{2}}}dr'\\
&& +\frac{\mu_n^{s_c+1}}{r_0^{2-\frac{2}{p-1}}}+ \mu_n^2\\
&&+(1+r)^{\sqrt{\Delta_1}}\Bigg(\int_1^r\left(\frac{\mu_n^{s_c+1}}{r_0^{2-\frac{2}{p-1}}}+ \mu_n^2\right)\left(\frac{1}{(1+r')^{\frac{1+\sqrt{\Delta_1}}{2}}}+|\kappa_{n,1}-\nu_1|\right)\frac{{r'}^2}{(1+r')^{\frac{1+\sqrt{\Delta_1}}{2}}}dr' \\
&&+\frac{\mu_n^{s_c+1}}{r_0^{2-\frac{2}{p-1}}}+ \mu_n^2\Bigg).
\eee
This yields\footnote{Here, we use the fact that
$$\sqrt{\Delta_1}-1 =\frac{4}{p-1}<1$$
since $p>5$, so that
$$\int_0^r\frac{{r'}^2}{ (1+r')^{1+\sqrt{\Delta_1}}}\lesssim (1+r)^{2-\sqrt{\Delta_1}}.$$}
\bee
\sup_{1\leq r\leq\frac{r_0}{\mu_n}}\frac{|(\kappa_{n,1}- \nu_1)(r)|}{|\nu_1(r)|} &\lesssim& r_0^2\left(1+\frac{\mu_n^{s_c-1}}{r_0^{2-\frac{2}{p-1}}}\right)
+\left(\frac{r_0}{\mu_n}\right)^{\sqrt{\Delta_1}}\left(\frac{\mu_n^{s_c+1}}{r_0^{2-\frac{2}{p-1}}}+ \mu_n^2\right)
\eee
and hence, together with \eqref{eq:controlofthedifferencekappanmnumonrleq1} and the fact that\footnote{Indeed, we have in view of \eqref{eq:explicitcompuationofDelta1}
$$\sqrt{\Delta_1}=\frac{p+3}{p-1}=2 - \frac{p-5}{p-1}<2$$
since $p>5$.} $\sqrt{\Delta_1}<2$, we have for $n\geq N$ large enough 
\bee
\sup_{0\leq r\leq\frac{r_0}{\mu_n}}\frac{|(\kappa_{n,1}- \nu_1)(r)|}{|\nu_1(r)|} &\lesssim& r_0^2.
\eee
The corresponding estimates for first order derivatives are obtained in the same way, and \eqref{vnvnonvnornor} is proved.


\section{Proof of Lemma \ref{lemma:comparisionforrleqr0toodeH}}
\label{appendixphinzero}


\noindent{\bf step 1} Proof of \eqref{uniformroximity}. Let 
$$\kappa_n :=\varphi_{n,0}(\mu_nr).$$
Then, since $\varphi_{n,0}$ satisfies $\mathcal{L}_{n,0}(\varphi_{n,0})=0$, we infer
$$-\pr^2_r\kappa_n -\frac{2}{r}\pr_r\kappa_n  -p\left(\mu_n^{\frac{2}{p-1}}\Phi_n(\mu_nr)\right)^{p-1}\kappa_n =-\mu_n^2\Lambda\kappa_n.$$
This yields
\bee
H(\kappa_n) &=& f_n
\eee
where we have introduced the notation
\bee
f_n &:=& p\left(\left(\mu_n^{\frac{2}{p-1}}\Phi_n(\mu_nr)\right)^{p-1}-Q^{p-1}(r)\right)\kappa_n -\mu_n^2\Lambda\kappa_n.
\eee
Since $H(\Lambda Q)=0$, we infer
\bee
H\left(\kappa_n - \frac{p-1}{2}\Lambda Q\right) &=& f_n.
\eee
Recall the solution $\rho$ to $H(\rho)=0$ constructed in Lemma \ref{lemma:homogeneoussolutionsofH} such that $(\Lambda Q, \rho)$ forms a basis of solutions of $H(w)=0$, then the solution to $$H(w) = f$$
is given by 
\bee
w &=&  \left(a_1+\int_0^r f\rho {r'}^{2}dr'\right)\Lambda Q+ \left(a_2-\int_0^r f\Lambda Q {r'}^{2}dr'\right)\rho.
\eee
We infer
\bee
\kappa_n - \frac{p-1}{2}\Lambda Q &=&  \left(a_1+\int_0^r f_n\rho {r'}^{2}dr'\right)\Lambda Q+ \left(a_2-\int_0^r f_n\Lambda Q {r'}^{2}dr'\right)\rho.
\eee
Since $\Lambda Q$ is a smooth function at $r=0$ with 
$$\Lambda Q(0)=\frac{2}{p-1}\neq 0,$$
we infer from the Wronskian relation that $\rho$ has the following asymptotic behavior 
$$\rho\sim\frac{c}{r}\textrm{ as }r\to 0_+$$
for some constant $c\neq 0$, and hence, we must have $a_2=0$. Furthermore, since we have
\bee
\left(\kappa_n - \frac{p-1}{2}\Lambda Q\right)(0)=0,\,\,\,\, \Lambda Q(0)=\frac{2}{p-1}\neq 0
\eee
we infer $a_1=0$. Hence, we have
\bee
\kappa_n - \frac{p-1}{2}\Lambda Q &=&  \left(\int_0^r f_n\rho {r'}^{2}dr'\right)\Lambda Q - \left(\int_0^r f_n\Lambda Q {r'}^{2}dr'\right)\rho.
\eee

In order to estimate $f_n$, recall from Corollary \ref{cor:consequenceforPhinofproponexistence} that we have
\bee
\sup_{r\leq r_0}\left|\Phi_n(r) -  \frac{1}{\mu_n^{\frac{2}{p-1}}}Q\left(\frac{r}{\mu_n}\right)\right| \lesssim \mu_n^{s_c-1}
\eee
This yields
\bea\label{eq:usefulestimatetocontrolfn}
\sup_{r\leq \frac{r_0}{\mu_n}}\left|p\left(\left(\mu_n^{\frac{2}{p-1}}\Phi_n(\mu_nr)\right)^{p-1}-Q^{p-1}(r)\right)\right| \lesssim \frac{\mu_n^{s_c+1}}{r_0^{2-\frac{2}{p-1}}}. 
\eea
Also, we rewrite $f_n$ as
\bea\label{eq:usefulestimatetocontrolfnbis}
f_n&=& p\left(\left(\mu_n^{\frac{2}{p-1}}\Phi_n(\mu_nr)\right)^{p-1}-Q^{p-1}(r)\right)\frac{p-1}{2}\Lambda Q -\mu_n^2 \frac{p-1}{2}\Lambda^2 Q\\
\nonumber&&+p\left(\left(\mu_n^{\frac{2}{p-1}}\Phi_n(\mu_nr)\right)^{p-1}-Q^{p-1}(r)\right)\left(\kappa_n - \frac{p-1}{2}\Lambda Q\right) -\mu_n^2\Lambda\left(\kappa_n - \frac{p-1}{2}\Lambda Q\right).
\eea

We start with the region $0\leq r\leq 1$. In view of the asymptotic behavior for $\Lambda Q$ and $\rho$:
$$\Lambda Q\sim \frac{2}{p-1}\textrm{ and }\rho\sim \frac{c}{r}\textrm{ as }r\to 0+,$$
we infer
\bee
\left|\kappa_n - \frac{p-1}{2}\Lambda Q\right| &\lesssim&  \int_0^r |f_n|{r'} dr'+\frac{1}{r}\left(\int_0^r |f_n|{r'}^2 dr'\right).
\eee
Together with \eqref{eq:usefulestimatetocontrolfn} and \eqref{eq:usefulestimatetocontrolfnbis} and integrating by parts the term $\Lambda(\kappa_n - (p-1)/2\Lambda Q)$, we deduce
\bee
\left|\kappa_n - \frac{p-1}{2}\Lambda Q\right| &\lesssim& \left(\frac{\mu_n^{s_c+1}}{r_0^{2-\frac{2}{p-1}}}+ \mu_n^2\right)\Bigg(1+ \int_0^r \left|\kappa_n - \frac{p-1}{2}\Lambda Q\right| {r'} dr'\\
&&+\frac{1}{r}\left(\int_0^r \left|\kappa_n - \frac{p-1}{2}\Lambda Q\right| {r'}^2 dr'\right)\Bigg).
\eee
We infer 
\bea\label{eq:estimateofthefifferencebetweenkappanabdLambdaQforrleq1}
\sup_{0\leq r\leq 1}\left|\kappa_n - \frac{p-1}{2}\Lambda Q\right| &\lesssim& \frac{\mu_n^{s_c+1}}{r_0^{2-\frac{2}{p-1}}}+ \mu_n^2.
\eea
In particular, this yields
\bea\label{eq:estimateforintegraloffnbetweenoand1}
 \int_0^1 |f_n|{r'} dr'+\int_0^1 |f_n|{r'}^2 dr' &\lesssim& \frac{\mu_n^{s_c+1}}{r_0^{2-\frac{2}{p-1}}}+ \mu_n^2.
\eea

Next, we consider the region $r\geq 1$. Recall the asymptotic behavior at infinity of $\Lambda Q$ and $\rho$ given by Lemma \ref{lemma:homogeneoussolutionsofH}
\bee
\Lambda Q(r)\sim\frac{c_7\sin\left(\om\log(r)+c_8\right)}{r^{\frac{1}{2}}},\,\,\,\,\rho(r)\sim\frac{c_9\sin\left(\om\log(r)+c_{10}\right)}{r^{\frac{1}{2}}}\textrm{ as }r\to +\infty,
\eee
where $c_7, c_9\neq 0$, $c_8, c_{10}\in\mathbb{R}$. We infer for $r\geq 1$
\bee
\left|\kappa_n - \frac{p-1}{2}\Lambda Q\right| &\lesssim&  \left( \frac{\mu_n^{s_c+1}}{r_0^{2-\frac{2}{p-1}}}+ \mu_n^2+\int_1^r |f_n|\frac{{r'}^{2}}{(1+{r'})^{\frac{1}{2}}}dr'\right)\frac{1}{(1+r)^{\frac{1}{2}}}.
\eee
After integrating by parts the term $\Lambda(\kappa_n - (p-1)/2\Lambda Q)$, and together with \eqref{eq:usefulestimatetocontrolfn} and \eqref{eq:usefulestimatetocontrolfnbis}, we deduce
\bee
&&(1+r)^{\frac{1}{2}}\left|\kappa_n - \frac{p-1}{2}\Lambda Q\right| \\
&\lesssim&  \left(\frac{\mu_n^{s_c+1}}{r_0^{2-\frac{2}{p-1}}}+ \mu_n^2\right)\left(1+\int_1^r \left(\frac{1}{(1+{r'})^{\frac{1}{2}}}+\left|\kappa_n - \frac{p-1}{2}\Lambda Q\right|\right)\frac{{r'}^{2}}{(1+{r'})^{\frac{1}{2}}}dr'\right)\\
&\lesssim&  \left(\frac{\mu_n^{s_c+1}}{r_0^{2-\frac{2}{p-1}}}+ \mu_n^2\right)(1+r)^2+\left(\frac{\mu_n^{s_c+1}}{r_0^{2-\frac{2}{p-1}}}+ \mu_n^2\right)\left(\int_1^r \left|\kappa_n - \frac{p-1}{2}\Lambda Q\right|\frac{{r'}^{2}}{(1+{r'})^{\frac{1}{2}}}dr'\right).
\eee
This yields
\bee
\sup_{1\leq r\leq\frac{r_0}{\mu_n}}(1+r)^{\frac{1}{2}}\left|\kappa_n - \frac{p-1}{2}\Lambda Q\right| &\lesssim& r_0^2\left(1+\frac{\mu_n^{s_c-1}}{r_0^{2-\frac{2}{p-1}}}\right)
\eee
which together with \eqref{eq:estimateofthefifferencebetweenkappanabdLambdaQforrleq1} implies
\bee
\sup_{0\leq r\leq\frac{r_0}{\mu_n}}(1+r)^{\frac{1}{2}}\left|\kappa_n - \frac{p-1}{2}\Lambda Q\right| &\lesssim& r_0^2\left(1+\frac{\mu_n^{s_c-1}}{r_0^{2-\frac{2}{p-1}}}\right).
\eee
Hence, we have for $n\geq N$ large enough 
\bee
\sup_{0\leq r\leq r_0}\left(1+\frac{r}{\mu_n}\right)^{\frac{1}{2}}\left|\varphi_{n,0}(r) - \frac{p-1}{2}\Lambda Q\left(\frac{r}{\mu_n}\right)\right| &\lesssim& r_0^2.
\eee

\noindent{\bf step 2} Proof of \eqref{eronvoinv}. Recall from Lemma \ref{lemma:comparisionforrleqr0toodeH} that we have for $n\geq N$ large enough 
\bee
\sup_{0\leq r\leq r_0}\left(1+\frac{r}{\mu_n}\right)^{\frac{1}{2}}\left|\varphi_{n,0}(r) - \frac{p-1}{2}\Lambda Q\left(\frac{r}{\mu_n}\right)\right| &\lesssim& r_0^2.
\eee
Also, recall that
$$\Lambda Q(r)\sim \frac{c_7\sin(\om\log(r)+c_8)}{r^{\frac{1}{2}}}\textrm{ as }r\to +\infty$$
and that $r_{\Lambda Q,n}<r_0/\mu_n$ introduced in Corollary \ref{cor:consequenceforPhinofproponexistence} denotes the last zero of $\Lambda Q$ before $r_0/\mu_n$. 
This yields
$$\Big|\om\log(r_{1,n})-\om\log(\mu_n)+c_8 -(\om\log(r_{\Lambda Q,n})+c_8)\Big| \lesssim r_0^2$$
and hence
\bee
r_{1,n} &=& \mu_n r_{\Lambda Q,n} e^{O(r_0^2)}= \mu_n r_{\Lambda Q,n}(1+ O(r_0^2)).
\eee
Furthermore, since we have from the proof of Corollary \ref{cor:consequenceforPhinofproponexistence}  that 
$$e^{-\frac{3\pi}{2\omega}}\frac{r_0}{\mu_n}\leq r_{\Lambda Q, n}\leq \frac{r_0}{\mu_n},$$
and 
\bee
r_{0,n} &=& \mu_n r_{\Lambda Q,n}(1+ O(r_0^2)),
\eee
we deduce
$$r_{1,n}=r_{0, n}+O(r_0^3)$$
and 
$$e^{-\frac{2\pi}{\omega}}r_0\leq r_{1, n}\leq r_0.$$

\end{appendix}


\end{document}